\documentclass[12pt,reqno]{amsart}
\usepackage[english]{babel}
\usepackage{amsmath}
\usepackage{amsthm}
\usepackage{anysize}
\usepackage{mathtools}
\usepackage{mathrsfs}
\usepackage{amssymb}
\usepackage{xcolor}
\usepackage{xfrac}
\usepackage{esint}
\usepackage{graphicx}
\usepackage{float}
\usepackage{array}
\usepackage{enumitem}
\usepackage{tikz-cd}
\usepackage{centernot}
\usepackage{stmaryrd}
\usepackage{comment}
\excludecomment{confidential}
\usepackage{epstopdf}

\usepackage{graphicx,color}
\usepackage{tikz,pgfplots,pgf}
\usepackage[font=small]{caption}

\newcommand{\sn}{\mathrm{sn}}

\newtheorem{theorem}{Theorem}[section]
\newtheorem{lemma}{Lemma}[section]
\newtheorem{proposition}{Proposition}[section]

\theoremstyle{definition}

\theoremstyle{definition}
\newtheorem{remark}{Remark}[section]
\newtheorem{definition}{Definition}[section]

\usepackage[english]{babel}
\usepackage{amsmath}
\usepackage{amsthm}
\usepackage{mathtools}
\usepackage{mathrsfs}
\usepackage{amssymb}
\usepackage{xcolor}
\usepackage{xfrac}
\usepackage{esint}
\usepackage{graphicx}
\usepackage{float}
\usepackage{array}
\usepackage{enumitem}
\usepackage[new]{old-arrows}
\usepackage[all,cmtip]{xy}
\usepackage{tikz-cd}
\usepackage{centernot}
\usepackage{stmaryrd}
\usepackage{comment}
\usepackage{bbm}
\usepackage{mathabx}

\newcommand{\mc}{\mathcal}

\newcommand{\R}{\mathbb{R}}
\newcommand{\N}{\mathbb{N}}

\renewcommand{\l}{\lambda}

\renewcommand{\a}{\alpha}
\renewcommand{\b}{\beta}
 \renewcommand{\o}{\omega}

\numberwithin{equation}{section}

\makeatletter
\@namedef{subjclassname@2020}{\textup{2020} Mathematics Subject Classification}
\makeatother

\def\a{\alpha}
\def\b{\beta}

\def\g{\gamma}

\def\l{\lambda}

\def\o{\omega}

\theoremstyle{definition}

\theoremstyle{remark}

\newcommand{\Z}{\mathbb{Z}}

\begin{document}

\title[Bifurcation of travelling--rotating Schrödinger maps]{Bifurcation and global continuation of travelling--rotating Schrödinger maps on the sphere}

\author[J. C. Sampedro]{Juan Carlos Sampedro} \thanks{J. C. S. has been supported by the Ministry of Science and Innovation of Spain under the
	Research Grant PID2024–155890NB-I00}
\address{Departamento de Matemática Aplicada y Ciencias de la Computación \\
	Avenida de los Castros 46 \\
	Universidad de Cantabria (UC) \\
	Santander, 39005, Spain.}
\email{juancarlos.sampedro@unican.es}

\author[L. Vega]{Luis Vega}
\address{Departamento de Matemáticas \\ Universidad del Pais Vasco (UPV/EHU) \\ Apdo 644, 48080, Bilbao, Spain \\ \& BCAM, Mazarredo 14, 48009, Bilbao, Spain}
\email{lvega@bcamath.org}

\keywords{Schrödinger maps, vortex filament equation, travelling--rotating profiles, bifurcation, global continuation, elliptic functions}

\subjclass[2020]{35Q55, 35B32, 35B10, 53E40, 76B47}

\begin{abstract}
	We study travelling--rotating solutions of the Schr\"odinger map equation into
	the sphere, viewed as tangent profiles of rigid vortex filaments. Two first
	integrals reduce the profile equation to a scalar cubic equation for the
	vertical component, giving an elliptic-function description and explicit
	closure conditions. We prove bifurcation from the equatorial branch at
	\(\lambda_k=R\sqrt{k^2-1}\), \(k\ge2\), and establish a global continuation
	alternative inside the regular non-polar class. The possible boundary
	mechanisms are pole contact, vertical collapse, and double-root degeneration.
	Numerical continuation of the equatorial branches suggests convergence to
	the north-pole boundary. Up to gauge, the reconstructed vortex filaments are of
	Kida type.
\end{abstract}

\maketitle

\section{Introduction}

The binormal flow, or Vortex Filament Equation,
\[
\mathbf X_t=\mathbf X_s\times \mathbf X_{ss},
\]
is a classical geometric model for the motion of thin vortex filaments under the
localized induction approximation. Its origins go back to the work of Da Rios on
vortex filaments, and we refer to the classical works
\cite{DaRios1906,ArmsHama1965,Betchov1965} and to the monograph
\cite{Saffman1992} for background on vortex dynamics and the localized induction
approximation. A fundamental feature of the equation is its relation, through the Hasimoto transformation, with the one-dimensional cubic nonlinear Schr\"odinger equation \cite{Hasimoto1972}. This connection places the vortex filament equation within a rich integrable and geometric framework, see for instance \cite{LangerPerline1991}.

When the filament is parametrized by arclength, its unit tangent vector
\(\mathbf T=\mathbf X_s\) evolves according to the Schr\"{o}dinger map equation into
the sphere,
\begin{equation}
	\label{Eq1.1}
	\mathbf T_t=\mathbf T\times \mathbf T_{ss},\qquad \|\mathbf T(t,s)\|\equiv 1.
\end{equation}
Thus the geometry of the vortex filament can be studied, at the tangent level,
as a curve evolving on the sphere.

A classical problem is to understand solutions which move without change of form. For the filament equation, Kida obtained a rich family of rigid travelling--rotating filaments, described in terms of elliptic integrals and including, as special or limiting cases, circular rings, helicoidal filaments, Euler elasticae and solitary-wave-type filaments \cite{Kida1981}. Related geometric aspects of elastic curves and filament dynamics also appear in \cite{LangerSinger1984,LangerPerline1991}.

More recently,
García and Vega studied rotating and slipping solutions of the Schrödinger map
equation by means of stereographic coordinates and Crandall--Rabinowitz
bifurcation theory, constructing local branches near circles and helices and
comparing them with Kida's solutions \cite{GarciaVega2024}. 

More broadly, the binormal flow and its relation with Schr\"odinger maps have been studied from several viewpoints, including self-similar dynamics, singular solutions, rough-data theory and special polygonal solutions, see for example \cite{GutierrezRivasVega2003,BanicaVega2009,BanicaVega2012,BanicaVega2015, DeLaHozVega2014,JerrardSmets2012}.

The purpose of the present paper is complementary to these works. We do not aim
at giving merely another explicit elliptic parametrization of rigid vortex
filaments. Instead, we work directly with the intrinsic equation satisfied by
the tangent profile on the sphere and use its first integrals to study
periodic travelling--rotating profiles from the point of view of bifurcation
and global continuation. In particular, our goal is to understand how the closed tangent profiles are
organized into local and global branches, how those branches may lose
regularity, and what the corresponding boundary mechanisms mean
geometrically. 

We consider travelling--rotating tangent profiles of the form
\begin{equation}
	\label{Eq1.2}
	\mathbf T(t,s)=\mathscr R^{\Omega t}\mathbf x(\xi),\qquad\xi=s-at,
\end{equation}
where \(a,\Omega\in\mathbb R\), \(a\neq0\), and
\(\mathscr R^{\Omega t}\) denotes the rotation of angle \(\Omega t\) around
the \(x_3\)-axis. Substitution of \eqref{Eq1.2} into
\eqref{Eq1.1} gives the profile equation
\begin{equation}
	\label{Eq1.3}
	\mathbf x\times \mathbf x''-\Omega e_3\times \mathbf x+a\mathbf x'=0.
\end{equation}
Here and throughout, primes denote differentiation with respect to the travelling variable \(\xi\). For the Schrödinger map equation one has \(\|\mathbf x\|\equiv1\). In the analysis below we keep a general radius
\(R>0\) and study the same profile equation on the sphere $\mathbb{S}^{2}_{R}$, that is,
\[
\|\mathbf x(\xi)\|\equiv R.
\]
This additional parameter is useful for the bifurcation and scaling arguments.
The usual arclength-parametrized vortex filament interpretation corresponds to
\(R=1\).

The first observation is that \eqref{Eq1.3} possesses two
elementary first integrals. The first one is the vertical angular momentum
\[
C=(\mathbf x\times \mathbf x')_3+a x_3,
\]
and the second one is an energy-type integral
\[
E=\frac a2\|\mathbf x'\|^2-\Omega (e_3\times\mathbf x)\cdot \mathbf x'.
\]
Together with the constraint \(\|\mathbf x\|=R\), these two quantities reduce
the dynamics to a scalar equation for the vertical component $x_3$.
More precisely, one obtains
\begin{equation}
	\label{Eq1.4}
	(x_3')^2=P(x_3),
\end{equation}
where \(P\) is an explicit cubic polynomial depending on the parameters and the
first integrals. Thus, as it is well-known \cite{Kida1981}, the qualitative behaviour of the profile is encoded in
the root structure of \(P\). The regular oscillatory regime corresponds to
three real roots
\[
e_1<e_2<e_3, \quad-R<e_1<e_2<R,
\]
with \(x_3\) oscillating between \(e_1\) and \(e_2\). Degenerate regimes occur
when one of the turning levels reaches a pole, when the oscillation collapses,
or when two roots merge into a separatrix configuration. This reduction gives explicit formulae in terms of Jacobi elliptic functions for the vertical component. In the regular case one may write,
after a suitable choice of origin,
\[
x_3(\xi)=e_1+(e_2-e_1)\sn^2(\omega(\xi-\xi_0)\mid k),\quad\omega^2=\frac{\Omega}{2R^2}(e_3-e_1),\quad k=\frac{e_2-e_1}{e_3-e_1},
\]
where \(\mathrm{sn}\) is the Jacobi elliptic sine function, see Appendix~\ref{ApA} and the classical reference \cite{AS}. The vertical motion closes after \(q\) oscillations when
\[
\frac{2qK(k)}{\omega}=2\pi,
\]
where $K$ is the complete elliptic integral of the first kind. On the other hand, writing a non-polar profile in cylindrical coordinates around the $e_3$-axis,
\[
\mathbf{x}(\xi)=\left(\sqrt{R^2-x_3(\xi)^2}\cos\phi(\xi),\sqrt{R^2-x_3(\xi)^2}\sin\phi(\xi),x_3(\xi)\right),
\]
the azimuthal angle $\phi$ satisfies an equation of the form
\[
\phi'(\xi)=\frac{C-a x_3(\xi)}{R^2-x_3(\xi)^2},
\]
and the full profile closes only when $q\,\Delta\phi=2\pi p$, $p\in\mathbb Z$,
where \(\Delta\phi\) is expressed through elliptic integrals of the third kind $\Pi$.
Thus the explicit elliptic classification leads, for closed profiles, to a
coupled transcendental system involving \(K\) and \(\Pi\), see for instance \eqref{Sis}. These formulae are valuable for describing individual profiles, but they are not well suited to understanding how the closed profiles are organized into local and global branches. This is the reason for complementing the elliptic classification with a bifurcation-theoretic analysis.

We then turn to bifurcation from the equatorial branch. For this purpose it is
convenient to distinguish between the travelling variable \(\xi=s-at\) and the
fixed \(2\pi\)-periodic variable
\[
\eta=\omega\xi,\qquad\omega:=\frac{\Omega}{a}.
\]
Equivalently, we introduce the bifurcation parameter
\[
\lambda:=\frac{a^2}{\Omega},\qquad\omega=\frac{a}{\lambda}.
\]
Thus a \(2\pi\)-periodic profile in the variable \(\eta\) gives a physical
travelling profile in the variable \(\xi\) by the rescaling $\mathbf x(\xi)=\mathbf y(\omega\xi)$.
In the variable \(\eta\), the equatorial branch is $\mathbf y_0(\eta)=(R\cos\eta,R\sin\eta,0)$. Writing nearby profiles as $\mathbf y=\mathbf y_0+\mathbf u$,
the fixed-period problem becomes a nonlinear equation $\mathcal G(\lambda,\mathbf u)=0$
on a suitable space of \(2\pi\)-periodic \(H^2\)-perturbations. The natural
augmented operator \(\mathcal G\) contains both the differential equation and
the spherical constraint. However, after the constraint is imposed, the
differential equation has only two independent components. We therefore
introduce a projected augmented operator \(\widehat{\mathcal G}\), obtained by
keeping the two tangential components of the differential equation together
with the constraint. Locally near the equator, the zero sets of
\(\widehat{\mathcal G}\) and \(\mathcal G\) coincide.

The linearization of the projected problem loses invertibility precisely at
\[
\lambda_k:=R\sqrt{k^2-1},\qquad k\ge2.
\]
The full problem also contains the phase degeneracy coming from rotations
around the \(x_3\)-axis. We remove this degeneracy by working first in a
reversible symmetry class. In that class the kernel at \(\lambda=\lambda_k\) is one-dimensional and the Crandall--Rabinowitz transversality condition holds \cite{CR1971}, see also \cite{Kielhofer2012, LG} for a general account of bifurcation theory in Banach spaces.
Restoring the residual phase symmetry afterwards gives, after fixing the trivial equatorial phase, a two-dimensional local sheet of physical travelling profiles. The corresponding result is the following.

\begin{theorem}[Local sheet of travelling--rotating profiles]
	\label{Th1.1}
	Fix an integer \(k\ge2\), and set
	\[\lambda_k:=R\sqrt{k^2-1},\qquad\Omega_k:=\frac{a^2}{\lambda_k},\qquad\omega_k:=\frac{\Omega_k}{a}=\frac{a}{\lambda_k}.
	\]
	Then there exist \(\delta>0\), a smooth function
	\[
	\Lambda_k:(-\delta,\delta)\to(0,+\infty),\qquad\Lambda_k(0)=\lambda_k,
	\]
	and a smooth two-parameter family
	\[
	\Psi_k:(-\delta,\delta)\times\mathbb T\longrightarrow H^2_{\mathrm{loc}}(\mathbb R,\mathbb R^3),\qquad(t,\varphi)\longmapsto \mathbf x_{t,\varphi},
	\]
	with the following properties.
	\begin{enumerate}
		\item[{\rm(i)}]
		For every \((t,\varphi)\in(-\delta,\delta)\times\mathbb T\), the curve
		\(\mathbf x_{t,\varphi}\) is a \(T_\xi(t)\)-periodic solution of
		\eqref{Eq1.3} with
		\[
		\Omega(t):=\frac{a^2}{\Lambda_k(t)},\qquad \omega(t):=\frac{\Omega(t)}{a}=\frac{a}{\Lambda_k(t)},
		\qquad
		T_\xi(t):=\frac{2\pi}{\omega(t)}
		=\frac{2\pi\Lambda_k(t)}{a},
		\]
		and satisfies $\|\mathbf x_{t,\varphi}(\xi)\|\equiv R$.
		\vspace{2pt}
		\item[{\rm(ii)}]
		At \(t=0\), $\mathbf x_{0,\varphi}(\xi)
		=
		\big(R\cos(\omega_k\xi),R\sin(\omega_k\xi),0\big)$
		for every \(\varphi\in\mathbb T\).
		\vspace{2pt}
		\item[{\rm(iii)}]
		As \(t\to0\), one has $\Lambda_k(t)=\lambda_k+o(1)$,
		and, uniformly with respect to \(\varphi\in\mathbb T\),
		\begin{equation}
			\label{Eq1.5}
		\mathbf x_{t,\varphi}(\xi)
		=
		\begin{pmatrix}
			R\cos(\omega(t)\xi)\\[2pt]
			R\sin(\omega(t)\xi)\\[2pt]
			0
		\end{pmatrix}
		+
		t
		\begin{pmatrix}
			-\sin\!\big(k(\omega(t)\xi-\varphi)\big)\sin(\omega(t)\xi)\\[2pt]
			\phantom{-}\sin\!\big(k(\omega(t)\xi-\varphi)\big)\cos(\omega(t)\xi)\\[2pt]
			-\dfrac{k}{\sqrt{k^2-1}}\cos\!\big(k(\omega(t)\xi-\varphi)\big)
		\end{pmatrix}
		+
		o(t)
		\end{equation}
		in \(H^2_{\mathrm{loc}}(\mathbb R,\mathbb R^3)\).
		\vspace{2pt}
		\item[{\rm(iv)}]
		The map $(t,\varphi)\to(\Omega(t),\mathbf x_{t,\varphi})$
		parametrizes a two-dimensional smooth family of nontrivial profiles, with the trivial equatorial phase fixed, issuing
		from
		\[
		\left(
		\Omega_k,\,
		\big(R\cos(\omega_k\xi),R\sin(\omega_k\xi),0\big)
		\right).
		\]
	\end{enumerate}
\end{theorem}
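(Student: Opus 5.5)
The plan is to pass to the fixed-period variable $\eta=\omega\xi$ with $\omega=a/\lambda$, so that \eqref{Eq1.3} becomes, after dividing by $\omega^2$ and using $\Omega/a=\omega$, $a/\omega=\lambda$,
\[
\mathbf y\times\mathbf y''-\lambda^{-1}\cdot\lambda\, e_3\times\mathbf y\,\frac{1}{\omega}\cdot\omega+\lambda\mathbf y'=0,
\]
that is, up to normalisation, $\mathbf y\times\mathbf y''-\lambda e_3\times \mathbf y+\lambda\mathbf y'=0$ on $2\pi$-periodic curves with $\|\mathbf y\|=R$. The equator $\mathbf y_0(\eta)=(R\cos\eta,R\sin\eta,0)$ solves it for every $\lambda$, since $\mathbf y_0\times\mathbf y_0''=0$ and $e_3\times\mathbf y_0=\mathbf y_0'$. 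We set $\mathbf y=\mathbf y_0+\mathbf u$ and, to get a Fredholm map between fixed spaces, use the projected operator $\widehat{\mathcal G}(\lambda,\mathbf u)$. Its components are the two tangential components of the equation, taken along the moving frame $\{\mathbf y_0'/R,\ e_3\}$, together with the constraint $\|\mathbf y_0+\mathbf u\|^2-R^2$. It maps $H^2_{2\pi}$ into $H^1_{2\pi}\times H^1_{2\pi}\times H^2_{2\pi}$. We then check that near $\mathbf u=0$ the zero sets of $\widehat{\mathcal G}$ and $\mathcal G$ agree: on the sphere the equation is orthogonal to $\mathbf y$, so two tangential components control the third.

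To remove the $S^1$ phase degeneracy, we restrict to a reversible class. The natural choice is to require $u_\theta$ (the component along $\mathbf y_0'$) odd in $\eta$, and $u_r$ and $u_3$ even. This is compatible with the reflection symmetry $(x_1,x_2,x_3)(\eta)\mapsto(x_1,-x_2,x_3)(-\eta)$ composed with the equation's reversibility; I will check that the sign of $\lambda\mathbf y'$ is preserved. In moving-frame coordinates $\mathbf u=u_r\mathbf y_0/R+u_\theta\mathbf y_0'/R+u_3e_3$, the linearised constraint forces $u_r=0$. The linearised tangential equations become a constant-coefficient $2\times2$ system in $(u_\theta,u_3)$, of the form
\[
R\,u_3''+\lambda u_\theta'+c\,u_3=0,\qquad -R\,u_\theta''+\lambda u_3'+\dots=0 .
\]
Inserting the Fourier modes $u_\theta=\alpha\sin k\eta$, $u_3=\beta\cos k\eta$ gives a $2\times2$ determinant. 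The computation should show that it vanishes exactly when $\lambda^2=R^2(k^2-1)$, with kernel direction $\beta/\alpha=-k/\sqrt{k^2-1}$. Mode $k=1$ is excluded, or trivial, since it corresponds to rigid rotations. Hence the kernel in the reversible class is one-dimensional, spanned by the vector in \eqref{Eq1.5} with $\varphi=0$. The Fredholm index-zero property follows because the principal part is $\mathrm{diag}(\partial^2,\partial^2)$ plus the identity on the constraint component, which is a compact perturbation of an isomorphism. The range has codimension one and is characterised by orthogonality to the adjoint kernel.

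The main obstacle is the transversality condition $\partial_\lambda D_{\mathbf u}\widehat{\mathcal G}(\lambda_k,0)[\mathbf v_k]\notin\operatorname{Ran}$. Here $\partial_\lambda$ acts on the terms $\lambda\mathbf u'-\lambda e_3\times\mathbf u$, and we pair the result with the adjoint kernel vector. This reduces to showing that the derivative of the $2\times2$ determinant in $\lambda$ at $\lambda_k$ is nonzero. Since the determinant is a polynomial in $\lambda$ with the simple root $\lambda_k>0$, I expect this to be the quantity $2\lambda_k k^2$ or similar, and hence nonzero. Crandall--Rabinowitz \cite{CR1971} then gives a smooth curve $t\mapsto(\Lambda_k(t),\mathbf u_t)$ with $\mathbf u_t=t\mathbf v_k+o(t)$.

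Finally, we restore the phase. Since the equation commutes with $\mathbf y(\eta)\mapsto\mathscr R^{-k\varphi}$-conjugated shifts, we define
\[
\mathbf y_{t,\varphi}(\eta)=\mathscr R^{k\varphi}\,\mathbf y_t(\eta-k\varphi)\ \ \text{(suitably arranged)},
\]
so that the equatorial part stays $(R\cos\eta,R\sin\eta,0)$ and the kernel term becomes $\sin(k(\eta-\varphi))$, as in \eqref{Eq1.5}. Rescaling by $\mathbf x_{t,\varphi}(\xi)=\mathbf y_{t,\varphi}(\omega(t)\xi)$ yields (i)–(iii): periodicity $T_\xi(t)=2\pi/\omega(t)$, and $H^2$ convergence on compacts, uniform in $\varphi$ by compactness of $\mathbb T$. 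For (iv), the map $(t,\varphi)$ is an immersion for $t\neq0$, because $\partial_\varphi$ of the first-order term is nonzero. Nontriviality holds for $t\ne0$ by the expansion.
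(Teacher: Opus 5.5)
Your plan follows the paper's proof in outline: the fixed-period rescaling, the tangential operator $\widehat{\mathcal G}$ with local equivalence of zero sets, the reversible class $\mathbf u\mapsto S\mathbf u(-\eta)$, the Fourier kernel computation, Crandall--Rabinowitz, and then the $SO(2)$ phase and the rescaling back. Two steps are argued differently from the paper, and both are valid and shorter:
\begin{itemize}
\item You get index zero from a compact-perturbation argument, where the paper inverts mode by mode.
\item You reduce transversality to the simple zero of $\det M_k(\lambda)=k^2\big(R^2(k^2-1)-\lambda^2\big)$ via Jacobi's formula, where the paper evaluates an explicit cokernel functional to get $2\pi k$. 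This reduction works because the mixed derivative of $\mathbf v_k^{(2)}$ equals $(\beta',\gamma')$, which stays in mode $k$ and has zero constraint component.
\end{itemize}

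When you write it out, correct the following slips.
\begin{itemize}
\item The codomain must be $L^2\times L^2\times H^2$, not $H^1\times H^1\times H^2$, because the tangential components contain $\mathbf u''$.
\item The signs in your displayed linearized system are off. The correct system is $\lambda u_\theta'-R(u_3''+u_3)=0$, $Ru_\theta''+\lambda u_3'=0$, and this is what yields the ratio $-k/\sqrt{k^2-1}$.
\item The phase direction removed by reversibility is the constant mode of $u_\theta$, i.e.\ $\mathbf x_0'$. The mode $k=1$ is simply nonsingular for $\lambda>0$; it has nothing to do with rigid rotations.
\item The phase must be restored by $R_\varphi\mathbf y_t(\eta-\varphi)$. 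Shifting by $k\varphi$ instead gives $\sin(k\eta-k^2\varphi)$ rather than $\sin(k(\eta-\varphi))$.
\end{itemize}
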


The proof is carried out in the fixed variable \(\eta\). The expansion \eqref{Eq1.5} in
Theorem~\ref{Th1.1} is obtained by pulling back the local
Crandall--Rabinowitz branch through the rescaling
\[
\eta=\omega(t)\xi.
\]
The additional parameter \(\varphi\) corresponds to the \(SO(2)\)-phase symmetry
of the full problem.

Other trivial families, such as non-equatorial parallels of the tangent sphere
which reconstruct as circular helices in the vortex filament, can also be used as base branches for
local bifurcation. These are closely related to the perturbations of helices
studied in \cite{GarciaVega2024}. In this paper we concentrate on the
equatorial branch, for which the subsequent global continuation inside the
regular oscillatory class has a particularly transparent formulation.

The local result is only the starting point. The main goal of the second part of
the paper is to continue these branches globally inside a natural regular
non-polar class. Let
\[
\mathcal Z:=\mathcal G^{-1}(0)
\]
be the zero set of the fixed-period problem in the variable \(\eta\). We define
\(\mathcal Z_{\rm reg}\subset\mathcal Z\) as the subset of non-polar solutions
for which the vertical component $x_3$ is genuinely oscillatory and the associated
cubic polynomial has the regular root configuration
\[
e_1<e_2<e_3,\qquad -R<e_1<e_2<R, \qquad x_3(\mathbb T)=[e_1,e_2].
\]
On this regular class there are two integer invariants. The first one is the
horizontal degree
\[
p(\mathbf y)=\frac1{2\pi}\int_0^{2\pi}\frac{y_1y_2'-y_2y_1'}{y_1^2+y_2^2}\,d\eta,
\]
and the second one is the vertical nodal number \(q\), namely the number of full
oscillations of \(x_3\) over one spatial period. We prove that both \(p\) and
\(q\) are locally constant on \(\mathcal Z_{\rm reg}\). Therefore they are
constant on every connected component of the regular class.

For each \(k\ge2\), let \(\mathscr C_k^{\rm reg}\) be a connected component of
\(\mathcal Z_{\rm reg}\) containing one of the local nontrivial solutions
bifurcating from \((\lambda_k,0)\), or one of its phase translates. The local
expansion above implies that $p=1$, $q=k$
near the bifurcation point, and hence on the whole component
\(\mathscr C_k^{\rm reg}\).

The key estimate for global continuation is an a priori bound at fixed vertical
nodal number. Namely, if \(q\) is fixed and \(0<\lambda\le\Lambda\), then every
regular solution with vertical nodal number \(q\) satisfies a uniform
\(W^{2,\infty}\), and hence \(H^2\), bound depending only on \(R,\Lambda,q\).
The proof uses the elliptic period formula associated with
\eqref{Eq1.4}.

In order to include the possible limit \(\lambda=0\), we use the extended
boundary
\[
\partial_{\rm ext}\mathcal Z_{\rm reg}:=\overline{\mathcal Z_{\rm reg}}^{\,[0,+\infty)\times H^2}\setminus\mathcal Z_{\rm reg}.
\]
The result is in the spirit of global bifurcation alternatives, although here the continuation is carried out inside the regular non-polar class and uses the explicit first integrals and elliptic period estimates, compare with the classical global alternative of Rabinowitz \cite{Rabinowitz1971}. The global continuation result is the following.

\begin{theorem}[Global alternative inside the regular class]
	Let \(k\ge2\), and let \(\mathscr C_{k}^{\rm reg}\) be a connected component of
	\(\mathcal Z_{\rm reg}\) containing one of the nontrivial local solutions
	bifurcating from $(\lambda_k,0)$, $\lambda_{k}=R\sqrt{k^{2}-1}$,
	or one of its phase translates.
	Then \(p=1\), \(q=k\) on \(\mathscr C_{k}^{\rm reg}\).
	Moreover, one of the following alternatives holds:
	\begin{enumerate}
		\item[\rm(i)] $\mathscr C_{k}^{\rm reg}$
		is relatively compact in \(\mathcal Z_{\rm reg}\cup \{(\l_k,0)\}\);
		\item[\rm(ii)] there exists a sequence $\{(\lambda_n,\mathbf u_n)\}_{n\in\N}\subset\mathscr C_{k}^{\rm reg}$
		such that $\lambda_n\to+\infty$;
		
		\item[\rm(iii)] there exists a sequence $\{(\lambda_n,\mathbf u_n)\}_{n\in\N}\subset\mathscr C_{k}^{\rm reg}$ and a point $(\lambda,\mathbf u)\in
		\partial_{\rm{ext}}\mathcal Z_{\rm reg}\setminus\{(\l_k,0)\}$
		such that
		\[
		\lim_{n\to+\infty}(\lambda_n,\mathbf u_n)=(\lambda,\mathbf u) \quad \text{in }
		[0,+\infty)\times H^{2}(\mathbb T,\mathbb R^{3}).
		\]
	\end{enumerate}
\end{theorem}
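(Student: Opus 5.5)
The plan is to argue by contradiction, in the style of Rabinowitz's global alternative, but entirely inside the regular class and using the a priori bounds in place of Leray--Schauder degree. The first step is the values of the invariants. The expansion \eqref{Eq1.5} shows that, for small \(t\ne0\), the horizontal part of \(\mathbf y\) is a perturbation of \((R\cos\eta,R\sin\eta)\). Its winding number is therefore \(p=1\). The vertical component is
\[
x_3=-\tfrac{tk}{\sqrt{k^2-1}}\cos(k(\eta-\varphi))+o(t)
\]
in \(H^2\subset C^1\). Since \(x_3\) solves \((x_3')^2=P(x_3)\) with simple turning points \(e_1<e_2\), its oscillation count is determined by the number of zeros of \(x_3'\), and this count is \(k\) full oscillations for small \(t\). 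The local solutions therefore lie in \(\mathcal Z_{\rm reg}\) with \(q=k\). Once I check that the local branch indeed has the regular root configuration (three distinct real roots, \(|e_i|<R\)), the local constancy of \(p,q\) on \(\mathcal Z_{\rm reg}\) (stated earlier) gives \(p=1\), \(q=k\) on the connected set \(\mathscr C_k^{\rm reg}\).

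Next, suppose (ii) and (iii) both fail; I aim to derive (i). Failure of (ii) gives \(\Lambda>0\) with \(\lambda\le\Lambda\) on \(\mathscr C_k^{\rm reg}\). The a priori estimate at fixed nodal number \(q=k\) then gives a uniform \(W^{2,\infty}\) bound on \(\mathbf y=\mathbf y_0+\mathbf u\), depending only on \(R,\Lambda,k\). From this I want compactness in \(H^2\), not just boundedness, so I will bootstrap using the equation. Taking the cross product of \eqref{Eq1.3} (in the \(\eta\) variable) with \(\mathbf y\) and using \(\|\mathbf y\|=R\), \(\mathbf y\cdot\mathbf y''=-\|\mathbf y'\|^2\), I can solve for \(\mathbf y''\) as a smooth expression in \(\mathbf y,\mathbf y'\) and the parameter. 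A bound on \(\lambda\) near \(0\) must be handled separately, by writing the equation in the form where \(\lambda\) multiplies the derivative terms, so that the coefficients stay bounded. This yields a \(W^{3,\infty}\) bound, and Arzel\`a--Ascoli gives precompactness of \(\{\mathbf u\}\) in \(H^2\). Since \(\lambda\in[0,\Lambda]\), the set \(\mathscr C_k^{\rm reg}\) is then relatively compact in \([0,\infty)\times H^2\).

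Let \((\lambda_n,\mathbf u_n)\) be any sequence in \(\mathscr C_k^{\rm reg}\); a subsequence converges to some \((\lambda,\mathbf u)\) in the closure of \(\mathcal Z_{\rm reg}\). If the limit lies in \(\partial_{\rm ext}\mathcal Z_{\rm reg}\setminus\{(\lambda_k,0)\}\), that is alternative (iii). Excluding (iii), every limit lies in \(\mathcal Z_{\rm reg}\cup\{(\lambda_k,0)\}\), which is exactly (i). The remaining subtlety is that the closure taken in \([0,\infty)\times H^2\) must agree with the extended boundary as defined, and the statement's definition is precisely tailored so that this holds.

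The main obstacle is the compactness step near \(\lambda\to0\) and near degenerate root configurations. There the rescaled equation in \(\eta\) may degenerate, so the \(W^{2,\infty}\) bound alone gives only weak and \(C^1\) convergence. My first attempt is to extract \(H^2\) convergence directly from the elliptic representation: on \(\mathcal Z_{\rm reg}\), \(x_3\) and \(\phi'\) are explicit smooth functions of the roots \(e_i\) and of \(C\), and these parameters range over a bounded set by the period formula. Convergence of the parameters then gives convergence of \(x_3\) in \(C^2\), and convergence of \(\phi\) in \(C^2\) wherever the profile stays away from the poles. Whenever a limit touches a pole, or the roots collide, the limit falls in \(\partial_{\rm ext}\mathcal Z_{\rm reg}\), so (iii) holds there anyway and no strong convergence is needed beyond what the sequence itself provides. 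If that route fails, I would settle for the weaker conclusion: strong convergence holds along sequences staying in a compact subset of \(\mathcal Z_{\rm reg}\), which is all that (i) requires.
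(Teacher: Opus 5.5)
Your main line is correct and follows the paper's proof. It gets $p=1$, $q=k$ from the local expansion plus local constancy; the regular root configuration of the local branch, which you defer, is Lemma~\ref{Le4.3}. It then applies Proposition~\ref{Pr4.1} at $q=k$, upgrades to $H^2$ compactness through \eqref{Eq4.7}, and notes that every limit lies in the closure $\mathcal Z_{\rm reg}\cup\partial_{\rm ext}\mathcal Z_{\rm reg}$. The differences are minor:
\begin{itemize}
\item You differentiate \eqref{Eq4.7} once to get a uniform $W^{3,\infty}$ bound and then use Arzel\`a--Ascoli. The paper instead combines weak $H^2$ convergence with strong $\mathcal C^0$ convergence of the right-hand side of \eqref{Eq4.7}. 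Both work.
\item Your classification of limits is more direct. The paper treats $\lambda=0$ separately and checks $\mathcal G(\lambda,\mathbf u)=0$ for $\lambda>0$, which the alternative as stated does not need.
\item In the first step, an $o(t)$ remainder in $\mathcal C^1$ does not by itself control the number of zeros of $x_3'$. Either bootstrap to $\mathcal C^2$ as the paper does, or count the $2k$ transversal zeros of $x_3$. For the elliptic profile with $e_1<0<e_2$, that count equals $2q$.
\end{itemize}

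Drop your last paragraph. The obstacle it describes is not there, and the fallbacks it offers would not prove the theorem. The right-hand side of \eqref{Eq4.7} is affine in $\lambda$ and involves no root data. So your $W^{3,\infty}$ bound is already uniform for $\lambda\in(0,\Lambda]$, including $\lambda\to0$ and root collisions, and nothing needs separate handling.

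If you really had only $\mathcal C^1$ convergence near the boundary, neither alternative would follow:
\begin{itemize}
\item (iii) demands convergence in $[0,+\infty)\times H^2$ to a point of $\partial_{\rm ext}\mathcal Z_{\rm reg}$, not convergence ``as much as the sequence provides''.
\item (i) demands an $H^2$-convergent subsequence for every sequence in $\mathscr C_k^{\rm reg}$, not only for sequences staying in a compact subset of $\mathcal Z_{\rm reg}$.
\end{itemize}
A sequence approaching the boundary with only $\mathcal C^1$ convergence would satisfy neither, so you must rely on the uniform bootstrap you already have.
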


We also describe the boundary appearing in the preceding alternative. Suppose
that a sequence of regular solutions converges in
\((0,+\infty)\times H^2\) to a solution which is no longer regular. Let
\[
e_{1,n}<e_{2,n}<e_{3,n}
\]
be the roots of the associated vertical cubic. Then, after passing to a
subsequence, loss of regularity occurs through at least one of the following
mechanisms:
\[
e_{1,n}\to -R, \qquad e_{2,n}\to R, \qquad e_{2,n}-e_{1,n}\to0, \qquad e_{3,n}-e_{2,n}\to0.
\]
These correspond respectively to contact with the south pole, contact with the
north pole, collapse of the vertical oscillation, and separatrix degeneration.
At fixed vertical nodal number and bounded \(\lambda\), the separatrix
alternative can occur only together with collapse of the vertical oscillation.
In particular, for the component bifurcating from the equator, where \(p=1\),
for positive limiting parameter, the boundary alternatives are contact
with one of the poles or collapse back to the equatorial branch.

This boundary description is complemented by a numerical
continuation of the elliptic closure equations associated with the components
bifurcating from the equator. In the normalized case \(R=1\), and for
\(k=2,\ldots,10\), the computations indicate that the branches issuing from
the equatorial configuration approach the north-pole boundary, namely
\(e_2=1\), at finite values of \(\lambda\). In the computed range, no numerical
evidence is found for escape to \(\lambda=+\infty\), approach to
\(\lambda=0\), return to the equator away from the initial bifurcation point,
or contact with the south pole, see Figure \ref{F4}.

Finally, we return from the tangent indicatrix to the vortex filament itself. In
the unit-sphere case \(R=1\), a tangent profile \(\mathbf x\) can be integrated
to a filament profile \(\mathbf X_0\) with $\mathbf X_0'=\mathbf x$.
Then the rigid travelling--rotating filament equation is an integral of
\eqref{Eq1.3}: there exists a constant vector \(\mathbf d\) such
that
\[
\mathbf x\times\mathbf x'-\Omega e_3\times\mathbf X_0+a\mathbf x =\mathbf d.
\]
Conversely, differentiating this equation gives precisely
\eqref{Eq1.3}. Moreover, when \(\Omega\neq0\), the horizontal
part of \(\mathbf d\) can be removed by the natural freedom of adding a constant
to the primitive \(\mathbf X_0\). After this gauge normalization, the reconstructed filaments are rigid travelling--rotating filaments of Kida type. Thus the contribution of the paper is not the mere existence of such rigid motions, but rather the bifurcation-theoretic and global-continuation organization of their tangent indicatrices.

The paper is organized as follows. In Section~\ref{S2} we derive the first integrals of the travelling--rotating profile equation and reduce the problem to a scalar cubic equation for the vertical component. This gives the explicit elliptic-function classification of non-polar profiles and the corresponding closure conditions. In Section~\ref{S3} we study bifurcation from the equatorial branch. After fixing the period and introducing the parameter \(\lambda=a^2/\Omega\), we formulate the problem as an augmented operator equation, perform a reversible symmetry reduction, and apply the Crandall--Rabinowitz theorem. In Section~\ref{S4} we introduce the regular non-polar class, define the horizontal degree and the vertical nodal number, and prove the global continuation alternative together with the description of the possible boundary degeneracies.  In Section~\ref{SN} we return to the elliptic closure equations derived in the previous sections. After writing them as the finite-dimensional systems
\(\mathscr S_{p,q}\), we numerically continue the branches
\(\mathscr S_{1,k}\) issuing from the equatorial root configuration. Finally, in Section~\ref{S5} we explain how the tangent profiles reconstruct rigid travelling--rotating solutions of the Vortex Filament Equation. Two
appendices collect, respectively, the elliptic-function conventions used in the paper and the elementary classification of the degenerate regimes \(a=0\) and \(\Omega=0\).

We close the introduction with a comment on the style of the paper. The proofs are deliberately written with full details, especially in the functional-analytic part. The aim is not only to obtain the local and global branches for the present equation, but also to make transparent a procedure that can be reused in related constrained geometric problems with symmetries.

\section{Travelling--rotating profiles}
\label{S2}

In this paper we study travelling--rotating solutions of the Schrödinger map
equation
\[
\mathbf{T}_{t}=\mathbf{T}\times \mathbf{T}_{ss}, \qquad \mathbf{T}: \mathbb{R}\times \mathbb{R} \longrightarrow \mathbb{R}^{3}, \qquad \|\mathbf T(t,s)\|\equiv 1,
\]
of the form
\[
\mathbf{T}(t,s)=\mathscr{R}^{\Omega t}\,\mathbf{T}_{0}(s-at),
\]
where \(a,\Omega\in\mathbb R\) and
\[
\mathscr{R}^{\Omega t}:=
\begin{pmatrix}
	\cos(\Omega t) & -\sin(\Omega t) & 0 \\
	\sin(\Omega t) &  \cos(\Omega t) & 0 \\
	0              &  0              & 1
\end{pmatrix}.
\]
Writing \(\xi=s-at\) and
\[
\mathbf{T}_{0}(\xi)=(x_{1}(\xi),x_{2}(\xi),x_{3}(\xi)), \qquad \xi\in (\a,\o)\subset\mathbb R,
\]
a straightforward computation shows that \(\mathbf{T}_{0}\) satisfies
\begin{equation*}
	\left\{
	\begin{array}{lll}
		x_{2} \ddot x_{3} - x_{3} \ddot x_{2} + a \dot x_{1} + \Omega x_{2} = 0, \\[2pt]
		x_{3} \ddot x_{1} - x_{1} \ddot x_{3} + a \dot x_{2} - \Omega x_{1} = 0, \\[2pt]
		x_{1} \ddot x_{2} - x_{2} \ddot x_{1} + a \dot x_{3} = 0,
	\end{array}
	\right.
\end{equation*}
or, equivalently,
\begin{equation}
	\label{Eq2.1}
	\mathbf{x}\times \ddot{\mathbf{x}}-\Omega e_{3}\times \mathbf{x}+a\,\dot{\mathbf{x}}=0,
\end{equation}
where \(e_{3}=(0,0,1)\) and
\(\mathbf{x}=(x_{1},x_{2},x_{3})\). Dots denote differentiation with respect to \(\xi\).

Although the Schrödinger map equation corresponds to the unit sphere \(\|\mathbf x\|\equiv1\), it is convenient to keep a general radius \(R>0\) in the profile analysis. This entails no essential change. Indeed, if \(\mathbf x=R\mathbf y\), then \(\mathbf x\) solves \eqref{Eq2.1} with parameters \((a,\Omega)\) if and only if \(\mathbf y\) solves the same type of equation on
the unit sphere with parameters
\[
\widetilde a=\frac{a}{R},\qquad\widetilde\Omega=\frac{\Omega}{R}.
\]
Thus the radius can be normalized to \(1\) at the price of rescaling the two
parameters. We keep \(R\) explicitly because it is useful in the bifurcation
formulas, where the critical values will depend on \(R\).

We shall also assume throughout the rest of the paper that
\[
a>0,\qquad\Omega>0.
\]
This is not a loss of generality for the non-degenerate travelling--rotating
case \(a\Omega\neq0\). First, reversing the orientation of the profile variable
changes the sign of \(a\). More precisely, if \(\mathbf x(\xi)\) solves
\eqref{Eq2.1} with parameters \((a,\Omega)\), then $\widetilde{\mathbf x}(\xi):=\mathbf x(-\xi)$
solves the same equation with parameters \((-a,\Omega)\). Second, the
transformation $\widetilde{\mathbf x}(\xi):=-\mathbf x(\xi)$
changes simultaneously the signs of \(a\) and \(\Omega\): it transforms
solutions with parameters \((a,\Omega)\) into solutions with parameters
\((-a,-\Omega)\). Combining these two elementary symmetries, every case with
\(a\neq0\) and \(\Omega\neq0\) can be reduced to the case \(a>0\), \(\Omega>0\). The degenerate regimes \(a=0\) and \(\Omega=0\) are discussed separately in Appendix~\ref{ApB}.

We start our analysis with a simple result.

\begin{proposition}
	Let $\mathbf{x}\in \mathcal{C}^{2}((\alpha,\omega), \mathbb{R}^{3})$ be a solution of \eqref{Eq2.1}. 
	Then there exists $R\geq 0$ such that
	\[
	\|\mathbf{x}(\xi)\|=R \quad \text{for every }\xi\in (\alpha,\omega),
	\]
	where $\|\cdot\|$ denotes the Euclidean norm in $\mathbb{R}^{3}$.
\end{proposition}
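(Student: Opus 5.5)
Dot the equation \eqref{Eq2.1} with $\mathbf x$ itself and show that $\frac{d}{d\xi}\|\mathbf x\|^2$ vanishes identically. This is a first-integral computation that uses only the algebraic structure of the three terms.

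Take the scalar product of \eqref{Eq2.1} with $\mathbf x(\xi)$ at each $\xi\in(\alpha,\omega)$. The first term gives $(\mathbf x\times\ddot{\mathbf x})\cdot\mathbf x=0$, since a cross product is orthogonal to each of its factors. The second term gives $(e_3\times\mathbf x)\cdot\mathbf x=0$ for the same reason. Hence
\[
0=a\,\dot{\mathbf x}\cdot\mathbf x=\frac a2\,\frac{d}{d\xi}\|\mathbf x(\xi)\|^2 .
\]
The regularity $\mathbf x\in\mathcal C^2$ makes all of these expressions continuous and justifies the differentiation. This step is pointwise and needs nothing else.

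Next, conclude. In the standing assumption $a>0$ (or more generally $a\neq0$), we get $\frac{d}{d\xi}\|\mathbf x\|^2\equiv0$. The interval $(\alpha,\omega)$ is connected, so $\|\mathbf x\|^2$ equals a constant $c\ge0$, and $R:=\sqrt c$ works.

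The only genuine obstacle is the degenerate case $a=0$, where the identity above is vacuous. The paper has already announced that $a=0$ and $\Omega=0$ are treated in Appendix~\ref{ApB}, so I would state the proposition under the standing assumption $a\neq0$ and note this explicitly. If one insists on covering $a=0$, I would instead dot the equation with $\dot{\mathbf x}$ or look for another conserved quantity. I would not expect $\|\mathbf x\|$ to be conserved in that regime: for $a=\Omega=0$, the equation $\mathbf x\times\ddot{\mathbf x}=0$ admits the straight line $\mathbf x(\xi)=\xi\mathbf v$, whose norm is not constant. The proposition therefore genuinely requires $a\neq0$, consistent with the paper's conventions.
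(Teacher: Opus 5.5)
Your proof is correct and is exactly the paper's argument: dotting \eqref{Eq2.1} with $\mathbf x$ kills both cross-product terms and leaves $\frac a2\frac{d}{d\xi}\|\mathbf x\|^2=0$, with $a\neq0$ supplied by the standing assumption $a>0$. Your remark that the statement genuinely fails when $a=0$ is also correct. For example, $\mathbf x(\xi)=\xi\mathbf v$ is a solution when $a=\Omega=0$, and the paper's proof likewise relies on $a\neq0$.
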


\begin{proof}
	If $\mathbf{x}\equiv 0$, the result follows with $R=0$. Suppose $\mathbf{x}\not\equiv 0$.
	Taking the scalar product of equation \eqref{Eq2.1} with $\mathbf{x}$ yields
	\[
	(\mathbf{x}\times \ddot{\mathbf{x}})\cdot \mathbf{x} - \Omega \, (e_{3}\times \mathbf{x})\cdot \mathbf{x} + a \, \dot{\mathbf{x}}\cdot \mathbf{x}=0.
	\]
	The vectors $\mathbf{x}\times \ddot{\mathbf{x}}$ and $e_{3}\times \mathbf{x}$ are orthogonal 
	to $\mathbf{x}$, and hence
	\[
	(\mathbf{x}\times \ddot{\mathbf{x}})\cdot \mathbf{x}=0, \qquad (e_{3}\times \mathbf{x})\cdot \mathbf{x}=0.
	\]
	Therefore,
	\[
	0 = a \, \dot{\mathbf{x}}\cdot \mathbf{x}= \frac{a}{2} \frac{d}{d\xi}\|\mathbf{x}(\xi)\|^{2}.
	\]
	Since $a\neq 0$, we deduce that $\|\mathbf{x}(\xi)\|^{2}$ is constant in $(\alpha,\omega)$, 
	which proves the claim.
\end{proof}

We proceed to describe two additional constants of motion.

\begin{proposition}
	Let $\mathbf{x}\in \mathcal{C}^{2}((\alpha,\omega), \mathbb{R}^{3})$ be a solution of \eqref{Eq2.1}. 
	Then there exist constants $C, E \in \mathbb{R}$ such that:
	\begin{enumerate}
		\item[{\rm(i)}] $(\mathbf{x}\times\dot{\mathbf{x}})_{3}+a \, x_{3}=C$ for each $\xi\in (\alpha,\omega)$.
		\item[{\rm(ii)}] $\displaystyle \frac{a}{2} \|\dot{\mathbf{x}}(\xi)\|^{2}- \Omega \, (e_{3}\times \mathbf{x})\cdot \dot{\mathbf{x}}=E$ for each $\xi\in (\alpha,\omega)$.
	\end{enumerate}
\end{proposition}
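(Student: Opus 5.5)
Both identities follow by differentiating the candidate quantity along a solution and using \eqref{Eq2.1}, with the constraint from the previous proposition available if needed. I will verify each derivative vanishes on \((\alpha,\omega)\); since the interval is connected, the quantity is then constant.

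For (i), take the third component of \eqref{Eq2.1}. Because \((e_3\times\mathbf x)_3=0\), this gives
\[
(\mathbf x\times\ddot{\mathbf x})_3+a\dot x_3=0.
\]
Moreover,
\[
\frac{d}{d\xi}(\mathbf x\times\dot{\mathbf x})=\dot{\mathbf x}\times\dot{\mathbf x}+\mathbf x\times\ddot{\mathbf x}=\mathbf x\times\ddot{\mathbf x}.
\]
Hence \(\frac{d}{d\xi}\big[(\mathbf x\times\dot{\mathbf x})_3+a x_3\big]=0\), and \(C\) exists. This identity is just the third scalar equation, \(x_1\ddot x_2-x_2\ddot x_1+a\dot x_3=0\), written as a total derivative.

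For (ii), take the scalar product of \eqref{Eq2.1} with \(\ddot{\mathbf x}\). The term \((\mathbf x\times\ddot{\mathbf x})\cdot\ddot{\mathbf x}\) vanishes, so
\[
a\,\dot{\mathbf x}\cdot\ddot{\mathbf x}=\Omega\,(e_3\times\mathbf x)\cdot\ddot{\mathbf x}.
\]
The left-hand side is \(\frac{d}{d\xi}\frac a2\|\dot{\mathbf x}\|^2\). For the right-hand side, compute
\[
\frac{d}{d\xi}\big[(e_3\times\mathbf x)\cdot\dot{\mathbf x}\big]=(e_3\times\dot{\mathbf x})\cdot\dot{\mathbf x}+(e_3\times\mathbf x)\cdot\ddot{\mathbf x}=(e_3\times\mathbf x)\cdot\ddot{\mathbf x},
\]
since \(e_3\times\dot{\mathbf x}\perp\dot{\mathbf x}\). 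Therefore \(\frac{d}{d\xi}\big[\frac a2\|\dot{\mathbf x}\|^2-\Omega(e_3\times\mathbf x)\cdot\dot{\mathbf x}\big]=0\), and \(E\) exists.

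The only real obstacle is choosing the right multiplier for (ii): pairing with \(\ddot{\mathbf x}\) rather than \(\dot{\mathbf x}\). Pairing with \(\dot{\mathbf x}\) leads nowhere. With \(\ddot{\mathbf x}\) the cross-product term drops out, and the rotation term becomes an exact derivative by the triple-product identity above. Regularity is not an issue, since \(\mathbf x\in\mathcal C^2\) makes every derivative above classical.
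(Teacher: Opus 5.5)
Your proof is correct and follows essentially the same route as the paper: for (i), use the third component of \eqref{Eq2.1} together with $\frac{d}{d\xi}(\mathbf{x}\times\dot{\mathbf{x}})=\mathbf{x}\times\ddot{\mathbf{x}}$; for (ii), differentiate $E$, use $(e_3\times\dot{\mathbf{x}})\cdot\dot{\mathbf{x}}=0$, and take the scalar product of \eqref{Eq2.1} with $\ddot{\mathbf{x}}$. As you note, the constant-norm constraint is not needed for either identity.
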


\begin{proof}
	We start with item (i). The identity $\dot{\mathbf{x}} \times \dot{\mathbf{x}} =0$ implies
	\begin{equation}
		\label{Eq2.2}
		\frac{d}{d\xi}(\mathbf{x}\times \dot{\mathbf{x}}) 
		= \dot{\mathbf{x}}\times \dot{\mathbf{x}} + \mathbf{x}\times \ddot{\mathbf{x}} 
		= \mathbf{x} \times \ddot{\mathbf{x}}.
	\end{equation}
	Taking the third component of equation \eqref{Eq2.1} and using that $(e_{3}\times \mathbf{x})_{3}=0$, we obtain
	\[
	(\mathbf{x}\times \ddot{\mathbf{x}})_{3}+ a \, \dot x_{3}=0.
	\]
	By \eqref{Eq2.2}, this is equivalent to
	\[
	\frac{d}{d\xi}\big((\mathbf{x}\times \dot{\mathbf{x}})_{3}+ a \, x_{3}\big)=0,
	\]
	which proves (i). For item (ii), define
	\[
	E: (\alpha,\omega) \longrightarrow \mathbb{R}, \qquad 
	E(\xi):= \frac{a}{2} \|\dot{\mathbf{x}}(\xi)\|^{2}- \Omega \, (e_{3}\times \mathbf{x})\cdot \dot{\mathbf{x}}.
	\]
	Then $E\in \mathcal{C}^{1}((\alpha,\omega))$ and
	\begin{align*}
		\dot{E}(\xi) 
		= a \, \dot{\mathbf{x}}\cdot \ddot{\mathbf{x}}
		- \Omega \, (e_{3}\times \mathbf{x})\cdot \ddot{\mathbf{x}}
		- \Omega \, (e_{3}\times \dot{\mathbf{x}})\cdot \dot{\mathbf{x}} = a \, \dot{\mathbf{x}}\cdot \ddot{\mathbf{x}}
		- \Omega \, (e_{3}\times \mathbf{x})\cdot \ddot{\mathbf{x}},
	\end{align*}
	since $(e_{3}\times \dot{\mathbf{x}})\cdot \dot{\mathbf{x}}=0$. On the other hand, taking the scalar product of 
	equation \eqref{Eq2.1} with $\ddot{\mathbf{x}}$ we obtain
	\[
	(\mathbf{x}\times \ddot{\mathbf{x}})\cdot \ddot{\mathbf{x}}- \Omega \, (e_{3}\times \mathbf{x})\cdot \ddot{\mathbf{x}}+ a \, \dot{\mathbf{x}}\cdot \ddot{\mathbf{x}}=0.
	\]
	Here $(\mathbf{x}\times \ddot{\mathbf{x}})\cdot \ddot{\mathbf{x}}=0$ by orthogonality, so
	\[
	a \, \dot{\mathbf{x}}\cdot \ddot{\mathbf{x}}- \Omega \, (e_{3}\times \mathbf{x})\cdot \ddot{\mathbf{x}}=0,
	\]
	which implies $\dot{E}(\xi)=0$ and concludes the proof of (ii).
\end{proof}

The three conserved quantities obtained above can be written in coordinates as
\begin{equation}
	\label{Eq2.3}
	\left\{
	\begin{array}{l}
		R^{2}=x_{1}^{2}+x_{2}^{2}+x_{3}^{2}, \\[2pt]
		C=x_{1}\dot{x}_{2}-x_{2}\dot{x}_{1}+a x_{3}, \\[2pt]
		E=\tfrac{a}{2}\big(\dot{x}_{1}^{2}+\dot{x}_{2}^{2}+\dot{x}_{3}^{2}\big)
		+\Omega \big(x_{2}\dot{x}_{1}-x_{1}\dot{x}_{2}\big).
	\end{array}
	\right.
\end{equation}
Moreover, differentiating the first identity in \eqref{Eq2.3} we obtain
\begin{equation}
	\label{Eq2.4}
	x_{1}\dot{x}_{1}+x_{2}\dot{x}_{2}+x_{3}\dot{x}_{3}=0.
\end{equation}
A direct computation shows that the following algebraic identity holds for all 
$x_{1},x_{2},\dot{x}_{1},\dot{x}_{2}\in\mathbb{R}$:
\[
(x_{1}^{2}+x_{2}^{2})(\dot{x}_{1}^{2}+\dot{x}_{2}^{2})= (x_{1}\dot{x}_{1}+x_{2}\dot{x}_{2})^{2}+(x_{1}\dot{x}_{2}-x_{2}\dot{x}_{1})^{2}.
\]
Using this, together with \eqref{Eq2.3} and \eqref{Eq2.4}, we deduce
\begin{align*}
	\dot{x}_{1}^{2}+\dot{x}_{2}^{2} = \frac{(x_{1}\dot{x}_{1}+x_{2}\dot{x}_{2})^{2}+(x_{1}\dot{x}_{2}-x_{2}\dot{x}_{1})^{2}}
	{x_{1}^{2}+x_{2}^{2}} = \frac{(x_{3}\dot{x}_{3})^{2} + (C-a x_{3})^{2}}{R^{2}-x_{3}^{2}},
\end{align*}
since $x_{1}\dot{x}_{1}+x_{2}\dot{x}_{2}=-x_{3}\dot{x}_{3}$ by \eqref{Eq2.4} and 
$x_{1}\dot{x}_{2}-x_{2}\dot{x}_{1}=C-a x_{3}$ by \eqref{Eq2.3}. Hence, the third equation in \eqref{Eq2.3} can be rewritten as
\begin{align*}
	E 
	&= \frac{a}{2}\big(\dot{x}_{1}^{2}+\dot{x}_{2}^{2}+\dot{x}_{3}^{2}\big)
	+\Omega \big(x_{2}\dot{x}_{1}-x_{1}\dot{x}_{2}\big) \\
	&= \frac{a}{2}\left(\frac{(x_{3}\dot{x}_{3})^{2}+(C-a x_{3})^{2}}{R^{2}-x_{3}^{2}}+\dot{x}_{3}^{2}\right)
	-\Omega (C-a x_{3}) \\
	&= \frac{a}{2}\,\frac{(C-a x_{3})^{2}+R^{2}\dot{x}_{3}^{2}}{R^{2}-x_{3}^{2}}
	-\Omega(C-a x_{3}).
\end{align*}
Rearranging this identity we obtain an autonomous equation for $x_{3}$.

\begin{proposition}
	If $\mathbf{x}\in \mathcal{C}^{2}((\alpha,\omega), \mathbb{R}^{3})$ is a solution of \eqref{Eq2.1}, 
	then $x_{3}$ satisfies the ODE
	\begin{equation}
		\label{Eq2.5}
		\left\{
		\begin{array}{ll}
			\dot{u}^{2}=a_{3} u^{3}- a_{2}u^{2}+a_{1}u+a_{0},  & \xi\in (\alpha,\omega), \\[2pt]
			-R \leq u \leq R,
		\end{array}
		\right.
	\end{equation}
	where
	\begin{align}
		\label{Eq2.6}
		& a_{0}:=\frac{2ER^{2}-aC^{2}+2\Omega C R^{2}}{aR^{2}}, 
		\qquad 
		a_{1}:=\frac{2aC-2\Omega R^{2}}{R^{2}}, \nonumber \\[2pt] 
		& a_{2}:=\frac{2E+a^{3}+2\Omega C}{aR^{2}}, 
		\qquad 
		a_{3}:=\frac{2\Omega}{R^{2}}.
	\end{align}
\end{proposition}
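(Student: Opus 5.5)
The plan is to carry out the elimination sketched just before the statement, but in a form that avoids dividing by \(R^2-x_3^2\), so that the identity also holds at instants where \(\mathbf x\) touches a pole. Throughout, \(R>0\) is the constant radius given by the first proposition of this section (the case \(R=0\) is excluded, since the coefficients \eqref{Eq2.6} are not defined there). The constraint \(-R\le u\le R\) with \(u=x_3\) is then immediate from \(x_3^2\le \|\mathbf x\|^2=R^2\).

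First, I will multiply the Lagrange-type identity
\[
(x_1^2+x_2^2)(\dot x_1^2+\dot x_2^2)=(x_1\dot x_1+x_2\dot x_2)^2+(x_1\dot x_2-x_2\dot x_1)^2
\]
through rather than divide. Using \eqref{Eq2.3} and \eqref{Eq2.4}, this gives the pointwise relation
\[
(R^2-x_3^2)(\dot x_1^2+\dot x_2^2)=x_3^2\dot x_3^2+(C-ax_3)^2 ,
\]
valid for every \(\xi\in(\alpha,\omega)\), including poles.

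Next, I will multiply the energy identity in \eqref{Eq2.3} by \(2(R^2-x_3^2)/a\), which is legitimate since \(a\neq0\). I will also use \(x_2\dot x_1-x_1\dot x_2=-(C-ax_3)\). Substituting the relation above yields
\[
\frac{2E}{a}(R^2-x_3^2)=(C-ax_3)^2+R^2\dot x_3^2-\frac{2\Omega}{a}(C-ax_3)(R^2-x_3^2),
\]
again with no division by \(R^2-x_3^2\). Solving for \(\dot x_3^2\) only requires dividing by \(R^2>0\):
\[
\dot u^2=\frac{1}{R^2}\Big[\frac{2E}{a}(R^2-u^2)+\frac{2\Omega}{a}(C-au)(R^2-u^2)-(C-au)^2\Big].
\]

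Finally, I will expand this polynomial in \(u\) and match coefficients with \eqref{Eq2.6}:
\begin{itemize}
\item the cubic term comes only from \(\frac{2\Omega}{a}(-au)(-u^2)/R^2\), giving \(a_3=2\Omega/R^2\);
\item the \(u^2\) terms are \(-\frac{2E}{aR^2}-\frac{2\Omega C}{aR^2}-\frac{a^2}{R^2}\), which equals \(-a_2\);
\item the linear terms are \(-2\Omega+\frac{2aC}{R^2}\), which equals \(a_1\);
\item the constant term is \(\frac{2ER^2}{aR^2}+\frac{2\Omega CR^2}{aR^2}-\frac{C^2}{R^2}\), which equals \(a_0\).
\end{itemize}

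The only point needing care is the first step: avoiding division by \(x_1^2+x_2^2\), which the preceding informal computation used. Keeping everything in multiplied form makes the argument valid on the whole interval. The remainder is bookkeeping.
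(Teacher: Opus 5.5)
Your proof is correct and follows the same elimination as the paper. You use the Lagrange identity with \eqref{Eq2.3}--\eqref{Eq2.4} to control $\dot x_1^2+\dot x_2^2$, substitute into the energy integral, and match coefficients, and your coefficients agree with \eqref{Eq2.6}.

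The only difference is that you keep the identity multiplied by $R^2-x_3^2$ instead of dividing by $x_1^2+x_2^2$. This makes the derivation valid also at instants where $\mathbf x$ touches a pole, a case the paper's division tacitly skips.
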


The ODE \eqref{Eq2.5} can be solved explicitly in terms of the roots of the cubic polynomial
\begin{equation}
	\label{Eq2.7}
P(x):= a_{3} x^{3}- a_{2}x^{2}+a_{1}x+a_{0}, \qquad x\in \mathbb{R},
\end{equation}
where $a_{0},a_{1},a_{2}$ and $a_{3}$ are given in \eqref{Eq2.6}. 
The next result classifies all the solutions of the ODE \eqref{Eq2.5}.

\begin{proposition}
	\label{Pr2.4}
	The following statements describe all $\mc{C}^{2}$-solutions of the ODE \eqref{Eq2.5}:
	\vspace{2pt}
	\begin{enumerate}
		\item[{\rm (a)}] The only constant solutions of \eqref{Eq2.5} are
		\[
		u(\xi) = e, \quad \xi\in\mathbb{R},
		\]
		where $e$ is a root of the polynomial $P$ satisfying the bound $-R\leq e \leq R$.
		\vspace{2pt}
		
		\item[{\rm (b)}] If the polynomial $P$ has three real roots $e_{1}<e_{2}<e_{3}$, then
		every non-constant solution of \eqref{Eq2.5} with values in $[-R,R]$ is of the form
		\[
		u(\xi) = e_{1} + (e_{2}-e_{1}) \, \text{\rm sn}^{2}\!\big(\omega (\xi- \xi_{0}) \; \big| \; k\big),
		\quad \xi\in\mathbb{R},
		\]
		where 
		\begin{equation*}
			k:=\frac{e_{2}-e_{1}}{e_{3}-e_{1}}\in (0,1),  \qquad  \omega:=\sqrt{\frac{\Omega}{2R^{2}}(e_{3}-e_{1})},
		\end{equation*}
		and $\mathrm{sn}$ is the Jacobi elliptic function. 
		Therefore, in this case, all non-constant solutions in $[-R,R]$ are periodic with period
		\begin{equation*}
		\mathcal{P}= \frac{2 K(k)}{\omega},
		\end{equation*}
		where $K$ is the complete elliptic integral of the first kind.
		\vspace{2pt}
		
		\item[{\rm (c)}] If the polynomial $P$ has two real roots $e_{1}<e_{2}=R$, with $e_{2}$ of
		multiplicity $2$, then every non-constant solution of \eqref{Eq2.5} with values in $[-R,R]$
		is of the form
		\[
		u(\xi) = e_{1} + (R-e_{1})\tanh^{2}\big(\omega (\xi - \xi_{0})\big), \quad \xi\in\mathbb{R},
		\]
		where
		\begin{equation*}
			\omega:=\sqrt{\frac{\Omega}{2R^{2}}(R-e_{1})}.
		\end{equation*}
		In this case, the solutions are defined on the whole real line and satisfy
		\[
		\lim_{|\xi|\to+\infty}u(\xi) = R.
		\]
		In particular, there are no periodic non-constant solutions in this case.
		\vspace{2pt}
		
		\item[{\rm (d)}] If the polynomial $P$ does not satisfy the assumptions of either item {\rm (b)} or item {\rm (c)}, 
		then equation \eqref{Eq2.5} does not admit any non-constant solution with values in $[-R,R]$. 
	\end{enumerate}
\end{proposition}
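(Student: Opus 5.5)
The plan is a phase-plane analysis of the one-degree-of-freedom system $\dot u^2=P(u)$. Throughout we use $a_3=2\Omega/R^2>0$, so $P(x)\to\pm\infty$ as $x\to\pm\infty$. The equation also implies $P(u(\xi))\ge 0$ along any solution. Differentiating and using $u\in\mathcal C^2$ gives
\[
2\dot u\ddot u=P'(u)\dot u,
\]
hence $\ddot u=\tfrac12P'(u)$ on the open set $\{\dot u\neq0\}$, and by continuity wherever that set accumulates.

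For item (a), a constant solution $u\equiv e$ forces $P(e)=0$ and $-R\le e\le R$. Conversely, every such root gives a constant solution.

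For a non-constant solution, pick $\xi_1$ with $\dot u(\xi_1)\neq0$ and follow the maximal interval where $\dot u\neq0$. On it, $\xi=\int du/\sqrt{P(u)}$. This integral reaches a root $e$ of $P$ in finite $\xi$ exactly when $e$ is a simple root; at a double root it diverges logarithmically. At a simple root $e$ we have $\ddot u=\tfrac12 P'(e)\neq0$, so $u$ turns back. Uniqueness for the second-order equation $\ddot u=\tfrac12P'(u)$, together with the data $(u,\dot u)$, then shows the solution is reflected symmetrically about the turning time. Hence a bounded non-constant solution oscillates in a maximal interval $I\subset[-R,R]$ on which $P>0$ in the interior and $P=0$ at the endpoints.

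Such an interval is bounded, so both endpoints are roots of $P$. Since $P\to+\infty$ at $+\infty$ and $P>0$ on $I$, the only possible configurations are:
\begin{enumerate}
\item[(1)] $I=[e_1,e_2]$, with three real roots $e_1<e_2<e_3$;
\item[(2)] one endpoint is a double root, namely $e_1<e_2=e_3$ with $I=[e_1,e_2]$;
\item[(3)] $I=(-\infty,e_1]$, which is unbounded.
\end{enumerate}
The remaining degenerate cases also fail: a double root $e_1=e_2<e_3$ has $P\le0$ near $e_1$, and a triple root leaves no interval of positivity. Configuration (3) is excluded because it cannot stay in $[-R,R]$, and a lone simple root never produces a bounded oscillation. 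This yields item (d), once we check that (2) with $e_2\neq R$ is impossible; that check is the main obstacle.

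For (1) we verify that $u=e_1+(e_2-e_1)\,\mathrm{sn}^2(\omega(\xi-\xi_0)\mid k)$ satisfies the equation. Differentiate and use $\mathrm{cn}^2=1-\mathrm{sn}^2$ and $\mathrm{dn}^2=1-k\,\mathrm{sn}^2$, with $k$ the parameter as in the statement. This gives
\[
\dot u^2=4\omega^2(e_2-e_1)^2\,\mathrm{sn}^2\mathrm{cn}^2\mathrm{dn}^2 .
\]
On the other hand, with $s=\mathrm{sn}^2$,
\[
P(u)=a_3(u-e_1)(u-e_2)(u-e_3)=a_3(e_2-e_1)^2(e_3-e_1)\,s(1-s)(1-ks).
\]
Since $u-e_2=-(e_2-e_1)(1-s)$ and $u-e_3=-(e_3-e_1)(1-ks)$, the two signs cancel. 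The expressions agree exactly when $4\omega^2=a_3(e_3-e_1)$, that is $\omega^2=\Omega(e_3-e_1)/(2R^2)$. By the reflection/uniqueness argument above, every oscillating solution is a time shift of this one. The period is $2K(k)/\omega$ because $\mathrm{sn}^2$ has period $2K$.

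For (2), integrating $\dot u=\pm\sqrt{a_3}\,(e_2-u)\sqrt{u-e_1}$ with the substitution $u=e_1+(e_2-e_1)\tanh^2$ gives the stated tanh profile with $\omega^2=a_3(e_2-e_1)/4$, approaching $e_2$ as $|\xi|\to\infty$. The real difficulty is then to show this case forces $e_2=R$. Here one must use the geometric origin of the coefficients \eqref{Eq2.6}, since the ODE alone permits any double root.

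Our approach is to exploit the fact that
\[
P(u)\,(R^2-u^2)=-\big(\text{nonnegative square terms}\big)\cdot(\ldots)
\]
evaluated at $u=\pm R$. From the derivation, $(R^2-x_3^2)\dot x_3^2 R^2=\ldots$, so
\[
P(\pm R)=-\frac{(C\mp aR)^2}{R^2}\le 0 .
\]
Since $P>0$ inside $I$ and $P(\pm R)\le0$, the interval $I$ is trapped between $-R$ and $R$. Moreover, if $P(e_2)=P'(e_2)=0$ with $e_2<R$, then $P\ge0$ on $[e_2,\infty)$, because $P$ has no further root beyond the double root. That contradicts $P(R)\le0$ unless $P(R)=0$, and then $R$ would have to be a root different from $e_2$, which is impossible. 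Hence $e_2=R$.

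The same inequality handles the remaining exclusions in (d). If I expected one step to fail, it is this use of $P(\pm R)\le0$, which must be derived from the representation of $P$ through $C$ and $E$.
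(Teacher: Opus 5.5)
Your argument is correct up to a few slips, and it rests on the same two facts as the paper's proof. The first is the sign pattern of a cubic with leading coefficient $a_3=2\Omega/R^2>0$. The second is the endpoint identity $P(\pm R)=-(C\mp aR)^2/R^2\le0$, which is the paper's \eqref{Eq2.8}. You need not worry about deriving the latter. The energy computation is the polynomial identity
\[
R^2P(u)=\tfrac{2}{a}\big(E+\Omega(C-au)\big)(R^2-u^2)-(C-au)^2 ,
\]
which is just \eqref{Eq2.6} rewritten and holds for all real $C,E$; evaluating at $u=\pm R$ gives it.

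The difference lies in execution. The paper proceeds constructively:
\begin{itemize}
\item it uses $P(\pm R)\le0$ at the outset to trap a positivity component $(r_1,r_2)$ inside $[-R,R]$;
\item it integrates $\dot u=\sqrt{P(u)}$ by separation of variables with $z=e_1+(e_2-e_1)t^2$ and inverts $F$ to obtain $\mathrm{sn}$;
\item it disposes of the negative branch by $\xi\mapsto2\xi_0-\xi$;
\item it obtains (c) as the formal limit $k\to1$.
\end{itemize}
You instead verify the $\mathrm{sn}^2$ ansatz by differentiation, integrate the $\tanh$ case directly, and deduce ``every solution'' from uniqueness for $\ddot u=\tfrac12P'(u)$. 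Your remark that $\mathcal C^2$-regularity forces $\ddot u=\tfrac12P'(u)$ wherever $\{\dot u\neq0\}$ accumulates is what prevents a solution from stalling at a simple root. It also justifies gluing across turning points, which the paper's purely local integration passes over. The paper's route is shorter; yours makes the global uniqueness claim rigorous and avoids the heuristic limit in (c).

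Fix the following slips:
\begin{itemize}
\item Your configuration (3) should be $I=[e_{\max},\infty)$, not $(-\infty,e_1]$, since $P\to-\infty$ at $-\infty$.
\item A triple root does leave an interval of positivity, just not a bounded one.
\item The phrase ``cannot stay in $[-R,R]$'' excludes the unbounded component only for solutions defined on all of $\mathbb R$. Equation \eqref{Eq2.5} is posed on an arbitrary $(\alpha,\omega)$, so if $e_{\max}<R$ a monotone arc in $[e_{\max},R]$ would be admissible on a short interval. What excludes it is $P(R)\le0$, which forces $e_{\max}\ge R$. It is cleaner to invoke $P(\pm R)\le0$ before classifying configurations, as the paper does, rather than only at the end.
\end{itemize}
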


\begin{proof}
	Item (a) is immediate, since constant solutions of \eqref{Eq2.5} correspond exactly to roots
	of $P$ lying in $[-R,R]$.
	
	Assume now that there exists a non-constant solution of equation \eqref{Eq2.5}. Then there exist 
	an interval $(a,b)\subset\mathbb{R}$ and a non-constant function 
	$u\in\mathcal{C}^{2}((a,b),\mathbb{R})$ such that
	\[
	\big(u'(\xi)\big)^{2} = P(u(\xi))>0 \quad \text{for every } \xi\in(a,b).
	\]
	In particular, $u((a,b))$ is contained in the set $\{x\in\mathbb{R} : P(x)>0\}$. 
	Since, by the geometric constraint, $-R\leq u(\xi)\leq R$ for all $\xi\in(a,b)$, we must have 
	$P(x_{0})>0$ for some $x_{0}\in[-R,R]$. On the other hand,
	\begin{equation}
		\label{Eq2.8}
	P(-R) = -\frac{(C+aR)^{2}}{R^{2}}\leq 0, \qquad	P(R)  = -\frac{(C-aR)^{2}}{R^{2}}\leq 0.
	\end{equation}
	By continuity of $P$, there exist $r_{1},r_{2}\in[-R,R]$ with $r_{1}<r_{2}$ such that 
	$P(r_{1})=P(r_{2})=0$ and $P(x)>0$ for every $x\in(r_{1},r_{2})$. Consequently, if there 
	exists a non-constant solution of \eqref{Eq2.5}, there exists a connected component $(r_1,r_2)$ of $\{P>0\}$ contained in $[-R,R]$, whose endpoints are roots of $P$.
	
	\medskip
	\noindent\emph{Proof of item {\rm (b)}.}
	Assume that the polynomial $P$ has three real roots $e_{1}<e_{2}<e_{3}$. Since for 
	$\Omega>0$ the leading coefficient of $P$ is positive, we have $P(x)\to -\infty$ as 
	$x\to -\infty$ and $P(x)\to +\infty$ as $x\to +\infty$. Hence the sign of $P$ alternates:
	\begin{align*}
		& P(x)<0 \text{ on }(-\infty,e_{1}),\quad
		P(x)>0 \text{ on }(e_{1},e_{2}), \\
		& P(x)<0 \text{ on }(e_{2},e_{3}),\quad
		P(x)>0 \text{ on }(e_{3},+\infty).
	\end{align*}
	Since $u((a,b))$ contains an interval $(r_{1},r_{2})$ where $P>0$ and 
	$u((a,b))\subset[-R,R]$, this interval must be contained in $(e_{1},e_{2})$. 
	From $P(-R)\le 0$ and $P(R)\le 0$ we deduce that $e_{1},e_{2}\in[-R,R]$ and $e_{3}\ge R$.
	Therefore $r_{1}=e_{1}$ and $r_{2}=e_{2}$, and
	\[
	u((a,b))\subset [e_{1},e_{2}], \qquad (u-e_{1})(u-e_{2})(u-e_{3}) > 0 \text{ on }(a,b).
	\]
	
	Since we are in the case $\Omega>0$, we can choose the positive branch of the square root and rewrite
	equation \eqref{Eq2.5} as
	\begin{equation}\label{Eq2.9}
		\dot{u} = \sqrt{\frac{2\Omega}{R^{2}}\, (u-e_{1})(u-e_{2})(u-e_{3})}.
	\end{equation}
	Integrating, we obtain
	\[
	\frac{\sqrt{2\Omega}}{R}\,(\xi-\xi_{0})= \int_{e_{1}}^{u}\frac{dz}{\sqrt{(z-e_{1})(z-e_{2})(z-e_{3})}},
	\]
	where $\xi_{0}$ is chosen so that $u(\xi_{0}) = e_{1}$.  
	The change of variables $z = e_{1} + (e_{2}-e_{1})t^{2}$, $0\leq t \leq 1$,
	yields
	\[
	\frac{\sqrt{2\Omega}}{R}\,(\xi-\xi_{0})= \frac{2}{\sqrt{e_{3}-e_{1}}}\int_{0}^{\sqrt{\frac{u-e_{1}}{e_{2}-e_{1}}}} 
	\frac{dt}{\sqrt{(1-t^{2})(1-k t^{2})}},
	\]
	where
	\begin{equation}
		\label{Eq2.10}
	k := \frac{e_{2}-e_{1}}{e_{3}-e_{1}} \in (0,1).
	\end{equation}
	By the definition of the Jacobi elliptic function $\mathrm{sn}$ (see Appendix \ref{ApA}), we obtain
	\[
	\sqrt{\frac{u-e_{1}}{e_{2}-e_{1}}}= \mathrm{sn}\!\left(\sqrt{\frac{\Omega}{2R^{2}}(e_{3}-e_{1})}\,(\xi-\xi_{0})\,\Big|\, k\right).
	\]
	Equivalently,
	\[
	u(\xi)= e_{1} + (e_{2}-e_{1})\,\mathrm{sn}^{2}\!\left(\omega(\xi-\xi_{0}) \,\Big|\, k\right), \quad  \omega:=\sqrt{\frac{\Omega}{2R^{2}}(e_{3}-e_{1})}.
	\]
	Since $\mathrm{sn}(\cdot\,|\,k)$ has real period $4K(k)$, the function 
	$\mathrm{sn}^{2}(\cdot\,|\,k)$ has period $2K(k)$, and thus $u$ has period
	\[
	\mathcal{P}=\frac{2K(k)}{\omega}.
	\]
	This proves item (b).
	
	We have used the positive branch in \eqref{Eq2.9}. If instead we take the negative branch,
	\[
	\dot{u} = -\sqrt{\frac{2\Omega}{R^{2}}(u-e_{1})(u-e_{2})(u-e_{3})},
	\]
	the integration leads to
	\[
	-\frac{\sqrt{2\Omega}}{R}(\xi-\xi_{0})= \int_{e_{1}}^{u}\frac{dz}{\sqrt{(z-e_{1})(z-e_{2})(z-e_{3})}},
	\]
	which is equivalent to the previous relation after the change 
	$\xi \mapsto 2\xi_{0}-\xi$ (i.e., a shift of the origin of the independent variable). 
	Thus the choice of sign only reverses the orientation in $\xi$ and does not produce new
	solutions; these are already encoded in the expression above with a suitable choice of 
	$\xi_{0}$.
	
	\medskip
	\noindent\emph{Proof of item {\rm (c)}.}
	For item (c) we consider the degenerate case in which $P$ has two real roots 
	$e_{1}<e_{2}$, with $e_{2}$ of multiplicity $2$. As before, the existence of a non-constant 
	solution with values in $[-R,R]$ forces the existence of an interval $(r_{1},r_{2})\subset[-R,R]$
	where $P>0$, bounded by two roots of $P$ in $[-R,R]$. The sign structure of a cubic with a 
	double root, together with $P(\pm R)\le 0$, implies that this is only possible if $e_{2}$ is the largest root and $e_{2}=R$.
	In particular, $e_{1}\in[-R,R]$ and
	\[
	P(u) = \frac{2\Omega}{R^{2}} (u-e_{1})(u-e_{2})^{2},\qquad e_{2}=R.
	\]
	This configuration can be viewed as the limit $e_{3}\to e_{2}$ in item (b). 
	Indeed, when $e_{3}\to e_{2}$, the modulus $k$ in \eqref{Eq2.10}
	tends to $1$, and the identity $\mathrm{sn}(\tau\,|\,1)=\tanh(\tau)$ yields
	\[
	u(\xi) = e_{1} + (e_{2}-e_{1})\,\tanh^{2}\!\left(\sqrt{\frac{\Omega}{2R^{2}}(e_{2}-e_{1})}\,(\xi-\xi_{0})\right),
	\]
	which is precisely the formula stated in (c) with \(\omega:=\sqrt{\frac{\Omega}{2R^{2}}(e_{2}-e_{1})}\).
	Since $\tanh^{2}(\tau)\to 1$ as $|\tau|\to\infty$, it follows that \(\lim_{|\xi|\to\infty} u(\xi) = e_{2}\),
	and non-constant solutions are not periodic in this case. This proves item (c).
	
	\medskip
	\noindent\emph{Proof of item {\rm (d)}.}
	Finally, suppose that the polynomial $P$ does not satisfy the assumptions of either (b) or (c). 
	In particular, $P$ does not admit three distinct real roots $e_{1}<e_{2}<e_{3}$ with 
	$e_{1},e_{2}\in[-R,R]$ and $e_{3}>R$, nor the degenerate configuration with a double root 
	at $R$. Assume by contradiction that there exists a non-constant solution $u$ of 
	\eqref{Eq2.5} with values in $[-R,R]$. Then, as shown at the beginning of the proof, there 
	must exist two real numbers $r_{1}<r_{2}$ in $[-R,R]$ such that $P(r_{1})=P(r_{2})=0$ and 
	$P(x)>0$ for all $x\in(r_{1},r_{2})$. This forces $P$ to have either three real roots with 
	the geometry described in item (b), or the degenerate configuration of item (c). Hence $P$ 
	would satisfy one of the previously considered conditions, a contradiction. We conclude that, 
	if $P$ does not satisfy (b) or (c), equation \eqref{Eq2.5} admits no non-constant solution 
	with values in $[-R,R]$.
\end{proof}

We now introduce azimuthal coordinates on the sphere for non-polar solutions of \eqref{Eq2.1}, that is, solutions $\mathbf{x}$ such that $|x_3|<R$. Since every non-trivial solution
$\mathbf{x}$ of \eqref{Eq2.1} has constant norm $\|\mathbf{x}\|\equiv R>0$, we may write
\[
\mathbf{x}=(x_{1},x_{2},x_{3})=\big(R\sqrt{1-z^{2}}\cos\phi,\; R\sqrt{1-z^{2}}\sin\phi,\; Rz\big),
\]
where $z=z(\xi)\in(-1,1)$ and $\phi=\phi(\xi)\in\mathbb{R}$ are functions of $\xi$ and 
$R=\|\mathbf{x}\|$. A direct computation gives
\[
\dot z = \frac{1}{R}\,\dot x_{3},
\]
and, setting $\rho(\xi):=\sqrt{1-z(\xi)^{2}}$,
\begin{align*}
	\dot x_{1}
	&= R\big(\dot\rho\cos\phi - \rho\sin\phi\,\dot\phi\big)
	= R\left(-\frac{z}{\rho}\dot z\cos\phi - \rho\sin\phi\,\dot\phi\right),\\[2pt]
	\dot x_{2}
	&= R\big(\dot\rho\sin\phi + \rho\cos\phi\,\dot\phi\big)
	= R\left(-\frac{z}{\rho}\dot z\sin\phi + \rho\cos\phi\,\dot\phi\right),
\end{align*}
where we have used $\dot\rho = -\dfrac{z}{\rho}\dot z$. From these expressions we obtain
\[
x_{1}\dot x_{2} - x_{2}\dot x_{1}= R^{2}\rho^{2}\dot\phi= R^{2}(1-z^{2})\dot\phi.
\]
Therefore, by the second identity in \eqref{Eq2.3},
\[
C = x_{1}\dot x_{2}-x_{2}\dot x_{1}+a x_{3}= R^{2}(1-z^{2})\dot\phi + aR z,
\]
and hence
\begin{equation}\label{Eq2.11}
	\dot\phi(\xi)= \frac{C-aR z(\xi)}{R^{2}(1-z(\xi)^{2})}= \frac{C-a x_{3}(\xi)}{R^{2}-x_{3}(\xi)^{2}}.
\end{equation}

Recall that $u(\xi):=x_{3}(\xi)$ satisfies the scalar ODE \eqref{Eq2.5}, $\dot u^{2} = P(u)$, $-R\le u\le R$,
where $P$ is the cubic polynomial introduced in \eqref{Eq2.7}. In terms of $z=u/R$ this becomes
\begin{equation*}
	\dot z^{2}= \frac{1}{R^{2}}\,P(Rz), \qquad -1\le z\le 1.
\end{equation*}

We thus arrive at the following reduction of \eqref{Eq2.1} to azimuthal coordinates.

\begin{proposition}
	Let $\mathbf{x}\in\mathcal{C}^{2}((\alpha,\omega),\mathbb{R}^{3})$ be a non-trivial and non-polar solution
	of \eqref{Eq2.1}, and let $R>0$, $C,E\in\mathbb{R}$ be the constants given by \eqref{Eq2.3}. 
	Writing $\mathbf{x}$ in azimuthal coordinates as
	\[
	\mathbf{x}(\xi)=\big(R\sqrt{1-z(\xi)^{2}}\cos\phi(\xi),\; R\sqrt{1-z(\xi)^{2}}\sin\phi(\xi),\; Rz(\xi)\big),
	\]
	the functions $z,\phi$ satisfy
	\begin{equation*}
	\left\{
	\begin{array}{ll}
		\dot z^{2} = \dfrac{1}{R^{2}}\,P(Rz), & -1\le z \le 1, \\[6pt]
		\dot\phi = \dfrac{C-aRz}{R^{2}(1-z^{2})},
	\end{array}
	\right.
	\end{equation*}
	where $P$ is the cubic polynomial defined in \eqref{Eq2.7}.
\end{proposition}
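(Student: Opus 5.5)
The statement to prove is the last proposition: in azimuthal coordinates, a non-trivial, non-polar solution satisfies the stated system for $(z,\phi)$. Almost everything has been prepared in the preceding computations, so the plan is to assemble them, taking care that the lift $\phi$ exists and that division by $1-z^2$ is legitimate.

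\emph{Existence of the coordinates.} Since $\mathbf x$ is non-trivial, the first proposition of this section gives $\|\mathbf x\|\equiv R>0$. Define $z:=x_3/R$; then $z\in\mathcal C^2$. Non-polarity means $|x_3|<R$, so $z\in(-1,1)$ and $\rho:=\sqrt{1-z^2}>0$ is $\mathcal C^2$. The planar curve $(x_1,x_2)/(R\rho)$ is a $\mathcal C^2$ map into $\mathbb S^1$ on the interval $(\alpha,\omega)$. By the standard lifting property, since the interval is simply connected, there is a $\mathcal C^2$ function $\phi$ with $(x_1,x_2)=R\rho(\cos\phi,\sin\phi)$. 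This yields the stated representation.

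\emph{Vertical equation.} By the proposition giving \eqref{Eq2.5}, $u=x_3$ satisfies $\dot u^2=P(u)$ with $-R\le u\le R$. Substituting $u=Rz$ gives $R^2\dot z^2=P(Rz)$, which is the first line of the system. Note that \eqref{Eq2.5} itself was derived by dividing by $R^2-x_3^2$; non-polarity is exactly what makes that division valid on the whole interval.

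\emph{Azimuthal equation.} The computation before the statement shows $x_1\dot x_2-x_2\dot x_1=R^2\rho^2\dot\phi$. The cross terms involving $\dot\rho$ cancel because $\cos^2\phi+\sin^2\phi=1$. Inserting this into the conserved quantity $C$ from \eqref{Eq2.3} gives $C=R^2(1-z^2)\dot\phi+aRz$. Dividing by $R^2(1-z^2)>0$ yields \eqref{Eq2.11}, the second line of the system.

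\emph{Main obstacle.} The only point needing care is regularity of the lift $\phi$ and of $\rho$. Both rely on strict non-polarity, which keeps $\rho$ bounded away from zero on compact subintervals. Given that, the lifting lemma for $\mathcal C^k$ maps into $\mathbb S^1$ applies directly, for example with $\phi(\xi)=\phi_0+\int_{\xi_0}^{\xi}(v_1\dot v_2-v_2\dot v_1)$ where $v=(x_1,x_2)/(R\rho)$. The rest is a direct substitution into results already established.
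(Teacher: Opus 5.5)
Your proposal is correct and follows the paper's argument. The vertical equation is just \eqref{Eq2.5} rewritten with $u=Rz$, and the azimuthal equation comes from inserting $x_1\dot x_2-x_2\dot x_1=R^2(1-z^2)\dot\phi$ into the constant $C$ of \eqref{Eq2.3}; your only addition is the explicit construction of a $\mathcal C^2$ lift $\phi$, which the paper takes for granted. One small slip: the $\dot\rho$ cross terms cancel identically, and it is the $\dot\phi$ terms that use $\cos^2\phi+\sin^2\phi=1$.
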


\begin{theorem}[Classification of non-polar solutions]
	\label{TP}
	The following statements describe all the profiles of the $\mc{C}^{2}$-solutions of equation 
	\begin{equation}
		\label{Eq2.12}
	\mathbf{x}\times \ddot{\mathbf{x}} - \Omega \, e_{3}\times \mathbf{x} + a\, \dot{\mathbf{x}}=0, \quad \|x\|\equiv R.
\end{equation}
	such that $|x_{3}|<R$.
	\vspace{2pt}
	\begin{enumerate}
		
		\item[{\rm (a)}] {\rm \textbf{Constant-latitude solutions.}}
		The non-polar constant-latitude solutions are precisely the curves
		\[
		\mathbf{x}(\xi)
		=
		\left(\sqrt{R^{2}-e^{2}}\cos\!\left(\phi_{0}+\nu\xi\right),\;\sqrt{R^{2}-e^{2}}\sin\!\left(\phi_{0}+\nu\xi\right),\; e\right),
		\]
		where $e\in(-R,R)$, $\phi_{0}\in\mathbb R$ and $e\nu^{2}-a\nu+\Omega=0$. Moreover $\nu=\tfrac{C-ae}{R^{2}-e^{2}}$. In particular, $\mathbf{x}$ parametrizes a horizontal circle
		on the sphere $\|\mathbf{x}\|=R$ at height $x_{3}\equiv e$, with constant angular
		velocity $\nu$.
		\vspace{2pt}
		
		\item[{\rm (b)}] {\rm\textbf{Periodic and quasi-periodic rotating solutions.}}
		Assume that the polynomial $P$ has three real roots $e_1<e_2<e_3$ such that $-R<e_{1}<e_{2}<R$. Then every
		non-constant solution in the $x_{3}$-component of \eqref{Eq2.12} is of the form
		\begin{equation}
			\label{Eq2.13}
			\mathbf{x}(\xi)=\big(R\sqrt{1-z(\xi)^{2}}\cos\phi(\xi),\; R\sqrt{1-z(\xi)^{2}}\sin\phi(\xi),\; Rz(\xi)\big),
		\end{equation}
		where
		\[
		z(\xi)= \frac{1}{R}\left(e_{1}+(e_{2}-e_{1}) \;\mathrm{sn}^{2}\; (\omega(\xi-\xi_{0})\;\big|\;k)\right),
		\]
		with
		\[
		\omega = \sqrt{\frac{\Omega}{2R^{2}}(e_{3}-e_{1})},
		\qquad
		k:=\frac{e_{2}-e_{1}}{e_{3}-e_{1}}\in(0,1),
		\]
		and
		\begin{align}
			\label{Eq2.14}
			\phi(\xi)= \; & \phi_{0} + \frac{C-aR}{2R(R-e_{1})\,\omega} \; \Pi\!\left(\omega(\xi-\xi_{0})\;\Big|\;\frac{e_{2}-e_{1}}{R-e_{1}},k\right) \nonumber \\ 
			& + \frac{C+aR}{2R(R+e_{1})\,\omega} \;\Pi\!\left(\omega(\xi-\xi_{0})\;\Big|\;-\,\frac{e_{2}-e_{1}}{R+e_{1}},k\right),
		\end{align}
		for some $\xi_{0},\phi_{0}\in\mathbb{R}$. Here $\mathrm{sn}$ denotes the Jacobi elliptic
		function and $\Pi$ the incomplete elliptic integral of the third kind in Jacobi form,
		\[
		\Pi(u\,|\,n,k):=\int_{0}^{u}\frac{d\tau}{1-n\,\mathrm{sn}^{2}(\tau\,|\,k)}.
		\]
		Let
		\[
		\mathcal{T}:= \frac{2\,K(k)}{\omega}= \frac{2\,K\!\left(\frac{e_{2}-e_{1}}{e_{3}-e_{1}}\right)}{\sqrt{\frac{\Omega}{2R^{2}}(e_{3}-e_{1})}},
		\]
		where $K(k)$ is the complete elliptic integral of the first kind. Then $z$ (and hence $x_{3}=Rz$) has minimal period $\mathcal{T}$.
		
		\begin{itemize}
			\item[{\rm (i)}] The solution \eqref{Eq2.13} is periodic if, and only if, there exist $p,q\in \mathbb{Z}$, $q\ge 1$, $\gcd(p,q)=1$, such that
			\begin{align}
				\label{Eq2.15}
				& \frac{C-aR}{R(R-e_{1})}\;\Pi\!\left(\frac{e_{2}-e_{1}}{R-e_{1}} \;\Big|\; k\right)+ \frac{C+aR}{R(R+e_{1})}\;\Pi\!\left(-\,\frac{e_{2}-e_{1}}{R+e_{1}} \;\Big|\; k\right)  \nonumber \\
				& \qquad\qquad= 2\pi\,\omega\,\frac{p}{q},
			\end{align}
			where, in \eqref{Eq2.15}, $\Pi(\eta\,|\,k)$ denotes the \emph{complete} elliptic integral of the third kind,
			\[
			\Pi(\eta\,|\,k) := \frac{1}{2}\int_{0}^{2K(k)}\frac{d\tau}{1-\eta\,\mathrm{sn}^{2}(\tau\,|\,k)}.
			\]
			In this case, the minimal period of $\mathbf{x}$ is 
			\[
			\mathcal{P} = q\,\mathcal{T}= \frac{2q\,K\!\left(\frac{e_{2}-e_{1}}{e_{3}-e_{1}}\right)}{\sqrt{\frac{\Omega}{2R^{2}}(e_{3}-e_{1})}}.
			\]
			\item[{\rm (ii)}] If \eqref{Eq2.15} does not hold for any $p,q\in \mathbb{Z}$, $q\ge 1$, $\gcd(p,q)=1$, then the solution
			\eqref{Eq2.13} is quasi-periodic: the component $x_{3}$ is periodic of period $\mathcal{T}$,
			while the azimuthal angle $\phi$ winds on the circle with an incommensurable rotation number,
			and therefore $\mathbf{x}$ does not close up.
		\end{itemize}
		\vspace{2pt}
		
		\item[{\rm (c)}] {\rm\textbf{Soliton-type (homoclinic/separatrix-type) solutions.}}
		Assume that the polynomial $P$ has two real roots $-R<e_{1}<e_{2}=R$, with $e_{2}$ of multiplicity $2$.
		Then necessarily $C=aR$, and every non-constant solution in the $x_{3}$-component of \eqref{Eq2.12} is of the form
		\begin{equation*}
			\mathbf{x}(\xi)=\big(R\sqrt{1-z(\xi)^{2}}\cos\phi(\xi),\; R\sqrt{1-z(\xi)^{2}}\sin\phi(\xi),\; Rz(\xi)\big),
		\end{equation*}
		where
		\[
		z(\xi)= \frac{1}{R}\left(e_{1}+(R-e_{1})\; \tanh^{2}\!\big(\omega (\xi-\xi_{0})\big)\right),
		\]
		with
		\[
		\omega = \sqrt{\frac{\Omega}{2R^{2}}(R-e_{1})},\qquad\delta = \sqrt{\frac{R-e_{1}}{R+e_{1}}},
		\]
		and the azimuthal angle is given explicitly by
		\[
		\phi(\xi)= \phi_{0}+ \frac{a}{2R}\left(\xi-\xi_{0}+ \frac{\delta}{\omega}\,\arctan\!\big(\delta\,\tanh(\omega(\xi-\xi_{0}))\big)\right),
		\]
		for some $\xi_{0},\phi_{0}\in\mathbb{R}$. In particular, $z(\xi)\to  1$ as $|\xi|\to\infty$, and $\mathbf{x}(\xi)$ converges to the
		north pole $(0,0,R)$ as $|\xi|\to\infty$. These solutions are not periodic and have the
		qualitative behaviour of soliton trajectories on the sphere.
	\end{enumerate}
\end{theorem}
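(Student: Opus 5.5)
The plan is to combine the scalar reduction for \(x_3\) (Proposition~\ref{Pr2.4}) with the azimuthal equation \eqref{Eq2.11}, treating the three regimes (a), (b), (c) separately. By the azimuthal reduction proposition, every non-polar solution is determined by \((z,\phi)\) with \(\dot z^2=R^{-2}P(Rz)\) and \(\dot\phi=(C-aRz)/(R^2(1-z^2))\). Conversely, any such pair reconstructs \(\mathbf x\) with the prescribed conserved quantities. Strictly speaking, this converse needs a check that the reconstructed curve solves \eqref{Eq2.12} and not only the first integrals. I would argue it as follows: the tangential components of \eqref{Eq2.12} are equivalent to the Euler--Lagrange system whose integrals are \(C,E\), and for non-constant \(z\) the two first integrals together with \(\|\mathbf x\|=R\) determine the motion.

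\textbf{Case (a).} For constant \(x_3\equiv e\in(-R,R)\), \eqref{Eq2.11} forces \(\dot\phi\equiv\nu\) to be constant, with \(\nu=(C-ae)/(R^2-e^2)\). Substituting the circle \((\sqrt{R^2-e^2}\cos(\phi_0+\nu\xi),\sqrt{R^2-e^2}\sin(\phi_0+\nu\xi),e)\) directly into \eqref{Eq2.12} and reading off the third and a horizontal component should give the single relation \(e\nu^2-a\nu+\Omega=0\). I expect the third component to vanish identically, and the horizontal one to reduce to \(\rho(e\nu^2-a\nu+\Omega)=0\) with \(\rho=\sqrt{R^2-e^2}\neq0\).

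\textbf{Case (b).} The formula for \(z\) and the minimal period \(\mathcal T=2K(k)/\omega\) come from Proposition~\ref{Pr2.4}(b). For \(\phi\), I would integrate \eqref{Eq2.11} using the partial fractions
\[
\frac{C-aRz}{R^2(1-z^2)}=\frac{C-aR}{2R^2(1-z)}+\frac{C+aR}{2R^2(1+z)}.
\]
Next I substitute \(Rz=e_1+(e_2-e_1)\,\mathrm{sn}^2\) and change variables \(\tau=\omega(\xi-\xi_0)\). Then
\[
R-Rz=(R-e_1)\Big(1-\tfrac{e_2-e_1}{R-e_1}\mathrm{sn}^2\Big),\qquad R+Rz=(R+e_1)\Big(1+\tfrac{e_2-e_1}{R+e_1}\mathrm{sn}^2\Big),
\]
which yields \eqref{Eq2.14} with characteristics \(\pm(e_2-e_1)/(R\mp e_1)\). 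Over one period \(\mathcal T\), the increment of \(\phi\) is \(\Delta\phi\), given by the left side of \eqref{Eq2.15} divided by \(\omega\); here the complete \(\Pi\) is normalized over \([0,2K]\), and the factors of \(\tfrac12\) cancel. Since \(\dot\phi\) is \(\mathcal T\)-periodic, \(\phi(\xi+\mathcal T)=\phi(\xi)+\Delta\phi\), and \(\mathbf x(\xi+\mathcal P)=\mathbf x(\xi)\) holds iff \(\mathcal P=q\mathcal T\) and \(q\Delta\phi\in2\pi\mathbb Z\). The period of \(\mathbf x\) must be a multiple of \(\mathcal T\), because it is a period of \(x_3\) and \(\mathcal T\) is minimal.

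Writing \(\Delta\phi=2\pi p/q\) in lowest terms gives (i), including minimality of \(q\mathcal T\). Its negation gives (ii): the map \(\xi\mapsto\xi+\mathcal T\) acts as an irrational rotation on \(\phi\), so \(\mathbf x\) never closes.

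\textbf{Case (c).} Equation \eqref{Eq2.8} gives \(P(R)=-(C-aR)^2/R^2\), and a double root at \(R\) forces \(P(R)=0\), hence \(C=aR\). The \(\tanh^2\) formula is Proposition~\ref{Pr2.4}(c). Now the first partial fraction vanishes, so
\[
\dot\phi=\frac{a}{R}\cdot\frac{1}{1+z}\cdot\frac{R}{R}.
\]
More precisely, \(\dot\phi=\dfrac{aR}{R(R+Rz)}\). With \(Rz=e_1+(R-e_1)\tanh^2\), I would rewrite \(R+Rz=(R+e_1)\mathrm{sech}^2+2R\tanh^2\) and integrate with \(w=\tanh(\omega(\xi-\xi_0))\). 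This produces a rational integral in \(w\) over \((1-w^2)(\text{quadratic})\), which decomposes into a linear term and an arctangent; the coefficient is \(\delta=\sqrt{(R-e_1)/(R+e_1)}\). Matching constants against the stated formula is routine bookkeeping, using \(\omega^2=\Omega(R-e_1)/(2R^2)\) and the root relations of \(P\) to eliminate \(\Omega\). Finally, \(z\to1\) as \(|\xi|\to\infty\) gives convergence to the pole, and hence non-periodicity.

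\textbf{Main obstacles.} Beyond the constant-tracking in (c), the delicate point is the \emph{if and only if} in (b)(i), together with minimality. I would handle it through the rotation-number argument above, using that \(\Delta\phi\) is independent of \(\xi_0\) by periodicity of \(\dot\phi\).
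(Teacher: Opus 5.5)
Your proposal is correct and follows essentially the same route as the paper: Proposition~\ref{Pr2.4} for $x_3$, direct substitution of the circle for (a), the partial-fraction splitting of \eqref{Eq2.11} into incomplete elliptic integrals of the third kind with the increment over one period $\mathcal{T}$ giving \eqref{Eq2.15} for (b), and $C=aR$ from $P(R)=0$ followed by the elementary $\tanh$-integral for (c). The converse reconstruction you worry about in your first paragraph is not needed, because (b) and (c) only assert that every solution has the stated form and (a) is already settled by your direct substitution. Likewise, in (c) no root relations are required, since the integral directly produces the prefactor $a/(2R\omega)$.
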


Geometrically, each non-polar solution $\mathbf{x}$ of \eqref{Eq2.1} is a curve on the sphere $\|\mathbf{x}\|=R$, and the previous classification shows that the admissible trajectories 
are either horizontal circles (constant latitude), band–limited oscillatory curves between two 
parallels, or soliton–type spirals asymptotic to the north pole. In terms of the Schr\"odinger 
map (for $R=1$), the corresponding solution
\[
\mathbf{T}(t,s)=\mathscr{R}^{\Omega t}\,\mathbf{T}_{0}(s-at),\qquad \mathbf{T}_{0}(\xi)=\mathbf{x}(\xi),
\]
is obtained by rigidly rotating in time the profile $\mathbf{T}_{0}$ on the unit sphere 
and simultaneously translating it with constant speed $a$ along the parameter $s$. Thus our 
solutions are rotating–traveling waves on $\mathbb{S}^{2}$, whose spatial profile is precisely 
one of the spherical curves described in Theorem~\ref{TP}.
\begin{proof}[Proof of Theorem \ref{TP}]
	Recall that, for any solution $\mathbf{x}$ of \eqref{Eq2.1} with $\|\mathbf{x}\|\equiv R>0$,
	the third component $u(\xi):=x_{3}(\xi)$ satisfies the scalar ODE \eqref{Eq2.5},
	\[
	\dot u^{\,2}=P(u),\qquad -R\le u\le R,
	\]
	and the azimuthal angle $\phi$ satisfies \eqref{Eq2.11}.
	Moreover, Proposition~\ref{Pr2.4} gives the complete classification of solutions
	of \eqref{Eq2.5} according to the real roots of $P$.
	
	\medskip\noindent
	\emph{Proof of item {\rm (a)}.}
	Suppose first that \(\mathbf{x}\) is a non-polar solution with constant vertical
	component $x_{3}(\xi)\equiv e$, $e\in(-R,R)$.
	Since \(\|\mathbf{x}\|\equiv R\), its horizontal projection has constant radius $\rho:=\sqrt{R^{2}-e^{2}}>0$.
	Therefore, we may write
	\[
	\mathbf{x}(\xi)=\big(\rho\cos\phi(\xi),\rho\sin\phi(\xi),e\big).
	\]
	Hence, by \eqref{Eq2.11},
	\[
	\dot\phi=\frac{C-ae}{\rho^{2}}=\frac{C-ae}{R^{2}-e^{2}}=:\nu.
	\]
	Thus $\phi(\xi)=\nu\xi+\phi_{0}$,
	and consequently
	\[
	\mathbf{x}(\xi)=\big(\sqrt{R^{2}-e^{2}}\cos(\nu\xi+\phi_{0}),\sqrt{R^{2}-e^{2}}\sin(\nu\xi+\phi_{0}),e\big).
	\]
	It remains to determine which values of \(e\) and \(\nu\) give genuine solutions of
	the profile equation. For such a curve one has $\dot{\mathbf{x}}=\nu e_{3}\times\mathbf{x}$ and $\ddot{\mathbf{x}}=-\nu^{2}(\mathbf{x}-e e_{3})$. Therefore
	\[
	\mathbf{x}\times\ddot{\mathbf{x}}=-e\nu^{2}e_{3}\times\mathbf{x}.
	\]
	Substitution into equation \eqref{Eq2.12}
	gives
	\[
	(-e\nu^{2}-\Omega+a\nu)e_{3}\times\mathbf{x}=0.
	\]
	Since \(e\in(-R,R)\), the horizontal projection is nonzero, and hence $e_{3}\times\mathbf{x}\neq0$.
	Thus
	\[
	e\nu^{2}-a\nu+\Omega=0.
	\]
	Conversely, if \(e\in(-R,R)\), \(\nu\in\mathbb R\), and $e\nu^{2}-a\nu+\Omega=0$,
	then the curve
	\[
	\mathbf{x}(\xi)=\big(\sqrt{R^{2}-e^{2}}\cos(\nu\xi+\phi_{0}),\sqrt{R^{2}-e^{2}}\sin(\nu\xi+\phi_{0}),e\big)
	\]
	satisfies $\dot{\mathbf{x}}=\nu e_{3}\times\mathbf{x}$, $\ddot{\mathbf{x}}=-\nu^{2}(\mathbf{x}-e e_{3})$, and the same computation shows that
	\[
	\mathbf{x}\times\ddot{\mathbf{x}}-\Omega e_{3}\times\mathbf{x}+a\dot{\mathbf{x}}=0.
	\]
	This proves the characterization of the non-polar constant-latitude solutions.
	
	\medskip\noindent
	\emph{Proof of item {\rm (b)}.}
	Assume now that $P$ has three real roots $e_{1}<e_{2}<e_{3}$. By
	Proposition~\ref{Pr2.4}{\rm(b)}, any non-constant solution of \eqref{Eq2.5} with values in $[-R,R]$
	is of the form
	\[
	u(\xi)= e_{1}+(e_{2}-e_{1})\,\mathrm{sn}^{2}\!\big(\omega(\xi-\xi_{0})\,\big|\,k\big),
	\]
	where
	\[
	\omega = \sqrt{\frac{\Omega}{2R^{2}}(e_{3}-e_{1})}, \qquad k = \frac{e_{2}-e_{1}}{e_{3}-e_{1}}.
	\]
	This gives the expression for $z=u/R$ in the statement.
	
	To compute $\phi$, we rewrite \eqref{Eq2.11} by partial fractions:
	\[
	\frac{C-aRz}{1-z^{2}}= \frac{C-aR}{2}\,\frac{1}{1-z}+\frac{C+aR}{2}\,\frac{1}{1+z},
	\]
	so that
	\begin{equation}\label{Eq2.16}
		\dot{\phi}(\xi)= \frac{C-aR}{2R^{2}}\frac{1}{1-z(\xi)}+ \frac{C+aR}{2R^{2}}\frac{1}{1+z(\xi)}.
	\end{equation}
	Using
	\[
	z(\xi) = \frac{e_{1}}{R}+\frac{e_{2}-e_{1}}{R}\,\mathrm{sn}^{2}\!\big(\omega(\xi-\xi_{0})\,\big|\,k\big),
	\]
	we obtain
	\begin{align*}
		1-z(\xi)
		&= \frac{R-e_{1}}{R}\Big(1-\frac{e_{2}-e_{1}}{R-e_{1}}\mathrm{sn}^{2}(\omega(\xi-\xi_{0})\,|\,k)\Big),\\[2pt]1+z(\xi)
		&= \frac{R+e_{1}}{R}\Big(1+\frac{e_{2}-e_{1}}{R+e_{1}}\mathrm{sn}^{2}(\omega(\xi-\xi_{0})\,|\,k)\Big).
	\end{align*}
	Inserting these expressions into \eqref{Eq2.16}, we get
	\begin{align*}
		\dot\phi(\xi)
		&= \frac{C-aR}{2R(R-e_{1})}
		\frac{1}{1-\frac{e_{2}-e_{1}}{R-e_{1}}
			\mathrm{sn}^{2}(\omega(\xi-\xi_{0})\,|\,k)} \\[2pt]
		&\quad
		+\frac{C+aR}{2R(R+e_{1})}
		\frac{1}{1+\frac{e_{2}-e_{1}}{R+e_{1}}
			\mathrm{sn}^{2}(\omega(\xi-\xi_{0})\,|\,k)}.
	\end{align*}
	Integrating in $\xi$ and using the Jacobi variable
	$u=\omega(\xi-\xi_{0})$, $d\xi = du/\omega$, we get
	\begin{align*}
	\phi(\xi)-\phi_{0} 
	= &\; \frac{C-aR}{2R(R-e_{1})\,\omega}\,
	\Pi\!\left(\omega(\xi-\xi_{0})\,\Big|\,
	\frac{e_{2}-e_{1}}{R-e_{1}},k\right) \\
	& + \frac{C+aR}{2R(R+e_{1})\,\omega}\,
	\Pi\!\left(\omega(\xi-\xi_{0})\,\Big|\,-\frac{e_{2}-e_{1}}{R+e_{1}},k\right),
	\end{align*}
	where
	\[
	\Pi(u\,|\,n,k) := \int_{0}^{u}\frac{d\tau}{1-n\,\mathrm{sn}^{2}(\tau\,|\,k)}.
	\]
	This is precisely \eqref{Eq2.14}, and proves the explicit formula for $\phi$.
	
	Since $\mathrm{sn}(\cdot\,|\,k)$ has real period $4K(k)$, the function
	$\mathrm{sn}^{2}(\cdot\,|\,k)$ has period $2K(k)$, and hence $u$ and $z$ have period
	\[
	\mathcal{T} = \frac{2K(k)}{\omega}= \frac{2K\!\left(\frac{e_{2}-e_{1}}{e_{3}-e_{1}}\right)}{\sqrt{\frac{\Omega}{2R^{2}}(e_{3}-e_{1})}}.
	\]
	Thus $x_{3}(\xi)=Rz(\xi)$ is periodic of period $\mathcal{T}$. The curve $\mathbf{x}$ given by \eqref{Eq2.13} is periodic if, and only if,
	\[
	\phi(\xi_{0}+\mathcal{T}) - \phi(\xi_{0}) \in 2\pi\mathbb{Q}.
	\]
	
	Under the change of variable $u=\omega(\xi-\xi_{0})$ and using
	\[
	\int_{0}^{2K(k)}\frac{d\xi}{1-\eta\,\mathrm{sn}^{2}(\xi\,|\,k)}
	= 2\,\Pi(\eta\,|\,k),
	\]
	where now $\Pi(\eta\,|\,k)$ denotes the \emph{complete} elliptic integral of the third kind, we obtain
	\[
	\phi(\xi_{0}+\mathcal{T}) - \phi(\xi_{0})= \frac{1}{\omega}\left[
	\frac{C-aR}{R(R-e_{1})}\,\Pi\!\left(\frac{e_{2}-e_{1}}{R-e_{1}}\,\Big|\,k\right)+\frac{C+aR}{R(R+e_{1})}\,\Pi\!\left(-\frac{e_{2}-e_{1}}{R+e_{1}}\,\Big|\,k\right)\right].
	\]
	The periodicity condition therefore becomes \eqref{Eq2.15}, i.e.
	\[
	\phi(\xi_{0}+\mathcal{T}) - \phi(\xi_{0})= 2\pi\,\frac{p}{q}
	\quad\Longleftrightarrow\quad\eqref{Eq2.15} \text{ holds}.
	\]
    If \eqref{Eq2.15} holds for some $p,q\in\mathbb{Z}$ with $q\neq 0$, then the minimal period
	of $\mathbf{x}$ is $q\mathcal{T}$; otherwise, $\phi$ has an irrational rotation number with respect to $\mathcal{T}$. Hence $\mathbf{x}$ is quasi-periodic. This proves (b).
	
	\medskip\noindent
	\emph{Proof of item {\rm (c)}.}
	Assume now that $P$ has two real roots $e_{1}<e_{2}=R$, with $e_{2}$ of multiplicity $2$.
	By the explicit formula \eqref{Eq2.8} for $P(\pm R)$,
	\[
	P(R)=-\frac{(C-aR)^{2}}{R^{2}},
	\]
	the fact that $R$ is a root of $P$ implies $C=aR$. In particular, the equation for $\phi$
	simplifies considerably: using \eqref{Eq2.11},
	\[
	\dot\phi(\xi)= \frac{C-aRz(\xi)}{R^{2}(1-z(\xi)^{2})}= \frac{aR(1-z(\xi))}{R^{2}(1-z(\xi)^{2})}= \frac{a}{R(1+z(\xi))}.
	\]
	
	On the other hand, by Proposition~\ref{Pr2.4}{\rm(c)}, every non-constant solution of \eqref{Eq2.5} with values in
	$[-R,R]$ is given by
	\[
	u(\xi)= e_{1}+(R-e_{1})\,\tanh^{2}\!\big(\omega(\xi-\xi_{0})\big),
	\qquad\omega = \sqrt{\frac{\Omega}{2R^{2}}(R-e_{1})},
	\]
	and hence
	\[
	z(\xi)=\frac{u(\xi)}{R}= \frac{1}{R}\left(e_{1}+(R-e_{1})\,\tanh^{2}\!\big(\omega(\xi-\xi_{0})\big)\right).
	\]
	Writing $\theta := \omega(\xi-\xi_{0})$, $t:=\tanh\theta$, we have
	\[
	1+z(\xi) = \frac{R+e_{1}+(R-e_{1})t^{2}}{R}.
	\]
	Thus
	\[
	\dot\phi(\xi)= \frac{a}{R(1+z(\xi))}= \frac{a}{R+e_{1}+(R-e_{1})t^{2}}.
	\]
	Since $t=\tanh\theta$ and $d\theta/d\xi = \omega$, we obtain
	\[
	\phi(\xi)-\phi_{0}= \frac{a}{\omega}\int_{0}^{\theta}
	\frac{d\vartheta}{R+e_{1}+(R-e_{1})\tanh^{2}\vartheta}.
	\]
	Set $\alpha := R+e_{1}$ and $\beta := R-e_{1}$,
	so that $\alpha,\beta>0$ and $\alpha+\beta=2R$.  A direct computation (or the change of variables $y=\tanh\vartheta$) shows that, for
	$\alpha,\beta>0$,
	\[
	\int \frac{dx}{\alpha+\beta\tanh^{2}x}= \frac{x}{\alpha+\beta}+ \frac{1}{\alpha+\beta}\sqrt{\frac{\beta}{\alpha}}\arctan\!\left(\sqrt{\frac{\beta}{\alpha}}\tanh x\right)+ \text{\rm const}.
	\]
	Applying this formula with $x=\vartheta$ and evaluating between $0$ and $\theta$, we obtain
	\[
	\phi(\xi)-\phi_{0}= \frac{a}{\omega(\alpha+\beta)}\left[\theta+ \sqrt{\frac{\beta}{\alpha}}\,\arctan\!\left(\sqrt{\frac{\beta}{\alpha}}\tanh\theta\right)\right].
	\]
	Since $\alpha+\beta=2R$ and
	\[
	\sqrt{\frac{\beta}{\alpha}}= \sqrt{\frac{R-e_{1}}{R+e_{1}}}=:\delta,
	\]
	we deduce that
	\begin{align*}
		\phi(\xi) &= \phi_{0}+ \frac{a}{2R\omega}\left[
		\theta+ \delta\,\arctan\!\big(\delta\tanh\theta\big)\right] \\
		&= \phi_{0}+ \frac{a}{2R}\left(\xi-\xi_{0}+ \frac{\delta}{\omega}\,\arctan\!\big(\delta\tanh(\omega(\xi-\xi_{0}))\big)\right),
	\end{align*}
	which is precisely the expression stated in item~(c).
	
	Finally, from the formula for $z$ we see that $z(\xi)\to 1$ and hence $x_{3}(\xi)\to R$ as
	$|\xi|\to\infty$. Since $\phi(\xi)$ grows roughly linearly with $\xi$, the curve
	$\mathbf{x}(\xi)$ converges to the north pole $(0,0,R)$ while winding infinitely many times
	around the axis, and is therefore non-periodic. This proves item (c) and completes the proof
	of the theorem.
\end{proof}

The geometric content of Theorem~\ref{TP} is schematically represented in Figure~\ref{F1}. The picture is intended only as a qualitative illustration of the different types of non-polar profiles described in the theorem.

\begin{figure}[h!]
	\centering
	\includegraphics[scale=0.25]{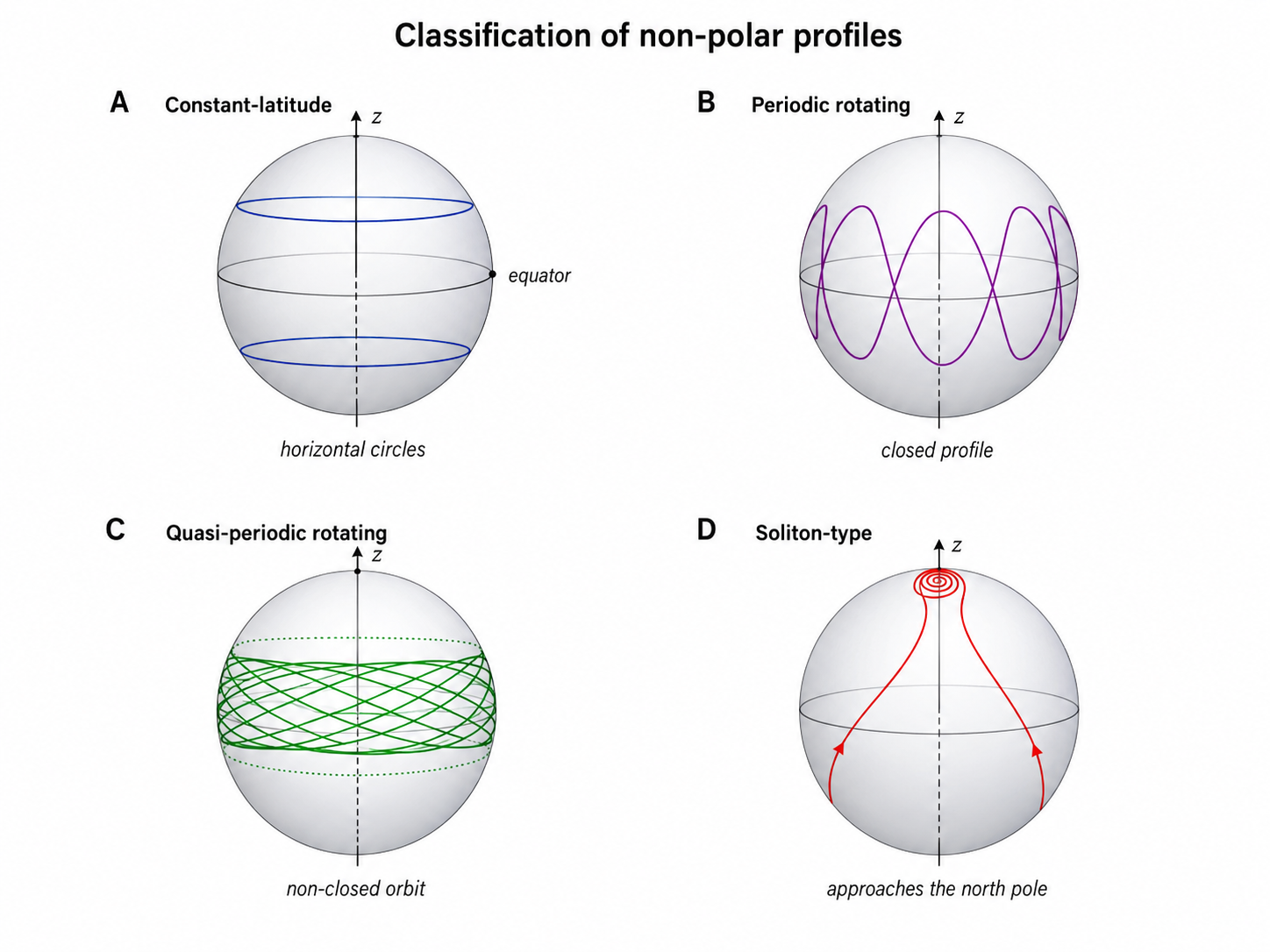}
	\caption{Schematic representation of the non-polar profiles described in Theorem~\ref{TP}.}
		\label{F1}
\end{figure}

\section{Bifurcation analysis around the equatorial branch}
\label{S3}

In the previous section we have obtained a complete explicit description of all non-polar solutions
of the reduced equation
\begin{equation}
	\label{Eq3.1}
\mathbf{x}\times \ddot{\mathbf{x}} - \Omega \, e_{3}\times \mathbf{x} + a\, \dot{\mathbf{x}}=0, \qquad \|\mathbf{x}\|\equiv R,
\end{equation}
In particular, constant solutions of the scalar ODE for $u=x_{3}$ correspond to horizontal
circles on the sphere, and the case $u\equiv 0$ describes rotating waves on the equator
of $\mathbb{S}^{2}_{R}$.

In this section we focus on the family of equatorial solutions and study, by means
of bifurcation theory, how nontrivial oscillatory solutions in the vertical
component \(x_{3}\) bifurcate from this branch as the parameter
\[
\lambda=\frac{a^{2}}{\Omega}
\]
is varied. We shall assume throughout this section that \(a>0\) and
\(\Omega>0\). We start by characterising precisely the equatorial branch and its
associated constants of motion.

\begin{proposition}[Equatorial rotating waves]
	Fix \(R>0\), \(a>0\) and \(\Omega>0\). Set $\omega:=\frac{\Omega}{a}$. Then, for every \(\phi_{0}\in\mathbb R\), the curve
	\begin{equation}
		\label{Eq3.2}
		\mathbf{x}_{\phi_0}(\xi)=\big(R\cos(\omega\xi+\phi_{0}),R\sin(\omega\xi+\phi_{0}),0\big)
	\end{equation}
	is a solution of \eqref{Eq3.1}. Its conserved quantities are
	\begin{equation*}
		C=\frac{\Omega R^{2}}{a}, \qquad E=-\frac{\Omega^{2}R^{2}}{2a}.
	\end{equation*}
	Moreover, the associated vertical polynomial \(P\) satisfies
	\[
	P(u)=u^{2}(a_{3}u-a_{2}), \qquad a_{2}=\frac{a^{4}+\Omega^{2}R^{2}}{a^{2}R^{2}}>0,
	\]
	and therefore \(u=0\) is a root of multiplicity two.
\end{proposition}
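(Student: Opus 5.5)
The plan is to check each claim by direct substitution, in three steps: the curve solves the equation, the values of $C$ and $E$ follow, and these values fed into \eqref{Eq2.6} give $P$. Nothing beyond the first integrals already derived in Section~\ref{S2} is needed.

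\emph{Step 1: the curve is a solution.} Write $\theta(\xi)=\omega\xi+\phi_0$. Then $\dot{\mathbf x}_{\phi_0}=\omega\, e_3\times\mathbf x_{\phi_0}$ and $\ddot{\mathbf x}_{\phi_0}=-\omega^2\mathbf x_{\phi_0}$, so $\mathbf x_{\phi_0}\times\ddot{\mathbf x}_{\phi_0}=0$. Equation \eqref{Eq3.1} therefore reduces to
\[
(a\omega-\Omega)\,e_3\times\mathbf x_{\phi_0}=0,
\]
which holds precisely because $\omega=\Omega/a$. The constraint $\|\mathbf x_{\phi_0}\|\equiv R$ is obvious. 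Alternatively, this is the case $e=0$, $\nu=\omega$ of Theorem~\ref{TP}(a), where the relation $e\nu^2-a\nu+\Omega=0$ becomes $a\nu=\Omega$.

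\emph{Step 2: the conserved quantities.} Insert the curve into \eqref{Eq2.3}. Since $x_3\equiv0$, we get
\[
C=x_1\dot x_2-x_2\dot x_1=R^2\omega=\frac{\Omega R^2}{a}.
\]
For the energy, $\tfrac a2\|\dot{\mathbf x}\|^2=\tfrac a2\omega^2R^2=\tfrac{\Omega^2R^2}{2a}$ and $\Omega(x_2\dot x_1-x_1\dot x_2)=-\Omega\omega R^2=-\tfrac{\Omega^2R^2}{a}$. Adding these gives $E=-\Omega^2R^2/(2a)$.

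\emph{Step 3: the coefficients of $P$.} Substitute $C$ and $E$ into \eqref{Eq2.6}.
\begin{itemize}
\item For $a_0$, the numerator is
\[
2ER^2-aC^2+2\Omega CR^2=\frac{R^4\Omega^2}{a}\,(-1-1+2)=0,
\]
so $a_0=0$.
\item For $a_1$, we have $2aC-2\Omega R^2=2\Omega R^2-2\Omega R^2=0$, so $a_1=0$.
\item For $a_2$,
\[
a_2=\frac{-\Omega^2R^2/a+a^3+2\Omega^2R^2/a}{aR^2}=\frac{a^4+\Omega^2R^2}{a^2R^2}>0.
\]
\end{itemize}
Hence $P(u)=a_3u^3-a_2u^2=u^2(a_3u-a_2)$. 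Since $a_2\neq0$, the root $u=0$ has multiplicity exactly two, and the remaining root $a_2/a_3$ is positive.

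I do not expect a genuine obstacle here. The only delicate point is keeping track of the sign conventions in \eqref{Eq2.3}: $E$ contains $+\Omega(x_2\dot x_1-x_1\dot x_2)$, whose sign is opposite to that of the angular-momentum term appearing in $C$. Getting this sign wrong would spoil the cancellation that makes $a_0=0$.
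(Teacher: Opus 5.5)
Your proof is correct and follows essentially the same route as the paper. The paper verifies that the curve is a solution by citing Theorem~\ref{TP}(a) with $e=0$, which you give as your alternative; it then computes $C$ and $E$ from \eqref{Eq2.3} and substitutes them into \eqref{Eq2.6} to obtain $a_0=a_1=0$ and the stated $a_2$, exactly as you do. Your direct substitution using $\dot{\mathbf x}=\omega\,e_3\times\mathbf x$ is simply a self-contained version of that first step.
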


\begin{proof}
	The existence of the equatorial branch follows from the constant-latitude
	classification in Theorem~\ref{TP}(a). Indeed, taking \(e=0\), the compatibility
	condition $e\nu^{2}-a\nu+\Omega=0$
	reduces to $-a\nu+\Omega=0$,
	hence
	\[
	\nu=\frac{\Omega}{a}=\omega.
	\]
	This gives precisely \eqref{Eq3.2}. We compute the conserved quantities. Writing $\theta(\xi):=\omega\xi+\phi_0$,
	we have
	\[
	\dot{\mathbf{x}}_{\phi_0}=R\omega(-\sin\theta,\cos\theta,0),
	\]
	and therefore $x_1\dot x_2-x_2\dot x_1=R^2\omega$.
	Since \(x_3\equiv0\), it follows from \eqref{Eq2.3} that
	\[
	C=R^2\omega=\frac{\Omega R^2}{a}.
	\]
	Moreover, $\|\dot{\mathbf{x}}_{\phi_0}\|^2=R^2\omega^2$ and $x_2\dot x_1-x_1\dot x_2=-R^2\omega$.
	Thus
	\[
	E=\frac a2 R^2\omega^2-\Omega R^2\omega=R^2\left(\frac a2\omega^2-\Omega\omega\right).
	\]
	Using \(\omega=\Omega/a\), we obtain
	\[
	E=R^2\left(\frac{\Omega^2}{2a}-\frac{\Omega^2}{a}\right)=-\frac{\Omega^2R^2}{2a}.
	\]
	It remains to identify the corresponding vertical polynomial. Substituting
	\[
	C=\frac{\Omega R^2}{a},\qquad E=-\frac{\Omega^2R^2}{2a}
	\]
	into the coefficients of \(P\), equation \eqref{Eq2.6}, one obtains $a_0=0$, $a_1=0$,
	and
	\[
	a_2=\frac{2E+a^3+2\Omega C}{aR^2}=\frac{a^4+\Omega^2R^2}{a^2R^2}>0.
	\]
	Consequently
	\[
	P(u)=a_3u^3-a_2u^2=u^2(a_3u-a_2),
	\]
	and \(u=0\) is a root of multiplicity two.
\end{proof}

The equatorial branch \eqref{Eq3.2} will be the reference family from
which we study bifurcations. We concentrate on the particular solution
\[
\mathbf{x}(\xi) = \big(R\cos(\omega \xi),\, R \sin(\omega \xi),\,0\big),
\]
corresponding to the choice $\phi_{0}=0$ in \eqref{Eq3.2}. Since
equation \eqref{Eq3.1} is invariant under rotations around the $x_{3}$–axis, different
values of $\phi_{0}$ are related by a rigid rotation and do not change the local
bifurcation picture. Thus, fixing $\phi_{0}=0$ entails no loss of generality.

For the bifurcation analysis it is convenient to work with $2\pi$–periodic curves. We
fix the period to be $2\pi$ and rescale the independent variable by
\[
\eta := \omega \xi, \quad\text{so that}\quad \mathbf{x}_{0}(\eta) = \big(R\cos\eta,\,R\sin\eta,\,0\big)
\]
is $2\pi$–periodic. Writing $\mathbf{x}(\xi)=\mathbf{y}(\eta)$ with $\eta=\omega\xi$, we have
\[
\dot{\mathbf{x}}(\xi) = \omega\,\mathbf{y}'(\eta), \qquad \ddot{\mathbf{x}}(\xi) = \omega^{2}\,\mathbf{y}''(\eta),
\]
where primes denote derivatives with respect to $\eta$. Substituting into \eqref{Eq3.1} yields
\[
\mathbf{y}\times \omega^{2}\mathbf{y}'' - \Omega\,e_{3}\times\mathbf{y} + a\,\omega\,\mathbf{y}' = 0.
\]
Dividing by $\omega^{2}$ and using the relation $\omega=\Omega/a$, we obtain
\[
\mathbf{y}\times \mathbf{y}''- \frac{\Omega}{\omega^{2}}\,e_{3}\times\mathbf{y}+ \frac{a}{\omega}\,\mathbf{y}'= \mathbf{y}\times \mathbf{y}'' - \lambda\, e_{3}\times\mathbf{y}+ \lambda\,\mathbf{y}' = 0,
\]
where we have introduced the bifurcation parameter
\[
\lambda := \frac{a^{2}}{\Omega}.
\]
Renaming $\mathbf{y}$ back to $\mathbf{x}$, we arrive at the rescaled problem
\begin{equation}
	\label{Eq3.3}
	\mathbf{x}\times \mathbf{x}''(\eta)- \lambda\, e_{3}\times\mathbf{x}(\eta) + \lambda\,\mathbf{x}'(\eta) = 0, \qquad \|\mathbf{x}(\eta)\|\equiv R,\quad \mathbf{x}(\eta+2\pi)=\mathbf{x}(\eta),
\end{equation}
with parameter $\lambda = a^{2}/\Omega$. In these variables the equatorial branch takes
the particularly simple form
\[
\mathbf{x}_{0}(\eta) = \big(R\cos\eta,\,R\sin\eta,\,0\big),\qquad \eta\in\mathbb{R},
\]
which will serve as our reference solution for the bifurcation analysis.

We now write a general solution as a perturbation of $\mathbf{x}_{0}$:
\[
\mathbf{x}(\eta) = \mathbf{x}_{0}(\eta) + \mathbf{u}(\eta),
\]
so that $\mathbf{u}\equiv 0$ corresponds to the equatorial branch. Substituting this
ansatz into \eqref{Eq3.3} gives
\[
\big(\mathbf{x}_{0}+\mathbf{u}\big)\times\big(\mathbf{x}_{0}''+\mathbf{u}''\big)- \lambda\,e_{3}\times\big(\mathbf{x}_{0}+\mathbf{u}\big)+ \lambda\,\big(\mathbf{x}_{0}'+\mathbf{u}'\big)=0.
\]
Since $\mathbf{x}_{0}$ itself solves \eqref{Eq3.3},
we are left with the perturbation equation
\begin{equation}
	\label{Eq3.4}
	\mathbf{x}_{0}\times \mathbf{u}'' + \mathbf{u}\times \mathbf{x}_{0}'' + \mathbf{u}\times \mathbf{u}''- \lambda\, e_{3}\times \mathbf{u}+ \lambda\,\mathbf{u}' = 0,\qquad \mathbf{u}(\eta+2\pi)=\mathbf{u}(\eta).
\end{equation}
The geometric constraint $\|\mathbf{x}(\eta)\|=R$ becomes
\[
\mathbf{x}_{0}(\eta)\cdot \mathbf{u}(\eta)+ \tfrac{1}{2}\|\mathbf{u}(\eta)\|^{2}=0.
\]
In particular, at the linear level admissible perturbations satisfy $\mathbf{x}_{0}(\eta)\cdot \mathbf{u}(\eta)=0$, so $\mathbf{u}(\eta)$ takes values in the tangent space
$T_{\mathbf{x}_{0}(\eta)}\mathbb{S}^{2}_{R}$ along the equator. The nonlinear equation
\eqref{Eq3.4}, together with this constraint, will be the starting point for our
bifurcation analysis in the next subsection.

\subsection{Nonlinear operator in the augmented setting}

We now recast the bifurcation problem around the equatorial rotating wave as a nonlinear
operator equation on a suitable Banach space, incorporating the spherical constraint into
the operator itself.

Recall that, after the rescaling $\eta=\omega\xi$ with $\omega=\Omega/a$, the reduced
equation \eqref{Eq3.1} becomes the $2\pi$–periodic problem
\begin{equation}
	\label{Eq3.5}
	\mathbf{x}(\eta)\times \mathbf{x}''(\eta)- \lambda\, e_{3}\times\mathbf{x}(\eta)+ \lambda\,\mathbf{x}'(\eta) = 0,\qquad \|\mathbf{x}(\eta)\|\equiv R,\quad \mathbf{x}(\eta+2\pi)=\mathbf{x}(\eta),
\end{equation}
with bifurcation parameter
\[
\lambda := \frac{a^{2}}{\Omega}>0.
\]
In these variables, the equatorial rotating wave reads
\[
\mathbf{x}_{0}(\eta) = \big(R\cos\eta,\,R\sin\eta,\,0\big),\qquad \eta\in\mathbb{T}:=\mathbb{R}/2\pi\mathbb{Z},
\]
and is a $2\pi$–periodic solution of \eqref{Eq3.5}. We describe any nearby solution as a perturbation of $\mathbf{x}_{0}$,
\[
\mathbf{x}(\eta) = \mathbf{x}_{0}(\eta) + \mathbf{u}(\eta),
\]
where $\mathbf{u}:\mathbb{T}\to\mathbb{R}^{3}$ is $2\pi$–periodic. Substituting into
\eqref{Eq3.5} and using that $\mathbf{x}_{0}$ solves the equation, we obtain
the perturbation equation
\begin{equation}
	\label{Eq3.6}
	\mathbf{x}_{0}\times \mathbf{u}'' + \mathbf{u}\times \mathbf{x}_{0}'' + \mathbf{u}\times \mathbf{u}''- \lambda\, e_{3}\times \mathbf{u}+ \lambda\,\mathbf{u}' = 0.
\end{equation}
The spherical constraint $\|\mathbf{x}(\eta)\|=R$ becomes
\begin{equation}
	\label{Eq3.7}
	\mathbf{x}_{0}(\eta)\cdot\mathbf{u}(\eta)+ \tfrac{1}{2}\|\mathbf{u}(\eta)\|^{2}=0\qquad\text{for all }\eta\in\mathbb{T}.
\end{equation}

Instead of restricting a priori to perturbations tangent to the sphere, we encode both the
differential equation \eqref{Eq3.6} and the constraint
\eqref{Eq3.7} in a single augmented nonlinear operator. 

\begin{definition}[Augmented nonlinear operator]
	For $\lambda>0$ and $\mathbf{u}\in H^{2}(\mathbb{T},\mathbb{R}^{3})$ we define
	the differential part
	\[
	\mc{F}(\lambda,\mathbf{u})(\eta):= \mathbf{x}_{0}(\eta)\times \mathbf{u}''(\eta)+ \mathbf{u}(\eta)\times \mathbf{x}_{0}''(\eta)+ \mathbf{u}(\eta)\times \mathbf{u}''(\eta)- \lambda\, e_{3}\times \mathbf{u}(\eta)+ \lambda\,\mathbf{u}'(\eta),
	\]
	and the constraint part
	\[
	\mc{C}(\mathbf{u})(\eta):= \mathbf{x}_{0}(\eta)\cdot\mathbf{u}(\eta)+ \tfrac{1}{2}\|\mathbf{u}(\eta)\|^{2}.
	\]
	The augmented operator is defined by
	\[
	\mc{G}:(0,+\infty)\times H^{2}(\mathbb{T},\mathbb{R}^{3})
	\longrightarrow  L^{2}(\mathbb{T},\mathbb{R}^{3})\times H^{2}(\mathbb{T}), \qquad
	\mc{G}(\lambda,\mathbf{u}):= \big(\mc{F}(\lambda,\mathbf{u}),\,\mc{C}(\mathbf{u})\big).
	\]
\end{definition}

The equation $\mc{G}(\lambda,\mathbf{u})=(0,0)$ is equivalent to the fact that $\mathbf{x}=\mathbf{x}_{0}+\mathbf{u}$ is a $2\pi$–periodic solution of
\eqref{Eq3.5} lying on the sphere of radius $R$.

We first verify that the operator $\mc{G}$ is well defined.

\begin{lemma}
The operator
	\[
	\mc{G}:(0,+\infty)\times H^{2}(\mathbb{T},\mathbb{R}^{3})\longrightarrow  L^{2}(\mathbb{T},\mathbb{R}^{3})\times H^{2}(\mathbb{T})
	\]
	is well defined and is continuous.
\end{lemma}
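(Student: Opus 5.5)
The plan is to treat the two components of $\mc{G}$ separately. For each one we show it maps into the stated target space, and then that it is continuous, in fact locally Lipschitz, jointly in $(\lambda,\mathbf u)$. The only tools needed are the one-dimensional Sobolev embedding $H^{1}(\mathbb T)\hookrightarrow L^{\infty}(\mathbb T)$ and the Banach algebra property of $H^{s}(\mathbb T)$ for $s\ge1$. Note that $\mathbf x_0$ and all its derivatives are smooth, bounded trigonometric vectors.

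For the differential part $\mc F$, we split it into three kinds of terms.

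\emph{Linear terms with smooth coefficients.} These are $\mathbf x_0\times\mathbf u''$, $\mathbf u\times\mathbf x_0''$, $-\lambda e_3\times\mathbf u$ and $\lambda\mathbf u'$. Each is bounded in $L^2$ by $C(1+\lambda)\|\mathbf u\|_{H^2}$, since $\|\mathbf x_0\|_{W^{2,\infty}}\le R$ and the cross product is a bounded bilinear map.

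\emph{Quadratic term.} For $\mathbf u\times\mathbf u''$ we use $\|\mathbf u\times\mathbf u''\|_{L^2}\le\|\mathbf u\|_{L^\infty}\|\mathbf u''\|_{L^2}\le C\|\mathbf u\|_{H^2}^2$.

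\emph{Continuity.} We write
\[
\mathbf u\times\mathbf u''-\mathbf v\times\mathbf v''=(\mathbf u-\mathbf v)\times\mathbf u''+\mathbf v\times(\mathbf u-\mathbf v)''.
\]
This gives the bound $C(\|\mathbf u\|_{H^2}+\|\mathbf v\|_{H^2})\|\mathbf u-\mathbf v\|_{H^2}$. The $\lambda$-dependent terms are handled by
\[
\|\lambda\mathbf u'-\mu\mathbf v'\|_{L^2}\le|\lambda-\mu|\,\|\mathbf u\|_{H^1}+\mu\|\mathbf u-\mathbf v\|_{H^1},
\]
and similarly for $e_3\times\mathbf u$. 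Together these give joint continuity of $\mc F$ into $L^2(\mathbb T,\mathbb R^3)$.

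For the constraint part $\mc C$, the target is $H^2(\mathbb T)$, so we need more than an $L^2$ bound.

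\emph{Linear piece.} The term $\mathbf x_0\cdot\mathbf u$ is a product of a smooth function with an $H^2$ function. By Leibniz, $\|\mathbf x_0\cdot\mathbf u\|_{H^2}\le C_R\|\mathbf u\|_{H^2}$.

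\emph{Quadratic piece.} For $\tfrac12\|\mathbf u\|^2=\tfrac12\sum_i u_i^2$ we invoke the algebra property $\|fg\|_{H^2}\le C\|f\|_{H^2}\|g\|_{H^2}$. The key case is the second derivative, $(fg)''=f''g+2f'g'+fg''$, where $f''g$ and $fg''$ are controlled via $L^\infty\times L^2$, and $f'g'$ via $H^1\subset L^\infty$.

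\emph{Continuity.} The same difference-of-products identity shows that $\mc C$ is locally Lipschitz, and $\mc C$ does not depend on $\lambda$.

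The only point needing care is that $\mc C$ lands in $H^2$ rather than merely $L^2$. This is exactly the algebra property of $H^2$ in one dimension, so I do not expect any real obstacle. The argument also shows that $\mc G$ is a polynomial map, affine in $\lambda$ and quadratic in $\mathbf u$. Hence $\mc G$ is in fact real analytic, which is the regularity the Crandall--Rabinowitz theorem will later require.
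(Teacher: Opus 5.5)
Your proposal is correct and follows essentially the same route as the paper: you bound each term of $\mathcal F$ in $L^2$ using the one-dimensional embedding into $L^\infty$, and you get $\mathcal C$ into $H^2(\mathbb T)$ from the algebra property of $H^2(\mathbb T)$. The only difference is that you write out the local Lipschitz estimates (the difference-of-products identity and the splitting of $\lambda\mathbf u'-\mu\mathbf v'$), which the paper leaves as ``the same estimates can be used to prove its continuity.''
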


\begin{proof}
	Let $\mathbf{u}\in H^{2}(\mathbb{T},\mathbb{R}^{3})$. Since $\mathbb{T}$ is one–dimensional
	and compact, the Sobolev embedding theorem yields the continuous embedding $H^{2}(\mathbb{T})\hookrightarrow \mc{C}^{1}(\mathbb{T})$. Applied componentwise, this implies:
	\begin{itemize}
		\item[(i)] $\mathbf{u}$ and $\mathbf{u}'$ are continuous and bounded on $\mathbb{T}$, hence
		$\mathbf{u},\mathbf{u}'\in L^{\infty}(\mathbb{T},\mathbb{R}^{3})$;
		\item[(ii)] $\mathbf{u}''\in L^{2}(\mathbb{T},\mathbb{R}^{3})$ by definition of $H^{2}$.
	\end{itemize}
	The reference curve $\mathbf{x}_{0}(\eta) = (R\cos\eta,R\sin\eta,0)$ is smooth and periodic,
	so both $\mathbf{x}_{0}$ and $\mathbf{x}_{0}''$ are bounded functions on $\mathbb{T}$, hence
	$\mathbf{x}_{0},\mathbf{x}_{0}''\in L^{\infty}(\mathbb{T},\mathbb{R}^{3})$.	We now inspect each term in the definition of $\mc{F}(\lambda,\mathbf{u})$.
	
	\medskip
	\begin{enumerate}
	\item[{\rm(a)}] The term $\mathbf{x}_{0}\times \mathbf{u}''$.
	Since $\mathbf{x}_{0}\in L^{\infty}$ and $\mathbf{u}''\in L^{2}$, the pointwise estimate
	\[
	\big|\mathbf{x}_{0}(\eta)\times \mathbf{u}''(\eta)\big|\leq \|\mathbf{x}_{0}\|_{L^{\infty}}\,|\mathbf{u}''(\eta)|
	\]
	implies
	\[
	\|\mathbf{x}_{0}\times \mathbf{u}''\|_{L^{2}}\leq \|\mathbf{x}_{0}\|_{L^{\infty}}\,\|\mathbf{u}''\|_{L^{2}}<\infty.
	\]
	Hence $\mathbf{x}_{0}\times \mathbf{u}''\in L^{2}(\mathbb{T},\mathbb{R}^{3})$.
	
	\medskip
	\item[{\rm (b)}] The term $\mathbf{u}\times \mathbf{x}_{0}''$.
	Here $\mathbf{u}\in L^{\infty}$ and $\mathbf{x}_{0}''\in L^{2}$ (indeed
	$\mathbf{x}_{0}''$ is smooth, so it also belongs to $L^{2}$). We obtain
	\[
	\|\mathbf{u}\times \mathbf{x}_{0}''\|_{L^{2}}\leq \|\mathbf{u}\|_{L^{\infty}}\,\|\mathbf{x}_{0}''\|_{L^{2}}<\infty,
	\]
	so $\mathbf{u}\times \mathbf{x}_{0}''\in L^{2}(\mathbb{T},\mathbb{R}^{3})$.
	
	\medskip
	\item[{\rm(c)}] The term $\mathbf{u}\times \mathbf{u}''$.
	Since $\mathbf{u}\in L^{\infty}$ and $\mathbf{u}''\in L^{2}$, we have
	\[
	\|\mathbf{u}\times \mathbf{u}''\|_{L^{2}}\leq \|\mathbf{u}\|_{L^{\infty}}\,
	\|\mathbf{u}''\|_{L^{2}}<\infty,
	\]
	so $\mathbf{u}\times \mathbf{u}''\in L^{2}(\mathbb{T},\mathbb{R}^{3})$.
	
	\medskip
	\item[\rm{(d)}] The term $e_{3}\times \mathbf{u}$.
	The vector $e_{3}$ is constant and $\mathbf{u}\in L^{2}$, so
	\[
	\|e_{3}\times \mathbf{u}\|_{L^{2}}\leq |e_{3}|\,\|\mathbf{u}\|_{L^{2}}= \|\mathbf{u}\|_{L^{2}}<\infty.
	\]
	The same estimate, multiplied by the constant $\lambda>0$, shows that
	$-\lambda\,e_{3}\times\mathbf{u}\in L^{2}(\mathbb{T},\mathbb{R}^{3})$.
	
	\medskip
	\item[\rm{(e)}] The term $\lambda\,\mathbf{u}'$.
	By definition of $H^{2}$, we have $\mathbf{u}'\in H^{1}(\mathbb{T},\mathbb{R}^{3})$, so in
	particular $\mathbf{u}'\in L^{2}(\mathbb{T},\mathbb{R}^{3})$. Therefore
	$\lambda\,\mathbf{u}'\in L^{2}(\mathbb{T},\mathbb{R}^{3})$.
\end{enumerate}

	\medskip
	
	Combining (a)–(e), we conclude that every term in the expression of
	$\mc{F}(\lambda,\mathbf{u})$ lies in $L^{2}(\mathbb{T},\mathbb{R}^{3})$. Hence, $\mc{F}$ is well-defined. Moreover the same estimates can be used to prove its continuity.
	
	We now consider the constraint operator $\mc{C}$. Since $\mathbb{T}$ is one--dimensional, we have
	the Sobolev embedding $H^{2}(\mathbb{T})\hookrightarrow L^{\infty}(\mathbb{T})$ and the algebra property
	$H^{2}(\mathbb{T})\cdot H^{2}(\mathbb{T})\subset H^{2}(\mathbb{T})$, with a bound
	\[
	\|fg\|_{H^{2}}\le C\|f\|_{H^{2}}\|g\|_{H^{2}}.
	\]
	Write $\mathbf{u}=(u_{1},u_{2},u_{3})$ and $\mathbf{x}_{0}=(x_{0,1},x_{0,2},x_{0,3})$. Then
	\[
	\mathbf{x}_{0}\cdot\mathbf{u}=\sum_{j=1}^{3}x_{0,j}u_{j}\in H^{2}(\mathbb{T}),
	\qquad\|\mathbf{x}_{0}\cdot\mathbf{u}\|_{H^{2}}\le C\|\mathbf{x}_{0}\|_{\mc{C}^{2}}\|\mathbf{u}\|_{H^{2}}.
	\]
	Moreover,
	\[
	\|\mathbf{u}\|^{2}=u_{1}^{2}+u_{2}^{2}+u_{3}^{2}\in H^{2}(\mathbb{T}),\qquad\|\|\mathbf{u}\|^{2}\|_{H^{2}}\le C\|\mathbf{u}\|_{H^{2}}^{2},
	\]
	by the algebra property. Therefore $\mc{C}(\mathbf{u})\in H^{2}(\mathbb{T})$ for every
	$\mathbf{u}\in H^{2}(\mathbb{T};\R^{3})$, and $\mc{C}:H^{2}(\mathbb{T},\R^{3})\to H^{2}(\mathbb{T})$ is well defined. The same inequalities can be used to prove its continuity. Hence, we conclude that $\mc{G}=(\mc{F},\mc{C})$ is well-defined and continuous.
\end{proof}

The next result computes the first Fréchet derivative of $\mc{G}$.

\begin{lemma}[First Fréchet derivative of \(\mc{G}\)]
	The map
	\[
	\mc{G}:(0,+\infty)\times H^{2}(\mathbb{T},\mathbb{R}^{3})
	\longrightarrow  L^{2}(\mathbb{T},\mathbb{R}^{3})\times H^{2}(\mathbb{T})
	\]
	is of class \(\mc{C}^{\infty}\). For \((\lambda,\mathbf{u})\in(0,+\infty)\times H^{2}(\mathbb{T},\mathbb{R}^{3})\)
	and an increment \((\mu,\mathbf{v})\in\mathbb{R}\times H^{2}(\mathbb{T},\mathbb{R}^{3})\), the Fréchet
	derivative is given by
	\[
	D\mc{G}(\lambda,\mathbf{u})[\mu,\mathbf{v}]= \big(D_{\lambda}\mc{F}(\lambda,\mathbf{u})[\mu]+ D_{\mathbf{u}}\mc{F}(\lambda,\mathbf{u})[\mathbf{v}],\;
	D_{\mathbf{u}}\mc{C}(\mathbf{u})[\mathbf{v}]\big),
	\]
	where
	\begin{align*}
		D_{\lambda}\mc{F}(\lambda,\mathbf{u})[\mu]
		&= \mu\big(- e_{3}\times \mathbf{u} + \mathbf{u}'\big),\\[4pt]
		D_{\mathbf{u}}\mc{F}(\lambda,\mathbf{u})[\mathbf{v}]
		&= \mathbf{x}_{0}\times \mathbf{v}''+ \mathbf{v}\times \mathbf{x}_{0}''+ \mathbf{v}\times \mathbf{u}'' + \mathbf{u}\times \mathbf{v}''- \lambda\, e_{3}\times \mathbf{v}+ \lambda\,\mathbf{v}',\\[4pt] D_{\mathbf{u}}\mc{C}(\mathbf{u})[\mathbf{v}]
		&= \mathbf{x}_{0}\cdot\mathbf{v} + \mathbf{u}\cdot\mathbf{v}.
	\end{align*}
	In particular, at any point \((\lambda,0)\) the derivative simplifies to
	\[
	D\mc{G}(\lambda,0)[\mu,\mathbf{v}]= \big( L_{\lambda}\mathbf{v},\; \mathbf{x}_{0}\cdot\mathbf{v}\big),
	\]
	where
	\begin{equation*}
		L_{\lambda}\mathbf{v}:= \mathbf{x}_{0}\times \mathbf{v}''+ \mathbf{v}\times \mathbf{x}_{0}''- \lambda\, e_{3}\times \mathbf{v}+ \lambda\,\mathbf{v}'.
	\end{equation*}
	Equivalently,
	\[
	D_{\lambda}\mc{G}(\lambda,0) = (0,0),\qquad
	D_{\mathbf{u}}\mc{G}(\lambda,0)[\mathbf{v}]= \big(L_{\lambda}\mathbf{v},\,\mathbf{x}_{0}\cdot\mathbf{v}\big).
	\]
\end{lemma}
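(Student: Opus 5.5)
The plan is to decompose $\mathcal G$ into pieces whose smoothness is transparent: bounded linear maps, continuous bilinear maps, and a map that is linear in $\lambda$ times a bounded linear map in $\mathbf u$. Every such map is $\mathcal C^\infty$, and its derivatives can be read off directly, so the lemma reduces to checking boundedness of each piece on the relevant spaces.

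For the differential part, split
\[
\mathcal F(\lambda,\mathbf u)=A\mathbf u+B(\mathbf u,\mathbf u)+\lambda M\mathbf u,
\]
with
\[
A\mathbf u:=\mathbf x_0\times\mathbf u''+\mathbf u\times\mathbf x_0'',\qquad B(\mathbf u,\mathbf w):=\mathbf u\times\mathbf w'',\qquad M\mathbf u:=-e_3\times\mathbf u+\mathbf u'.
\]
The estimates (a)--(e) from the previous lemma show the following.

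\begin{itemize}
\item $A$ and $M$ are bounded linear maps $H^2(\mathbb T,\mathbb R^3)\to L^2(\mathbb T,\mathbb R^3)$.
\item $B$ is a bounded bilinear map $H^2\times H^2\to L^2$. This uses $\|\mathbf u\times\mathbf w''\|_{L^2}\le\|\mathbf u\|_{L^\infty}\|\mathbf w''\|_{L^2}\le C\|\mathbf u\|_{H^2}\|\mathbf w\|_{H^2}$ via the embedding $H^2\hookrightarrow\mathcal C^1$.
\item The map $(\lambda,\mathbf u)\mapsto\lambda M\mathbf u$ is the composition of the bounded bilinear map $\mathbb R\times L^2\to L^2$, $(\lambda,\mathbf f)\mapsto\lambda\mathbf f$, with $(\lambda,\mathbf u)\mapsto(\lambda,M\mathbf u)$. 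It is therefore a continuous bilinear map on $\mathbb R\times H^2$, restricted to the open set $(0,+\infty)\times H^2$.
\end{itemize}

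Continuous linear and bilinear maps between Banach spaces are $\mathcal C^\infty$. Their derivatives are $A\mathbf v$, $B(\mathbf u,\mathbf v)+B(\mathbf v,\mathbf u)$, and $\mu M\mathbf u+\lambda M\mathbf v$ respectively, and all derivatives beyond the second vanish. Adding these gives exactly the stated formulas for $D_\lambda\mathcal F[\mu]=\mu(-e_3\times\mathbf u+\mathbf u')$ and for $D_{\mathbf u}\mathcal F[\mathbf v]$.

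The constraint part is handled the same way: $\mathcal C(\mathbf u)=\mathbf x_0\cdot\mathbf u+\tfrac12 Q(\mathbf u,\mathbf u)$ with $Q(\mathbf u,\mathbf w):=\mathbf u\cdot\mathbf w$.

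\begin{itemize}
\item $\mathbf u\mapsto\mathbf x_0\cdot\mathbf u$ is bounded linear $H^2(\mathbb T,\mathbb R^3)\to H^2(\mathbb T)$, since $\mathbf x_0$ is smooth.
\item $Q$ is bounded bilinear $H^2\times H^2\to H^2$ by the algebra property already quoted, $\|fg\|_{H^2}\le C\|f\|_{H^2}\|g\|_{H^2}$, applied componentwise.
\end{itemize}

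Hence $D_{\mathbf u}\mathcal C(\mathbf u)[\mathbf v]=\mathbf x_0\cdot\mathbf v+\tfrac12\big(Q(\mathbf u,\mathbf v)+Q(\mathbf v,\mathbf u)\big)=\mathbf x_0\cdot\mathbf v+\mathbf u\cdot\mathbf v$, and $\mathcal C$ does not depend on $\lambda$. Smoothness of $\mathcal G=(\mathcal F,\mathcal C)$ follows componentwise.

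Finally, set $\mathbf u=0$. The terms $\mu(-e_3\times\mathbf u+\mathbf u')$, $\mathbf v\times\mathbf u''$, $\mathbf u\times\mathbf v''$ and $\mathbf u\cdot\mathbf v$ all vanish. What remains is $D\mathcal G(\lambda,0)[\mu,\mathbf v]=(L_\lambda\mathbf v,\mathbf x_0\cdot\mathbf v)$, so $D_\lambda\mathcal G(\lambda,0)=(0,0)$.

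The only step needing care is the bilinear term $B$, since $\mathbf w''$ is only in $L^2$. The $L^\infty\cdot L^2$ estimate handles it, so I expect no real obstacle. The argument is a systematic use of the polynomial, in fact at most quadratic, structure of $\mathcal G$.
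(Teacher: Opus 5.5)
Your proof is correct. It differs from the paper's mainly in how it is organized.

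\textbf{The paper's route.} The paper works by hand. It computes the G\^ateaux derivatives by expanding $\mathcal{F}(\lambda,\mathbf u+\varepsilon\mathbf v)$ in $\varepsilon$ and checks that the resulting operators are bounded. It then verifies Fr\'echet differentiability from the exact remainder $\mathbf v\times\mathbf v''+\mu(-e_3\times\mathbf v+\mathbf v')$, together with $\tfrac12\|\mathbf v\|^2$ for $\mathcal{C}$. Next it proves that $(\lambda,\mathbf u)\mapsto D\mathcal{G}(\lambda,\mathbf u)$ is locally Lipschitz in operator norm. Finally it only asserts that $\mathcal{C}^\infty$ regularity follows ``by the same steps''.

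\textbf{Your route.} You write $\mathcal{G}$ as a sum of bounded linear and bounded bilinear maps: $A$, $M$, $B$, $(\lambda,\mathbf u)\mapsto\lambda M\mathbf u$, $\mathbf u\mapsto\mathbf x_0\cdot\mathbf u$, and $Q$. You then invoke the general fact that such maps are $\mathcal{C}^\infty$.

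\textbf{Comparison.} The analytic input is the same in both routes. One ingredient is the $L^\infty\cdot L^2$ bound for $\mathbf u\times\mathbf w''$, which comes from $H^2\hookrightarrow\mathcal{C}^1$. The other is the Banach algebra property of $H^2(\mathbb{T})$ for $\mathbf u\cdot\mathbf w$. Your packaging turns the $\mathcal{C}^\infty$ claim into a complete argument rather than a sketch. It also shows that $\mathcal{G}$ is a polynomial map of degree two in $(\lambda,\mathbf u)$. Hence $D^2\mathcal{G}$ is constant and all derivatives of order three and higher vanish. In particular, the mixed derivative $D_{\lambda\mathbf u}\mathcal{F}(\lambda,0)[\mathbf v]=-e_3\times\mathbf v+\mathbf v'$, used later for transversality, can be read off directly as $M\mathbf v$. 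The paper's explicit remainder computation is longer but fully self-contained, since it does not appeal to the abstract calculus of multilinear maps.
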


\begin{proof}
	Firstly, let us give the formal derivatives. The dependence on \(\lambda\) is affine and only in the last two terms of \(\mc{F}\), so for
	\(\mu\in\mathbb{R}\),
	\[
	D_{\lambda}\mc{F}(\lambda,\mathbf{u})[\mu]= \mu\big(- e_{3}\times \mathbf{u} + \mathbf{u}'\big).
	\]
	The constraint \(\mc{C}\) does not depend on \(\lambda\), so \(D_{\lambda}\mc{C}(\mathbf{u})=0\).

		To compute the derivative with respect to \(\mathbf{u}\), we use linearity of differentiation and the cross product, and the fact that
		\(\mathbf{u}\mapsto \mathbf{u}\times\mathbf{u}''\) is quadratic. For
		\(\mathbf{u},\mathbf{v}\in H^{2}(\mathbb{T},\mathbb{R}^{3})\),
		\begin{align*}
			\mc{F}(\lambda,\mathbf{u}+\varepsilon\mathbf{v})
			&= \mathbf{x}_{0}\times (\mathbf{u}+\varepsilon\mathbf{v})''
			+ (\mathbf{u}+\varepsilon\mathbf{v})\times \mathbf{x}_{0}''
			+ (\mathbf{u}+\varepsilon\mathbf{v})\times (\mathbf{u}+\varepsilon\mathbf{v})'' \\
			&\quad - \lambda\, e_{3}\times (\mathbf{u}+\varepsilon\mathbf{v})+ \lambda\,(\mathbf{u}+\varepsilon\mathbf{v})'.
		\end{align*}
		Expanding the linear terms gives
		\[
		\mathbf{x}_{0}\times (\mathbf{u}+\varepsilon\mathbf{v})''= \mathbf{x}_{0}\times \mathbf{u}'' + \varepsilon\,\mathbf{x}_{0}\times \mathbf{v}'',
		\]
		\[
		(\mathbf{u}+\varepsilon\mathbf{v})\times \mathbf{x}_{0}''= \mathbf{u}\times \mathbf{x}_{0}'' + \varepsilon\,\mathbf{v}\times \mathbf{x}_{0}'',
		\]
		\[
		-\lambda\,e_{3}\times (\mathbf{u}+\varepsilon\mathbf{v})= -\lambda\,e_{3}\times\mathbf{u} - \varepsilon\,\lambda\,e_{3}\times\mathbf{v},
		\]
		\[
		\lambda(\mathbf{u}+\varepsilon\mathbf{v})'= \lambda\mathbf{u}' + \varepsilon\,\lambda\mathbf{v}'.
		\]
		For the quadratic term we use bilinearity:
		\[
		(\mathbf{u}+\varepsilon\mathbf{v})\times(\mathbf{u}+\varepsilon\mathbf{v})''
		= \mathbf{u}\times\mathbf{u}''+ \varepsilon\big(\mathbf{v}\times\mathbf{u}'' + \mathbf{u}\times\mathbf{v}''\big)+ \varepsilon^{2}\,\mathbf{v}\times\mathbf{v}''.
		\]
		Dividing by \(\varepsilon\) and letting \(\varepsilon\to 0\) yields
		\[
		D_{\mathbf{u}}\mc{F}(\lambda,\mathbf{u})[\mathbf{v}]
		= \mathbf{x}_{0}\times \mathbf{v}''+ \mathbf{v}\times \mathbf{x}_{0}''+ \mathbf{v}\times \mathbf{u}'' + \mathbf{u}\times \mathbf{v}''- \lambda\, e_{3}\times \mathbf{v}+ \lambda\,\mathbf{v}'.
		\]
		For the constraint functional $\mc{C}(\mathbf{u}) = \mathbf{x}_{0}\cdot\mathbf{u} + \tfrac12\|\mathbf{u}\|^{2}$,
		the first term is linear, so its derivative is \(\mathbf{x}_{0}\cdot\mathbf{v}\). For the
		quadratic term,
		\[
		\frac{d}{d\varepsilon}
		\left(\tfrac{1}{2}\|\mathbf{u}+\varepsilon\mathbf{v}\|^{2}\right)\Big|_{\varepsilon=0}
		= \mathbf{u}\cdot\mathbf{v},
		\]
		and hence
		\[
		D_{\mathbf{u}}\mc{C}(\mathbf{u})[\mathbf{v}]
		= \mathbf{x}_{0}\cdot\mathbf{v} + \mathbf{u}\cdot\mathbf{v}.
		\]
		Fix \((\lambda,\mathbf{u})\). We now show that
		\[
		D\mc{G}(\lambda,\mathbf{u}):
		\mathbb{R}\times H^{2}(\mathbb{T},\mathbb{R}^{3})
		\longrightarrow L^{2}(\mathbb{T},\mathbb{R}^{3})\times H^{2}(\mathbb{T})
		\]
		is a bounded linear operator. For the derivative with respect to \(\lambda\),
		\[
		\|D_{\lambda}\mc{F}(\lambda,\mathbf{u})[\mu]\|_{L^{2}}
		\le |\mu|\big(\|e_{3}\times\mathbf{u}\|_{L^{2}} + \|\mathbf{u}'\|_{L^{2}}\big)
		\le C\,|\mu|\,\|\mathbf{u}\|_{H^{2}}.
		\]
		For the derivative with respect to \(\mathbf{u}\), using the explicit formula,
		\[
		\|D_{\mathbf{u}}\mc{F}(\lambda,\mathbf{u})[\mathbf{v}]\|_{L^{2}}
		\le C\Big(
		\|\mathbf{v}''\|_{L^{2}}+ \|\mathbf{v}\|_{L^{2}}+ \|\mathbf{v}\times\mathbf{u}''\|_{L^{2}}+ \|\mathbf{u}\times\mathbf{v}''\|_{L^{2}}+ \|\mathbf{v}\|_{L^{2}}+ \|\mathbf{v}'\|_{L^{2}}
		\Big).
		\]
		Using the embedding $H^{2}\hookrightarrow L^{\infty}$, the mixed terms are estimated by
		\[
		\|\mathbf{v}\times\mathbf{u}''\|_{L^{2}}
		\le \|\mathbf{v}\|_{L^{\infty}}\|\mathbf{u}''\|_{L^{2}}
		\le C\|\mathbf{v}\|_{H^{2}}\|\mathbf{u}\|_{H^{2}},
		\]
		\[
		\|\mathbf{u}\times\mathbf{v}''\|_{L^{2}}
		\le \|\mathbf{u}\|_{L^{\infty}}\|\mathbf{v}''\|_{L^{2}}
		\le C\|\mathbf{u}\|_{H^{2}}\|\mathbf{v}\|_{H^{2}}.
		\]
		The remaining terms are directly controlled by \(\|\mathbf{v}\|_{H^{2}}\). Hence there exists a
		constant \(C(\lambda,\|\mathbf{u}\|_{H^{2}})>0\) such that
		\[
		\|D_{\mathbf{u}}\mc{F}(\lambda,\mathbf{u})[\mathbf{v}]\|_{L^{2}}
		\le C(\lambda,\|\mathbf{u}\|_{H^{2}})\,\|\mathbf{v}\|_{H^{2}}.
		\]
		Thus \(D_{\mathbf{u}}\mc{F}(\lambda,\mathbf{u})\) is a bounded linear operator from
		\(H^{2}\) to \(L^{2}\).
		
		Now, we do the same for the component \(\mc{C}\). For the derivative with respect to \(\mathbf{u}\),
		\[
		D_{\mathbf{u}}\mc{C}(\mathbf{u})[\mathbf{v}]
		= \mathbf{x}_{0}\cdot\mathbf{v} + \mathbf{u}\cdot\mathbf{v}.
		\]
		By the algebra property of $H^{2}(\mathbb{T})$, the first term satisfies
		\[
		\|\mathbf{x}_{0}\cdot\mathbf{v}\|_{H^{2}}
		\le C \, \|\mathbf{x}_{0}\|_{\mc{C}^{2}}\|\mathbf{v}\|_{H^{2}}
		\le C\,\|\mathbf{v}\|_{H^{2}},
		\]
		since \(\mathbf{x}_{0}\) is smooth and bounded. The second term satisfies
		\[
		\|\mathbf{u}\cdot\mathbf{v}\|_{H^{2}}
		\le C\|\mathbf{u}\|_{H^{2}}\|\mathbf{v}\|_{H^{2}}.
		\]
		Therefore
		\[
		\|D_{\mathbf{u}}\mc{C}(\mathbf{u})[\mathbf{v}]\|_{H^{2}}
		\le C(1+\|\mathbf{u}\|_{H^{2}})\,\|\mathbf{v}\|_{H^{2}},
		\]
		so \(D_{\mathbf{u}}\mc{C}(\mathbf{u})\) is also a bounded linear operator from \(H^{2}\) to
		\(H^{2}\). Combining these estimates, we see that $D\mc{G}(\lambda,\mathbf{u})
		: \mathbb{R}\times H^{2}\to L^{2}\times H^{2}$ is linear and bounded. 
		
		On the other hand, we prove that, indeed, $D\mc{G}$ is the Fréchet derivative of $\mc{G}$. 	Fix \((\lambda,\mathbf{u})\in\R\times H^{2}(\mathbb{T},\R^{3})\) and consider \((\mu,\mathbf{v})\in \R\times H^{2}(\mathbb{T},\R^{3})\).
		We expand \(\mc{F}(\lambda+\mu,\mathbf{u}+\mathbf{v})\) and subtract the linear terms.
		A direct computation shows
		\begin{equation}
			\label{Eq3.8}
			\mc{F}(\lambda+\mu,\mathbf{u}+\mathbf{v})
			-\mc{F}(\lambda,\mathbf{u})
			-D_{\lambda}\mc{F}(\lambda,\mathbf{u})[\mu]
			-D_{\mathbf{u}}\mc{F}(\lambda,\mathbf{u})[\mathbf{v}]
			=\mathbf{v}\times \mathbf{v}''
			+\mu\big(-e_{3}\times \mathbf{v}+\mathbf{v}'\big).
		\end{equation}
		We estimate the remainder in \(L^{2}\). Using \(H^{2}\hookrightarrow L^{\infty}\),
		\[
		\|\mathbf{v}\times\mathbf{v}''\|_{L^{2}}
		\le \|\mathbf{v}\|_{L^{\infty}}\|\mathbf{v}''\|_{L^{2}}
		\le c\,\|\mathbf{v}\|_{H^{2}}^{2},
		\]
		and
		\[
		\|\mu(-e_{3}\times \mathbf{v}+\mathbf{v}')\|_{L^{2}}
		\le |\mu|\,\big(\|\mathbf{v}\|_{L^{2}}+\|\mathbf{v}'\|_{L^{2}}\big)
		\le c\,|\mu|\,\|\mathbf{v}\|_{H^{2}}.
		\]
		Hence the \(L^{2}\)-norm of the right-hand side of \eqref{Eq3.8} is
		\(o\big(|\mu|+\|\mathbf{v}\|_{H^{2}}\big)\) as \((\mu,\mathbf{v})\to(0,0)\).
		
		For the constraint \(\mc{C}\), one has the exact expansion
		\begin{equation*}
			\mc{C}(\mathbf{u}+\mathbf{v})-\mc{C}(\mathbf{u})-D_{\mathbf{u}}\mc{C}(\mathbf{u})[\mathbf{v}]
			= \tfrac12\|\mathbf{v}\|^{2}.
		\end{equation*}
		Since \(H^{2}(\mathbb{T})\) is a Banach algebra,
		\[
		\Big\|\tfrac12\|\mathbf{v}\|^{2}\Big\|_{H^{2}}
		\le c\,\|\mathbf{v}\|_{H^{2}}^{2}=o\big(\|\mathbf{v}\|_{H^{2}}\big)
		\quad\text{as }\mathbf{v}\to 0.
		\]
		Together with \eqref{Eq3.8}, this proves that \(\mc{G}\) is Fréchet differentiable
		at \((\lambda,\mathbf{u})\) with derivative given in the statement.
		
		To conclude that \(\mc{G}\) is of class \(\mc{C}^{1}\), it remains to check that
		\((\lambda,\mathbf{u})\mapsto D\mc{G}(\lambda,\mathbf{u})\) is continuous in the operator norm.
		Let \(\mathbf{u}_{1},\mathbf{u}_{2}\in H^{2}\) and fix \(\mathbf{v}\in H^{2}\) with
		\(\|\mathbf{v}\|_{H^{2}}=1\). Then
		\[
		D_{\mathbf{u}}\mc{F}(\lambda,\mathbf{u}_{1})[\mathbf{v}]- D_{\mathbf{u}}\mc{F}(\lambda,\mathbf{u}_{2})[\mathbf{v}]= \mathbf{v}\times(\mathbf{u}_{1}''-\mathbf{u}_{2}'')+ (\mathbf{u}_{1}-\mathbf{u}_{2})\times\mathbf{v}''.
		\]
		Hence
		\[
		\big\|D_{\mathbf{u}}\mc{F}(\lambda,\mathbf{u}_{1})- D_{\mathbf{u}}\mc{F}(\lambda,\mathbf{u}_{2})\big\|_{\mathcal{L}(H^{2},L^{2})}
		\le C\,\|\mathbf{u}_{1}-\mathbf{u}_{2}\|_{H^{2}},
		\]
		for some constant \(C\) (on bounded sets in \(H^{2}\)). A similar estimate holds for
		\(D_{\mathbf{u}}\mc{C}\):
		\[
		\big\|D_{\mathbf{u}}\mc{C}(\mathbf{u}_{1})- D_{\mathbf{u}}\mc{C}(\mathbf{u}_{2})\big\|_{\mathcal{L}(H^{2},H^{2})}
		\le C\,\|\mathbf{u}_{1}-\mathbf{u}_{2}\|_{H^{2}}.
		\]
		The dependence on \(\lambda\) is affine, hence continuous. Therefore
		$(\lambda,\mathbf{u})\mapsto D\mc{G}(\lambda,\mathbf{u})$
		is continuous as a map into
		\(\mathcal{L}\big(\mathbb{R}\times H^{2},\,L^{2}\times H^{2}\big)\), and \(\mc{G}\) is of class
		\(\mc{C}^{1}\) on \((0,+\infty)\times H^{2}(\mathbb{T},\mathbb{R}^{3})\). The $\mc{C}^{\infty}$-regularity of $\mc{G}$ is proven by following the same steps.
	\end{proof}

\subsection{Study of the linearization} In this subsection we compute the generalized spectrum of the linearization. The main result is the following.

\begin{proposition}
	\label{Pr3.2}
	Let $\lambda>0$ and consider the linear operator $L_{\lambda}:=D_{\mathbf{u}}\mc{F}(\lambda,0)$ defined by
	\[
	L_{\lambda}:H^{2}(\mathbb{T},\mathbb{R}^{3})\longrightarrow L^{2}(\mathbb{T},\mathbb{R}^{3}),
	\qquad L_{\lambda}\mathbf{v}:= \mathbf{x}_{0}\times \mathbf{v}'' + \mathbf{v}\times \mathbf{x}_{0}'' - \lambda\, e_{3}\times \mathbf{v} + \lambda\,\mathbf{v}'.
	\]
	Let
	\[
	X:=\Big\{\mathbf{v}\in H^{2}(\mathbb{T},\mathbb{R}^{3}) : 
	\mathbf{x}_{0}(\eta)\cdot \mathbf{v}(\eta)=0\ \text{for all } \eta\in\mathbb{T}\Big\}.
	\]
	Then:
	\begin{enumerate}
		\item[{\rm(i)}] The space $X$ is invariant under $L_{\lambda}$, and
		\[
		\ker D_{\mathbf{u}}\mathcal{G}(\lambda,0)=\ker\big(L_{\lambda}\vert_{X}\big).
		\]
		
		\item[{\rm(ii)}] Define $\lambda_{k} := R\sqrt{k^{2}-1}$, $k\in\mathbb{N}$, $k\ge 2$. If $\lambda\neq \lambda_{k}$ for all $k\ge 2$, then
		\[
		\ker\big(L_{\lambda}\vert_{X}\big)=\mathrm{span}\{\mathbf{x}'_{0}\}.
		\]
		
		\item[{\rm(iii)}] For each $k\ge 2$ one has
		\[
		\ker\big(L_{\lambda_{k}}\vert_{X}\big)
		=\mathrm{span}\{\mathbf{x}'_{0},\,\mathbf{v}_{k}^{(1)},\,\mathbf{v}_{k}^{(2)}\},
		\]
		and in particular $\dim\ker(L_{\lambda_k}\vert_X)=3$, where
		\begin{align*}
			\mathbf{v}_{k}^{(1)}(\eta)
			&= \beta_{k}^{(1)}(\eta)\,E_{2}(\eta) + \gamma_{k}^{(1)}(\eta)\,E_{3}, \\
			\mathbf{v}_{k}^{(2)}(\eta)
			&= \beta_{k}^{(2)}(\eta)\,E_{2}(\eta) + \gamma_{k}^{(2)}(\eta)\,E_{3},
		\end{align*}
		with the moving orthonormal frame
		\[
		E_{1}(\eta):=\frac{\mathbf{x}_{0}(\eta)}{R},\quad
		E_{2}(\eta):=\frac{\mathbf{x}_{0}'(\eta)}{R},\quad
		E_{3}:=e_{3},
		\]
		and the scalar coefficients
		\[
		\beta_{k}^{(1)}(\eta)=\cos(k\eta),\qquad
		\gamma_{k}^{(1)}(\eta)=\frac{k}{\sqrt{k^{2}-1}}\sin(k\eta),
		\]
		\[
		\beta_{k}^{(2)}(\eta)=\sin(k\eta),\qquad
		\gamma_{k}^{(2)}(\eta)=-\frac{k}{\sqrt{k^{2}-1}}\cos(k\eta).
		\]
	\end{enumerate}
\end{proposition}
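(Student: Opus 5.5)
The plan is to rewrite everything in the moving orthonormal frame $E_1,E_2,E_3$ of the statement, which turns $L_\lambda$ into a constant-coefficient system, and then solve that system by Fourier series.

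\textbf{Frame computation.} Write an arbitrary $\mathbf v\in H^2(\mathbb T,\mathbb R^3)$ as $\mathbf v=\alpha E_1+\beta E_2+\gamma E_3$ with scalar $\alpha,\beta,\gamma\in H^2(\mathbb T)$. We use $E_1'=E_2$, $E_2'=-E_1$, $E_3'=0$, together with $E_1\times E_2=E_3$, $E_2\times E_3=E_1$, $E_3\times E_1=E_2$. Then $\mathbf v'=(\alpha'-\beta)E_1+(\beta'+\alpha)E_2+\gamma'E_3$ and $\mathbf v''=(\alpha''-2\beta'-\alpha)E_1+(\beta''+2\alpha'-\beta)E_2+\gamma''E_3$. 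Using $\mathbf x_0=RE_1$ and $\mathbf x_0''=-RE_1$, a routine computation should give
\[
L_\lambda\mathbf v=\lambda\alpha'\,E_1+\big(\lambda\beta'-R(\gamma''+\gamma)\big)E_2+\big(R\beta''+2R\alpha'+\lambda\gamma'\big)E_3 .
\]
I would verify this identity term by term, since all later steps depend on it. In addition, $\mathbf x_0\cdot\mathbf v=R\alpha$.

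\textbf{Step (i).} The space $X$ is exactly the set $\{\alpha=0\}$. For $\mathbf v\in X$ the $E_1$-component of $L_\lambda\mathbf v$ is $\lambda\alpha'=0$, so $L_\lambda\mathbf v$ is tangent along $\mathbf x_0$. I read the invariance statement in this sense: $L_\lambda$ maps $X$ into the tangent fields in $L^2$. For the kernel, $D_{\mathbf u}\mathcal G(\lambda,0)[\mathbf v]=(L_\lambda\mathbf v,\mathbf x_0\cdot\mathbf v)=0$ forces $\alpha=0$, that is $\mathbf v\in X$, and then $L_\lambda\mathbf v=0$. This gives $\ker D_{\mathbf u}\mathcal G(\lambda,0)=\ker(L_\lambda|_X)$.

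\textbf{Reduction to a scalar system.} On $X$ the kernel equation becomes the system
\[
\lambda\beta'=R(\gamma''+\gamma),\qquad R\beta''+\lambda\gamma'=0 \quad\text{on }\mathbb T.
\]

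\textbf{Fourier analysis.} Expand $\beta=\sum b_n e^{in\eta}$ and $\gamma=\sum g_n e^{in\eta}$. These are $L^2$-convergent expansions with the reality conditions $b_{-n}=\overline{b_n}$, $g_{-n}=\overline{g_n}$. Each mode decouples into
\[
i\lambda n\,b_n-R(1-n^2)g_n=0,\qquad -Rn^2 b_n+i\lambda n\,g_n=0 .
\]
\begin{itemize}
\item For $n=0$ the system forces $g_0=0$ and leaves $b_0$ free. This gives $\beta$ constant and $\gamma=0$, that is $\mathbf v\in\mathrm{span}\{E_2\}=\mathrm{span}\{\mathbf x_0'\}$.
\item For $n\neq0$ the determinant is $n^2\big(R^2(n^2-1)-\lambda^2\big)$. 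For $|n|=1$ it vanishes only if $\lambda=0$, which is excluded since $\lambda>0$. For $|n|\ge2$ it vanishes exactly when $\lambda=R\sqrt{n^2-1}$, i.e. $\lambda=\lambda_{|n|}$. Since $k\mapsto\lambda_k$ is strictly increasing, at most one value $k=|n|$ can resonate.
\end{itemize}

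\textbf{Conclusions (ii) and (iii).} If $\lambda\ne\lambda_k$ for all $k\ge2$, every mode with $n\neq0$ is trivial, which proves (ii). At $\lambda=\lambda_k$ the modes $n=\pm k$ each contribute a complex line, namely $g_k=-\dfrac{ik}{\sqrt{k^2-1}}\,b_k$, and the reality condition reduces these to a real two-dimensional space.
\begin{itemize}
\item Taking $b_k=\tfrac12$ gives $\beta=\cos k\eta$ and $\gamma=\tfrac{k}{\sqrt{k^2-1}}\sin k\eta$, which is $\mathbf v_k^{(1)}$.
\item Taking $b_k=-\tfrac i2$ gives $\beta=\sin k\eta$ and $\gamma=-\tfrac{k}{\sqrt{k^2-1}}\cos k\eta$, which is $\mathbf v_k^{(2)}$.
\end{itemize}
Together with $\mathbf x_0'$ these three fields are linearly independent, because they live in distinct Fourier modes. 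So the kernel is three-dimensional, as claimed in (iii).

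\textbf{Main obstacle.} The only delicate point is the frame computation: the signs of the cross products and the terms produced by differentiating the rotating frame. An error there would shift the resonance values $\lambda_k$. As a consistency check, I would confirm that $\mathbf x_0'$, which is the generator of the rotational symmetry, lies in the kernel for every $\lambda$.
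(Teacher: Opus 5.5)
Your proposal is correct and follows essentially the same route as the paper: the moving-frame expression for $L_\lambda\mathbf v$, the reduction on $X$ to the system $\lambda\beta'=R(\gamma''+\gamma)$, $R\beta''+\lambda\gamma'=0$, and the Fourier-mode determinant $n^2\bigl(R^2(n^2-1)-\lambda^2\bigr)$ are exactly what the paper uses. Your reading of the invariance in (i) as ``$L_\lambda$ maps $X$ into $L^2$ fields orthogonal to $E_1$'' is the accurate one, since the paper's $L_\lambda(X)\subset X$ glosses over the $H^2$ versus $L^2$ mismatch. Your explicit use of the reality condition on the modes $n=\pm k$ is a more careful version of the paper's step of taking real and imaginary parts of the complex solution.
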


\begin{proof}
	We introduce the moving orthonormal frame along the equator
	\begin{align*}
	\left\{
	\begin{array}{l}
	E_{1}(\eta):=\dfrac{\mathbf{x}_{0}(\eta)}{R}=(\cos\eta,\sin\eta,0), \\
	\\
	E_{2}(\eta):=\dfrac{\mathbf{x}_{0}'(\eta)}{R}=(-\sin\eta,\cos\eta,0), \\
	\\
	E_{3}:=e_{3}=(0,0,1).
	\end{array}
	\right.
	\end{align*}
	A direct computation gives 
	\begin{equation}
		\label{Eq3.9}
	E_{1}'=E_{2}, \quad  E_{2}'=-E_{1}, \quad E_{3}'=0,
    \end{equation}
 and $\mathbf{x}_{0}=R E_{1}$, $\mathbf{x}_{0}'=R E_{2}$, $\mathbf{x}_{0}''=-R E_{1}$. Moreover, the cross products in this frame satisfy
	\begin{equation}
		\label{Eq3.10}
	E_{1}\times E_{2}=E_{3},\qquad
	E_{1}\times E_{3}=-E_{2},\qquad
	E_{3}\times E_{1}=E_{2},\qquad
	E_{3}\times E_{2}=-E_{1}.
	\end{equation}
	These identities will be useful for the forthcoming computations. Any $\mathbf{v}\in H^{2}(\mathbb{T},\mathbb{R}^{3})$ can be written uniquely as
	\[
	\mathbf{v}(\eta)=\alpha(\eta)E_{1}(\eta)
	+\beta(\eta)E_{2}(\eta)
	+\gamma(\eta)E_{3},
	\]
	with $\alpha,\beta,\gamma\in H^{2}(\mathbb{T},\mathbb{R})$. The constraint
	$\mathbf{x}_{0}\cdot\mathbf{v}=0$ is equivalent to
	\[
	\mathbf{x}_{0}\cdot\mathbf{v}
	= R\,E_{1}\cdot(\alpha E_{1}+\beta E_{2}+\gamma E_{3})
	= R\,\alpha = 0,
	\]
	so on $X$ we have $\alpha\equiv 0$, and therefore
	\[
	X
	= \big\{\mathbf{v}=\beta E_{2}+\gamma E_{3} : \beta,\gamma\in H^{2}(\mathbb{T},\mathbb{R})\big\}.
	\]
	For a general $\mathbf{v}=\alpha E_{1}+\beta E_{2}+\gamma E_{3}$, its first derivative is given by
	\begin{align}
		\label{Eq3.11}
	\mathbf{v}'  & = \alpha' E_{1}+\alpha E_{1}'
	+ \beta' E_{2}+\beta E_{2}'
	+ \gamma' E_{3}+\gamma E_{3}' \nonumber \\
	&= (\alpha'-\beta)E_{1} + (\alpha+\beta')E_{2} + \gamma' E_{3},
\end{align} 
	where we have used identities \eqref{Eq3.9}. Setting $A:=\alpha'-\beta$ and $B:=\alpha+\beta'$, we obtain $\mathbf{v}' = A E_{1} + B E_{2} + \gamma' E_{3}$. Differentiating once more and using again identities \eqref{Eq3.9}, it becomes apparent that
	\begin{align*}
	\mathbf{v}''  & = A' E_{1} + A E_{1}'
	+ B' E_{2} + B E_{2}'
	+ \gamma'' E_{3}  = (A'-B)E_{1} + (A+B')E_{2} + \gamma'' E_{3}.
	\end{align*}
	Since $A'=\alpha''-\beta'$, $B'=\alpha'+\beta''$, we obtain
	\begin{align}
		\label{Eq3.12}
	\mathbf{v}'' &=a_{1}E_{1}+a_{2}E_{2}+a_{3}E_{3} \nonumber \\
	&= (\alpha''-\alpha-2\beta')E_{1}
	+ (2\alpha'-\beta+\beta'')E_{2}
	+ \gamma'' E_{3}.
     \end{align}
	We now compute each term in $L_{\lambda}\mathbf{v}$.
	
	\begin{itemize}
	\item[{\rm(a)}] The term $\mathbf{x}_{0}\times\mathbf{v}''$.
	Since $\mathbf{x}_{0}=R E_{1}$ and \eqref{Eq3.12}, we have, by using identities \eqref{Eq3.10}, that
	\begin{align*}
	\mathbf{x}_{0}\times\mathbf{v}'' = R E_{1}\times(a_{1}E_{1}+a_{2}E_{2}+a_{3}E_{3}) = R\big(a_{2}E_{3} - a_{3}E_{2}\big).
	\end{align*}
	Therefore
	\[
	\mathbf{x}_{0}\times\mathbf{v}''
	= -R\gamma'' E_{2} + R(2\alpha'-\beta+\beta'')E_{3}.
	\]
	
	\item[{\rm(b)}] The term $\mathbf{v}\times\mathbf{x}_{0}''$.
	We have $\mathbf{x}_{0}''=-R E_{1}$, so
	\[
	\mathbf{v}\times\mathbf{x}_{0}''
	= \mathbf{v}\times(-R E_{1})
	= -R\,\mathbf{v}\times E_{1}.
	\]
	Using
	\begin{align*}
	\mathbf{v}\times E_{1} &= (\alpha E_{1}+\beta E_{2}+\gamma E_{3})\times E_{1} = \beta(E_{2}\times E_{1})+\gamma(E_{3}\times E_{1}) \\
	& = -\beta E_{3} + \gamma E_{2},
	\end{align*}
	we obtain
	\[
	\mathbf{v}\times\mathbf{x}_{0}''= -R(\gamma E_{2} - \beta E_{3})= -R\gamma E_{2} + R\beta E_{3}.
	\]
	
	\item[{\rm(c)}] The term $-\,\lambda e_{3}\times\mathbf{v}$.
	Since $e_{3}=E_{3}$,
	\begin{align*}
	e_{3}\times\mathbf{v} &= E_{3}\times(\alpha E_{1}+\beta E_{2}+\gamma E_{3})
	= \alpha(E_{3}\times E_{1})+\beta(E_{3}\times E_{2}) \\
	&= \alpha E_{2} - \beta E_{1}.
\end{align*}
	Thus $-\lambda e_{3}\times\mathbf{v}
	= \lambda\beta E_{1} - \lambda\alpha E_{2}$.
	
	\smallskip
	\item[{\rm(d)}] The term $\lambda\mathbf{v}'$. By \eqref{Eq3.11}, 
	\[
	\lambda\mathbf{v}'= \lambda(\alpha'-\beta)E_{1}
	+ \lambda(\alpha+\beta')E_{2}+ \lambda\gamma' E_{3}.
	\]
\end{itemize}
Summing up the contributions in the $E_{1},E_{2},E_{3}$ directions we obtain
\[
L_{\lambda}\mathbf{v}= \lambda\alpha' E_{1}+ \big[-R(\gamma''+\gamma) + \lambda\beta'\big]E_{2}+ \big[R(2\alpha'+\beta'') + \lambda\gamma'\big]E_{3}.
\]
On the subspace $X$ we have $\alpha\equiv 0$ and therefore $\alpha'\equiv 0$. Inserting
	$\alpha\equiv 0$ in the above expression, we obtain
	\[
	L_{\lambda}(\beta E_{2}+\gamma E_{3})
	= \big[-R(\gamma''+\gamma) + \lambda\beta'\big]E_{2}
	+ \big[R\beta'' + \lambda\gamma'\big]E_{3}.
	\]
	Hence $L_{\lambda}\mathbf{v}$ has zero $E_{1}$–component whenever
	$\mathbf{v}=\beta E_{2}+\gamma E_{3}\in X$, so $L_{\lambda}(X)\subset X$. This proves that
	$X$ is invariant under $L_{\lambda}$.
	Therefore
	\[
	\ker D_{\mathbf{u}}\mathcal{G}(\lambda,0)
	= \big\{\mathbf{v}\in H^{2}(\mathbb{T},\mathbb{R}^{3}) :
	L_{\lambda}\mathbf{v}=0,\ \mathbf{x}_{0}\cdot\mathbf{v}=0\big\}
	= \ker\big(L_{\lambda}\vert_{X}\big),
	\]
	which proves item (i).
	Note that $L_{\lambda}\mathbf{v}=0$ if and only if $(\beta,\gamma)$ solves
	\begin{equation}
		\label{Eq3.13}
		\left\{
		\begin{aligned}
			\lambda\beta' - R(\gamma''+\gamma) &= 0,\\
			R\beta'' + \lambda\gamma' &= 0.
		\end{aligned}
		\right.
	\end{equation}
	We expand in Fourier series,
	$$
	\b(\eta)=\sum_{k\in\Z}B_{k} e^{ik\eta}, \qquad \g(\eta)=\sum_{k\in\Z}G_{k}e^{ik\eta}.
	$$
	Substituting into \eqref{Eq3.13}, we obtain, for each $k\in\Z$,
	\[
	\lambda(ikB_{k}) - R(-k^{2}G_{k}+G_{k}) = 0,
	\qquad R(-k^{2}B_{k}) + \lambda(ikG_{k}) = 0,
	\]
	that is,
	\begin{equation}
		\label{Eq3.14}
	\begin{pmatrix}
		i\lambda k & R(k^{2}-1)\\[2pt]
		-Rk^{2} & i\lambda k
	\end{pmatrix}
	\begin{pmatrix} B_{k} \\ G_{k} \end{pmatrix}
	= \begin{pmatrix} 0 \\ 0 \end{pmatrix}.
	\end{equation}
	There exists a non-trivial solution $(B_{k},G_{k})\neq(0,0)$ of this system if and only if the determinant vanishes:
	\[
	(i\lambda k)^{2} - (-Rk^{2})\,R(k^{2}-1) = 0.
	\]
	Since we restrict to $\lambda>0$, this gives the discrete set
	\[
	\lambda = \lambda_{k} := R\sqrt{k^{2}-1}.
	\]
	The case $k=1$ would give $\lambda=0$ and is therefore excluded. For $k=0$ the system reduces to
	\[
	\begin{pmatrix}
		0 & -R\\[2pt]
		0 & 0
	\end{pmatrix}
	\begin{pmatrix} B_{0} \\ G_{0} \end{pmatrix}
	= \begin{pmatrix} 0 \\ 0 \end{pmatrix}.
	\]
	Hence $G_{0}=0$ and, therefore, we obtain $B_{0}\in\R$ as a free parameter. Thus, for $\lambda>0$, if $\lambda\neq\lambda_{k}$ for all integers
	$k\ge 2$,
	$$
	\ker(L_{\lambda}\vert_{X})=\text{span}\{E_{2}(\eta)\}=\text{span}\{\mathbf{x}'_{0}(\eta)\}.
	$$
	For $k\ge 2$, we solve the algebraic system \eqref{Eq3.14}	for $(B_{k},G_{k})$.
	Both equations are equivalent when $\lambda=\lambda_{k}$, so the solution space is
	one–dimensional over $\mathbb{C}$. Choosing $B_{k}=1$ and using the first equation, we obtain
	\[
	G_{k} = -\,i\,\frac{\lambda_{k}k}{R(k^{2}-1)}= -\,i\,\frac{k}{\sqrt{k^{2}-1}}.
	\]
	Thus a complex solution is
	\[
	\beta(\eta)=e^{ik\eta},\qquad
	\gamma(\eta)=-\,i\,\frac{k}{\sqrt{k^{2}-1}}\,e^{ik\eta}.
	\]
	Taking real and imaginary parts, we obtain two linearly independent real solutions:
	\[
	\beta_{k}^{(1)}(\eta)=\cos(k\eta),\qquad\gamma_{k}^{(1)}(\eta)=\frac{k}{\sqrt{k^{2}-1}}\sin(k\eta),
	\]
	\[
	\beta_{k}^{(2)}(\eta)=\sin(k\eta),\qquad \gamma_{k}^{(2)}(\eta)=-\frac{k}{\sqrt{k^{2}-1}}\cos(k\eta).
	\]
	By construction, for $\lambda=\lambda_{k}$ the pairs $(\beta_{k}^{(j)},\gamma_{k}^{(j)})$,
	$j=1,2$, solve \eqref{Eq3.13} and are $2\pi$–periodic. Therefore the vector fields
	\[
	\mathbf{v}_{k}^{(j)}(\eta)
	= \beta_{k}^{(j)}(\eta)\,E_{2}(\eta) + \gamma_{k}^{(j)}(\eta)\,E_{3},\qquad j=1,2,
	\]
	belong to $X$ and satisfy $L_{\lambda_{k}}\mathbf{v}_{k}^{(j)}=0$. They are linearly independent
	over $\mathbb{R}$, so
	\[
	\ker(L_{\lambda_{k}}\vert_{X})=\text{span}\{\mathbf{x}'_{0}, \mathbf{v}_{k}^{(1)}, \mathbf{v}_{k}^{(2)}\}.
	\]
	This proves item (iii) and completes the proof of the
	proposition.
\end{proof}

In order to reduce the dimension of the kernels of the linearization, we introduce an adequate symmetry on the involved functional spaces. 	Let
\[
S:=\mathrm{diag}(1,-1,1)\in\mathbb R^{3\times 3},\qquad(\mathcal R\mathbf u)(\eta):=S\,\mathbf u(-\eta),\qquad \eta\in\mathbb T,
\]
and define the closed subspace (fixed-point space)
\[
Y_{\mathrm{rev}}:=\mathrm{Fix}(\mathcal R)=\{\mathbf u\in H^{2}(\mathbb T,\mathbb R^{3}) : \mathcal R\mathbf u=\mathbf u\}.
\]
On the codomain define the linear involutions
\[
(\mathcal R_{F}\mathbf f)(\eta):=-S\,\mathbf f(-\eta)\quad\text{for }\mathbf f\in L^{2}(\mathbb T,\mathbb R^{3}),
\qquad (\mathcal R_{C}g)(\eta):=g(-\eta)\quad\text{for }g\in H^{2}(\mathbb T),
\]
and set
\[
Z_{\mathrm{rev}}:=\mathrm{Fix}(\mathcal R_{F})\times \mathrm{Fix}(\mathcal R_{C})
\subset L^{2}(\mathbb T,\mathbb R^{3})\times H^{2}(\mathbb T).
\]

\begin{lemma}
	\label{Le3.3}
	The following statements hold:
	\begin{enumerate}
		\item[{\rm(i)}] For every $\lambda>0$ and every $\mathbf u\in H^{2}(\mathbb T,\mathbb R^{3})$ one has the symmetry relations
		\[
		\mathcal F(\lambda,\mathcal R\mathbf u)=\mathcal R_{F}\,\mathcal F(\lambda,\mathbf u),
		\qquad\mathcal C(\mathcal R\mathbf u)=\mathcal R_{C}\,\mathcal C(\mathbf u).
		\]
		In particular, $\mathcal G\big((0,\infty)\times Y_{\mathrm{rev}}\big)\subset Z_{\mathrm{rev}}$,
		so the restricted map
		\[
		\mathcal G_{\mathrm{rev}}:=\mathcal G\big|_{(0,\infty)\times Y_{\mathrm{rev}}}:
		(0,\infty)\times Y_{\mathrm{rev}}\longrightarrow Z_{\mathrm{rev}}
		\]
		is well-defined.
		
		\item[{\rm(ii)}] For every $\lambda>0$,
		\begin{align*}
		\ker D_{\mathbf u}\mathcal G_{\mathrm{rev}}(\lambda,0)
		 =\ker\Big(D_{\mathbf u}\mathcal G(\lambda,0)\big|_{Y_{\mathrm{rev}}}\Big)  =
		\Big\{\mathbf v\in Y_{\mathrm{rev}}:\ L_{\lambda}\mathbf v=0,\ \mathbf x_{0}\cdot \mathbf v=0\Big\}.
		\end{align*}
		Moreover, $\ker D_{\mathbf u}\mathcal G_{\mathrm{rev}}(\lambda,0)=\{0\}$ if $\lambda\neq \lambda_{k}:=R\sqrt{k^{2}-1}$ for all $k\ge2$,
		whereas for $\lambda=\lambda_{k}$ with $k\ge2$ one has
		\[
		\ker D_{\mathbf u}\mathcal G_{\mathrm{rev}}(\lambda_{k},0)=\mathrm{span}\{\mathbf v_{k}^{(2)}\}.
		\]
	\end{enumerate}
\end{lemma}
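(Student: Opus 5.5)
We verify (i) by direct substitution, checking each term of $\mathcal F$ and $\mathcal C$ separately, and we get (ii) by intersecting the kernel from Proposition~\ref{Pr3.2} with $Y_{\mathrm{rev}}$.

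\emph{Part (i).} The key observations are that $S\mathbf{x}_0(-\eta)=\mathbf{x}_0(\eta)$ and that $S$ is orthogonal with $\det S=-1$. The first identity holds because $\mathbf{x}_0(-\eta)=(R\cos\eta,-R\sin\eta,0)$. The second gives $S\mathbf a\times S\mathbf b=-S(\mathbf a\times\mathbf b)$. We also have $Se_3=e_3$.

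Put $\mathbf w:=\mathcal R\mathbf u$. Then $\mathbf w'(\eta)=-S\mathbf u'(-\eta)$ and $\mathbf w''(\eta)=S\mathbf u''(-\eta)$. Since $\mathbf{x}_0''=-\mathbf{x}_0$, we also have $\mathbf{x}_0''(\eta)=S\mathbf{x}_0''(-\eta)$. The terms of $\mathcal F$ then transform as follows.
\begin{itemize}
\item Each bilinear cross-product term picks up one factor $-S$. For example, $\mathbf{x}_0(\eta)\times\mathbf w''(\eta)=S\mathbf{x}_0(-\eta)\times S\mathbf u''(-\eta)=-S(\mathbf{x}_0\times\mathbf u'')(-\eta)$.
\item For the rotation term, $e_3\times\mathbf w=Se_3\times S\mathbf u(-\cdot)=-S(e_3\times\mathbf u)(-\cdot)$.
\item For the derivative term, $\lambda\mathbf w'=-S(\lambda\mathbf u')(-\cdot)$.
\end{itemize}
Summing these gives $\mathcal F(\lambda,\mathcal R\mathbf u)=\mathcal R_F\mathcal F(\lambda,\mathbf u)$.

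For the constraint, orthogonality of $S$ gives $\mathbf{x}_0(\eta)\cdot S\mathbf u(-\eta)=\mathbf{x}_0(-\eta)\cdot\mathbf u(-\eta)$ and $\|S\mathbf u\|=\|\mathbf u\|$. Hence $\mathcal C(\mathcal R\mathbf u)=\mathcal R_C\mathcal C(\mathbf u)$. Invariance of the fixed-point spaces follows at once: if $\mathbf u=\mathcal R\mathbf u$, then $\mathcal R_F\mathcal F(\lambda,\mathbf u)=\mathcal F(\lambda,\mathbf u)$, and similarly for $\mathcal C$.

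\emph{Part (ii).} Since $Y_{\mathrm{rev}}$ is a closed linear subspace, the derivative of the restriction is the restriction of the derivative. By the derivative lemma at $\mathbf u=0$ and Proposition~\ref{Pr3.2}(i), the kernel of $D_{\mathbf u}\mathcal G_{\mathrm{rev}}(\lambda,0)$ is $\ker(L_\lambda|_X)\cap Y_{\mathrm{rev}}$. By Proposition~\ref{Pr3.2}(ii),(iii), it remains to decide which elements of $\mathrm{span}\{\mathbf{x}_0',\mathbf v_k^{(1)},\mathbf v_k^{(2)}\}$ are $\mathcal R$-invariant.

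We compute $\mathcal R$ on the frame:
\[
SE_2(-\eta)=S(\sin\eta,\cos\eta,0)=(\sin\eta,-\cos\eta,0)=-E_2(\eta),\qquad SE_3=E_3.
\]
Therefore, for $\mathbf v=\beta E_2+\gamma E_3$,
\[
\mathcal R\mathbf v=-\beta(-\eta)E_2(\eta)+\gamma(-\eta)E_3 .
\]
So $\mathbf v$ is $\mathcal R$-invariant exactly when $\beta$ is odd and $\gamma$ is even. Applying this criterion:
\begin{itemize}
\item $\mathbf{x}_0'=RE_2$ has $\beta$ constant, hence even, so it is excluded.
\item $\mathbf v_k^{(1)}$ has $\beta=\cos k\eta$ (even) and $\gamma\propto\sin k\eta$ (odd), so it is excluded.
\item $\mathbf v_k^{(2)}$ has $\beta=\sin k\eta$ (odd) and $\gamma\propto\cos k\eta$ (even), so it is included.
\end{itemize}
Because $\mathcal R$ maps each of the three basis vectors to $\pm$ itself, a linear combination is invariant iff the coefficients of the anti-invariant vectors vanish. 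This gives the trivial kernel for $\lambda\ne\lambda_k$ and $\mathrm{span}\{\mathbf v_k^{(2)}\}$ at $\lambda=\lambda_k$.

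The main obstacle is only bookkeeping. The care needed is with the sign from $\det S=-1$ in the cross products and with the sign of $\mathbf w'$ under $\eta\mapsto-\eta$. These two signs must combine to the same factor $-S$ in every term of $\mathcal F$, which is exactly why the codomain involution is $\mathcal R_F=-S(\cdot)(-\eta)$.
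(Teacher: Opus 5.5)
Your proposal is correct and follows essentially the same route as the paper. You check the equivariance term by term using $S\mathbf x_0(-\eta)=\mathbf x_0(\eta)$, $\det S=-1$ and $Se_3=e_3$, then intersect the kernel from Proposition~\ref{Pr3.2} with $Y_{\mathrm{rev}}$ via the parities of $\mathbf x_0'$, $\mathbf v_k^{(1)}$, $\mathbf v_k^{(2)}$; your observation that each basis vector satisfies $\mathcal R\mathbf v=\pm\mathbf v$, so only the invariant one survives in a linear combination, makes explicit a step the paper leaves implicit.
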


\begin{proof}
	Recall that $\mathbf x_{0}(\eta)=(R\cos\eta,R\sin\eta,0)$. A direct check gives
	\begin{equation}
		\label{Eq3.15}
		\mathbf x_{0}(-\eta)=S\,\mathbf x_{0}(\eta),\qquad
		\mathbf x_{0}'(-\eta)=-S\,\mathbf x_{0}'(\eta),\qquad
		\mathbf x_{0}''(-\eta)=S\,\mathbf x_{0}''(\eta).
	\end{equation}
	Moreover \(Se_{3}=e_{3}\) and \(\det S=-1\). Hence, for every
	\(a,b\in\mathbb R^{3}\),
	\begin{equation}
		\label{Eq3.16}
		(Sa)\times(Sb)=\det(S)\,S(a\times b)=-S(a\times b),
	\end{equation}
	and, in particular, $e_{3}\times(Sa)=-S(e_{3}\times a)$.
	Let \(\mathbf u\in H^{2}(\mathbb T,\mathbb R^{3})\), and set
	\[
	\widetilde{\mathbf u}:=\mathcal R\mathbf u,\qquad	\widetilde{\mathbf u}(\eta)=S\mathbf u(-\eta).
	\]
	Then
	\[
	\widetilde{\mathbf u}'(\eta)=-S\mathbf u'(-\eta),
	\qquad\widetilde{\mathbf u}''(\eta)=S\mathbf u''(-\eta).
	\]
	Using \eqref{Eq3.15} and \eqref{Eq3.16}, term by term in the definition of \(\mathcal F\), we obtain
	\[
	\begin{aligned}
		\mathcal F(\lambda,\widetilde{\mathbf u})(\eta)
		&=
		\mathbf x_0(\eta)\times\widetilde{\mathbf u}''(\eta)+\widetilde{\mathbf u}(\eta)\times\mathbf x_0''(\eta)+\widetilde{\mathbf u}(\eta)\times\widetilde{\mathbf u}''(\eta)-\lambda e_3\times\widetilde{\mathbf u}(\eta)+\lambda\widetilde{\mathbf u}'(\eta)\\
		&=
		\mathbf x_0(\eta)\times S\mathbf u''(-\eta)+S\mathbf u(-\eta)\times\mathbf x_0''(\eta)+S\mathbf u(-\eta)\times S\mathbf u''(-\eta)\\
		&\quad
		-\lambda e_3\times S\mathbf u(-\eta)-\lambda S\mathbf u'(-\eta).
	\end{aligned}
	\]
	Since \(\mathbf x_0(\eta)=S\mathbf x_0(-\eta)\) and
	\(\mathbf x_0''(\eta)=S\mathbf x_0''(-\eta)\), this becomes
	\[
	\begin{aligned}
		\mathcal F(\lambda,\widetilde{\mathbf u})(\eta)
		&=
		(S\mathbf x_0(-\eta))\times(S\mathbf u''(-\eta))
		+(S\mathbf u(-\eta))\times(S\mathbf x_0''(-\eta))\\
		&\quad
		+(S\mathbf u(-\eta))\times(S\mathbf u''(-\eta))-\lambda e_3\times S\mathbf u(-\eta)-\lambda S\mathbf u'(-\eta)\\
		&=
		-S\big(\mathbf x_0(-\eta)\times\mathbf u''(-\eta)\big)-S\big(\mathbf u(-\eta)\times\mathbf x_0''(-\eta)\big)\\
		&\quad
		-S\big(\mathbf u(-\eta)\times\mathbf u''(-\eta)\big)+\lambda S\big(e_3\times\mathbf u(-\eta)\big)-\lambda S\mathbf u'(-\eta)\\
		&=
		-S\Big[
		\mathbf x_0(-\eta)\times\mathbf u''(-\eta)
		+\mathbf u(-\eta)\times\mathbf x_0''(-\eta)+\mathbf u(-\eta)\times\mathbf u''(-\eta)\\
		&\qquad\qquad
		-\lambda e_3\times\mathbf u(-\eta)+\lambda\mathbf u'(-\eta)
		\Big]=-S\,\mathcal F(\lambda,\mathbf u)(-\eta).
	\end{aligned}
	\]
	Thus $\mathcal F(\lambda,\mathcal R\mathbf u)=\mathcal R_F\mathcal F(\lambda,\mathbf u)$. For the constraint, using again \eqref{Eq3.15} and the orthogonality of \(S\), we get
	\[
	\begin{aligned}
		\mathcal C(\widetilde{\mathbf u})(\eta)
		&=
		\mathbf x_0(\eta)\cdot S\mathbf u(-\eta)+\frac12\|S\mathbf u(-\eta)\|^2=(S\mathbf x_0(\eta))\cdot \mathbf u(-\eta)+\frac12\|\mathbf u(-\eta)\|^2\\
		&=
		\mathbf x_0(-\eta)\cdot\mathbf u(-\eta)+\frac12\|\mathbf u(-\eta)\|^2=\mathcal C(\mathbf u)(-\eta).
	\end{aligned}
	\]
	Hence $\mathcal C(\mathcal R\mathbf u)=\mathcal R_C\mathcal C(\mathbf u)$.
	Therefore, if \(\mathbf u\in Y_{\mathrm{rev}}=\operatorname{Fix}(\mathcal R)\), then $\mathcal F(\lambda,\mathbf u)\in\operatorname{Fix}(\mathcal R_F)$ and $\mathcal C(\mathbf u)\in\operatorname{Fix}(\mathcal R_C)$.
	Equivalently,
	\[
	\mathcal G\big((0,\infty)\times Y_{\mathrm{rev}}\big)\subset Z_{\mathrm{rev}},
	\]
	and the restricted map $\mathcal G_{\mathrm{rev}}:(0,\infty)\times Y_{\mathrm{rev}}\to Z_{\mathrm{rev}}$
	is well defined. This proves item {\rm(i)}.
	
	We now prove item {\rm(ii)}. Since $D_{\mathbf u}\mathcal G(\lambda,0)[\mathbf v]=\big(L_\lambda\mathbf v,\mathbf x_0\cdot\mathbf v\big)$, we have
	\[
	\ker D_{\mathbf u}\mathcal G_{\mathrm{rev}}(\lambda,0)=\Big\{\mathbf v\in Y_{\mathrm{rev}}:L_\lambda\mathbf v=0,\;\mathbf x_0\cdot\mathbf v=0\Big\}.
	\]
	The condition \(\mathbf x_0\cdot\mathbf v=0\) is precisely the condition
	\(\mathbf v\in X\). Therefore
	\[
	\ker D_{\mathbf u}\mathcal G_{\mathrm{rev}}(\lambda,0)=\ker(L_\lambda|_X)\cap Y_{\mathrm{rev}}=\ker\big(L_\lambda|_{X\cap Y_{\mathrm{rev}}}\big).
	\]
	By Proposition~\ref{Pr3.2}, if $\lambda\neq\lambda_k:=R\sqrt{k^2-1}$ for every $k\ge2$,
	then $\ker(L_\lambda|_X)=\operatorname{span}\{\mathbf x_0'\}$.
	Since \(\mathbf x_0'=R E_2\), and $E_2(\eta)=(-\sin\eta,\cos\eta,0)$,
	we have
	\[
	(\mathcal R E_2)(\eta)=S E_2(-\eta)=S(\sin\eta,\cos\eta,0)=(\sin\eta,-\cos\eta,0)=-E_2(\eta).
	\]
	Thus $\mathcal R\mathbf x_0'=-\mathbf x_0'$,
	and hence \(\mathbf x_0'\notin Y_{\mathrm{rev}}\). Consequently,
	\[
	\ker D_{\mathbf u}\mathcal G_{\mathrm{rev}}(\lambda,0)=\{0\}
	\]
	whenever \(\lambda\neq\lambda_k\) for all \(k\ge2\). Now let \(\lambda=\lambda_k\) for some \(k\ge2\). Again by
	Proposition~\ref{Pr3.2},
	\[
	\ker(L_{\lambda_k}|_X)=\operatorname{span}\{\mathbf x_0',\mathbf v_k^{(1)},\mathbf v_k^{(2)}\}.
	\]
	We have already seen that \(\mathbf x_0'\notin Y_{\mathrm{rev}}\). It remains to determine
	which of \(\mathbf v_k^{(1)}\) and \(\mathbf v_k^{(2)}\) belongs to \(Y_{\mathrm{rev}}\).
	Recall that
	\[
	\mathbf v_k^{(1)}=\cos(k\eta)E_2(\eta)+\frac{k}{\sqrt{k^2-1}}\sin(k\eta)E_3,
	\]
	and
	\[
	\mathbf v_k^{(2)}=\sin(k\eta)E_2(\eta)-\frac{k}{\sqrt{k^2-1}}\cos(k\eta)E_3.
	\]
	Since $\mathcal R E_2=-E_2$, $\mathcal R E_3=E_3$,
	and since \(\cos(k\eta)\) is even whereas \(\sin(k\eta)\) is odd, we obtain
	\[
	\mathcal R\mathbf v_k^{(1)}=-\mathbf v_k^{(1)},\qquad \mathcal R\mathbf v_k^{(2)}=\mathbf v_k^{(2)}.
	\]
	Therefore $\mathbf v_k^{(1)}\notin Y_{\mathrm{rev}}$, $\mathbf v_k^{(2)}\in Y_{\mathrm{rev}}$.
	It follows that
	\[
	\ker D_{\mathbf u}\mathcal G_{\mathrm{rev}}(\lambda_k,0)=\operatorname{span}\{\mathbf v_k^{(2)}\}.
	\]
	This proves item {\rm(ii)} and completes the proof.
\end{proof}

\subsection{The tangential augmented operator}

The augmented operator \(\mathcal G=(\mathcal F,\mathcal C)\) encodes the full
differential equation together with the spherical constraint. However, once the
constraint is imposed, the differential equation has only two independent
components, since the vector field
\[
\mathcal E_\lambda(\mathbf x):=\mathbf x\times \mathbf x''-\lambda e_3\times \mathbf x+\lambda \mathbf x'
\]
is tangent to the sphere whenever \(\|\mathbf x\|\equiv R\). Thus, for the
Fredholm analysis, it is convenient to remove the redundant normal component of
\(\mathcal F\) and keep only its components along \(E_2\) and \(E_3\).

We define the tangential augmented operator
\[
\widehat{\mathcal G}:(0,+\infty)\times H^{2}(\mathbb T,\mathbb R^3)\longrightarrow L^2(\mathbb T)\times L^2(\mathbb T)\times H^2(\mathbb T)
\]
\[
\widehat{\mathcal G}(\lambda,\mathbf u):=\left(\mathcal F(\lambda,\mathbf u)\cdot E_2,\,\mathcal F(\lambda,\mathbf u)\cdot E_3,\,\mathcal C(\mathbf u)\right).
\]
Equivalently, \(\widehat{\mathcal G}\) is obtained from the full augmented
operator \(\mathcal G\) by projecting the differential component onto the two
directions \(E_2,E_3\). In the reversible setting, we set
\[
\widehat Z_{\mathrm{rev}}:= L^2_{\mathrm{even}}(\mathbb T) \times L^2_{\mathrm{odd}}(\mathbb T) \times H^2_{\mathrm{even}}(\mathbb T).
\]
Then the symmetry relations in Lemma~\ref{Le3.3}
imply that $\widehat{\mathcal G}\big((0,+\infty)\times Y_{\mathrm{rev}}\big)\subset\widehat Z_{\mathrm{rev}}$. We therefore obtain a well-defined restricted operator
\[
\widehat{\mathcal G}_{\mathrm{rev}}:(0,+\infty)\times Y_{\mathrm{rev}}\longrightarrow\widehat Z_{\mathrm{rev}}.
\]
Since \(\widehat{\mathcal G}\) is obtained from \(\mathcal G\) by composition with
a bounded linear projection, the smoothness of \(\mathcal G\) immediately gives $\widehat{\mathcal G}\in \mathcal C^\infty$.
Moreover,
\[
D_{\mathbf u}\widehat{\mathcal G}(\lambda,0)[\mathbf v]=\left((L_\lambda\mathbf v)\cdot E_2,\,(L_\lambda\mathbf v)\cdot E_3,\,\mathbf x_0\cdot \mathbf v\right).
\]

\begin{lemma}[Local equivalence of the zero sets]
	\label{Le3.4}
	There exists an \(H^2\)-neighbourhood \(\mathcal U\) of \(0\) in
	\(H^2(\mathbb T,\mathbb R^3)\) such that, for every $(\lambda,\mathbf u)\in (0,+\infty)\times \mathcal U$,
	one has
	\[
	\widehat{\mathcal G}(\lambda,\mathbf u)=0 \quad\Longleftrightarrow\quad \mathcal G(\lambda,\mathbf u)=0.
	\]
\end{lemma}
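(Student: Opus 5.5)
The implication $\mathcal G(\lambda,\mathbf u)=0\Rightarrow\widehat{\mathcal G}(\lambda,\mathbf u)=0$ is trivial, since $\widehat{\mathcal G}$ is obtained from $\mathcal G$ by projecting the differential component onto $E_2$ and $E_3$. For the converse, assume $\mathcal F(\lambda,\mathbf u)\cdot E_2=\mathcal F(\lambda,\mathbf u)\cdot E_3=0$ and $\mathcal C(\mathbf u)=0$. It then suffices to show that the remaining component $\mathcal F(\lambda,\mathbf u)\cdot E_1$ also vanishes.

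\emph{Step 1: the full vector field is tangent.} Since $\mathcal C(\mathbf u)=0$, the curve $\mathbf x:=\mathbf x_0+\mathbf u$ satisfies $\|\mathbf x\|\equiv R$. Because $\mathbf x_0$ solves \eqref{Eq3.5}, we have $\mathcal F(\lambda,\mathbf u)=\mathcal E_\lambda(\mathbf x)$. Moreover,
\[
\mathcal E_\lambda(\mathbf x)\cdot\mathbf x=(\mathbf x\times\mathbf x'')\cdot\mathbf x-\lambda(e_3\times\mathbf x)\cdot\mathbf x+\lambda\,\mathbf x'\cdot\mathbf x=\tfrac{\lambda}{2}\big(\|\mathbf x\|^2\big)'=0,
\]
exactly as in the proof of the first proposition of Section~\ref{S2}. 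All of these identities hold a.e.\ for $\mathbf u\in H^2$, because $\mathbf x\in\mathcal C^1$ and $\mathbf x''\in L^2$. Hence $\mathbf f:=\mathcal F(\lambda,\mathbf u)$ satisfies $\mathbf f\cdot\mathbf x=0$ a.e.

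\emph{Step 2: a pointwise linear system.} Write $\mathbf f=f_1E_1+f_2E_2+f_3E_3$. The hypothesis gives $f_2=f_3=0$, so Step 1 reduces to $f_1\,(E_1\cdot\mathbf x)=0$ a.e. Now
\[
E_1\cdot\mathbf x=R+E_1\cdot\mathbf u\ge R-\|\mathbf u\|_{L^\infty}\ge R-c_S\|\mathbf u\|_{H^2},
\]
where $c_S$ is the constant of the embedding $H^2(\mathbb T)\hookrightarrow L^\infty(\mathbb T)$. We therefore choose $\mathcal U:=\{\|\mathbf u\|_{H^2}<R/(2c_S)\}$. On this set $E_1\cdot\mathbf x\ge R/2>0$ everywhere, so $f_1=0$ a.e., whence $\mathbf f=0$ in $L^2$ and $\mathcal G(\lambda,\mathbf u)=0$. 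The neighbourhood does not depend on $\lambda$, as the statement requires.

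There is no serious obstacle. The only points needing care are the a.e.\ validity of the product rule $(\|\mathbf x\|^2)'=2\,\mathbf x\cdot\mathbf x'$ for $H^2$ functions, which holds because $\mathbf x\in\mathcal C^1$, and the observation that the choice of $\mathcal U$ is what keeps $\mathbf x$ from becoming orthogonal to $E_1$. That orthogonality is the only way the normal component of $\mathbf f$ could escape detection by the tangential projections.
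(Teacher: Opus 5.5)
Your proof is correct and follows essentially the same route as the paper. Both arguments use the tangency $\mathcal E_\lambda(\mathbf x)\cdot\mathbf x=0$ coming from the constraint, together with a $\mathcal C^0$-smallness condition on $\mathbf u$ that gives $\mathbf x\cdot E_1>R/2$; the paper phrases this as $\{\mathbf x,E_2,E_3\}$ being a basis, whereas you reduce it to the scalar identity $f_1\,(E_1\cdot\mathbf x)=0$, and your explicit checks that $\mathcal F(\lambda,\mathbf u)=\mathcal E_\lambda(\mathbf x)$ and that the identities hold a.e.\ for $H^2$ data are small but welcome additions of rigor.
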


\begin{proof}
	The implication $\mathcal G(\lambda,\mathbf u)=0\Rightarrow\widehat{\mathcal G}(\lambda,\mathbf u)=0$
	is immediate, since \(\widehat{\mathcal G}\) is obtained from \(\mathcal G\) by
	keeping the \(E_2\)- and \(E_3\)-components of the differential part and the
	same constraint component. Conversely, assume that $\widehat{\mathcal G}(\lambda,\mathbf u)=0$ and set $\mathbf x:=\mathbf x_0+\mathbf u$. Then the constraint component gives $\mathcal C(\mathbf u)=0$, and therefore $\|\mathbf x(\eta)\|^2=R^2$ for every $\eta\in\mathbb T$. In particular, $\mathbf x\cdot\mathbf x'=0$.
	Let
	\[
	\mathcal E_\lambda(\mathbf x):=\mathbf x\times\mathbf x''-\lambda e_3\times\mathbf x+\lambda\mathbf x'.
	\]
	Since \(\mathbf x\) lies on the sphere, we have $\mathcal E_\lambda(\mathbf x)\cdot\mathbf x=0$.
	On the other hand, the first two components of
	\(\widehat{\mathcal G}(\lambda,\mathbf u)=0\) give $\mathcal E_\lambda(\mathbf x)\cdot E_2=0$ and $\mathcal E_\lambda(\mathbf x)\cdot E_3=0$.
	It remains to choose the neighbourhood \(\mathcal U\). Since $\mathbf x_0(\eta)=R E_1(\eta)$,
	we have $\mathbf x_0(\eta)\cdot E_1(\eta)=R$.
	By the Sobolev embedding \(H^2(\mathbb T)\hookrightarrow \mc{C}^0(\mathbb T)\), we may choose
	an \(H^2\)-neighbourhood \(\mathcal U\) of \(0\) such that
	\[
	\|\mathbf u\|_{\mc{C}^{0}}<\frac R2\quad\text{for every }\mathbf u\in\mathcal U.
	\]
	Then, for every \(\eta\in\mathbb T\),
	\[
	\mathbf x(\eta)\cdot E_1(\eta)=R+\mathbf u(\eta)\cdot E_1(\eta)\ge R-\|\mathbf u\|_{C^0}>\frac R2.
	\]
	Thus the three vectors $\mathbf x(\eta)$, $E_2(\eta)$, $E_3$ form a basis of \(\mathbb R^3\) for every \(\eta\in\mathbb T\). Since
	\(\mathcal E_\lambda(\mathbf x)(\eta)\) is orthogonal to each of these three
	vectors, it follows that $\mathcal E_\lambda(\mathbf x)=0$.
	Therefore the differential part of \(\mathcal G(\lambda,\mathbf u)\) vanishes.
	Since we already know that \(\mathcal C(\mathbf u)=0\), we conclude that $\mathcal G(\lambda,\mathbf u)=0$.
	This proves the reverse implication and hence the local equivalence of the two
	zero sets.
\end{proof}

\begin{lemma}
	\label{Le3.5}
	For every \(\lambda>0\), $\ker D_{\mathbf u}\widehat{\mathcal G}_{\mathrm{rev}}(\lambda,0)
	=\ker D_{\mathbf u}\mathcal G_{\mathrm{rev}}(\lambda,0)$. In particular, if \(\lambda\neq\lambda_k:=R\sqrt{k^2-1}\) for all \(k\ge2\), $\ker D_{\mathbf u}\widehat{\mathcal G}_{\mathrm{rev}}(\lambda,0)=\{0\}$,
	whereas
	\[
	\ker D_{\mathbf u}\widehat{\mathcal G}_{\mathrm{rev}}(\lambda_k,0)=\mathrm{span}\{\mathbf v_k^{(2)}\}.
	\]
\end{lemma}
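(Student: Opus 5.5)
The plan is to reduce the statement to Lemma~\ref{Le3.3}(ii) by showing that, at the trivial solution, dropping the \(E_1\)-component of the linearized differential equation loses no information. By construction,
\[
D_{\mathbf u}\widehat{\mathcal G}(\lambda,0)[\mathbf v]=\big((L_\lambda\mathbf v)\cdot E_2,\,(L_\lambda\mathbf v)\cdot E_3,\,\mathbf x_0\cdot\mathbf v\big),
\]
so every \(\mathbf v\in\ker D_{\mathbf u}\mathcal G_{\mathrm{rev}}(\lambda,0)\) satisfies \(L_\lambda\mathbf v=0\) and \(\mathbf x_0\cdot\mathbf v=0\). Hence \(\mathbf v\) lies in \(\ker D_{\mathbf u}\widehat{\mathcal G}_{\mathrm{rev}}(\lambda,0)\), which gives the inclusion \(\supseteq\).

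For the reverse inclusion, I take \(\mathbf v\in Y_{\mathrm{rev}}\) with the three components of \(D_{\mathbf u}\widehat{\mathcal G}(\lambda,0)[\mathbf v]\) equal to zero. The third component gives \(\mathbf x_0\cdot\mathbf v=0\), so \(\mathbf v\in X\). In the frame of Proposition~\ref{Pr3.2} this means \(\mathbf v=\beta E_2+\gamma E_3\), that is, \(\alpha\equiv0\).

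I then use the frame computation from the proof of Proposition~\ref{Pr3.2}. That computation shows that the \(E_1\)-component of \(L_\lambda\mathbf v\) equals \(\lambda\alpha'\), which vanishes once \(\alpha\equiv0\). This is exactly the invariance \(L_\lambda(X)\subset X\) established there. Consequently \((L_\lambda\mathbf v)\cdot E_1=0\) automatically. Together with the vanishing of the \(E_2\)- and \(E_3\)-components, and since \(\{E_1,E_2,E_3\}\) is an orthonormal frame at every \(\eta\), this yields \(L_\lambda\mathbf v=0\). Thus \(\mathbf v\in\ker D_{\mathbf u}\mathcal G_{\mathrm{rev}}(\lambda,0)\), and the two kernels coincide.

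The remaining assertions follow directly from Lemma~\ref{Le3.3}(ii). The kernel is trivial for \(\lambda\neq\lambda_k\) and equals \(\mathrm{span}\{\mathbf v_k^{(2)}\}\) at \(\lambda=\lambda_k\). There is no real obstacle here. The only point needing care is to use the constraint component first, to obtain \(\alpha\equiv0\), before discarding the \(E_1\)-component of \(L_\lambda\mathbf v\). Without \(\alpha\equiv0\), that component \(\lambda\alpha'\) is not automatically zero.
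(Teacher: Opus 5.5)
Your proposal is correct and follows essentially the same argument as the paper. The constraint component forces $\alpha\equiv 0$, the moving-frame formula then makes the $E_1$-component $\lambda\alpha'$ of $L_\lambda\mathbf v$ vanish, so the two kernels coincide, and the explicit description is inherited from Lemma~\ref{Le3.3}(ii).
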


\begin{proof}
	Let \(\mathbf v\in Y_{\mathrm{rev}}\). Write $\mathbf v=\alpha E_1+\beta E_2+\gamma E_3$.
	Then $\mathbf x_0\cdot\mathbf v=R\alpha$. If $D_{\mathbf u}\mathcal G_{\mathrm{rev}}(\lambda,0)[\mathbf v]=0$, then clearly
	\[
	D_{\mathbf u}\widehat{\mathcal G}_{\mathrm{rev}}(\lambda,0)[\mathbf v]=0.
	\]
	Conversely, assume that $D_{\mathbf u}\widehat{\mathcal G}_{\mathrm{rev}}(\lambda,0)[\mathbf v]=0$.
	Then $(L_\lambda\mathbf v)\cdot E_2=0$, $(L_\lambda\mathbf v)\cdot E_3=0$ and $\mathbf x_0\cdot\mathbf v=0$.
	The last condition gives \(\alpha=0\). From the computation of
	\(L_\lambda\) in the moving frame,
	\[
	L_\lambda\mathbf v=\lambda\alpha' E_1+\big[-R(\gamma''+\gamma)+\lambda\beta'\big]E_2+\big[R(2\alpha'+\beta'')+\lambda\gamma'\big]E_3.
	\]
	Since \(\alpha=0\), the \(E_1\)-component also vanishes. Therefore $L_\lambda\mathbf v=0$, $\mathbf x_0\cdot\mathbf v=0$,
	and hence
	\[
	D_{\mathbf u}\mathcal G_{\mathrm{rev}}(\lambda,0)[\mathbf v]=0.
	\]
	The description of the kernel follows from Lemma~\ref{Le3.3}.
\end{proof}

\begin{lemma}[Fredholm property]
	\label{Le3.6}
	For every \(\lambda>0\), the operator
	\[D_{\mathbf u}\widehat{\mathcal G}_{\mathrm{rev}}(\lambda,0):Y_{\mathrm{rev}}\longrightarrow\widehat Z_{\mathrm{rev}}\]
	is Fredholm of index zero.
\end{lemma}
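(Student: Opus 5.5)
The plan is to reduce the statement to a constant-coefficient Fourier computation in the moving frame $E_1,E_2,E_3$, exactly as in the proof of Proposition~\ref{Pr3.2}.

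\emph{Parity description of $Y_{\mathrm{rev}}$.} Write $\mathbf v=\alpha E_1+\beta E_2+\gamma E_3$ with $\alpha,\beta,\gamma\in H^2(\mathbb T)$. One checks that $\mathcal R E_1=E_1$, while $\mathcal R E_2=-E_2$ and $\mathcal R E_3=E_3$ (the latter two are computed in the proof of Lemma~\ref{Le3.3}). Hence $\mathbf v\in Y_{\mathrm{rev}}$ if and only if $\alpha$ is even, $\beta$ is odd and $\gamma$ is even. This gives an isomorphism $Y_{\mathrm{rev}}\cong H^2_{\mathrm{even}}\times H^2_{\mathrm{odd}}\times H^2_{\mathrm{even}}$.

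\emph{Reduction to a $2\times2$ system.} By the frame formula for $L_\lambda\mathbf v$ computed in Proposition~\ref{Pr3.2}, the operator $T:=D_{\mathbf u}\widehat{\mathcal G}_{\mathrm{rev}}(\lambda,0)$ reads
\[
T(\alpha,\beta,\gamma)=\big(\lambda\beta'-R(\gamma''+\gamma),\;R(2\alpha'+\beta'')+\lambda\gamma',\;R\alpha\big).
\]
The parities of the components match $\widehat Z_{\mathrm{rev}}$. The structure is block triangular. The third component $R\alpha$ is an isomorphism $H^2_{\mathrm{even}}\to H^2_{\mathrm{even}}$. The coupling term $2R\alpha'$ is bounded from $H^2_{\mathrm{even}}$ into $L^2_{\mathrm{odd}}$. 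Given $(f,g,h)$, one therefore takes $\alpha=h/R$ and reduces to $M(\beta,\gamma)=(f,\,g-2h')$, where
\[
M:H^2_{\mathrm{odd}}\times H^2_{\mathrm{even}}\to L^2_{\mathrm{even}}\times L^2_{\mathrm{odd}},\qquad M(\beta,\gamma)=\big(\lambda\beta'-R(\gamma''+\gamma),\;R\beta''+\lambda\gamma'\big).
\]
Composing with bounded isomorphisms, $T$ is Fredholm of index zero if and only if $M$ is.

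\emph{Fourier analysis of $M$.} Use the sine basis for $\beta$ and $g$, and the cosine basis for $\gamma$ and $f$.
\begin{itemize}
\item For $k=0$, only $G_0$ (the zeroth coefficient of $\gamma$) and $F_0$ (that of $f$) are present, and the map is $G_0\mapsto -RG_0$, which is invertible.
\item For each $k\ge1$, $M$ acts on $(b_k,c_k)=$ (sine coefficient of $\beta$, cosine coefficient of $\gamma$) by a real $2\times2$ matrix. Its determinant is, up to sign, $k^2\big(R^2(k^2-1)-\lambda^2\big)$, matching \eqref{Eq3.14}.
\end{itemize}
Each block is square, so the decomposition is diagonal into finite-dimensional square blocks. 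The determinant vanishes for at most one $k$, namely $k$ with $\lambda=\lambda_k$. Hence the kernel is finite dimensional, and the cokernel of the finitely many singular blocks has the same dimension as their kernel.

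\emph{Main obstacle: the high-frequency estimate.} It remains to show that on the complementary modes $|k|\ge k_0$ the inverse maps $L^2$ into $H^2$, which gives closed range and the index count. For large $k$ the determinant is of order $R^2k^4$. The entries of the inverse matrix are then $O(k^{-2})$ for the terms involving $Rk^2$ and $R(k^2-1)$, and $O(k^{-3})$ for the term involving $\lambda k$. This yields $\|(\beta,\gamma)\|_{H^2}\le C\|(f,g)\|_{L^2}$ on those modes, with $C$ independent of $k$.

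Consequently $M$ is the direct sum of an isomorphism between the high-mode subspaces and a square finite-dimensional map, and is thus Fredholm of index zero. The same then holds for $T$.
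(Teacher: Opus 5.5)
Your proposal is correct and is essentially the paper's own argument: the parity description of $Y_{\mathrm{rev}}$, then the triangular change of codomain (the paper's $\mathcal S(h_2,h_3,g)=(h_2,h_3-2g',g)$) reducing the operator to $\mathcal M_\lambda\oplus R\,\mathrm{Id}$, then the mode-by-mode $2\times2$ analysis with the $O(k^{-2})$ bound on the inverse at high frequencies. The only cosmetic difference is in the final step: you split into a high-mode isomorphism plus a square finite-dimensional block, whereas the paper writes the range as the common kernel of finitely many continuous Fourier functionals $\mathscr L_k$ and counts the codimension directly.
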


\begin{proof}
	Let $\mathbf v=\alpha E_1+\beta E_2+\gamma E_3\in Y_{\mathrm{rev}}$.
	The reversibility condition is equivalent to
	\[
	\alpha(-\eta)=\alpha(\eta), \qquad \beta(-\eta)=-\beta(\eta), \qquad \gamma(-\eta)=\gamma(\eta).
	\]
	Thus $\alpha\in H^2_{\mathrm{even}}(\mathbb T)$, $\beta\in H^2_{\mathrm{odd}}(\mathbb T)$ and $\gamma\in H^2_{\mathrm{even}}(\mathbb T)$. Using the expression for \(L_\lambda\mathbf v\) in the moving frame, we obtain
	\[
	D_{\mathbf u}\widehat{\mathcal G}_{\mathrm{rev}}(\lambda,0)[\mathbf v]=\left(\lambda\beta'-R(\gamma''+\gamma), \; R(2\alpha'+\beta'')+\lambda\gamma', \; R\alpha \right).
	\]
	Consider the bounded isomorphism of the codomain
	\[
	\mathcal S: L^2_{\mathrm{even}}(\mathbb{T})\times L^2_{\mathrm{odd}}(\mathbb{T})\times H^2_{\mathrm{even}}(\mathbb{T}) \longrightarrow L^2_{\mathrm{even}}(\mathbb{T})\times L^2_{\mathrm{odd}}(\mathbb{T})\times H^2_{\mathrm{even}}(\mathbb{T})
	\]
	defined by
	\[
	\mathcal S(h_2,h_3,g):=(h_2,h_3-2g',g).
	\]
	Its inverse is
	\[
	\mathcal S^{-1}(h_2,h_3,g)=(h_2,h_3+2g',g).
	\]
	Since \(g\in H^2_{\mathrm{even}}\), we have \(g'\in H^1_{\mathrm{odd}}\subset L^2_{\mathrm{odd}}\), so
	\(\mathcal S\) is well-defined and bounded.
	Composing with \(\mathcal S\), we get
	\[
	\mathcal S\,D_{\mathbf u}\widehat{\mathcal G}_{\mathrm{rev}}(\lambda,0)
	(\alpha,\beta,\gamma)=\left(\lambda\beta'-R(\gamma''+\gamma),\; R\beta''+\lambda\gamma',\; R\alpha\right).
	\]
	Therefore $D_{\mathbf u}\widehat{\mathcal G}_{\mathrm{rev}}(\lambda,0)$
	is equivalent, up to an isomorphism of the codomain, to the block diagonal operator
	\[
	\mathcal M_\lambda\oplus R\,\mathrm{Id},
	\]
	where
	\[
	\mathcal M_\lambda: H^2_{\mathrm{odd}}(\mathbb T)\times H^2_{\mathrm{even}}(\mathbb T)
	\to L^2_{\mathrm{even}}(\mathbb T)\times L^2_{\mathrm{odd}}(\mathbb T), \quad
	\mathcal M_\lambda(\beta,\gamma)=\left(\lambda\beta'-R(\gamma''+\gamma),\; R\beta''+\lambda\gamma'\right).
	\]
	It remains to prove that \(\mathcal M_\lambda\) is Fredholm of index zero. We write
	\[
	\beta(\eta)=\sum_{k\geq1} b_k\sin(k\eta), \qquad \gamma(\eta)=a_0+\sum_{k\geq1} a_k\cos(k\eta).
	\]
	Then
	\[
	\gamma''(\eta)+\gamma(\eta)=a_0+\sum_{k\geq1}(1-k^2)a_k\cos(k\eta).
	\]
	Thus, if $\mathcal M_\lambda(\beta,\gamma)=(h_1,h_2)$, with
	\[
	h_1(\eta)=p_0+\sum_{k\geq1}p_k\cos(k\eta), \qquad h_2(\eta)=\sum_{k\geq1}q_k\sin(k\eta),
	\]
	then $p_0=-Ra_0$, and, for every \(k\geq1\),
	\[
	\binom{p_k}{q_k}
	=M_k(\lambda)
	\binom{b_k}{a_k}, \qquad
	M_k(\lambda):=
	\begin{pmatrix}
		\lambda k & R(k^2-1)\\
		-Rk^2 & -\lambda k
	\end{pmatrix}.
	\]
	Then
	\[
	\mathcal M_\lambda(\beta,\gamma)(\eta)=\binom{-Ra_0}{0}+\sum_{k\ge1}
	\begin{pmatrix}
		\cos(k\eta) & 0\\
		0 & \sin(k\eta)
	\end{pmatrix}
	M_k(\lambda)
	\binom{b_k}{a_k}.
	\]
	Moreover, $\det M_k(\lambda)=k^2\bigl(R^2(k^2-1)-\lambda^2\bigr)$.
	Hence the only possible singular modes are those satisfying $\lambda=R\sqrt{k^2-1}$. For fixed \(\lambda>0\), the set
	\[
	\mathcal N_\lambda:=\{k\geq1:\det M_k(\lambda)=0\}
	\]
	is finite, in fact it has at most one element. For \(k\notin\mathcal N_\lambda\), the matrix \(M_k(\lambda)\) is invertible. 
	
	It follows that the kernel of \(\mathcal M_\lambda\) is finite-dimensional. Indeed, all
	non-singular modes vanish for an element of the kernel, and only the finitely many modes
	in \(\mathcal N_\lambda\) may contribute:
	\[
	\ker \mathcal M_\lambda=\bigoplus_{k\in\mathcal N_\lambda}\ker M_k(\lambda).
	\]
	Since each singular \(2\times2\) matrix has rank one, each \(\ker M_k(\lambda)\) is
	one-dimensional.
	
	We now prove that the range of \(\mathcal M_\lambda\) is closed and has finite codimension. For each singular mode \(k\in\mathcal N_\lambda\), the matrix \(M_k(\lambda)\) has rank one.
	Hence its range is a one-dimensional subspace of \(\mathbb R^2\). We choose a non-zero
	linear functional $\ell_k\in(\mathbb R^2)^*$
	such that
	\[
	\operatorname{Ran}M_k(\lambda)=\ker \ell_k.
	\]
	We define the corresponding continuous linear functional on
	\(L^2_{\mathrm{even}}(\mathbb T)\times L^2_{\mathrm{odd}}(\mathbb T)\) by
	\[
	\mathscr L_k(h_1,h_2):=\ell_k(p_k,q_k), \quad (h_1, h_2)\in L^2_{\mathrm{even}}(\mathbb T)\times L^2_{\mathrm{odd}}(\mathbb T),
	\]
	where \(p_k,q_k\) are the \(k\)-th Fourier coefficients of \(h_1,h_2\). This is continuous
	because the maps $h_1\mapsto p_k$, $h_2\mapsto q_k$ are continuous linear functionals on \(L^2(\mathbb T)\). We claim that
	\[
	\operatorname{Ran}\mathcal M_\lambda=\bigcap_{k\in\mathcal N_\lambda}\ker \mathscr L_k.
	\]
	First, let \((h_1,h_2)\in\operatorname{Ran}\mathcal M_\lambda\). Then there exists
	\((\beta,\gamma)\in H^2_{\mathrm{odd}}\times H^2_{\mathrm{even}}\) such that $\mathcal M_\lambda(\beta,\gamma)=(h_1,h_2)$. Taking Fourier coefficients in a singular mode \(k\in\mathcal N_\lambda\), we get
	\[
	\binom{p_k}{q_k}=M_k(\lambda)\binom{b_k}{a_k}.
	\]
	Thus $(p_k,q_k)\in\operatorname{Ran}M_k(\lambda)=\ker\ell_k$, and therefore $\mathscr L_k(h_1,h_2)=0$.
	Since this holds for every \(k\in\mathcal N_\lambda\), we have
	\[
	(h_1,h_2)\in\bigcap_{k\in\mathcal N_\lambda}\ker \mathscr L_k.
	\]
	Conversely, suppose that $(h_1,h_2)\in
	\bigcap_{k\in\mathcal N_\lambda}\ker \mathscr L_k$.
	We construct \((\beta,\gamma)\in H^2_{\mathrm{odd}}\times H^2_{\mathrm{even}}\) such that $\mathcal M_\lambda(\beta,\gamma)=(h_1,h_2)$.
	Set	$a_0=-\frac{p_0}{R}$.
	For every non-singular mode \(k\notin\mathcal N_\lambda\), define
	\[\binom{b_k}{a_k}=
	M_k(\lambda)^{-1}
	\binom{p_k}{q_k}.\]
	For each singular mode \(k\in\mathcal N_\lambda\), the condition $\mathscr L_k(h_1,h_2)=0$
	means precisely that $(p_k,q_k)\in\ker\ell_k=\operatorname{Ran}M_k(\lambda)$.
	Therefore we may choose a vector \((b_k,a_k)\in\mathbb R^2\) satisfying
	\[
	M_k(\lambda)\binom{b_k}{a_k}=\binom{p_k}{q_k}.
	\]
	There are only finitely many singular modes, so these choices contribute only finitely
	many Fourier coefficients. We now prove that the functions defined by these coefficients belong to \(H^2\). Since
	\[
	\det M_k(\lambda)=k^2\bigl(R^2(k^2-1)-\lambda^2\bigr)\sim R^2k^4\qquad (k\to\infty),
	\]
	there exist \(k_0\in\mathbb N\) and \(c>0\) such that $|\det M_k(\lambda)|\geq c k^4$ for all $k\geq k_0$.
	Furthermore,
	\[
	M_k(\lambda)^{-1}=\frac{1}{\det M_k(\lambda)}
	\begin{pmatrix}
		-\lambda k & -R(k^2-1)\\
		Rk^2 & \lambda k
	\end{pmatrix}.
	\]
	Since the entries of the adjugate matrix are \(O(k^2)\), we obtain
	\[
	\|M_k(\lambda)^{-1}\|\leq \frac{C}{k^2}
	\]
	for all sufficiently large \(k\notin\mathcal N_\lambda\). Enlarging \(C\) if necessary, the
	estimate is valid for all non-singular modes \(k\notin\mathcal N_\lambda\), apart from
	finitely many low modes, which are controlled by increasing the constant. Consequently, on the complement of the singular modes,
	\[
	|b_k|^2+|a_k|^2\leq\frac{C}{k^4}\bigl(|p_k|^2+|q_k|^2\bigr).
	\]
	Together with \(p_0=-Ra_0\), this gives
	\[
	|a_0|^2+\sum_{k\notin\mathcal N_\lambda} k^4\bigl(|b_k|^2+|a_k|^2\bigr)\leq C_\lambda\left(|p_0|^2+\sum_{k\geq1}\bigl(|p_k|^2+|q_k|^2\bigr)\right).
	\]
	The singular modes are finite in number, hence they do not affect membership in \(H^2\).
	Therefore
	\[
	\beta(\eta):=\sum_{k\geq1}b_k\sin(k\eta)\in H^2_{\mathrm{odd}}(\mathbb T),
	\]
	and
	\[
	\gamma(\eta):=a_0+\sum_{k\geq1}a_k\cos(k\eta)\in H^2_{\mathrm{even}}(\mathbb T).
	\]
	By construction, their Fourier coefficients satisfy $p_0=-Ra_0$
	and
	\[
	\binom{p_k}{q_k}=M_k(\lambda)\binom{b_k}{a_k}\qquad\text{for every }k\geq1.
	\]
	Thus $\mathcal M_\lambda(\beta,\gamma)=(h_1,h_2)$.
	Hence $(h_1,h_2)\in\operatorname{Ran}\mathcal M_\lambda$.
	This proves the reverse inclusion
	\[
	\bigcap_{k\in\mathcal N_\lambda}\ker \mathscr L_k
	\subset\operatorname{Ran}\mathcal M_\lambda.
	\]
	Consequently,
	\[
	\operatorname{Ran}\mathcal M_\lambda=\bigcap_{k\in\mathcal N_\lambda}\ker \mathscr L_k.
	\]
	Since \(\mathcal N_\lambda\) is finite and each \(\mathscr L_k\) is continuous, the range is
	closed. Moreover,
	\[
	\operatorname{codim}\operatorname{Ran}\mathcal M_\lambda=\#\mathcal N_\lambda,
	\]
	because each singular mode imposes exactly one independent scalar compatibility
	condition. Hence
	\[
	\dim\ker\mathcal M_\lambda=\operatorname{codim}\operatorname{Ran}\mathcal M_\lambda=\#\mathcal N_\lambda.
	\]
	Consequently, $\operatorname{ind}\mathcal M_\lambda=0$. Thus \(\mathcal M_\lambda\) is Fredholm of index zero.
	Since \(R\,\mathrm{Id}\) on	\(H^2_{\mathrm{even}}\) is an isomorphism onto \(H^2_{\mathrm{even}}\), it follows that $D_{\mathbf u}\widehat{\mathcal G}_{\mathrm{rev}}(\lambda,0)$ is Fredholm of index zero.
\end{proof}

\subsection{Application of the Crandall--Rabinowitz theorem}

We now complete the verification of the hypotheses of the Crandall--Rabinowitz
bifurcation theorem \cite{CR1971} (see also \cite{LG}) for the projected reversible operator $\widehat{\mathcal G}_{\rm rev}:
(0,+\infty)\times Y_{\rm{rev}}\to\widehat Z_{\rm rev}$. The Fredholm property and the description of the kernel have been obtained in the previous subsection. It remains to verify the transversality condition at
each critical value \(\lambda_k=R\sqrt{k^2-1}\), \(k\ge2\).

\begin{lemma}[Transversality condition]
	\label{Le3.7}
	For each \(k\ge 2\), one has
	\[
	D_{\lambda\mathbf u}\widehat{\mathcal G}_{\mathrm{rev}}(\lambda_k,0)
	\big[\mathbf v_k^{(2)}\big]\notin\operatorname{Ran}\big(D_{\mathbf u}\widehat{\mathcal G}_{\mathrm{rev}}(\lambda_k,0)\big),
	\]
	where $\lambda_k=R\sqrt{k^2-1}$.
\end{lemma}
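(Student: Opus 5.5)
The plan is to compute the mixed derivative explicitly in the moving frame $E_1,E_2,E_3$, and then test it against the description of the range obtained in the proof of Lemma~\ref{Le3.6}. That description reduces membership in $\operatorname{Ran}\big(D_{\mathbf u}\widehat{\mathcal G}_{\mathrm{rev}}(\lambda_k,0)\big)$ to a single scalar compatibility condition in the $k$-th Fourier mode.

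\emph{Step 1: the mixed derivative.} From the formula for $D\mathcal G$, $D_{\mathbf u}\mathcal F(\lambda,0)[\mathbf v]=L_\lambda\mathbf v$ depends on $\lambda$ only through $-\lambda e_3\times\mathbf v+\lambda\mathbf v'$. Hence
\[
D_{\lambda\mathbf u}\widehat{\mathcal G}(\lambda_k,0)[\mathbf v]=\big((\mathbf v'-e_3\times\mathbf v)\cdot E_2,\;(\mathbf v'-e_3\times\mathbf v)\cdot E_3,\;0\big).
\]
For $\mathbf v=\beta E_2+\gamma E_3\in X$, the frame identities \eqref{Eq3.11} give $\mathbf v'=-\beta E_1+\beta' E_2+\gamma' E_3$, and \eqref{Eq3.10} gives $e_3\times\mathbf v=-\beta E_1$. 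The $E_1$-terms cancel, so the mixed derivative equals $(\beta',\gamma',0)$. For $\mathbf v_k^{(2)}$ we have $\beta=\sin(k\eta)$ and $\gamma=-c_k\cos(k\eta)$ with $c_k:=k/\sqrt{k^2-1}$. Therefore
\[
D_{\lambda\mathbf u}\widehat{\mathcal G}_{\mathrm{rev}}(\lambda_k,0)\big[\mathbf v_k^{(2)}\big]=\big(k\cos(k\eta),\;k c_k\sin(k\eta),\;0\big)\in\widehat Z_{\mathrm{rev}}.
\]

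\emph{Step 2: reduction to a $2\times2$ matrix.} By the proof of Lemma~\ref{Le3.6}, $D_{\mathbf u}\widehat{\mathcal G}_{\mathrm{rev}}(\lambda_k,0)=\mathcal S^{-1}(\mathcal M_{\lambda_k}\oplus R\,\mathrm{Id})$. So a triple $(h_2,h_3,g)$ lies in the range if and only if $\mathcal S(h_2,h_3,g)$ lies in $\operatorname{Ran}\mathcal M_{\lambda_k}\times H^2_{\mathrm{even}}$. Our vector has $g=0$, so $\mathcal S$ leaves it unchanged, and we only need to decide whether $(k\cos k\eta,\,kc_k\sin k\eta)\in\operatorname{Ran}\mathcal M_{\lambda_k}$. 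Since $\mathcal N_{\lambda_k}=\{k\}$, the characterisation $\operatorname{Ran}\mathcal M_\lambda=\bigcap\ker\mathscr L_k$ reduces this to asking whether $(p_k,q_k)=(k,kc_k)$ lies in $\operatorname{Ran}M_k(\lambda_k)$.

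\emph{Step 3: the compatibility condition fails.} Since $\det M_k(\lambda_k)=0$, $M_k(\lambda_k)$ has rank one. Its range is spanned by its first column $(\lambda_k k,\,-Rk^2)$, which is nonzero. The vector $(k,kc_k)$ is parallel to this column if and only if
\[
k\cdot(-Rk^2)-kc_k\cdot\lambda_k k=0,\qquad\text{that is,}\qquad \lambda_k c_k=-Rk.
\]
But $\lambda_k c_k=R\sqrt{k^2-1}\cdot k/\sqrt{k^2-1}=Rk>0$, so the condition fails. Hence $\mathscr L_k$ does not vanish on the mixed derivative, and transversality holds.

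The main obstacle is bookkeeping rather than analysis. One must check that the sign conventions of $M_k(\lambda)$ match the even/odd cosine/sine expansions of Lemma~\ref{Le3.6}, namely that $h_1$ is expanded in $\cos$ and $h_2$ in $\sin$. One must also confirm that the relevant sign really is $+Rk$ versus $-Rk$, so that the vector is genuinely not parallel to the range rather than accidentally in it. I would double-check this by verifying directly that $M_k(\lambda_k)(1,-c_k)^T=0$, which is the coefficient vector $(b_k,a_k)$ of $\mathbf v_k^{(2)}$ itself, and that the range column is consistent with this kernel.
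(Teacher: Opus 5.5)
Your proposal is correct and follows the paper's proof essentially step by step. You use the same computation of the mixed derivative as $(k\cos(k\eta),\,\tfrac{k^2}{\sqrt{k^2-1}}\sin(k\eta),\,0)$ via cancellation of the $E_1$-terms, and the same reduction through $\mathcal S$ to the singular $k$-th mode of $M_k(\lambda_k)$. The only cosmetic difference is the final check. The paper certifies non-membership with the left-kernel functional $\ell$ built from $(1,\sqrt{k^2-1}/k)$ and obtains the value $2\pi k\neq0$. You instead test non-parallelism against the first column $(\lambda_k k,-Rk^2)$, which is the same computation because that left-kernel vector is orthogonal to the column.
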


\begin{proof}
Since $D_\lambda \mathcal F(\lambda,\mathbf u)[\mu]
	=
	\mu\big(-e_3\times\mathbf u+\mathbf u'\big)$,
	we obtain, differentiating with respect to \(\mathbf u\) at \(\mathbf u=0\),
	\[
	D_{\lambda\mathbf u}\mathcal F(\lambda,0)[\mathbf v]=-e_3\times\mathbf v+\mathbf v'.
	\]
	Moreover, \(\mathcal C\) does not depend on \(\lambda\). Hence
	\[
	D_{\lambda\mathbf u}\widehat{\mathcal G}_{\mathrm{rev}}(\lambda,0)[\mathbf v]
	=\left((-e_3\times\mathbf v+\mathbf v')\cdot E_2,\,(-e_3\times\mathbf v+\mathbf v')\cdot E_3,\,0\right).
	\]
	Fix \(k\ge2\), and set $T_k:=D_{\mathbf u}\widehat{\mathcal G}_{\mathrm{rev}}(\lambda_k,0)$.
	We shall construct a bounded linear functional on \(\widehat Z_{\mathrm{rev}}\) which
	annihilates \(\operatorname{Ran}(T_k)\) but not $D_{\lambda\mathbf u}\widehat{\mathcal G}_{\mathrm{rev}}(\lambda_k,0)\big[\mathbf v_k^{(2)}\big]$. Write an element of \(Y_{\mathrm{rev}}\) as $\mathbf v=\alpha E_1+\beta E_2+\gamma E_3$, with $\alpha\in H^2_{\mathrm{even}}(\mathbb T)$, $\beta\in H^2_{\mathrm{odd}}(\mathbb T)$, $\gamma\in H^2_{\mathrm{even}}(\mathbb T)$.
	From the proof of Lemma~\ref{Le3.6}, after the codomain isomorphism
	\[
	\mathcal S(h_2,h_3,g)=(h_2,h_3-2g',g),
	\]
	the linear operator \(T_k\) is equivalent to
	\[
	(\alpha,\beta,\gamma)\longmapsto\left(\mathcal M_{\lambda_k}(\beta,\gamma),\,R\alpha\right),
	\]
	where
	\[
	\mathcal M_{\lambda_k}(\beta,\gamma)=\left(\lambda_k\beta'-R(\gamma''+\gamma),\; R\beta''+\lambda_k\gamma'\right).
	\]
	Since the third component is onto, the cokernel is entirely produced by the
	singular \(k\)-th Fourier mode of \(\mathcal M_{\lambda_k}\). In the \(k\)-th mode, write $\beta(\eta)=b\sin(k\eta)$, $\gamma(\eta)=a\cos(k\eta)$.
	Then
	\[
	\mathcal M_{\lambda_k}(\beta,\gamma)=\left(\big(\lambda_k k b+R(k^2-1)a\big)\cos(k\eta),\;\big(-Rk^2 b-\lambda_k k a\big)\sin(k\eta)\right).
	\]
	Thus the relevant real \(2\times2\) matrix is
	\[
	M_k(\lambda_k)=
	\begin{pmatrix}
		\lambda_k k & R(k^2-1)\\
		-Rk^2 & -\lambda_k k
	\end{pmatrix}.
	\]
	At \(\lambda=\lambda_k=R\sqrt{k^2-1}\), this matrix has rank one. A generator
	of the left kernel is $(
	1,\frac{\sqrt{k^2-1}}{k})$.
	Consequently, the functional $\ell:\widehat Z_{\mathrm{rev}}\to\mathbb R$
	defined by
	\[
	\ell(h_2,h_3,g):=\int_{\mathbb T}
	\left(h_2(\eta)\cos(k\eta)
	+\frac{\sqrt{k^2-1}}{k}\,
	h_3(\eta)\sin(k\eta)\right)\,d\eta
	\]
	annihilates the range of $\mathcal S T_k$.
	Equivalently, $\ell\big(\mathcal S T_k\mathbf v\big)=0$ for every $\mathbf v\in Y_{\mathrm{rev}}$.
	Now we evaluate the mixed derivative on the kernel generator $\mathbf v_k^{(2)}(\eta)=\beta(\eta)E_2(\eta)+\gamma(\eta)E_3$,
	where
	\[
	\beta(\eta)=\sin(k\eta),
	\qquad\gamma(\eta)=-\frac{k}{\sqrt{k^2-1}}\cos(k\eta).
	\]
	Using $E_2'=-E_1$, $E_3'=0$, $e_3\times E_2=-E_1$, we get $(\mathbf v_k^{(2)})'=\beta' E_2-\beta E_1+\gamma' E_3$
	and $-e_3\times\mathbf v_k^{(2)}=\beta E_1$.
	Therefore the \(E_1\)-terms cancel:
	\[
	-e_3\times\mathbf v_k^{(2)}
	+(\mathbf v_k^{(2)})'=\beta' E_2+\gamma' E_3.
	\]
	Since
	\[
	\beta'(\eta)=k\cos(k\eta),
	\qquad \gamma'(\eta)=\frac{k^2}{\sqrt{k^2-1}}\sin(k\eta),
	\]
	we obtain
	\[
	D_{\lambda\mathbf u}\widehat{\mathcal G}_{\mathrm{rev}}(\lambda_k,0)
	\big[\mathbf v_k^{(2)}\big]=\left(k\cos(k\eta),\frac{k^2}{\sqrt{k^2-1}}\sin(k\eta),0\right).
	\]
	Because the third component is zero, applying \(\mathcal S\) does not change this
	vector. Hence
	\begin{align*}
		\ell\!\left(\mathcal S D_{\lambda\mathbf u}\widehat{\mathcal G}_{\mathrm{rev}}(\lambda_k,0)\big[\mathbf v_k^{(2)}\big]\right)
		&=
		\int_{\mathbb T}\left(k\cos^2(k\eta)+\frac{\sqrt{k^2-1}}{k}\frac{k^2}{\sqrt{k^2-1}}\sin^2(k\eta)\right)\,d\eta \\
		&=
		\int_{\mathbb T} k\,d\eta=2\pi k\neq0.
	\end{align*}
	Therefore
	\[
	\mathcal S
	D_{\lambda\mathbf u}\widehat{\mathcal G}_{\mathrm{rev}}(\lambda_k,0)
	\big[\mathbf v_k^{(2)}\big]\notin\operatorname{Ran}(\mathcal S T_k).
	\]
	Since \(\mathcal S\) is an isomorphism of the codomain, this is equivalent to
	\[
	D_{\lambda\mathbf u}\widehat{\mathcal G}_{\mathrm{rev}}(\lambda_k,0)\big[\mathbf v_k^{(2)}\big]\notin\operatorname{Ran}(T_k).
	\]
	The proof is complete.
\end{proof}
We now introduce the trivial branch with respect to which the bifurcation occurs. Namely, we denote the set of trivial solutions of \(\widehat{\mathcal G}_{\mathrm{rev}}\) by
\[
\mathcal T:=\{(\lambda,0):\lambda>0\}\subset (0,+\infty)\times Y_{\mathrm{rev}}.
\]
The main local result is the following.

\begin{theorem}[Local bifurcation in the reversible class]
	\label{Th3.1}
	Let \(k\ge2\). Then
	\[
	(\lambda_k,0)=(R\sqrt{k^2-1},0)\in (0,+\infty)\times Y_{\mathrm{rev}}
	\]
	is a bifurcation point of the projected reversible operator $\widehat{\mathcal G}_{\mathrm{rev}}:(0,+\infty)\times Y_{\mathrm{rev}}\to\widehat Z_{\mathrm{rev}}$ from the trivial branch \(\mathcal T\). Let
	\[
	Y^k_{\mathrm{rev}}:=\left\{\mathbf u\in Y_{\mathrm{rev}}:(\mathbf u,\mathbf v_k^{(2)})_{H^2}=0\right\}.
	\]
	Then the following statements hold.
	
	\begin{enumerate}
		\item[{\rm(a)}] {\rm\textbf{Existence.}}
		There exist \(\delta>0\), a neighbourhood of \((\lambda_k,0)\), and two
		\(\mathcal C^\infty\)-maps
		\[
		\Lambda_k:(-\delta,\delta)\to(0,+\infty), \qquad \Lambda_k(0)=\lambda_k,
		\]
		\[
		\Gamma_k:(-\delta,\delta)\to Y^k_{\mathrm{rev}}, \qquad \Gamma_k(0)=0,
		\]
		such that, for every \(t\in(-\delta,\delta)\),
		\[
		\widehat{\mathcal G}_{\mathrm{rev}}\big(\Lambda_k(t),\mathbf u_k(t)\big)=0,\qquad \mathbf u_k(t):=t\big(\mathbf v_k^{(2)}+\Gamma_k(t)\big).
		\]
		In particular, $\mathbf u_k'(0)=\mathbf v_k^{(2)}$.
		\vspace{5pt}
		\item[{\rm(b)}] {\rm\textbf{Local uniqueness.}}
		There exists \(\rho>0\) such that, if $\widehat{\mathcal G}_{\mathrm{rev}}(\lambda,\mathbf u)=0$
		and
		\[
		(\lambda,\mathbf u)\in B_\rho(\lambda_k,0)\subset (0,+\infty)\times Y_{\mathrm{rev}},
		\]
		then either \(\mathbf u=0\), or there exists
		\(t\in(-\delta,\delta)\) such that
		\[
		(\lambda,\mathbf u)=\big(\Lambda_k(t),\mathbf u_k(t)\big).
		\]
	\end{enumerate}
	Moreover, after possibly reducing \(\rho\) and \(\delta\), the above curve is also a
	curve of zeros of the full augmented operator \(\mathcal G_{\mathrm{rev}}\) (and therefore, $\mc{G}$). Equivalently,
	if
	\[
	\mathbf x_k(t):=\mathbf x_0+\mathbf u_k(t),
	\]
	then, for every \(t\in(-\delta,\delta)\),
	\[
	\mathbf x_k(t)\times \mathbf x_k(t)''-\Lambda_k(t)e_3\times \mathbf x_k(t)+\Lambda_k(t)\mathbf x_k(t)'=0, \qquad
	\|\mathbf x_k(t)(\eta)\|\equiv R.
	\]
\end{theorem}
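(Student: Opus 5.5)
The plan is to apply the Crandall--Rabinowitz theorem \cite{CR1971} directly to $\widehat{\mathcal G}_{\mathrm{rev}}:(0,+\infty)\times Y_{\mathrm{rev}}\to\widehat Z_{\mathrm{rev}}$ at $(\lambda_k,0)$. Almost all of its hypotheses have already been checked, so the proof will mostly assemble them.

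\textbf{Step 1: trivial branch and regularity.} $\widehat{\mathcal G}_{\mathrm{rev}}$ is $\mathcal C^\infty$, since it is obtained from $\mathcal G$ by a bounded linear projection and restriction to a closed subspace. Moreover $\widehat{\mathcal G}_{\mathrm{rev}}(\lambda,0)=0$ for all $\lambda>0$: this holds because $\mathcal F(\lambda,0)=0$ and $\mathcal C(0)=0$.

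\textbf{Step 2: spectral hypotheses.} Lemma~\ref{Le3.6} shows that $D_{\mathbf u}\widehat{\mathcal G}_{\mathrm{rev}}(\lambda_k,0)$ is Fredholm of index zero. Lemma~\ref{Le3.5} shows that its kernel is $\mathrm{span}\{\mathbf v_k^{(2)}\}$, which is one-dimensional; consequently the codimension of the range is one. Lemma~\ref{Le3.7} gives the transversality condition $D_{\lambda\mathbf u}\widehat{\mathcal G}_{\mathrm{rev}}(\lambda_k,0)[\mathbf v_k^{(2)}]\notin\operatorname{Ran}$.

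\textbf{Step 3: conclusions (a) and (b).} For the splitting I take the $H^2$-orthogonal complement $Y^k_{\mathrm{rev}}$ of $\mathbf v_k^{(2)}$ inside the Hilbert space $Y_{\mathrm{rev}}$. This is a closed complement, so Crandall--Rabinowitz yields smooth maps $\Lambda_k,\Gamma_k$ with $\Gamma_k(0)=0$ and the stated zeros $t(\mathbf v_k^{(2)}+\Gamma_k(t))$. It also yields the local uniqueness in a ball $B_\rho(\lambda_k,0)$: there the zero set is exactly the trivial line together with this curve. Hence $(\lambda_k,0)$ is a bifurcation point, and $\mathbf u_k'(0)=\mathbf v_k^{(2)}+\Gamma_k(0)=\mathbf v_k^{(2)}$. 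Smoothness in $t$ follows because $\widehat{\mathcal G}_{\mathrm{rev}}$ is $\mathcal C^\infty$.

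\textbf{Step 4: passage to the full operator.} This is the only step needing care. Lemma~\ref{Le3.4} gives an $H^2$-neighbourhood $\mathcal U$ of $0$ in which the zero sets of $\widehat{\mathcal G}$ and $\mathcal G$ coincide, for every $\lambda>0$. Since $\mathbf u_k(t)\to0$ in $H^2$ as $t\to0$, I shrink $\delta$ (and $\rho$) so that $\mathbf u_k(t)\in\mathcal U$ for all $|t|<\delta$. Then $\widehat{\mathcal G}(\Lambda_k(t),\mathbf u_k(t))=0$ implies $\mathcal G(\Lambda_k(t),\mathbf u_k(t))=0$. Because $\mathbf u_k(t)\in Y_{\mathrm{rev}}$, the same equation holds for $\mathcal G_{\mathrm{rev}}$. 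Unwinding the definitions, $\mathcal F=0$ is the profile equation for $\mathbf x_k(t)=\mathbf x_0+\mathbf u_k(t)$, and $\mathcal C=0$ is equivalent to $\|\mathbf x_k(t)\|\equiv R$.

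The main (mild) obstacle is making sure the Crandall--Rabinowitz hypotheses are stated for the correct restricted spaces. Both the kernel and the transversality computation must be taken within $Y_{\mathrm{rev}}$ and $\widehat Z_{\mathrm{rev}}$. Lemma~\ref{Le3.5}, Lemma~\ref{Le3.6} and Lemma~\ref{Le3.7} were already formulated there, so no further computation is needed.
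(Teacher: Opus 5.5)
Your proposal is correct and follows the paper's own proof. The paper also applies Crandall--Rabinowitz to $\widehat{\mathcal G}_{\mathrm{rev}}$, using Lemma~\ref{Le3.6}, Lemma~\ref{Le3.5} and Lemma~\ref{Le3.7}, and then transfers the branch to $\mathcal G_{\mathrm{rev}}$ via the local equivalence in Lemma~\ref{Le3.4}; your checks of the trivial branch, the closed complement $Y^k_{\mathrm{rev}}$, and the shrinking of $\delta$ via $\mathbf u_k(t)\to0$ in $H^2$ only make explicit details the paper leaves implicit.
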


\begin{proof}
	We apply the Crandall--Rabinowitz theorem to $\widehat{\mathcal G}_{\mathrm{rev}}:(0,+\infty)\times Y_{\mathrm{rev}}\to\widehat Z_{\mathrm{rev}}$.
	By Lemma~\ref{Le3.6}, $D_{\mathbf u}\widehat{\mathcal G}_{\mathrm{rev}}(\lambda_k,0)$
	is Fredholm of index zero. By Lemma~\ref{Le3.5},
	\[
	\ker D_{\mathbf u}\widehat{\mathcal G}_{\mathrm{rev}}(\lambda_k,0)=\mathrm{span}\{\mathbf v_k^{(2)}\}.
	\]
	Finally, Lemma~\ref{Le3.7} gives the transversality condition
	\[
	D_{\lambda\mathbf u}\widehat{\mathcal G}_{\mathrm{rev}}(\lambda_k,0)[\mathbf v_k^{(2)}]
	\notin \operatorname{Ran} D_{\mathbf u} \widehat{\mathcal G}_{\mathrm{rev}}(\lambda_k,0).
	\]
	The Crandall--Rabinowitz theorem therefore yields \(\delta>0\) and smooth maps
	\[
	\Lambda_k:(-\delta,\delta)\to(0,+\infty), \qquad \Gamma_k:(-\delta,\delta)\to Y^k_{\mathrm{rev}},
	\]
	with $\Lambda_k(0)=\lambda_k$, $\Gamma_k(0)=0$, such that $\mathbf u_k(t)=t(\mathbf v_k^{(2)}+\Gamma_k(t))$
	parametrizes all non-trivial zeros of
	\(\widehat{\mathcal G}_{\mathrm{rev}}\) in a neighbourhood of
	\((\lambda_k,0)\), up to the trivial branch.
	
	It remains only to relate these zeros to the original full equation. By
	Lemma~\ref{Le3.4}, after possibly reducing the neighbourhood,
	the zero sets of \(\widehat{\mathcal G}\) and \(\mathcal G\) coincide. Therefore the local
	branch obtained above is also a local branch of zeros of
	\(\mathcal G_{\mathrm{rev}}\), and hence of solutions of the full rescaled profile
	equation on the sphere. This proves the theorem.
\end{proof}

\subsection{Conclusion: unfixing the phase and returning to the original variable}

We have obtained, by applying the Crandall--Rabinowitz theorem to the projected
operator \(\widehat{\mathcal G}_{\mathrm{rev}}\), a local one--parameter branch in
the reversible class. By Lemma~\ref{Le3.4}, after possibly
shrinking the parameter interval, this branch consists also of zeros of the full
augmented operator \(\mathcal G_{\mathrm{rev}}\). We now use the \(SO(2)\)-symmetry
of the full equation to recover the phase parameter and obtain the corresponding
two--parameter local family of solutions.

\begin{lemma}[\(SO(2)\)-equivariance]
	\label{lem:SO2-equivariance-fixed-x0}
	Let
	\[
	R_\varphi:=
	\begin{pmatrix}
		\cos\varphi & -\sin\varphi & 0\\
		\sin\varphi & \cos\varphi & 0\\
		0 & 0 & 1
	\end{pmatrix},
	\qquad \varphi\in\mathbb R,
	\]
	and define $(\rho_\varphi \mathbf u)(\eta):=
	R_\varphi\,\mathbf u(\eta-\varphi)$, $\mathbf u\in H^2(\mathbb T,\mathbb R^3)$.
	Then, for every \(\lambda>0\), every \(\mathbf u\in H^2(\mathbb T,\mathbb R^3)\), and every
	\(\varphi\in\mathbb R\), one has
	\[
	\mathcal F(\lambda,\rho_\varphi\mathbf u)(\eta)=R_\varphi\,\mathcal F(\lambda,\mathbf u)(\eta-\varphi),
	\]
	and $\mathcal C(\rho_\varphi\mathbf u)(\eta)=\mathcal C(\mathbf u)(\eta-\varphi)$. In particular, if $\mathcal G(\lambda,\mathbf u)=(0,0)$, then $\mathcal G(\lambda,\rho_\varphi\mathbf u)=(0,0)$.
\end{lemma}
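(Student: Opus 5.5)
The plan is a direct term-by-term verification, exactly parallel to the reversibility computation in Lemma~\ref{Le3.3}. Two elementary facts do all the work. First, the reference curve is equivariant under the combined action:
\[
R_\varphi\mathbf x_0(\eta-\varphi)=\mathbf x_0(\eta),\qquad R_\varphi\mathbf x_0''(\eta-\varphi)=\mathbf x_0''(\eta).
\]
Both identities follow because rotating the equator point of angle \(\eta-\varphi\) by \(\varphi\) gives the point of angle \(\eta\). Second, \(R_\varphi\in SO(3)\) fixes \(e_3\). Hence
\[
(R_\varphi a)\times(R_\varphi b)=R_\varphi(a\times b),\qquad e_3\times(R_\varphi a)=R_\varphi(e_3\times a).
\]
Unlike the reflection \(S\) in Lemma~\ref{Le3.3}, no sign appears, since \(\det R_\varphi=1\).

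Set \(\widetilde{\mathbf u}:=\rho_\varphi\mathbf u\). Since the shift commutes with differentiation and \(R_\varphi\) is constant, we have \(\widetilde{\mathbf u}'(\eta)=R_\varphi\mathbf u'(\eta-\varphi)\) and \(\widetilde{\mathbf u}''(\eta)=R_\varphi\mathbf u''(\eta-\varphi)\). Substituting into the five terms of \(\mathcal F\), I would rewrite \(\mathbf x_0(\eta)\) and \(\mathbf x_0''(\eta)\) as \(R_\varphi\mathbf x_0(\eta-\varphi)\) and \(R_\varphi\mathbf x_0''(\eta-\varphi)\). The cross-product identity then pulls \(R_\varphi\) out of each of the three cross-product terms, the \(e_3\) identity handles the \(-\lambda e_3\times\widetilde{\mathbf u}\) term, and the \(\lambda\widetilde{\mathbf u}'\) term is already of the right form. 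Collecting everything gives \(R_\varphi\,\mathcal F(\lambda,\mathbf u)(\eta-\varphi)\).

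For the constraint, orthogonality of \(R_\varphi\) gives
\[
\mathbf x_0(\eta)\cdot R_\varphi\mathbf u(\eta-\varphi)=\mathbf x_0(\eta-\varphi)\cdot\mathbf u(\eta-\varphi),
\]
and \(\|R_\varphi\mathbf u\|=\|\mathbf u\|\). So \(\mathcal C(\widetilde{\mathbf u})(\eta)=\mathcal C(\mathbf u)(\eta-\varphi)\).

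The final claim follows immediately: if both \(\mathcal F(\lambda,\mathbf u)\) and \(\mathcal C(\mathbf u)\) vanish, their rotated translates vanish. I would also remark that \(\rho_\varphi\) maps \(H^2(\mathbb T,\mathbb R^3)\) isometrically to itself, because translation on \(\mathbb T\) and a constant orthogonal matrix both preserve the \(H^2\) norm. So the statement makes sense in the functional setting.

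There is no real obstacle here. The only point requiring care is the equivariance identity for \(\mathbf x_0\), specifically that the shift is \(\eta-\varphi\) rather than \(\eta+\varphi\), since this is what makes the rotated perturbation compatible with the fixed reference curve.
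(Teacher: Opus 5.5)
Your proposal is correct and follows essentially the same route as the paper: commute the constant rotation and the shift with differentiation, use $R_\varphi\mathbf x_0(\eta-\varphi)=\mathbf x_0(\eta)$, $R_\varphi\mathbf x_0''(\eta-\varphi)=\mathbf x_0''(\eta)$, invariance of cross and scalar products under $R_\varphi\in SO(3)$, and $R_\varphi e_3=e_3$, then verify term by term. Your extra remark that $\rho_\varphi$ is an $H^2$-isometry is also exactly what the paper later uses for the uniformity in $\varphi$ of the expansion.
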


\begin{proof}
	Since \(R_\varphi\) is independent of \(\eta\), differentiation commutes with
	\(R_\varphi\). Hence
	\[
	(\rho_\varphi\mathbf u)'(\eta)=R_\varphi\mathbf u'(\eta-\varphi),\qquad(\rho_\varphi\mathbf u)''(\eta)=R_\varphi\mathbf u''(\eta-\varphi).
	\]
	Moreover \(R_\varphi\) preserves scalar products, norms, and cross products, and
	\(R_\varphi e_3=e_3\). Finally,
	\[
	R_\varphi\mathbf x_0(\eta-\varphi)=\mathbf x_0(\eta),\qquad R_\varphi\mathbf x_0''(\eta-\varphi)=\mathbf x_0''(\eta).
	\]
	Using these identities term by term in the definition of \(\mathcal F\), we obtain
	\[
	\mathcal F(\lambda,\rho_\varphi\mathbf u)(\eta)=R_\varphi\,\mathcal F(\lambda,\mathbf u)(\eta-\varphi).
	\]
	Similarly,
	\begin{align*}
		\mathcal C(\rho_\varphi\mathbf u)(\eta)
		&=
		\mathbf x_0(\eta)\cdot R_\varphi\mathbf u(\eta-\varphi)+\frac12\|R_\varphi\mathbf u(\eta-\varphi)\|^2\\
		&=
		R_\varphi\mathbf x_0(\eta-\varphi)\cdot R_\varphi\mathbf u(\eta-\varphi)+\frac12\|\mathbf u(\eta-\varphi)\|^2\\
		&=
		\mathbf x_0(\eta-\varphi)\cdot\mathbf u(\eta-\varphi)+\frac12\|\mathbf u(\eta-\varphi)\|^2=
		\mathcal C(\mathbf u)(\eta-\varphi).
	\end{align*}
	The final implication follows immediately.
\end{proof}

Fix \(k\ge2\), and let $\lambda_k:=R\sqrt{k^2-1}$.
By Theorem~\ref{Th3.1}, there exist \(\delta>0\), a smooth map $\Lambda_k:(-\delta,\delta)\to(0,+\infty)$, $\Lambda_k(0)=\lambda_k$,
and a smooth map $\Gamma_k:(-\delta,\delta)\to Y_{\mathrm{rev}}^k$, $\Gamma_k(0)=0$,
such that $\mathbf u_k(t)=t\big(\mathbf v_k^{(2)}+\Gamma_k(t)\big)$
satisfies $\widehat{\mathcal G}_{\mathrm{rev}}(\Lambda_k(t),\mathbf u_k(t))=0$. After reducing \(\delta\) if necessary, Lemma~\ref{Le3.4}
implies that $\mathcal G_{\mathrm{rev}}(\Lambda_k(t),\mathbf u_k(t))=0$ for every \(t\in(-\delta,\delta)\). Thus $\mathbf x_t(\eta):=\mathbf x_0(\eta)+\mathbf u_k(t)(\eta)$ is a \(2\pi\)-periodic solution of the full rescaled profile equation
\[
\mathbf x\times\mathbf x''-\Lambda_k(t)e_3\times\mathbf x+\Lambda_k(t)\mathbf x'=0,\qquad\|\mathbf x(\eta)\|\equiv R.
\]
Recall that
\[
\mathbf v_k^{(2)}(\eta)=\sin(k\eta)E_2(\eta)-\frac{k}{\sqrt{k^2-1}}\cos(k\eta)E_3.
\]
In Cartesian coordinates this is
\begin{equation*}
	\mathbf v_k^{(2)}(\eta)=
	\begin{pmatrix}
		-\sin(k\eta)\sin\eta\\[2pt]
		\phantom{-}\sin(k\eta)\cos\eta\\[2pt]
		-\dfrac{k}{\sqrt{k^2-1}}\cos(k\eta)
	\end{pmatrix}.
\end{equation*}
Since \(\Gamma_k(t)=o(1)\) in \(H^2(\mathbb T,\mathbb R^3)\), we obtain
\begin{equation*}
	\mathbf x_t(\eta)=
	\begin{pmatrix}
		R\cos\eta\\[2pt]
		R\sin\eta\\[2pt]
		0
	\end{pmatrix}
	+t
	\begin{pmatrix}
		-\sin(k\eta)\sin\eta\\[2pt]
		\phantom{-}\sin(k\eta)\cos\eta\\[2pt]
		-\dfrac{k}{\sqrt{k^2-1}}\cos(k\eta)
	\end{pmatrix}
	+o(t)
\end{equation*}
in \(H^2(\mathbb T,\mathbb R^3)\). We now restore the phase parameter. For \(\varphi\in\mathbb T\), define
\[
\mathbf u_k(t,\varphi):=\rho_\varphi\mathbf u_k(t),\qquad\mathbf x_{t,\varphi}(\eta):=\mathbf x_0(\eta)+\mathbf u_k(t,\varphi)(\eta).
\]
By Lemma~\ref{lem:SO2-equivariance-fixed-x0}, $\mathcal G(\Lambda_k(t),\mathbf u_k(t,\varphi))=(0,0)$. Therefore \(\mathbf x_{t,\varphi}\) is again a \(2\pi\)-periodic solution of the full rescaled
equation:
\[
\mathbf x_{t,\varphi}\times\mathbf x_{t,\varphi}''-\Lambda_k(t)e_3\times\mathbf x_{t,\varphi}+\Lambda_k(t)\mathbf x_{t,\varphi}'=0,\qquad\|\mathbf x_{t,\varphi}(\eta)\|\equiv R.
\]
Using $R_\varphi E_2(\eta-\varphi)=E_2(\eta)$ and $R_\varphi E_3=E_3$,
we get
\[
(\rho_\varphi\mathbf v_k^{(2)})(\eta)=\sin(k(\eta-\varphi))E_2(\eta)-\frac{k}{\sqrt{k^2-1}}\cos(k(\eta-\varphi))E_3.
\]
Consequently,
\begin{equation}
	\label{Eq3.17}
	\mathbf x_{t,\varphi}(\eta)=
	\begin{pmatrix}
		R\cos\eta\\[2pt]
		R\sin\eta\\[2pt]
		0
	\end{pmatrix}
	+t
	\begin{pmatrix}
		-\sin\!\big(k(\eta-\varphi)\big)\sin\eta\\[2pt]
		\phantom{-}\sin\!\big(k(\eta-\varphi)\big)\cos\eta\\[2pt]
		-\dfrac{k}{\sqrt{k^2-1}}\cos\!\big(k(\eta-\varphi)\big)
	\end{pmatrix}
	+o(t)
\end{equation}
in \(H^2(\mathbb T,\mathbb R^3)\), uniformly with respect to \(\varphi\in\mathbb T\). Here the
remainder is $o(t)=t\,\rho_\varphi\Gamma_k(t)$,
and the uniformity follows from the fact that \(\rho_\varphi\) acts isometrically on
\(H^2(\mathbb T,\mathbb R^3)\).

We finally return to the original travelling variable \(\xi\). Recall that
\[
\eta=\omega\xi, \qquad \omega=\frac{\Omega}{a}, \qquad \lambda=\frac{a^2}{\Omega}.
\]
Along the branch, \(\lambda=\Lambda_k(t)\), and therefore we set
\[
\Omega(t):=\frac{a^2}{\Lambda_k(t)}, \qquad \omega(t):=\frac{\Omega(t)}{a}=\frac{a}{\Lambda_k(t)}.
\]
Define
\[
\mathbf X_{t,\varphi}(\xi):=\mathbf x_{t,\varphi}(\omega(t)\xi).
\]
Then \(\mathbf X_{t,\varphi}\) solves the original profile equation
\[
\mathbf X\times \ddot{\mathbf X}-\Omega(t)e_3\times\mathbf X +a\dot{\mathbf X}=0, \qquad \|\mathbf X(\xi)\|\equiv R.
\]
Moreover, \(\mathbf X_{t,\varphi}\) is \(T_\xi(t)\)-periodic, where
\[
T_\xi(t)=\frac{2\pi}{\omega(t)}=\frac{2\pi\Lambda_k(t)}{a}.
\]
Substituting \(\eta=\omega(t)\xi\) in \eqref{Eq3.17}, we obtain
\begin{equation*}
	\mathbf X_{t,\varphi}(\xi)=
	\begin{pmatrix}
		R\cos(\omega(t)\xi)\\[2pt]
		R\sin(\omega(t)\xi)\\[2pt]
		0
	\end{pmatrix}
	+t
	\begin{pmatrix}
		-\sin\!\big(k(\omega(t)\xi-\varphi)\big)\sin(\omega(t)\xi)\\[2pt]
		\phantom{-}\sin\!\big(k(\omega(t)\xi-\varphi)\big)\cos(\omega(t)\xi)\\[2pt]
	    -\dfrac{k}{\sqrt{k^2-1}}\cos\!\big(k(\omega(t)\xi-\varphi)\big)
	\end{pmatrix}
	+o(t),
\end{equation*}
where the remainder is \(o(t)\) in \(H^2_{\mathrm{loc}}(\mathbb R,\mathbb R^3)\), uniformly in
\(\varphi\in\mathbb T\).

We summarize the conclusion in the following theorem.

\begin{theorem}[Local sheet of travelling--rotating profiles]
	Fix an integer \(k\ge2\), and set
	\[
	\lambda_k:=R\sqrt{k^2-1}, \qquad \Omega_k:=\frac{a^2}{\lambda_k}, \qquad \omega_k:=\frac{\Omega_k}{a}=\frac{a}{\lambda_k}.
	\]
	Then there exist \(\delta>0\), a smooth function
	\[
	\Lambda_k:(-\delta,\delta)\to(0,+\infty), \qquad \Lambda_k(0)=\lambda_k,
	\]
	and a smooth two--parameter family
	\[
	\Psi_k:(-\delta,\delta)\times\mathbb T\longrightarrow H^2_{\mathrm{loc}}(\mathbb R,\mathbb R^3),\qquad(t,\varphi)\longmapsto \mathbf X_{t,\varphi},
	\]
	with the following properties.
	
	\begin{enumerate}
		\item[{\rm(i)}]
		For every \((t,\varphi)\in(-\delta,\delta)\times\mathbb T\), the curve
		\(\mathbf X_{t,\varphi}\) is a \(T_\xi(t)\)-periodic solution of
		\[
		\mathbf X\times \ddot{\mathbf X}-\Omega(t)e_3\times\mathbf X+a\dot{\mathbf X}=0, \qquad \|\mathbf X(\xi)\|\equiv R,
		\]
		where
		\[
		\Omega(t):=\frac{a^2}{\Lambda_k(t)}, \qquad \omega(t):=\frac{\Omega(t)}{a}=\frac{a}{\Lambda_k(t)}, \qquad T_\xi(t):=\frac{2\pi}{\omega(t)}=\frac{2\pi\Lambda_k(t)}{a}.
		\]
		
		\item[{\rm(ii)}]
		At \(t=0\), the whole family reduces to the equatorial rotating profile
		\[
		\mathbf X_{0,\varphi}(\xi)=\big(R\cos(\omega_k\xi),\,R\sin(\omega_k\xi),\,0\big), \qquad \varphi\in\mathbb T.
		\]
		
		\item[{\rm(iii)}]
		As \(t\to0\), one has $\Lambda_k(t)=\lambda_k+o(1)$, and, uniformly with respect to \(\varphi\in\mathbb T\),
		\[
		\mathbf X_{t,\varphi}(\xi)=
		\begin{pmatrix}
			R\cos(\omega(t)\xi)\\[2pt] R\sin(\omega(t)\xi)\\[2pt] 0
		\end{pmatrix}
		+t
		\begin{pmatrix}
			-\sin\!\big(k(\omega(t)\xi-\varphi)\big)\sin(\omega(t)\xi)\\[2pt]\phantom{-}\sin\!\big(k(\omega(t)\xi-\varphi)\big)\cos(\omega(t)\xi)\\[2pt]-\dfrac{k}{\sqrt{k^2-1}}\cos\!\big(k(\omega(t)\xi-\varphi)\big)
		\end{pmatrix}
		+o(t)
		\]
		in \(H^2_{\mathrm{loc}}(\mathbb R,\mathbb R^3)\).
		\vspace{5pt}
		\item[{\rm(iv)}]
		The map $(t,\varphi)\mapsto
		(\Omega(t),\mathbf X_{t,\varphi})$
		defines a smooth two--parameter local family of nontrivial profiles, with the trivial equatorial phase fixed, issuing from
		\[
		\left(\Omega_k,\,\big(R\cos(\omega_k\xi),\,R\sin(\omega_k\xi),\,0\big)\right).
		\]
	\end{enumerate}
\end{theorem}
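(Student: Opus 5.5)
The statement collects results already obtained in this section, so the plan is to assemble them in order rather than prove anything new.

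\textbf{Step 1 (reversible branch).} Start from Theorem~\ref{Th3.1}. It provides $\delta>0$ and smooth maps $\Lambda_k$, $\Gamma_k$ with $\Lambda_k(0)=\lambda_k$ and $\Gamma_k(0)=0$. Along the resulting curve $\mathbf u_k(t)=t(\mathbf v_k^{(2)}+\Gamma_k(t))$ one has $\widehat{\mathcal G}_{\mathrm{rev}}(\Lambda_k(t),\mathbf u_k(t))=0$. After shrinking $\delta$ so that $\mathbf u_k(t)$ lies in the neighbourhood $\mathcal U$ of Lemma~\ref{Le3.4}, these are zeros of the full operator $\mathcal G$. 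Hence $\mathbf x_t=\mathbf x_0+\mathbf u_k(t)$ is a $2\pi$-periodic solution of \eqref{Eq3.5} on the sphere of radius $R$, with $\lambda=\Lambda_k(t)$.

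\textbf{Step 2 (restoring the phase).} Set $\mathbf x_{t,\varphi}:=\mathbf x_0+\rho_\varphi\mathbf u_k(t)$. By Lemma~\ref{lem:SO2-equivariance-fixed-x0}, these are still zeros of $\mathcal G$. The map $(t,\varphi)\mapsto\rho_\varphi\mathbf u_k(t)$ is smooth, because $\mathbf u_k(t)$ is smooth in $t$ and translation acts smoothly on the relevant Fourier modes; the parametrization near $t=0$ is understood after fixing the trivial phase.

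To obtain the first-order term:
\begin{itemize}
\item rewrite $\mathbf v_k^{(2)}$ in Cartesian form, using $E_2=(-\sin\eta,\cos\eta,0)$;
\item use $R_\varphi E_2(\eta-\varphi)=E_2(\eta)$ and $R_\varphi E_3=E_3$ to compute $\rho_\varphi\mathbf v_k^{(2)}$.
\end{itemize}
The remainder is $t\rho_\varphi\Gamma_k(t)$. Since $\rho_\varphi$ is an $H^2$-isometry, this remainder is $o(t)$ uniformly in $\varphi$. This gives \eqref{Eq3.17}.

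\textbf{Step 3 (back to the travelling variable).} Define $\Omega(t)=a^2/\Lambda_k(t)$ and $\omega(t)=a/\Lambda_k(t)$. The inverse of the rescaling done before \eqref{Eq3.3} shows that $\mathbf X_{t,\varphi}(\xi):=\mathbf x_{t,\varphi}(\omega(t)\xi)$ solves \eqref{Eq1.3} with parameters $(a,\Omega(t))$. It has constant norm $R$ and period $2\pi/\omega(t)=2\pi\Lambda_k(t)/a$, which is item (i).

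At $t=0$ we have $\mathbf u_k(0)=0$, so $\mathbf X_{0,\varphi}$ is the equator traversed with frequency $\omega_k$; this is item (ii).

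Substituting $\eta=\omega(t)\xi$ into \eqref{Eq3.17} gives the expansion in (iii).

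\textbf{Main obstacle.} The delicate point is transferring the $H^2(\mathbb T)$ remainder estimate to $H^2_{\mathrm{loc}}(\mathbb R)$, because the dilation $\eta=\omega(t)\xi$ depends on $t$. On any compact $\xi$-interval $[-M,M]$, the chain rule bounds the $H^2$ norm of $\mathbf r(\omega(t)\cdot)$ by $C(M)\max(1,\omega(t)^2)\,\omega(t)^{-1/2}$ times the $H^2$ norm of $\mathbf r$ over finitely many periods. Since $\omega(t)\to\omega_k>0$, this constant is uniform for small $t$, so $o(t)$ is preserved.

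The same argument gives smoothness of $\Psi_k$ into $H^2_{\mathrm{loc}}$, though only at the level of composition with a smoothly varying dilation. This needs a little care, since dilation is not smooth on $H^2$; I would either state smoothness for the pair $(\Omega(t),\mathbf x_{t,\varphi})$ in the fixed variable $\eta$, or use the extra regularity of solutions, which are $C^\infty$ by the ODE.

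Item (iv) then follows from Steps 1--3. Nontriviality for $t\neq0$ holds because the $t\mathbf v_k^{(2)}$ term dominates the $o(t)$ remainder.
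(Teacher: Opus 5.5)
Your proposal is correct and follows the paper's own proof essentially step by step: the Crandall--Rabinowitz branch in the reversible class, passage to zeros of $\mathcal G$ via Lemma~\ref{Le3.4}, restoration of the phase by $\rho_\varphi$ (using that it is an $H^2$-isometry for uniformity in $\varphi$), and the rescaling $\eta=\omega(t)\xi$. The paper leaves two points implicit that you address: your dilation estimate, which transfers the $o(t)$ remainder to $H^2_{\mathrm{loc}}(\mathbb R)$, and your observation that smoothness in $(t,\varphi)$ must rely on the $C^\infty$ regularity of the ODE solutions, since translation and dilation lose derivatives on $H^2$.
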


\begin{remark}
	The equatorial branch is not the only elementary family of \(2\pi\)-periodic
	solutions of the fixed-period profile equation. Indeed, for
	\(\nu\in\mathbb Z\setminus\{0\}\), consider a constant-latitude profile of the
	form
	\[
	\mathbf x(\eta)=\big(\sqrt{R^2-z^2}\cos(\nu\eta+\theta),\sqrt{R^2-z^2}\sin(\nu\eta+\theta),z\big),
	\]
	A direct substitution into $\mathbf x\times\mathbf x''-\lambda e_3\times\mathbf x+\lambda\mathbf x'=0$
	shows that such a profile is a solution if and only if
	\[
	\lambda(\nu-1)=\nu^2 z.
	\]
	The case \(\nu=1\) gives \(z=0\), and hence recovers the equatorial branch.
	For \(\nu\ge2\) one obtains non-equatorial parallels in the northern
	hemisphere, while for \(\nu\le -1\) one obtains parallels in the southern
	hemisphere. As the latitude tends to a pole, these families converge to the
	constant polar profiles. After reconstructing the filament by
	\(\mathbf X_0'=\mathbf x\), the corresponding filaments are circular helices
	around the vertical axis. This point of view is closely related to the perturbative analysis around helices developed in \cite{GarciaVega2024}. We do not pursue this direction here. Our aim in this section is to isolate the bifurcation mechanism from the equatorial branch, which is the branch used in the global continuation argument of the next section. This choice keeps the local analysis and the subsequent global description in a single coherent framework. A similar treatment starting from non-equatorial parallels would require a separate discussion and would lead to a different global problem.
\end{remark}

\section{Global continuation in the regular class}
\label{S4}

In this section we study the global continuation of the local branches obtained
in the previous section. Recall that we are dealing with the equation
\begin{equation}
	\label{Eq4.1}
	\mathbf{x}\times\mathbf{x}''-\lambda e_{3}\times\mathbf{x}+\lambda\mathbf{x}'=0, \qquad \|\mathbf{x}\|\equiv R, \qquad \mathbf{x}(\eta+2\pi)=\mathbf{x}(\eta).
\end{equation}
The augmented operator \(\mathcal G\) is written in terms of perturbations $\mathbf{x}=\mathbf{x}_{0}+\mathbf u$.
Thus, if $\mathcal G(\lambda,\mathbf u)=(0,0)$, then $\mathbf{x}:=\mathbf{x}_{0}+\mathbf u$
is a \(2\pi\)-periodic solution of \eqref{Eq4.1}.

We denote the zero set of the augmented problem by
\begin{equation*}
	\mathcal Z:=\left\{
	(\lambda,\mathbf u)\in(0,+\infty)\times H^{2}(\mathbb T,\mathbb R^{3}):\mathcal G(\lambda,\mathbf u)=(0,0)\right\}.
\end{equation*}
For $(\lambda,\mathbf u)\in\mathcal Z$ we always write $\mathbf{x}:=\mathbf{x}_{0}+\mathbf u$ with $u:=x_{3}$.

\subsection{The rescaled invariants}

We first recall how the invariants of Section~\ref{S2} look in the
rescaled equation \(a=\Omega=\lambda\). From \eqref{Eq2.3}, the conserved quantities
are
\begin{equation*}
	C=(\mathbf{x}\times\mathbf{x}')_{3}+\lambda x_{3}, \qquad E=\frac{\lambda}{2}\|\mathbf{x}'\|^{2}+\lambda(x_{2}x_{1}'-x_{1}x_{2}').
\end{equation*}
Since $x_{2}x_{1}'-x_{1}x_{2}'=-(\mathbf{x}\times\mathbf{x}')_{3}$,
and $C=(\mathbf{x}\times\mathbf{x}')_{3}+\lambda x_{3}$,
we get
\[
E=\frac{\lambda}{2}\|\mathbf{x}'\|^{2}-\lambda(C-\lambda x_{3}).
\]
Consequently the quantity
\begin{equation*}
	\mathcal H:=\frac{E}{\lambda}+C
\end{equation*}
satisfies
\begin{equation}
	\label{Eq4.2}
	\mathcal H=\frac12\|\mathbf{x}'\|^{2}+\lambda x_{3}.
\end{equation}
Thus \(\mathcal H\) is not a new first integral; it is only a convenient
combination of the first integrals \(C\) and \(E\). With the same specialization \(a=\Omega=\lambda\), the polynomial \(P\) in
\eqref{Eq2.7} is
\begin{equation}
	\label{Eq4.3}
	P(s)=a_{3}s^{3}-a_{2}s^{2}+a_{1}s+a_{0},
\end{equation}
where, by \eqref{Eq2.6},
\begin{align}
	\label{Eq4.4}
	a_{0}
	&=
	\frac{2ER^{2}-\lambda C^{2}+2\lambda C R^{2}}
	{\lambda R^{2}},
	&
	a_{1}
	&=\frac{2\lambda C-2\lambda R^{2}}{R^{2}},
	\nonumber\\
	a_{2}
	&=\frac{2E+\lambda^{3}+2\lambda C}{\lambda R^{2}},
	&
	a_{3}
	&=\frac{2\lambda}{R^{2}}.
\end{align}
The vertical component satisfies the scalar equation $(u')^{2}=P(u)$, $-R\le u\le R$.
Whenever \(P\) has three real roots $e_{1}<e_{2}<e_{3}$ and the solution oscillates between \(e_{1}\) and \(e_{2}\), we write
\begin{equation}
	\label{Eq4.5}
	P(s)=\frac{2\lambda}{R^{2}}(s-e_{1})(s-e_{2})(s-e_{3}).
\end{equation}
This is precisely the factorization used in Proposition~\ref{Pr2.4}, specialized
to the rescaled equation. Comparing the coefficient of \(s^{2}\) in \eqref{Eq4.3} and
\eqref{Eq4.5}, we obtain
\[
a_{2}=\frac{2\lambda}{R^{2}}(e_{1}+e_{2}+e_{3}).
\]
Using \eqref{Eq4.4}, this gives
\[
\frac{2E+\lambda^{3}+2\lambda C}{\lambda R^{2}}=\frac{2\lambda}{R^{2}}(e_{1}+e_{2}+e_{3}).
\]
Equivalently,
\begin{equation}
	\label{Eq4.6}
	\mathcal H=\frac{E}{\lambda}+C=\lambda(e_{1}+e_{2}+e_{3})-\frac{\lambda^{2}}{2}.
\end{equation}

We shall also use the following explicit expression for \(\mathbf x''\), obtained
from \eqref{Eq4.1} and the constraint \(\|\mathbf x\|=R\):
\begin{equation}
	\label{Eq4.7}
	\mathbf{x}''=-\frac{\|\mathbf{x}'\|^{2}}{R^{2}}\mathbf{x}-\lambda e_{3}+\frac{\lambda x_{3}}{R^{2}}\mathbf{x}+\frac{\lambda}{R^{2}}\mathbf{x}\times\mathbf{x}'.
\end{equation}
Indeed, since \(\mathbf{x}\cdot\mathbf{x}'=0\) and
\(\mathbf{x}\cdot\mathbf{x}''=-\|\mathbf{x}'\|^{2}\), taking the cross product
of
\[
\mathbf{x}\times\mathbf{x}''=\lambda e_{3}\times\mathbf{x}-\lambda\mathbf{x}'
\]
on the right by \(\mathbf x\), and using $(a\times b)\times c=(a\cdot c)b-(b\cdot c)a$,
gives
\[
R^2\mathbf{x}''+\|\mathbf{x}'\|^2\mathbf{x}=\lambda x_3\mathbf{x}-\lambda R^2e_3+\lambda\mathbf{x}\times\mathbf{x}'.
\]
Dividing by \(R^2\) gives \eqref{Eq4.7}.

\subsection{The regular non-polar class}

We now define the regular class in terms of the polynomial \(P\) introduced
above.

\begin{definition}[Regular non-polar oscillatory zeros]
	\label{Df4.1}
	We define \(\mathcal Z_{\rm reg}\subset\mathcal Z\) as the set of zeros $(\lambda,\mathbf u)\in\mathcal Z$ such that, for $\mathbf{x}=\mathbf{x}_{0}+\mathbf u$, $u=x_{3}$, the following conditions hold:
	\begin{enumerate}
		\item[\rm(i)] the solution does not touch the poles:
		\[
		-R<u(\eta)<R\qquad\text{for every }\eta\in\mathbb T;
		\]
		
		\item[\rm(ii)] the vertical component is not constant:
		\[
		\operatorname{osc}(u):=
		\max_{\eta\in\mathbb T}u(\eta)-\min_{\eta\in\mathbb T}u(\eta)>0;
		\]
		
		\item[\rm(iii)] if
		\[
		e_{1}:=\min_{\eta\in\mathbb T}u(\eta), \qquad e_{2}:=\max_{\eta\in\mathbb T}u(\eta),
		\]
		then the polynomial \(P\) associated with the solution by
		\eqref{Eq4.3}--\eqref{Eq4.4} has a third real root
		\(e_{3}\) such that
		\[
		-R<e_{1}<e_{2}<R, \qquad e_{2}<e_{3},
		\]
		and
		\[
		P(s)=\frac{2\lambda}{R^{2}}(s-e_{1})(s-e_{2})(s-e_{3}).
		\]
	\end{enumerate}
\end{definition}
Note that the equatorial branch $\{(\lambda,0):\lambda>0\}$ is not contained in \(\mathcal Z_{\rm reg}\), since for \(\mathbf u=0\) one has \(x_{3}\equiv0\). Thus the bifurcation points $(\lambda_{k},0)$, $\lambda_{k}=R\sqrt{k^{2}-1}$, belong to the boundary of the regular class.

\begin{lemma}
	The set \(\mathcal Z_{\rm reg}\) is relatively open in \(\mathcal Z\), with respect
	to the topology induced by
	\[
	(0,+\infty)\times H^{2}(\mathbb T,\mathbb R^{3}).
	\]
\end{lemma}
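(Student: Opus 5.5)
The plan is to take $(\lambda_*,\mathbf u_*)\in\mathcal Z_{\rm reg}$ and a sequence $(\lambda_n,\mathbf u_n)\in\mathcal Z$ with $(\lambda_n,\mathbf u_n)\to(\lambda_*,\mathbf u_*)$ in $(0,+\infty)\times H^2$, and to show that $(\lambda_n,\mathbf u_n)\in\mathcal Z_{\rm reg}$ for all large $n$. Since $\mathcal Z$ carries the metric topology, this sequential statement gives relative openness. Write $\mathbf x_n=\mathbf x_0+\mathbf u_n$ and $u_n=x_{3,n}$.

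\emph{Step 1: continuity of the invariants.} The embedding $H^2(\mathbb T)\hookrightarrow\mathcal C^1(\mathbb T)$ gives $\mathbf x_n\to\mathbf x_*$ and $\mathbf x_n'\to\mathbf x_*'$ uniformly. Hence $\min u_n\to\min u_*$ and $\max u_n\to\max u_*$. Conditions (i) and (ii) of Definition~\ref{Df4.1} are strict inequalities for $\mathbf u_*$, so they persist for large $n$. The first integrals $C_n=(\mathbf x_n\times\mathbf x_n')_3+\lambda_n x_{3,n}$ and $E_n$ are constant in $\eta$; evaluating them at $\eta=0$ shows $C_n\to C_*$ and $E_n\to E_*$. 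Therefore the coefficients \eqref{Eq4.4} converge, and $P_n\to P_*$ coefficientwise, with leading coefficient $2\lambda_n/R^2$ bounded away from zero.

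\emph{Step 2: root structure.} $P_*$ has three simple real roots $e_1^*<e_2^*<e_3^*$. By continuity of simple roots (intermediate value theorem on small disjoint intervals around each root, where $P_*$ changes sign), $P_n$ has three simple real roots $e_{j,n}\to e_j^*$ for large $n$. Hence $e_{2,n}<e_{3,n}$ and $-R<e_{1,n}<e_{2,n}<R$, and $P_n=\frac{2\lambda_n}{R^2}\prod_j(s-e_{j,n})$.

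\emph{Step 3: identification $\min u_n=e_{1,n}$ and $\max u_n=e_{2,n}$.} This is the main point. Since $u_n$ is nonconstant with values in $(-R,R)$ and $(u_n')^2=P_n(u_n)$, the argument in the proof of Proposition~\ref{Pr2.4}(b) applies. Using $P_n(\pm R)\le0$ from \eqref{Eq2.8} and the sign pattern of $P_n$, the range of $u_n$ must lie in $[e_{1,n},e_{2,n}]$. It remains to show the range is all of $[e_{1,n},e_{2,n}]$. At an interior extremum $\eta_0$ we have $u_n'(\eta_0)=0$, so $P_n(u_n(\eta_0))=0$. The extremal values of $u_n$ are therefore roots of $P_n$ lying in $[e_{1,n},e_{2,n}]$, which forces $\min u_n=e_{1,n}$ and $\max u_n=e_{2,n}$ because $u_n$ is nonconstant. (Alternatively, by Proposition~\ref{Pr2.4}(b), $u_n$ is the explicit sn$^2$ profile, which oscillates between $e_{1,n}$ and $e_{2,n}$.)

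Together these steps verify condition (iii) of Definition~\ref{Df4.1} for large $n$, which completes the proof. The only delicate point is Step 3, ensuring that the roots produced by perturbation coincide with the actual extrema of $u_n$; the extremum argument above handles it.
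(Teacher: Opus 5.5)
Your proof is correct and follows essentially the same route as the paper. Both arguments proceed sequentially, use $\mathcal C^1$ convergence to keep the strict inequalities, pass from convergence of $C,E$ to convergence of the coefficients of $P$, invoke continuity of simple roots, and identify the extrema of $u_n$ as roots of $P_n$ via $u_n'=0$. The only minor difference is in Step 3. You exclude the wrong roots globally: $P_n(R)\le 0$ and the sign pattern give $e_{3,n}\ge R>u_n$, which confines the range of $u_n$ to $[e_{1,n},e_{2,n}]$. The paper argues locally instead: $\min u_n\to e_1$, $\max u_n\to e_2$, and $e_{1,n},e_{2,n}$ are the only roots of $P_n$ near $e_1,e_2$. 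Both versions are valid.
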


\begin{proof}
	We prove the statement sequentially. Let $(\lambda,\mathbf u)\in\mathcal Z_{\rm reg}$
	and let $\{(\lambda_{n},\mathbf u_{n})\}_{n\in\N}\subset\mathcal Z$
	be such that
	\[
	\lim_{n\to+\infty}(\lambda_{n},\mathbf u_{n})=(\lambda,\mathbf u) \qquad \text{in }(0,+\infty)\times H^{2}(\mathbb T,\mathbb R^{3}).
	\]
	We shall prove that $(\lambda_{n},\mathbf u_{n})\in\mathcal Z_{\rm reg}$ for \(n\) large enough. Set
	\[
	\mathbf{x}_{n}:=\mathbf{x}_{0}+\mathbf u_{n}, \qquad \mathbf{x}:=\mathbf{x}_{0}+\mathbf u, \qquad u_{n}:=(x_{n})_{3}, \qquad u:=x_{3}.
	\]
	Since $H^{2}(\mathbb T)\hookrightarrow \mc C^{1}(\mathbb T)$,
	we have
	\[
	\lim_{n\to+\infty}\mathbf{x}_{n}=\mathbf{x}\qquad\text{in }\mc C^{1}(\mathbb T,\mathbb R^{3}).
	\]
	In particular, $u_{n}\to u$ in $\mc C^{1}(\mathbb T)$. Since \((\lambda,\mathbf u)\in\mathcal Z_{\rm reg}\), we have
	\[
	\min_{\eta\in\mathbb T}(R^{2}-u(\eta)^{2})>0.
	\]
	By uniform convergence, for \(n\) large enough,
	\[
	\min_{\eta\in\mathbb T}(R^{2}-u_{n}(\eta)^{2})>0.
	\]
	Thus the solutions \(\mathbf x_n\) are non-polar for \(n\) large enough. Similarly, since $\operatorname{osc}(u)>0$,
	and the map
	\[
	v\mapsto \operatorname{osc}(v)=\max_{\mathbb T}v-\min_{\mathbb T}v
	\]
	is continuous with respect to the \(\mc{C}^{0}\)-topology, we have $\operatorname{osc}(u_{n})>0$ for \(n\) large enough.
	
	It remains to check the regular root configuration. Let
	\[
	e_{1}:=\min_{\mathbb T}u, \qquad e_{2}:=\max_{\mathbb T}u.
	\]
	Since \((\lambda,\mathbf u)\in\mathcal Z_{\rm reg}\), the polynomial \(P\)
	associated with \((\lambda,\mathbf u)\) has a third real root \(e_{3}\) such that $-R<e_{1}<e_{2}<R$,
	$e_{2}<e_{3}$, and
	\[
	P(s)=\frac{2\lambda}{R^{2}}(s-e_{1})(s-e_{2})(s-e_{3}).
	\]
	In particular, \(e_{1},e_{2},e_{3}\) are simple roots of \(P\).
	
	For each \(n\), let \(P_{n}\) be the polynomial associated with
	\((\lambda_{n},\mathbf u_{n})\). The conserved quantities \(C_{n},E_{n}\)
	depend continuously on \((\lambda_{n},\mathbf x_{n})\) in the \(\mc{C}^{1}\)-topology.
	Hence the coefficients of \(P_{n}\) converge to the coefficients of \(P\).
	Since the roots \(e_{1},e_{2},e_{3}\) of \(P\) are simple, there exist, for \(n\)
	large enough, three real simple roots $e_{1,n}<e_{2,n}<e_{3,n}$
	of \(P_{n}\), depending continuously on \(n\), such that $e_{i,n}\to e_{i}$ for each $i=1,2,3$.
	In particular, for \(n\) large enough,
	\[
	-R<e_{1,n}<e_{2,n}<R, \qquad e_{2,n}<e_{3,n}.
	\]
	We now show that \(u_{n}\) oscillates between \(e_{1,n}\) and \(e_{2,n}\).
	Let
	\[
	m_{n}:=\min_{\mathbb T}u_{n}, \qquad M_{n}:=\max_{\mathbb T}u_{n}.
	\]
	By uniform convergence, $m_{n}\to e_{1}$ and $M_{n}\to e_{2}$. At points where the minimum and maximum are attained one has \(u_{n}'=0\).
	Since $(u_{n}')^{2}=P_{n}(u_{n})$, it follows that $P_{n}(m_{n})=0$, $P_{n}(M_{n})=0$.
	For \(n\) large enough, the only root of \(P_{n}\) close to \(e_{1}\) is
	\(e_{1,n}\), and the only root of \(P_{n}\) close to \(e_{2}\) is \(e_{2,n}\).
	Therefore $m_{n}=e_{1,n}$ and $M_{n}=e_{2,n}$. Thus
	\[
	u_{n}(\mathbb T)=[e_{1,n},e_{2,n}].
	\]
	Consequently, for \(n\) large enough, the solution \((\lambda_{n},\mathbf u_{n})\)
	satisfies all the defining conditions of \(\mathcal Z_{\rm reg}\). Hence $(\lambda_{n},\mathbf u_{n})\in\mathcal Z_{\rm reg}$
	for all sufficiently large \(n\), which proves that \(\mathcal Z_{\rm reg}\) is
	relatively open in \(\mathcal Z\).
\end{proof}

\subsection{Horizontal degree and vertical nodal number}

On \(\mathcal Z_{\rm reg}\) there are two natural discrete invariants. First,
since $-R<x_{3}(\eta)<R$ for every $\eta$, the horizontal projection $(x_{1},x_{2})$
does not vanish. Hence we can define the horizontal degree
\begin{equation*}
	p(\mathbf{x}):=\frac1{2\pi}\int_{0}^{2\pi}\frac{x_{1}x_{2}'-x_{2}x_{1}'}{x_{1}^{2}+x_{2}^{2}}\,d\eta \in\mathbb Z.
\end{equation*}

Second, the vertical component \(u=x_{3}\) is regular oscillatory. We define the
vertical nodal number \(q(\mathbf{x})\) as the number of full oscillations of
\(u\) on \([0,2\pi]\). Equivalently, if \(T_{u}\) denotes the minimal period of
\(u\), then
\[
T_{u}=\frac{2\pi}{q(\mathbf{x})}.
\]

\begin{lemma}
	\label{Le4.2}
	The horizontal degree \(p\) and the vertical nodal number \(q\) are locally
	constant on \(\mathcal Z_{\rm reg}\). Consequently, they are constant on every
	connected component of \(\mathcal Z_{\rm reg}\).
\end{lemma}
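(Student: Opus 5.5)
We argue sequentially, in the same way as in the proof that \(\mathcal Z_{\rm reg}\) is relatively open in \(\mathcal Z\). Fix \((\lambda,\mathbf u)\in\mathcal Z_{\rm reg}\) and a sequence \(\{(\lambda_n,\mathbf u_n)\}_{n\in\N}\subset\mathcal Z_{\rm reg}\) converging to it in \((0,+\infty)\times H^2\). By the embedding \(H^2(\mathbb T)\hookrightarrow\mc C^1(\mathbb T)\), we have \(\mathbf x_n\to\mathbf x\) in \(\mc C^1\). It suffices to show that \(p(\mathbf x_n)=p(\mathbf x)\) and \(q(\mathbf x_n)=q(\mathbf x)\) for \(n\) large. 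Local constancy then follows, and a locally constant integer-valued function is constant on connected sets.

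\emph{Horizontal degree.} Since \(|u|<R\) on the compact set \(\mathbb T\), we have \(\min_{\mathbb T}(x_1^2+x_2^2)=\min_{\mathbb T}(R^2-u^2)>0\). By uniform convergence, \(\min_{\mathbb T}(x_{1,n}^2+x_{2,n}^2)\) is bounded below by a positive constant for \(n\) large. The integrand \((x_1x_2'-x_2x_1')/(x_1^2+x_2^2)\) therefore depends continuously on \(\mathbf x\) in the \(\mc C^1\)-topology, uniformly in \(\eta\). Hence \(p(\mathbf x_n)\to p(\mathbf x)\). Since each \(p(\mathbf x_n)\) is an integer (a winding number of a nonvanishing closed planar curve), \(p(\mathbf x_n)=p(\mathbf x)\) eventually.

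\emph{Vertical nodal number.} This is the main step. We use the explicit period from Proposition~\ref{Pr2.4}(b), specialised to \(a=\Omega=\lambda\). For \((\lambda,\mathbf u)\in\mathcal Z_{\rm reg}\), the function \(u\) is a non-constant solution of \((u')^2=P(u)\) with regular roots \(e_1<e_2<e_3\). Hence \(u\) has minimal period
\[
T_u=\frac{2K(k)}{\omega},\qquad k=\frac{e_2-e_1}{e_3-e_1},\qquad \omega=\sqrt{\frac{\lambda}{2R^2}(e_3-e_1)},
\]
and \(q=2\pi/T_u\). One point needs care: Proposition~\ref{Pr2.4} classifies solutions on intervals where \(u'\neq0\). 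We must therefore note that a \(2\pi\)-periodic \(\mc C^1\) solution of \((u')^2=P(u)\) with range \([e_1,e_2]\) and simple roots cannot rest at a turning point. At a simple root one has \(u''=P'(u)/2\neq0\), which follows from differentiating \((u')^2=P(u)\); equivalently, it follows from the second-order equation obtained from \eqref{Eq4.7}. So \(u\) is exactly the \(\mathrm{sn}^2\) profile, and \(2\pi\) is an integer multiple of \(T_u\).

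In the proof of relative openness we showed that the roots \(e_{i,n}\) of \(P_n\) converge to \(e_i\), with \(e_{1,n},e_{2,n}\) equal to \(\min u_n\) and \(\max u_n\). Hence \(k_n\to k\in(0,1)\) and \(\omega_n\to\omega>0\). Since \(K\) is continuous on \([0,1)\), we get \(T_{u_n}\to T_u\), and thus \(q(\mathbf x_n)=2\pi/T_{u_n}\to q(\mathbf x)\). As these are integers, they coincide eventually.

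The main obstacle I expect is justifying rigorously that \(q\) is well defined as an integer. This requires ruling out degenerate rest intervals at turning points and checking that the minimal period of \(u\) divides \(2\pi\). Both use simplicity of the roots \(e_1,e_2\), which is part of Definition~\ref{Df4.1}(iii).
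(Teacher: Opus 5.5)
Your proposal is correct and follows essentially the same route as the paper. For $q$, both arguments use the continuous dependence of the simple roots $e_1<e_2<e_3$, hence of the elliptic period $T_u=2K(k)/\omega$, together with integrality of $q=2\pi/T_u$. For $p$, you use continuity of the winding integral where the paper uses a nonvanishing straight-line homotopy, which is an inessential variation. Your extra check that $u$ is exactly the $\mathrm{sn}^2$ profile, via $u''=P'(u)/2$ (which does follow from the third component of \eqref{Eq4.7} and the first integrals), makes explicit a point the paper takes for granted.
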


\begin{proof}
	Let $(\lambda_{0},\mathbf u_{0})\in\mathcal Z_{\rm reg}$ with $\mathbf x^{0}:=\mathbf x_{0}+\mathbf u_{0}$, $u_{0}:=(x^{0})_{3}$. We first prove the local constancy of the horizontal degree. Since
	\((\lambda_{0},\mathbf u_{0})\in\mathcal Z_{\rm reg}\), we have $-R<u_{0}(\eta)<R$ for every $\eta\in\mathbb T$.
	Equivalently, the horizontal projection
	\[
	y_{0}:\mathbb{T}\to \R^{2}\setminus\{(0,0)\}, \quad y_{0}(\eta):=(x^{0}_{1}(\eta),x^{0}_{2}(\eta)),
	\]
	does not vanish. Since \(\mathbb T\) is compact, there exists \(\delta>0\) such
	that $|y_{0}(\eta)|\ge \delta$ for every $\eta\in\mathbb T$.
	If \((\lambda,\mathbf u)\in\mathcal Z_{\rm reg}\) is sufficiently close to
	\((\lambda_{0},\mathbf u_{0})\) in \((0,+\infty)\times H^{2}\), then, writing $\mathbf x=\mathbf x_{0}+\mathbf u$,
	$y=(x_{1},x_{2})$,
	we have $\|y-y_{0}\|_{\infty}<\delta/2$.
	Hence $|y(\eta)|\ge \delta/2$ for every $\eta\in\mathbb T$.
	The straight-line homotopy
	\[
	H:[0,1]\times \mathbb{T}\longrightarrow \R^{2}, \quad H(s,\eta):=(1-s)y_{0}(\eta)+s\,y(\eta),
	\]
	does not vanish, provided the neighbourhood is chosen sufficiently small.
	Therefore \(y_{0}\) and \(y\) have the same winding number around the origin. Thus $p(\mathbf x)=p(\mathbf x^{0})$. This proves that \(p\) is locally constant.
	
	We now prove the local constancy of the vertical nodal number. Let
	\[
	e_{1}^{0}:=\min_{\mathbb T}u_{0}, \qquad e_{2}^{0}:=\max_{\mathbb T}u_{0}.
	\]
	Since \((\lambda_{0},\mathbf u_{0})\in\mathcal Z_{\rm reg}\), the associated
	polynomial \(P_{0}\) has a third root \(e_{3}^{0}\) such that
	\[
	-R<e_{1}^{0}<e_{2}^{0}<R, \qquad e_{2}^{0}<e_{3}^{0},
	\]
	and
	\[
	P_{0}(s)=\frac{2\lambda_{0}}{R^{2}}(s-e_{1}^{0})(s-e_{2}^{0})(s-e_{3}^{0}).
	\]
	The roots are simple. Hence, for every \((\lambda,\mathbf u)\in\mathcal Z_{\rm reg}\)
	sufficiently close to \((\lambda_{0},\mathbf u_{0})\), the associated polynomial
	\(P\) has roots $e_{1}<e_{2}<e_{3}$
	depending continuously on \((\lambda,\mathbf u)\), with $e_{i}\to e_{i}^{0}$ for every $i=1,2,3$.
	Moreover, by the definition of \(\mathcal Z_{\rm reg}\), the vertical component
	\(u=x_{3}\) oscillates between \(e_{1}\) and \(e_{2}\). For such a regular oscillation,
	\[
	P(u)
	=
	\frac{2\lambda}{R^{2}}(u-e_{1})(u-e_{2})(u-e_{3}).
	\]
	On the interval \(e_{1}\le u\le e_{2}\), this is equivalently
	\[
	(u')^{2}
	=\frac{2\lambda}{R^{2}}(u-e_{1})(e_{2}-u)(e_{3}-u).
	\]
	Therefore the minimal vertical period is
	\[
	T_{u}=\frac{2K\left(\frac{e_{2}-e_{1}}{e_{3}-e_{1}}\right)}{\sqrt{\frac{\lambda}{2R^{2}}(e_{3}-e_{1})}}.
	\]
	Thus, the
	period \(T_{u}\) depends continuously on \((\lambda,\mathbf u)\) in a
	neighbourhood of \((\lambda_{0},\mathbf u_{0})\). Because every element of \(\mathcal Z_{\rm reg}\) is \(2\pi\)-periodic, the
	minimal vertical period satisfies $T_{u}=\frac{2\pi}{q(\mathbf x)}$
	for some \(q(\mathbf x)\in\mathbb N\). Hence
	\[
	q(\mathbf x)=\frac{2\pi}{T_{u}}.
	\]
	The right-hand side is continuous in \((\lambda,\mathbf u)\), while
	\(q(\mathbf x)\) takes values in the discrete set \(\mathbb N\). Therefore
	\(q\) is locally constant. Since \(p\) and \(q\) are locally constant, they are constant on every connected
	component of \(\mathcal Z_{\rm reg}\).
\end{proof}

The local bifurcation theorem was proved in the reversible subspace \(Y_{\rm rev}\).
However, every solution obtained there is, of course, a solution of the full
augmented problem. We shall therefore regard the local Crandall--Rabinowitz
branch as a subset of the full zero set \(\mathcal Z\).

For each \(k\ge2\), the local branch bifurcating from $(\lambda_{k},0)$, $\lambda_{k}=R\sqrt{k^{2}-1}$,
has the form $\lambda=\Lambda_{k}(t)$, $\mathbf u=\mathbf u_{k}(t)$, $\mathbf u_{k}(0)=0$.
Moreover, if $\mathbf x_{k}(t):=\mathbf x_{0}+\mathbf u_{k}(t)$,
then its vertical component satisfies
\begin{equation}
	\label{Eq4.8}
	x_{k,3}(t,\eta)=-t\,\frac{k}{\sqrt{k^{2}-1}}\cos(k\eta)+o(t)\quad \text{in }\mc{C}^{1}(\mathbb T)
\end{equation}
as \(t\to0\). In the full zero set, the corresponding solutions obtained by the
phase symmetry have vertical component $x_{k,3}(t,\eta-\varphi)$ and satisfy the same conclusions below.

\begin{lemma}
	\label{Le4.3}
	For \(t\neq0\) sufficiently small, the nontrivial local branch bifurcating from
	\((\lambda_{k},0)\) belongs to \(\mathcal Z_{\rm reg}\). Moreover, on this local
	branch one has
	\[
	p=1, \qquad q=k.
	\]
	The same holds for the phase orbit of this branch in the full zero set
	\(\mathcal Z\).
\end{lemma}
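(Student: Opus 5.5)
We have to show three things: that for small $t\neq0$ the local solutions lie in $\mathcal Z_{\rm reg}$, that $p=1$ on them, and that $q=k$. The horizontal degree is the easiest and we treat it first. The expansion \eqref{Eq3.17} shows that $\mathbf x_t\to\mathbf x_0$ in $H^2$, hence in $\mathcal C^1$, as $t\to0$. The horizontal projection of $\mathbf x_0$ is $R(\cos\eta,\sin\eta)$, which has winding number $1$. The straight-line homotopy argument from the proof of Lemma~\ref{Le4.2} then gives $p(\mathbf x_t)=1$ for $|t|$ small.

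Next we check the conditions of Definition~\ref{Df4.1}. Non-polarity (i) holds for small $t$ because $\|u_t\|_{\mathcal C^0}=O(|t|)<R$, where $u_t:=x_{k,3}(t,\cdot)$. For (ii) we use \eqref{Eq4.8}. It says that $u_t/t\to-\frac{k}{\sqrt{k^2-1}}\cos(k\eta)$ in $\mathcal C^1$, so $\operatorname{osc}(u_t)=\frac{2k|t|}{\sqrt{k^2-1}}+o(t)>0$.

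For (iii) we use the conserved quantities. Set $e_1:=\min u_t$ and $e_2:=\max u_t$. At these points $u_t'=0$, so $P_t(e_1)=P_t(e_2)=0$, and $e_1<e_2$. Since $P_t$ is a real cubic with leading coefficient $2\lambda/R^2>0$, it has a third real root $e_3$ and factors as $\frac{2\lambda}{R^2}(s-e_1)(s-e_2)(s-e_3)$. It remains to place $e_3$. The coefficients of $P_t$ converge to those of the equatorial polynomial $u^2(a_3u-a_2)$, whose roots are $0$ (double) and $a_2/a_3=\frac{\lambda_k^4+\lambda_k^2R^2}{2\lambda_k^2}\cdot\frac{1}{\lambda_k}>0$. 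Both $e_1$ and $e_2$ tend to $0$, so they account for the double root. The third root therefore converges to the positive simple root, giving $e_3>e_2$ for small $t$. One could also appeal to the sign argument of Proposition~\ref{Pr2.4}: $P_t\ge0$ on $[e_1,e_2]$ while $P_t(R)\le0$, which forces $e_3\ge R>e_2$. Either way $-R<e_1<e_2<R<e_3$, so (iii) holds.

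The nodal number $q=k$ is the step I expect to be most delicate. The function $u_t$ has some minimal period $2\pi/q_t$, and over that period it oscillates exactly once between $e_1$ and $e_2$, by the sn$^2$ description of Proposition~\ref{Pr2.4}(b). Its critical points are therefore exactly the points where $u_t=e_1$ or $u_t=e_2$, and there are $2q_t$ of them on $\mathbb T$.

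The first route is to count zeros. Write $v_t:=u_t/t\to v:=-\frac{k}{\sqrt{k^2-1}}\cos(k\eta)$ in $\mathcal C^1$. The function $v$ has exactly $2k$ simple zeros on $\mathbb T$. By $\mathcal C^1$ convergence, $v_t$ also has exactly $2k$ simple zeros for small $t$. A regular sn$^2$ oscillation crosses any intermediate level exactly twice per period. For small $t$ the levels $0$ and $e_1,e_2$ are all $O(t)$, with $e_1/t$ and $e_2/t$ converging to $\mp\frac{k}{\sqrt{k^2-1}}$ (in the order fixed by the sign of $t$). The level $0$ thus lies strictly between $e_1$ and $e_2$, so $u_t$ has $2q_t$ zeros. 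Comparing counts gives $q_t=k$.

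An alternative route is through the period formula $T_u=2K(m)/\sqrt{\frac{\lambda}{2R^2}(e_3-e_1)}$. As $t\to0$ we have $m\to0$, $K\to\pi/2$ and $e_3-e_1\to e_3^0$. Computing the resulting limit of $T_u$ and checking that it equals $2\pi/k$ requires using $\lambda_k^2=R^2(k^2-1)$. I regard this as a consistency check only and would use the zero count as the actual proof.

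Finally, the phase-translated solutions are handled by Lemma~\ref{lem:SO2-equivariance-fixed-x0}. The map $\rho_\varphi$ shifts $u$ in $\eta$ and rotates the horizontal part rigidly. Neither operation changes $\min u$, $\max u$, $C$, $E$ (hence $P$), the winding number, or the nodal count. All of the above conclusions therefore transfer to the phase orbit of the branch.
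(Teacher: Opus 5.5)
Your proof is correct, and its overall structure matches the paper's: non-polarity and positive oscillation from \eqref{Eq4.8}, turning values as roots of $P_t$, winding number by homotopy to the equator, and transfer to the phase orbit by equivariance. The two steps where you argue differently are worth comparing.

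\emph{The step $q=k$.} The paper upgrades \eqref{Eq4.8} to $\mathcal C^2$ by an unspecified bootstrap. It then uses stability of the non-degenerate critical points of $\cos(k\eta)$ to obtain exactly $k$ maxima and $k$ minima. You use only the $\mathcal C^1$ convergence $u_t/t\to v$, which follows directly from $\Gamma_k(t)\to0$ in $H^2$. You count the $2k$ simple zeros and convert this into $q_t=k$ through the $\mathrm{sn}^2$ structure, which crosses the interior level $0$ exactly twice per period. This needs condition (iii) first, so that Proposition~\ref{Pr2.4}(b) applies, and you establish it in that order. The paper's critical-point count implicitly uses the same structure to pass from $k$ maxima to minimal period $2\pi/k$.

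\emph{Placement of $e_3$.} The paper relies on simplicity of the turning values, which it justifies only heuristically. Your sign argument avoids this: $P_t(R)=-(C-\lambda R)^2/R^2\le0$ together with $e_1<e_2<R$ forces $e_3\ge R>e_2$. Note that this gives only $e_3\ge R$, not $e_3>R$, but that suffices for Definition~\ref{Df4.1}. Your root-continuity argument toward $u^2(a_3u-a_2)$ gives the strict inequality, since $e_3\to\frac{\lambda_k^2+R^2}{2\lambda_k}=\frac{Rk^2}{2\sqrt{k^2-1}}>R$.

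\emph{The period formula.} The route you set aside as a consistency check is in fact a complete proof. Since $T_u=2\pi/q_t$ with $q_t\in\mathbb N$, and $\alpha^2\to\frac{\lambda_k}{2R^2}\cdot\frac{\lambda_k^2+R^2}{2\lambda_k}=\frac{k^2}{4}$ while $K(m)\to\pi/2$, you get $T_u\to2\pi/k$. Hence $q_t=k$ for small $t\neq0$. This is exactly the mechanism the paper uses in Lemma~\ref{Le4.2}.
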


\begin{proof}
	We first prove the statement for the branch in the reversible subspace
	\(Y_{\rm rev}\). Let
	\[
	(\Lambda_k(t),\mathbf u_k(t)), \qquad \mathbf x_k(t):=\mathbf x_0+\mathbf u_k(t),
	\]
	be the local Crandall--Rabinowitz branch. Its vertical component satisfies \eqref{Eq4.8}
	in \(\mc{C}^1(\mathbb T)\) as \(t\to0\). In fact, since the branch consists of
	solutions of the smooth profile equation, the solutions are smooth and the
	expansion may be read in \(\mc{C}^2(\mathbb T)\) after the usual bootstrap. Hence,
	for \(t\neq0\) sufficiently small, \(x_{k,3}(t,\cdot)\) is a \(\mc{C}^2\)-small
	perturbation of a nonzero multiple of \(\cos(k\eta)\).
	
	The function \(\cos(k\eta)\) has exactly \(k\) non-degenerate maxima and
	\(k\) non-degenerate minima on \([0,2\pi)\). By stability of non-degenerate
	critical points under \(\mc{C}^2\)-small perturbations, the function
	\(x_{k,3}(t,\cdot)\) has exactly \(k\) maxima and \(k\) minima on
	\([0,2\pi)\), for \(t\neq0\) sufficiently small. In particular, $\operatorname{osc}(x_{k,3}(t,\cdot))>0$,
	and the vertical component completes exactly \(k\) full oscillations on one
	\(2\pi\)-period. Thus $q(\mathbf x_k(t))=k$. Moreover,
	\[
	\lim_{t\to 0}\mathbf x_k(t) = \mathbf x_0 \qquad \text{in }H^2(\mathbb T,\mathbb R^3).
	\]
	Since \(H^2(\mathbb T)\hookrightarrow \mc{C}^1(\mathbb T)\), we also
	have
	\[
	\lim_{t\to 0}\mathbf x_k(t)=\mathbf x_0 \qquad \text{in }\mc{C}^1(\mathbb T,\mathbb R^3).
	\]
	In particular,
	\[
	\lim_{t\to 0} x_{k,3}(t,\eta)=0 \qquad \text{uniformly in }\eta.
	\]
	Therefore, for \(t\neq0\) sufficiently small, $-R<x_{k,3}(t,\eta)<R$ for every $\eta\in\mathbb T$.
	Hence the solution does not touch the poles.
	
	It remains to verify the regular cubic condition in
	Definition~\ref{Df4.1}. Set
	\[
	u_t:=x_{k,3}(t,\cdot), \qquad e_1(t):=\min_{\mathbb T}u_t, \qquad e_2(t):=\max_{\mathbb T}u_t.
	\]
	From the previous discussion, for \(t\neq0\) sufficiently small,
	\[
	-R<e_1(t)<e_2(t)<R.
	\]
	Let \(P_t\) be the polynomial associated with the solution
	\(\mathbf x_k(t)\). Since \(u_t\) satisfies $(u_t')^2=P_t(u_t)$, at points where
	the minimum and maximum of \(u_t\) are attained, one has \(u_t'=0\). Hence $P_t(e_1(t))=P_t(e_2(t))=0$.
	The function \(u_t\) is nonconstant and periodic, therefore \(e_1(t)\) and
	\(e_2(t)\) are simple roots of \(P_t\). Indeed, if one of these turning values
	were a multiple root, the corresponding nonconstant trajectory of
	\((u')^2=P_t(u)\) would reach it only asymptotically, contradicting the
	periodic oscillatory motion.
	
	Since \(P_t\) is a cubic polynomial, it has a third real root, which we denote
	by \(e_3(t)\). On the interval \((e_1(t),e_2(t))\), the solution moves
	between the two turning values and therefore $P_t(s)>0$ for $s\in(e_1(t),e_2(t))$.
	The leading coefficient of \(P_t\) is
	\[
	\frac{2\Lambda_k(t)}{R^2}>0.
	\]
	Because \(e_1(t)\) and \(e_2(t)\) are simple roots and \(P_t\) is positive
	between them, the third root must lie to the right of \(e_2(t)\). Hence $e_1(t)<e_2(t)<e_3(t)$
	and
	\[
	P_t(s)=\frac{2\Lambda_k(t)}{R^2}(s-e_1(t))(s-e_2(t))(s-e_3(t)).
	\]
	Thus $(\Lambda_k(t),\mathbf u_k(t))\in\mathcal Z_{\rm reg}$
	for \(t\neq0\) sufficiently small.
	
	Finally, the horizontal projection of \(\mathbf x_k(t)\) is a \(\mc{C}^1\)-small
	perturbation of $(R\cos\eta,R\sin\eta)$,
	which has winding number \(1\). Therefore, for \(t\neq0\) sufficiently small, $p(\mathbf x_k(t))=1$.
	We already proved that $q(\mathbf x_k(t))=k$.
	
	The phase orbit of the local branch is obtained by applying
	\[
	(\rho_\varphi\mathbf u)(\eta)=R_\varphi\mathbf u(\eta-\varphi).
	\]
	This operation preserves the sphere constraint, non-polarity, the root
	configuration of the vertical polynomial, and the horizontal winding number.
	Moreover, the vertical component is merely translated:
	\[
	(\rho_\varphi\mathbf u)_3(\eta)=u_3(\eta-\varphi),
	\]
	so the number of vertical oscillations is unchanged. Hence the same
	conclusions hold for the whole phase orbit in the full zero set
	\(\mathcal Z\).
\end{proof}

We now define \(\mathscr C_{k}^{\rm reg}\) to be any connected component of
\(\mathcal Z_{\rm reg}\) which contains a nontrivial local solution given by
Lemma~\ref{Le4.3}, or one of its phase
translates in the full zero set. By Lemmas~\ref{Le4.2}
and \ref{Le4.3}, we have
\begin{equation}
	\label{Eq4.9}
	p=1,\qquad q=k\qquad\text{on }\mathscr C_{k}^{\rm reg}.
\end{equation}

\subsection{A priori estimates at fixed vertical nodal number}

We next prove the a priori estimate needed for compactness. The estimate is
stated for an arbitrary fixed vertical nodal number \(q\), and will then be
applied with \(q=k\) on \(\mathscr C_{k}^{\rm reg}\).

\begin{proposition}
	\label{Pr4.1}
	Let \(q\in\mathbb N\) and \(\Lambda>0\). Let $(\lambda,\mathbf u)\in\mathcal Z_{\rm reg}$
	with $0<\lambda\le\Lambda$
	and vertical nodal number \(q\). Set $\mathbf{x}:=\mathbf{x}_{0}+\mathbf u$.
	Then there exists a constant $C=C(R,\Lambda,q)>0$
	such that
	\[
	\|\mathbf{x}\|_{W^{2,\infty}(\mathbb T,\mathbb R^{3})}
	\le C.
	\]
	In particular, $\|\mathbf u\|_{H^{2}(\mathbb T,\mathbb R^{3})}\le C$.
\end{proposition}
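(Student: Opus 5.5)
The estimate comes down to controlling $\|\mathbf x'\|_{L^\infty}$. Indeed, $\|\mathbf x\|_{L^\infty}=R$ is automatic, and by \eqref{Eq4.7} we have
\[
\|\mathbf x''\|_{L^\infty}\le \frac{\|\mathbf x'\|_{L^\infty}^{2}}{R}+\lambda+\lambda+\frac{\lambda}{R}\|\mathbf x'\|_{L^\infty}.
\]
So once $\|\mathbf x'\|_{L^\infty}$ is bounded in terms of $(R,\Lambda,q)$, the $W^{2,\infty}$ bound follows. The $H^2$ bound on $\mathbf u=\mathbf x-\mathbf x_0$ then follows trivially on the compact torus. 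For $\mathbf x'$ I will use the quantity $\mathcal H$ of \eqref{Eq4.2}--\eqref{Eq4.6}. It is constant along the solution, so
\[
\tfrac12\|\mathbf x'(\eta)\|^2=\mathcal H-\lambda x_3(\eta)=\lambda(e_1+e_2+e_3)-\tfrac{\lambda^2}{2}-\lambda x_3(\eta)\le \lambda(2R+e_3)+\lambda R .
\]
Since $\lambda\le\Lambda$ and $e_1,e_2\in(-R,R)$, everything reduces to an upper bound on the third root $e_3$ depending only on $(R,\Lambda,q)$. If $e_3\le 2R$ (say) we are done, so the main task is to rule out $e_3$ large.

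To bound $e_3$ I will use the period constraint at fixed nodal number. From Lemma~\ref{Le4.2} (or Proposition~\ref{Pr2.4}(b) in the rescaled form), the minimal period of $x_3$ is
\[
\frac{2\pi}{q}=T_u=\frac{2K(k)}{\sqrt{\frac{\lambda}{2R^2}(e_3-e_1)}},\qquad k=\frac{e_2-e_1}{e_3-e_1}\in(0,1).
\]
Since $K(k)\ge K(0)=\pi/2$, this gives
\[
\sqrt{\frac{\lambda}{2R^2}(e_3-e_1)}=\frac{qK(k)}{\pi}\ge \frac q2,
\]
which is only a lower bound. The useful direction comes from the upper bound $K(k)\le K(k_*)$ when $k\le k_*<1$. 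If $e_3\ge 2R$, then $k\le 2R/(e_3-e_1)\le 2R/(e_3-R)$ is small, say $k\le 2/3$ once $e_3\ge 4R$. Then $K(k)\le K(2/3)=:c_0$, and the period identity gives
\[
\lambda(e_3-e_1)=\frac{2R^2q^2K(k)^2}{\pi^2}\le \frac{2R^2q^2c_0^2}{\pi^2}.
\]
Hence $\lambda e_3\le C(R,q)+\lambda R$.

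The point of this last step is that I do not need a bound on $e_3$ alone, only on the product $\lambda e_3$. That is exactly what enters $\|\mathbf x'\|^2\le 2\lambda(3R+e_3)$. Combining the two cases, $e_3<4R$ or $e_3\ge 4R$, yields
\[
\|\mathbf x'\|_{L^\infty}^2\le 2\lambda e_3+6\lambda R\le C(R,\Lambda,q)
\]
in either case. The main obstacle I anticipate is precisely this step: a naive bound on $e_3$ would fail as $\lambda\to0$, since $e_3$ may blow up there. One must notice that the elliptic period formula at fixed $q$ controls $\lambda(e_3-e_1)$ rather than $e_3$, and use the monotonicity of $K$ on $(0,1)$ in the regime where $e_3$ is large.

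Substituting the bound on $\|\mathbf x'\|_{L^\infty}$ into the display for $\|\mathbf x''\|_{L^\infty}$ completes the $W^{2,\infty}$ estimate. Then
\[
\|\mathbf u\|_{H^2}\le \sqrt{2\pi}\,\|\mathbf x-\mathbf x_0\|_{W^{2,\infty}}\le C,
\]
using $\|\mathbf x_0\|_{W^{2,\infty}}=R$.
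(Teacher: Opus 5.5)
Your proposal is correct and follows essentially the paper's argument: bound $\lambda(e_3-e_1)$ via the fixed-$q$ period identity $\lambda(e_3-e_1)=\tfrac{2R^2q^2}{\pi^2}K(k)^2$, then bound $\mathcal H$ and $\|\mathbf x'\|_{L^\infty}$, and finally $\|\mathbf x''\|_{L^\infty}$ via \eqref{Eq4.7}. The only variation is how $K$ is kept bounded: the paper uses $e_2-e_1\le 2R$ to derive $\mu K(\mu)^2\le \Lambda\pi^2/(Rq^2)$ and the blow-up of $\mu K(\mu)^2$ as $\mu\to1^-$, whereas your case split ($e_3<4R$ trivial, $e_3\ge4R$ forcing $k\le 2/3$) uses the same ingredients and gives a slightly more explicit constant.
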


\begin{proof}
	Let \((\lambda,\mathbf u)\in\mathcal Z_{\rm reg}\). Thus $-R<e_{1}<e_{2}<R$, $e_{2}<e_{3}$,
	and \(u=x_{3}\) oscillates between \(e_{1}\) and \(e_{2}\). Set $A:=e_{3}-e_{1}$, $d:=e_{2}-e_{1}$ and $\mu:=\frac{d}{A}$. Then \(0<\mu<1\). From \((u')^{2}=P(u)\), the vertical component has the standard
	form
	\[
	u(\eta)=e_{1}+d\,\sn^{2}(\alpha(\eta-\eta_{0})\mid\mu),
	\]
	with $\alpha^{2}=\frac{\lambda}{2R^{2}}A$. Since \(\sn^{2}(\cdot)\) has period \(2K(\mu)\), the minimal vertical
	period is $T_{u}=\frac{2K(\mu)}{\alpha}$.
	Because the vertical nodal number is \(q\), $T_{u}=\frac{2\pi}{q}$.
	Therefore
	\[
	\alpha=\frac{qK(\mu)}{\pi},
	\]
	and hence
	\begin{equation}
		\label{Eq4.10}
		\lambda A=\frac{2R^{2}q^{2}}{\pi^{2}}K(\mu)^{2}.
	\end{equation}
	We now prove that \(\lambda A\) is bounded in terms of \(R,\Lambda,q\). Since $d=e_{2}-e_{1}\le2R$
	and \(d=\mu A\), we have
	\[
	A\le\frac{2R}{\mu}.
	\]
	Using \(0<\lambda\le\Lambda\), we get $\lambda A\le\frac{2\Lambda R}{\mu}$.
	Combining this with \eqref{Eq4.10}, we find
	\[
	\mu K(\mu)^{2}\le\frac{\Lambda\pi^{2}}{Rq^{2}}.
	\]
	The function $\mu\mapsto \mu K(\mu)^{2}$
	tends to \(+\infty\) as \(\mu\to1^{-}\). Hence \(\mu\) stays bounded away from
	\(1\), and therefore \(K(\mu)\) is bounded. From
	\eqref{Eq4.10} we obtain
	\begin{equation}
		\label{Eq4.11}
	\lambda(e_{3}-e_{1})\le M(R,\Lambda,q).
	\end{equation}
	Recall that by \eqref{Eq4.6},
	\[
	\mathcal H=\lambda(e_{1}+e_{2}+e_{3})-\frac{\lambda^{2}}{2}.
	\]
	Since \(e_{1},e_{2}\in[-R,R]\), \(0<\lambda\le\Lambda\), by \eqref{Eq4.11}
	it follows that $|\mathcal H|\le H(R,\Lambda,q)$.
	Using \eqref{Eq4.2}, we deduce that
	\[
	\|\mathbf{x}'\|^{2}=2(\mathcal H-\lambda x_{3})\le 2|\mathcal H|+2\Lambda R.
	\]
	Hence $\|\mathbf{x}'\|_{L^{\infty}}\le C(R,\Lambda,q)$.
	Finally, using \eqref{Eq4.7}, we obtain $\|\mathbf{x}''\|_{L^{\infty}}\le C(R,\Lambda,q)$.
	Since \(\|\mathbf{x}\|=R\), this proves the \(W^{2,\infty}\)-bound. The
	\(H^{2}\)-bound for \(\mathbf u=\mathbf{x}-\mathbf{x}_{0}\) follows because
	\(\mathbf{x}_{0}\) is fixed and smooth.
\end{proof}

The previous estimate implies strong compactness for bounded sequences in the
regular component.

\begin{proposition}
	Let \(k\ge2\), and let $\{(\lambda_{n},\mathbf u_{n})\}_{n\in\mathbb N}\subset\mathscr C_{k}^{\rm reg}$ be a sequence such that $\lambda_{*}\le\lambda_{n}\le\Lambda$ for every $n\in\N$ and some constants \(0<\lambda_{*}\le \Lambda\). Then, after passing to a subsequence, there exists $(\lambda,\mathbf u)\in\mathcal Z$
	such that
	\[
	\lim_{n\to+\infty}(\lambda_{n},\mathbf u_{n})=(\lambda,\mathbf u)\qquad \text{in }(0,+\infty)\times H^{2}(\mathbb T,\mathbb R^{3}).
	\]
	If the limit belongs to \(\mathcal Z_{\rm reg}\), then it still satisfies $p=1$, $q=k$.
\end{proposition}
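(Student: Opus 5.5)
The plan is to combine the uniform bound of Proposition~\ref{Pr4.1} with a compactness gain coming from the equation itself. First, since every element of \(\mathscr C_k^{\rm reg}\) lies in \(\mathcal Z_{\rm reg}\) with vertical nodal number \(q=k\) by \eqref{Eq4.9}, and \(\lambda_n\le\Lambda\), Proposition~\ref{Pr4.1} gives
\[
\|\mathbf x_n\|_{W^{2,\infty}}\le C(R,\Lambda,k),\qquad \mathbf x_n:=\mathbf x_0+\mathbf u_n .
\]
Hence \(\{\mathbf u_n\}\) is bounded in \(H^2\) and in \(W^{2,\infty}\). Passing to a subsequence, we may assume \(\lambda_n\to\lambda\in[\lambda_*,\Lambda]\subset(0,+\infty)\). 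By Arzel\`a--Ascoli we may also assume \(\mathbf x_n\to\mathbf x\) in \(\mathcal C^1(\mathbb T,\mathbb R^3)\), since \(\mathbf x_n'\) is equi-Lipschitz.

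The main step is to upgrade this to convergence in \(H^2\), that is, to show that \(\mathbf x_n''\) converges strongly in \(L^2\). For this we use the explicit formula \eqref{Eq4.7}:
\[
\mathbf x_n''=-\frac{\|\mathbf x_n'\|^2}{R^2}\mathbf x_n-\lambda_n e_3+\frac{\lambda_n (x_n)_3}{R^2}\mathbf x_n+\frac{\lambda_n}{R^2}\mathbf x_n\times\mathbf x_n'.
\]
The right-hand side depends only on \(\lambda_n\), \(\mathbf x_n\) and \(\mathbf x_n'\), and each term converges uniformly. Therefore \(\mathbf x_n''\) converges uniformly to the same expression evaluated at \((\lambda,\mathbf x,\mathbf x')\). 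Since \(\mathbf x_n'\to\mathbf x'\) uniformly, the limit of \(\mathbf x_n''\) is \(\mathbf x''\) in the distributional sense, and \(\mathbf x\in W^{2,\infty}\). Thus \(\mathbf x_n\to\mathbf x\) in \(\mathcal C^2\), and in particular \(\mathbf u_n\to\mathbf u:=\mathbf x-\mathbf x_0\) in \(H^2\).

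We then check that the limit lies in \(\mathcal Z\). The operator \(\mathcal G\) is continuous on \((0,+\infty)\times H^2\) by the well-definedness lemma, and \(\mathcal G(\lambda_n,\mathbf u_n)=0\), so \(\mathcal G(\lambda,\mathbf u)=0\). Alternatively, one can pass directly to the limit in the equation and in the constraint \(\|\mathbf x_n\|=R\) using the \(\mathcal C^2\) convergence.

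For the final claim, suppose \((\lambda,\mathbf u)\in\mathcal Z_{\rm reg}\). Since \(\mathcal Z_{\rm reg}\) is relatively open in \(\mathcal Z\), the sequence eventually enters any neighbourhood of \((\lambda,\mathbf u)\) in \(\mathcal Z_{\rm reg}\). By Lemma~\ref{Le4.2}, \(p\) and \(q\) are locally constant on \(\mathcal Z_{\rm reg}\), so \(p(\mathbf x)=p(\mathbf x_n)=1\) and \(q(\mathbf x)=q(\mathbf x_n)=k\) for \(n\) large. This does not require the limit to lie in \(\mathscr C_k^{\rm reg}\), only local constancy near the limit point. (In fact the limit then belongs to the closure of the component within \(\mathcal Z_{\rm reg}\); since components of a locally connected set are closed, it lies in \(\mathscr C_k^{\rm reg}\).)

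The only delicate point is the \(H^2\) upgrade, which \eqref{Eq4.7} resolves. The lower bound \(\lambda_*>0\) is used only to keep the limit parameter inside \((0,+\infty)\), where \(\mathcal Z\) is defined.
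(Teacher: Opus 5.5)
Your proof is correct and follows the paper's argument essentially step by step: the $W^{2,\infty}$ bound from Proposition~\ref{Pr4.1} at fixed $q=k$, $\mathcal C^1$ compactness, the upgrade to strong $H^2$ convergence via \eqref{Eq4.7}, continuity of $\mathcal G$, and local constancy of $p,q$ from Lemma~\ref{Le4.2}. The only difference is cosmetic. You obtain $\mathcal C^2$ convergence via Arzel\`a--Ascoli and uniform convergence of derivatives, whereas the paper identifies the strong $L^2$ limit of $\mathbf x_n''$ with its weak $H^2$ limit; also, in your parenthetical remark, components of any topological space are closed, so local connectedness is not needed.
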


\begin{proof}
	By \eqref{Eq4.9}, every element of
	\(\mathscr C_{k}^{\rm reg}\) has vertical nodal number \(k\). Therefore
	Proposition~\ref{Pr4.1} gives
	\[
	\|\mathbf u_{n}\|_{H^{2}(\mathbb T,\R^{3})}\le C(R,\Lambda,k).
	\]
	Thus, after passing to a subsequence, there exist $\lambda\in[\lambda_{*},\Lambda]$
	and $\mathbf u\in H^{2}(\mathbb T,\mathbb R^{3})$ such that
	\[
	\lambda_{n}\to\lambda,\qquad\mathbf u_{n}\rightharpoonup\mathbf u\quad\text{weakly in }H^{2}(\mathbb T,\mathbb R^{3}).
	\]
	Since $H^{2}(\mathbb T)\hookrightarrow \mc{C}^{1}(\mathbb T)$ compactly, we also have, after passing to a further subsequence,
	\[
	\lim_{n\to+\infty}\mathbf u_{n}=\mathbf u\qquad\text{in }\mc C^{1}(\mathbb T,\mathbb R^{3}).
	\]
	Set $\mathbf x_{n}:=\mathbf x_{0}+\mathbf u_{n}$, $\mathbf x:=\mathbf x_{0}+\mathbf u$.
	Then $\mathbf x_{n}\to\mathbf x$ in $\mc C^{1}(\mathbb T,\mathbb R^{3})$. For every \(n\in\N\), the curve \(\mathbf x_{n}\) satisfies
	\[
	\mathbf{x}_{n}\times\mathbf{x}_{n}''-\lambda_{n}e_{3}\times\mathbf{x}_{n}+\lambda_{n}\mathbf{x}_{n}'=0,\qquad\|\mathbf x_n\|\equiv R.
	\]
	Using \eqref{Eq4.7}, we can write
	\[
	\mathbf{x}_{n}''=-\frac{\|\mathbf{x}_{n}'\|^{2}}{R^{2}}\mathbf{x}_{n}-\lambda_{n}e_{3}+\frac{\lambda_{n}(x_{n})_{3}}{R^{2}}\mathbf{x}_{n}+\frac{\lambda_{n}}{R^{2}}\mathbf{x}_{n}\times\mathbf{x}_{n}'.
	\]
	The right-hand side depends continuously on $(\lambda_{n},\mathbf x_{n},\mathbf x_{n}')$
	in the \(\mc{C}^{0}\)-topology. Since $\lambda_{n}\to\lambda$ and $\mathbf x_{n}\to\mathbf x$ in $\mc{C}^{1}$,
	the right-hand sides converge strongly in \(\mc{C}^{0}\), and hence in \(L^{2}\), to
	\[
	\mathbf F:=-\frac{\|\mathbf{x}'\|^{2}}{R^{2}}\mathbf{x}-\lambda e_{3}+\frac{\lambda x_{3}}{R^{2}}\mathbf{x}+\frac{\lambda}{R^{2}}\mathbf{x}\times\mathbf{x}'.
	\]
	Therefore $\mathbf x_{n}''\to \mathbf F$ strongly in $L^{2}(\mathbb T,\mathbb R^{3})$.
	On the other hand, since $\mathbf u_{n}\rightharpoonup\mathbf u$ weakly in $H^{2}$, we also have $\mathbf x_{n}''\rightharpoonup\mathbf x''$ weakly in $L^{2}$.
	By uniqueness of weak limits, $\mathbf x''=\mathbf F$. Hence $\mathbf x_{n}''\to\mathbf x''$ strongly in $L^{2}$.
	Combining this with the already known convergence in \(\mc{C}^{1}\), and hence in
	\(H^{1}\), we obtain $\mathbf x_{n}\to\mathbf x$ strongly in $H^{2}(\mathbb T,\mathbb R^{3})$.
	Equivalently,
	\[
	\lim_{n\to+\infty}\mathbf u_{n}=\mathbf u\qquad	\text{strongly in }H^{2}(\mathbb T,\mathbb R^{3}).
	\]
	It remains to show that the limit belongs to \(\mathcal Z\). Since $\mathcal G(\lambda_{n},\mathbf u_{n})=(0,0)$ for every \(n\), and since \(\mathcal G\) is continuous with respect to the
	\((0,+\infty)\times H^{2}\)-topology, the strong convergence just proved gives $\mathcal G(\lambda,\mathbf u)=(0,0)$. Thus $(\lambda,\mathbf u)\in\mathcal Z$. Finally, if the limit belongs to \(\mathcal Z_{\rm reg}\), then the identities $p=1$, $q=k$ follow from Lemma~\ref{Le4.2}, because
	\(p\) and \(q\) are locally constant on \(\mathcal Z_{\rm reg}\).
\end{proof}
\subsection{The regular global alternative}
We denote by
\[
\partial_{\rm ext}\mathcal Z_{\rm reg}:=\overline{\mathcal Z_{\rm reg}}^{\, [0,+\infty)\times H^{2}} \setminus \mathcal Z_{\rm reg}
\]
the extended boundary of the regular class, where the closure is taken in
\[
[0,+\infty)\times H^{2}(\mathbb T,\mathbb R^{3}).
\]
The previous proposition shows that, inside the regular class, boundedness of
\(\lambda\) prevents any hidden loss of compactness. Thus a regular continuation
can fail to remain compact only if \(\lambda\to+\infty\) or if the branch
approaches the boundary of the regular class.

\begin{theorem}[Global alternative inside the regular class]
	\label{Th4.1}
	Let \(k\ge2\), and let \(\mathscr C_{k}^{\rm reg}\) be a connected component of
	\(\mathcal Z_{\rm reg}\) containing one of the nontrivial local solutions
	bifurcating from $(\lambda_k,0)$, $\lambda_{k}=R\sqrt{k^{2}-1}$,
	or one of its phase translates.
	Then \(p=1\), \(q=k\) on \(\mathscr C_{k}^{\rm reg}\).
	Moreover, one of the following alternatives holds:
	\begin{enumerate}
		\item[\rm(i)] $\mathscr C_{k}^{\rm reg}$
		is relatively compact in \(\mathcal Z_{\rm reg}\cup \{(\l_k,0)\}\);
		\item[\rm(ii)] there exists a sequence $\{(\lambda_n,\mathbf u_n)\}_{n\in\N}\subset\mathscr C_{k}^{\rm reg}$
		such that $\lambda_n\to+\infty$;
		
		\item[\rm(iii)] there exists a sequence $\{(\lambda_n,\mathbf u_n)\}_{n\in\N}\subset\mathscr C_{k}^{\rm reg}$ and a point $(\lambda,\mathbf u)\in
		\partial_{\rm{ext}}\mathcal Z_{\rm reg}\setminus\{(\l_k,0)\}$
		such that
		\[
		\lim_{n\to+\infty}(\lambda_n,\mathbf u_n)=(\lambda,\mathbf u) \quad \text{in }	[0,+\infty)\times H^{2}(\mathbb T,\mathbb R^{3}).
		\]
	\end{enumerate}
\end{theorem}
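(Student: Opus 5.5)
The first assertion, \(p=1\) and \(q=k\) on \(\mathscr C_k^{\rm reg}\), is already \eqref{Eq4.9}. It follows from Lemma~\ref{Le4.3}, which places a local solution (or a phase translate) in \(\mathscr C_k^{\rm reg}\) with \(p=1\), \(q=k\), together with Lemma~\ref{Le4.2}, which gives constancy of \(p,q\) on connected components. The real content is the trichotomy. I would argue by contradiction: assume (ii) and (iii) both fail, and deduce (i).

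Failure of (ii) means \(\Lambda:=\sup\{\lambda:(\lambda,\mathbf u)\in\mathscr C_k^{\rm reg}\}<\infty\). Take an arbitrary sequence \(\{(\lambda_n,\mathbf u_n)\}\subset\mathscr C_k^{\rm reg}\). Since \(q=k\) on the component, Proposition~\ref{Pr4.1} gives a uniform \(H^2\) bound \(C(R,\Lambda,k)\), and after a subsequence \(\lambda_n\to\lambda\in[0,\Lambda]\). I would then repeat the compactness argument of the preceding proposition without assuming \(\lambda_*>0\). The compact embedding \(H^2\hookrightarrow \mathcal C^1\) yields \(\mathbf x_n\to\mathbf x\) in \(\mathcal C^1\). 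The identity \eqref{Eq4.7} expresses \(\mathbf x_n''\) continuously in \((\lambda_n,\mathbf x_n,\mathbf x_n')\), including at \(\lambda=0\), so this upgrades to strong \(H^2\) convergence. The limit \((\lambda,\mathbf u)\) therefore lies in \(\overline{\mathcal Z_{\rm reg}}^{[0,+\infty)\times H^2}\).

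If the limit is not in \(\mathcal Z_{\rm reg}\), it belongs to \(\partial_{\rm ext}\mathcal Z_{\rm reg}\). Since (iii) fails, the limit must then equal \((\lambda_k,0)\). Otherwise the limit lies in \(\mathcal Z_{\rm reg}\); note that \(\lambda=0\) is excluded here because \(\mathcal Z_{\rm reg}\subset(0,\infty)\times H^2\), so \(\lambda=0\) would already fall into the extended boundary case. Hence every sequence in \(\mathscr C_k^{\rm reg}\) has a subsequence converging in \(\mathcal Z_{\rm reg}\cup\{(\lambda_k,0)\}\). This is exactly relative compactness of \(\mathscr C_k^{\rm reg}\) in that set, i.e.\ alternative (i).

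The step needing most care is the passage to the limit when \(\lambda_n\to0\). Proposition~\ref{Pr4.1} only needs \(\lambda\le\Lambda\), so the bound survives, but one must check that \eqref{Eq4.7} and the \(H^2\) upgrade do not degenerate there. They do not, since \eqref{Eq4.7} is polynomial in \(\lambda\). The limit then need not solve a problem in the domain of \(\mathcal G\), but it is still a point of the extended closure, which is all the argument uses. A second subtlety is bookkeeping of the closures: relative compactness in (i) should be read as sequential compactness of the closure inside \(\mathcal Z_{\rm reg}\cup\{(\lambda_k,0)\}\), which is equivalent in a metric space.
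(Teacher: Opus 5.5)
Your proposal is correct and follows the paper's proof essentially step by step. Assuming (ii) and (iii) fail, you use the fixed-nodal-number bound of Proposition~\ref{Pr4.1}, the compact embedding \(H^2\hookrightarrow\mathcal C^1\) and the identity \eqref{Eq4.7} to extract a subsequence converging strongly in \([0,+\infty)\times H^2\), and the failure of (iii) then forces any limit outside \(\mathcal Z_{\rm reg}\) to be \((\lambda_k,0)\); the only difference is cosmetic, in that you fold the case \(\lambda=0\) directly into the extended-boundary case and omit the (unneeded) check that \(\mathcal G(\lambda,\mathbf u)=0\) for \(\lambda>0\), which the paper carries out separately.
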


\begin{proof}
	The identities $p=1$, $q=k$ on \(\mathscr C_{k}^{\rm reg}\) follow from the local bifurcation expansion and
	from the local constancy of \(p\) and \(q\) on \(\mathcal Z_{\rm reg}\).
	
	We now prove the alternative. Assume that alternatives \({\rm(ii)}\) and
	\({\rm(iii)}\) do not occur. We shall prove that \({\rm(i)}\) holds. Since \({\rm(ii)}\) does not occur, the parameter \(\lambda\) is bounded above on \(\mathscr C_{k}^{\rm reg}\). Hence there exists \(\Lambda>0\) such that $0<\lambda\le\Lambda$ on $\mathscr C_{k}^{\rm reg}$. Let $\{(\lambda_n,\mathbf u_n)\}_{n\in\mathbb N}
	\subset \mathscr C_{k}^{\rm reg}$ be an arbitrary sequence, and set $\mathbf x_n:=\mathbf x_0+\mathbf u_n$.
	Since \(q=k\) on \(\mathscr C_k^{\rm reg}\), the a priori estimate at fixed
	vertical nodal number gives
	\[
	\|\mathbf x_n\|_{W^{2,\infty}}\le C(R,\Lambda,k).
	\]
	In particular, after passing to a subsequence,
	\[
	\lim_{n\to+\infty}\lambda_n=\lambda\in[0,\Lambda],
	\]
	and
	\[
	\lim_{n\to+\infty}\mathbf x_n=\mathbf x \quad\text{in } \mc{C}^{1}(\mathbb T,\mathbb R^{3}), \qquad \mathbf x_n\rightharpoonup\mathbf x\qquad\text{weakly in }H^{2}(\mathbb T,\mathbb R^{3}),
	\]
	for some \(\mathbf x\in H^2(\mathbb T,\mathbb R^{3})\). 
	We claim that the convergence is actually strong in \(H^{2}\). Indeed, by
	\eqref{Eq4.7},
	\[
	\mathbf{x}_{n}''=-\frac{\|\mathbf{x}_{n}'\|^{2}}{R^{2}}\mathbf{x}_{n}-\lambda_{n}e_{3}+\frac{\lambda_{n}(x_{n})_{3}}{R^{2}}\mathbf{x}_{n}+\frac{\lambda_{n}}{R^{2}}\mathbf{x}_{n}\times\mathbf{x}_{n}'.
	\]
	The right-hand side depends continuously on $(\lambda_n,\mathbf x_n,\mathbf x_n')$
	in the \(\mc{C}^{0}\)-topology. Since $\lambda_n\to\lambda$ and $\mathbf x_n\to\mathbf x$ in $\mc{C}^{1}$, we obtain
	\[
	\mathbf x_n''\to-\frac{\|\mathbf{x}'\|^{2}}{R^{2}}\mathbf{x}-\lambda e_{3}+\frac{\lambda x_{3}}{R^{2}}\mathbf{x}+\frac{\lambda}{R^{2}}\mathbf{x}\times\mathbf{x}'
	\]
	strongly in \(\mc{C}^{0}\), and hence strongly in \(L^{2}\). On the other hand, $\mathbf x_n''\rightharpoonup \mathbf x''$ weakly in $L^{2}$.
	Therefore $\mathbf x_n''\to \mathbf x''$ strongly in $L^{2}$. Consequently,
	\[
	\lim_{n\to+\infty}\mathbf x_n=\mathbf x\quad\text{strongly in }H^{2}(\mathbb T,\mathbb R^{3}).
	\]
	Equivalently, if $\mathbf u:=\mathbf x-\mathbf x_0$, then
	\[
	\lim_{n\to+\infty}\mathbf u_n=\mathbf u\quad\text{strongly in }H^{2}(\mathbb T,\mathbb R^{3}).
	\]
	We now identify the possible limit $(\lambda,\mathbf u)
	\in
	[0,+\infty)\times H^2(\mathbb T,\mathbb R^3)$.
	First suppose that $\lambda=0$.
	Then $(\lambda_n,\mathbf u_n)\to(0,\mathbf u)$
	in $[0,+\infty)\times H^2(\mathbb T,\mathbb R^3)$.
	Since every \((\lambda_n,\mathbf u_n)\) belongs to \(\mathcal Z_{\rm reg}\),
	the point \((0,\mathbf u)\) belongs to the closure of \(\mathcal Z_{\rm reg}\)
	in the extended space. Moreover, $(0,\mathbf u)\notin \mathcal Z_{\rm reg}$,
	because
	\[
	\mathcal Z_{\rm reg}\subset (0,+\infty)\times H^2(\mathbb T,\mathbb R^3).
	\]
	Hence $(0,\mathbf u)\in \partial_{\rm ext}\mathcal Z_{\rm reg}$.
	Since $\lambda_k=R\sqrt{k^2-1}>0$,
	we have $(0,\mathbf u)\neq (\l_k,0)$.
	Therefore alternative \({\rm(iii)}\) occurs, contrary to our assumption.
	Thus $\lambda>0$ and for \(n\) large enough,
	\[
	\lambda_n\ge \frac{\lambda}{2}>0.
	\]
	Since $(\lambda_n,\mathbf u_n)\to(\lambda,\mathbf u)$
	strongly in $(0,+\infty)\times H^2(\mathbb T,\mathbb R^3)$
	and $\mathcal G(\lambda_n,\mathbf u_n)=0$
	for every \(n\), the continuity of \(\mathcal G\) gives	$\mathcal G(\lambda,\mathbf u)=0$.
	Hence $(\lambda,\mathbf u)\in\mathcal Z$.
	If $(\lambda,\mathbf u)\in\mathcal Z_{\rm reg}$,
	then the limit belongs to the desired enlarged space. Suppose, on the other
	hand, that $(\lambda,\mathbf u)\notin\mathcal Z_{\rm reg}$.
	Since \((\lambda,\mathbf u)\) is a limit, in the extended space, of points of
	\(\mathcal Z_{\rm reg}\), it belongs to $\partial_{\rm ext}\mathcal Z_{\rm reg}$.
	If $(\lambda,\mathbf u)\neq (\l_k,0)$,
	then alternative \({\rm(iii)}\) occurs, contrary to our assumption. Therefore
	the only remaining possibility is $(\lambda,\mathbf u)=(\l_k,0)$.
	In all cases we have proved that
	\[
	(\lambda,\mathbf u)\in \mathcal Z_{\rm reg}\cup\{(\l_k,0)\}.
	\]
	Thus every sequence in \(\mathscr C_k^{\rm reg}\) admits a subsequence
	converging, in $[0,+\infty)\times H^2(\mathbb T,\mathbb R^3)$,
	to a point of $\mathcal Z_{\rm reg}\cup\{(\l_k,0)\}$.
	Equivalently, the closure of \(\mathscr C_k^{\rm reg}\) in
	\(\mathcal Z_{\rm reg}\cup\{(\l_k,0)\}\), endowed with the topology induced by the
	extended space, is sequentially compact.
	This is alternative \({\rm(i)}\), and the proof is complete.
\end{proof}

	The explicit elliptic reduction of Section~2 also gives a finite-dimensional
	system whose solutions describe the regular closed profiles, up to the elementary
	phase parameters. We record this system in order to make explicit the relation
	between the global components considered above and the classical closure
	conditions.
	
	Fix \(p\in\mathbb Z\) and \(q\in\mathbb N\). The unknowns of the system are $(e_1,e_2,e_3,C,\lambda)\in\mathbb R^5$,
	subject to the regularity constraints
	\[
	\lambda>0,\qquad -R<e_1<e_2<R,\qquad e_2<e_3.
	\]
	Here \(e_1,e_2,e_3\) are the roots of the vertical polynomial, \(C\) is the
	conserved vertical angular momentum, $C=(\mathbf x\times\mathbf x')_3+\lambda x_3$,
	and \(\lambda\) is the bifurcation parameter of the rescaled equation.
	Set
	\[
	m:=\frac{e_2-e_1}{e_3-e_1},
	\qquad
	\alpha:=
	\sqrt{\frac{\lambda}{2R^2}(e_3-e_1)}.
	\]
	Then the corresponding algebraic--transcendental system is
	\begin{equation}
		\label{Sis}
	\mathscr S_{p,q}:
	\begin{cases}
		(C-\lambda R)^2
		=
		2\lambda (R-e_1)(R-e_2)(e_3-R),
		\\[0.8em]
		(C+\lambda R)^2
		=
		2\lambda (R+e_1)(R+e_2)(R+e_3),
		\\[0.8em]
		\dfrac{2qK(m)}{\alpha}=2\pi,
		\\[1.2em]
		\dfrac{1}{\alpha}
		\left[
		\dfrac{C-\lambda R}{R(R-e_1)}
		\Pi\left(\dfrac{e_2-e_1}{R-e_1}\,\middle|\,m\right)
		+
		\dfrac{C+\lambda R}{R(R+e_1)}
		\Pi\left(-\dfrac{e_2-e_1}{R+e_1}\,\middle|\,m\right)
		\right]
		=
		\dfrac{2\pi p}{q}.
	\end{cases}
	\end{equation}
	The first two equations are the endpoint identities obtained from
	\[
	P(s)=\frac{2\lambda}{R^2}(s-e_1)(s-e_2)(s-e_3),
	\]
	together with
	\[
	P(R)=-\frac{(C-\lambda R)^2}{R^2},
	\qquad
	P(-R)=-\frac{(C+\lambda R)^2}{R^2}.
	\]
	The third equation imposes that the vertical component performs exactly \(q\)
	oscillations over the period \(2\pi\), since its minimal period is
	\[
	T_u=\frac{2K(m)}{\alpha}.
	\]
	The fourth equation is the azimuthal closure condition with horizontal degree
	\(p\).
	
	Given a solution of \(\mathscr S_{p,q}\), the associated vertical component is
	\[
	x_3(\eta)
	=
	e_1+(e_2-e_1)\sn^2\big(\alpha(\eta-\eta_0)\mid m\big),
	\]
	where \(\eta_0\in\mathbb R\) is a free phase. The azimuthal angle is then obtained
	by integrating
	\[
	\phi'(\eta)=\frac{C-\lambda x_3(\eta)}{R^2-x_3(\eta)^2},
	\]
	and the fourth equation ensures that the full spherical profile closes after
	\(q\) vertical oscillations with horizontal degree \(p\). The additional additive
	constant in \(\phi\) gives the second phase parameter.
	
	Thus, modulo the two elementary phase parameters \(\eta_0\) and \(\phi_0\), the
	regular closed profiles with invariants \((p,q)\) are described by the solution
	set of \(\mathscr S_{p,q}\). In particular, for the regular component
	\(\mathscr C_k^{\mathrm{reg}}\) bifurcating from the equator one has $p=1$ and $q=k$.
	Hence \(\mathscr C_k^{\mathrm{reg}}\), modulo phase, is contained in a connected
	component of the explicit system \(\mathscr S_{1,k}\). Although this system is finite-dimensional, its global analysis is still nontrivial, it involves complete elliptic integrals of the first and third
	kind, singular limits at the boundary of the regular regime, and possible
	changes in the root configuration of the vertical polynomial. In Section~\ref{SN}, we return to this explicit system and perform a numerical continuation of the branches \(\mathscr S_{1,k}\) corresponding to the regular components bifurcating from the equatorial branch. The full global analysis of \(\mathscr S_{p,q}\) for arbitrary \((p,q)\), including all possible boundary regimes, is left open.

\subsection{The boundary of the regular class}

We finally describe the possible ways in which a sequence of regular non-polar
oscillatory solutions may leave the regular class. The description is given in
terms of the roots of the polynomial \(P\). We do not attempt here to solve the
closure conditions on the boundary; rather, we identify the geometric meaning of
the possible degeneracies. Note that this concerns limits with positive limiting parameter. The extended boundary also contains possible limits with \(\lambda=0\), which are not classified by the root degeneracies below. For a classification of the case $\l=0$ we refer to Appendix \ref{ApB}.

\begin{lemma}
	\label{Le4.4}
	Let $\{(\lambda_{n},\mathbf u_{n})\}_{n\in\N}\subset\mathcal Z_{\rm reg}$
	be a sequence such that
	\[
	\lim_{n\to+\infty}(\lambda_{n},\mathbf u_{n})=(\lambda,\mathbf u) \quad \text{in }(0,+\infty)\times H^{2}(\mathbb T,\mathbb R^{3}),
	\]
	with $(\lambda,\mathbf u)\in\mathcal Z\setminus\mathcal Z_{\rm reg}$.
	For each \(n\), let
	\[
	P_{n}(s)=\frac{2\lambda_{n}}{R^{2}}(s-e_{1,n})(s-e_{2,n})(s-e_{3,n})
	\]
	be the regular factorization associated with
	\((\lambda_{n},\mathbf u_{n})\). Then, after passing to a subsequence, at least
	one of the following alternatives occurs:
	\begin{enumerate}
		\item[{\rm (i)}] $e_{1,n}\to -R$ as $n\to+\infty$.
		\item[{\rm (ii)}] $e_{2,n}\to R$ as $n\to+\infty$.
		\item[{\rm (iii)}] $e_{2,n}-e_{1,n}\to0$ as $n\to+\infty$.
		\item[{\rm (iv)}] $e_{3,n}-e_{2,n}\to0$ as $n\to+\infty$.
	\end{enumerate}
\end{lemma}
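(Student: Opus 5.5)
We argue by contradiction. Suppose that, even after passing to any subsequence, none of (i)--(iv) holds. Then, along a subsequence, there is $\varepsilon>0$ with
\[
e_{1,n}\ge -R+\varepsilon,\qquad e_{2,n}\le R-\varepsilon,\qquad e_{2,n}-e_{1,n}\ge\varepsilon,\qquad e_{3,n}-e_{2,n}\ge\varepsilon
\]
for every $n$. We will show that the limit $(\lambda,\mathbf u)$ then lies in $\mathcal Z_{\rm reg}$, contradicting the hypothesis.

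First we pass to the limit in the roots. Set $\mathbf x_n:=\mathbf x_0+\mathbf u_n$ and $u_n:=(x_n)_3$. Since $H^2\hookrightarrow\mathcal C^1$, we have $\mathbf x_n\to\mathbf x:=\mathbf x_0+\mathbf u$ in $\mathcal C^1$. Hence the conserved quantities $C_n,E_n$ converge to $C,E$, and since $\lambda_n\to\lambda>0$, the coefficients \eqref{Eq4.4} of $P_n$ converge to those of the polynomial $P$ associated with $(\lambda,\mathbf u)$. The roots $e_{1,n},e_{2,n}$ lie in $[-R,R]$. The third root is bounded because $e_{1,n}+e_{2,n}+e_{3,n}=R^2a_{2,n}/(2\lambda_n)$ converges. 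After extracting a subsequence we get $e_{i,n}\to e_i$ for $i=1,2,3$. Passing to the limit in the factorization gives
\[
P(s)=\frac{2\lambda}{R^2}(s-e_1)(s-e_2)(s-e_3),
\]
and the uniform separation inequalities survive in the limit:
\[
-R<e_1<e_2<R,\qquad e_2<e_3.
\]

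Next we check that $u$ has range $[e_1,e_2]$. Uniform convergence gives $\min u_n\to\min u$ and $\max u_n\to\max u$. Since $\min u_n=e_{1,n}$ and $\max u_n=e_{2,n}$ for regular solutions, we get $\min_{\mathbb T}u=e_1$ and $\max_{\mathbb T}u=e_2$. From this, $\operatorname{osc}(u)=e_2-e_1\ge\varepsilon>0$ and $-R<u<R$ on $\mathbb T$. So conditions (i)--(iii) of Definition~\ref{Df4.1} hold, and therefore $(\lambda,\mathbf u)\in\mathcal Z_{\rm reg}$, which is the desired contradiction.

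The only point needing care, and the one I expect to be the main obstacle, is to make sure that the polynomial obtained as the limit of the factorized $P_n$ is indeed the polynomial attached to the limit solution. This reduces to continuity of $(\lambda,\mathbf x)\mapsto(C,E)$ in the $\mathcal C^1$ topology, with $\lambda$ bounded away from $0$ so that the formulas for $a_0$ and $a_2$ (which divide by $\lambda$) remain well behaved. The bound on $e_{3,n}$ is the other place where $\lambda>0$ is used.
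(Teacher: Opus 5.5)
Your proof is correct and follows essentially the same route as the paper: $\mathcal C^1$ convergence gives convergence of $\min u_n=e_{1,n}$, $\max u_n=e_{2,n}$ and of the coefficients of $P_n$. The third root stays bounded (you use the Vieta relation for $a_2$, the paper uses that the leading coefficient stays away from zero), and if none of the degeneracies occurs the limit satisfies every condition of Definition~\ref{Df4.1}, which contradicts $(\lambda,\mathbf u)\notin\mathcal Z_{\rm reg}$.
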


\begin{proof}
	Set $\mathbf{x}_{n}:=\mathbf{x}_{0}+\mathbf u_{n}$, $u_{n}:=(x_{n})_{3}$, $\mathbf{x}:=\mathbf{x}_{0}+\mathbf u$, $u:=x_{3}$.
	Since \(H^{2}(\mathbb T)\hookrightarrow \mc C^{1}(\mathbb T)\), we have
	\[
	\lim_{n\to+\infty}u_{n}= u	\qquad
	\text{in }\mc C^{1}(\mathbb T).
	\]
	In particular,
	\[
	\min_{\mathbb T}u_{n}\to\min_{\mathbb T}u, \qquad \max_{\mathbb T}u_{n}\to\max_{\mathbb T}u.
	\]
	Since
	\[
	e_{1,n}=\min_{\mathbb T}u_n,
	\qquad e_{2,n}=\max_{\mathbb T}u_n,
	\]
	we get
	\[
	e_{1,n}\to e_{1}:=\min_{\mathbb T}u,
	\qquad e_{2,n}\to e_{2}:=\max_{\mathbb T}u.
	\]
	
	The conserved quantities \(C_{n},E_{n}\) associated with \(\mathbf x_n\) converge
	to the conserved quantities \(C,E\) associated with \(\mathbf x\). Therefore the
	coefficients of the polynomials \(P_{n}\) converge to the coefficients of the
	limiting polynomial \(P\). Since
	\[
	\frac{2\lambda_n}{R^2}\to \frac{2\lambda}{R^2}>0,
	\]
	the leading coefficients stay bounded away from zero. Hence the roots of
	\(P_n\) remain bounded. After passing to a further subsequence, we may assume
	that $e_{3,n}\to e_3$,
	where \(e_3\) is a real root of \(P\). For every \(n\), the regularity conditions give
	\[
	-R<e_{1,n}<e_{2,n}<R,
	\qquad	e_{2,n}<e_{3,n}.
	\]
	If none of the four alternatives stated in the lemma occurred, then the limits
	would satisfy
	\[
	-R<e_{1}<e_{2}<R,\qquad e_{2}<e_{3}.
	\]
	Moreover, since \(u_{n}(\mathbb T)=[e_{1,n},e_{2,n}]\) and \(u_{n}\to u\)
	uniformly, we have
	\[
	u(\mathbb T)=[e_{1},e_{2}].
	\]
	Thus the limiting solution satisfies all the defining conditions of
	\(\mathcal Z_{\rm reg}\), contradicting $(\lambda,\mathbf u)\notin\mathcal Z_{\rm reg}$.
	Therefore at least one of the four alternatives must occur.
\end{proof}

We first interpret the alternatives $e_{2,n}\to R$ and $e_{1,n}\to -R$. Suppose that $e_{2,n}\to R$.
Passing to the limit in the identity $P_n(e_{2,n})=0$, and using the convergence of the coefficients of \(P_n\) to those of \(P\), we obtain $P(R)=0$. In the rescaled problem \(a=\Omega=\lambda\), the identity from
Section~\ref{S2}, $P(R)=-\frac{(C-\lambda R)^{2}}{R^{2}}$ therefore gives $C=\lambda R$.
Thus this boundary alternative corresponds to contact with the north pole. Similarly, if $e_{1,n}\to -R$,
then $P(-R)=0$. Since $P(-R)=-\frac{(C+\lambda R)^{2}}{R^{2}}$, we obtain $C=-\lambda R$. Thus this boundary alternative corresponds to contact with the south pole.

The alternative $e_{2,n}-e_{1,n}\to0$
corresponds to collapse of the vertical oscillation. Indeed, since $u_n(\mathbb T)=[e_{1,n},e_{2,n}]$,
we have
\[
\operatorname{osc}(u_n)=e_{2,n}-e_{1,n}\to0.
\]
Hence any \(\mc{C}^0\)-limit satisfies $x_3(\eta)\equiv u_0$
for some constant \(u_0\in[-R,R]\). Let us recall the corresponding solutions. Suppose first that \(|u_0|<R\), so
that the limiting curve is non-polar. Then the horizontal projection has a
constant radius
\[
\rho=\sqrt{R^{2}-u_0^{2}}>0.
\]
Writing its winding number as \(p\in\mathbb Z\), the curve has the form
\[
\mathbf x(\eta)=\big(\rho\cos(p\eta+\varphi),\rho\sin(p\eta+\varphi),u_0\big).
\]
The case \(p=0\) would give a constant non-polar curve, which does not solve
\eqref{Eq4.1} for \(\lambda>0\). Hence $p\in\mathbb Z\setminus\{0\}$.
For such a curve,
\[
\mathbf x'=p\,e_{3}\times\mathbf x,\qquad\mathbf x''=-p^{2}(\mathbf x-u_{0}e_{3}).
\]
Substitution into \eqref{Eq4.1} gives
\[
(-p^{2}u_{0}-\lambda+\lambda p)\,e_{3}\times\mathbf x=0.
\]
Since \(|u_0|<R\), the horizontal projection is nonzero, and therefore
\[
u_{0}=\frac{\lambda(p-1)}{p^{2}}.
\]
Conversely, every curve of the above form satisfying this relation solves
\eqref{Eq4.1}. In particular, in horizontal degree \(p=1\), collapse of the vertical oscillation
inside the non-polar class can only lead to $u_0=0$.
Thus, for the branch bifurcating from the equator, collapse without prior contact
with the poles can only lead back to the equatorial branch. The polar constant solutions $\mathbf x\equiv Re_{3}$, $\mathbf x\equiv -Re_{3}$ are the limiting cases \(|u_0|=R\) and solve the equation for every
\(\lambda>0\).

The remaining root degeneration is $e_{3,n}-e_{2,n}\to0$.
If this occurs without simultaneous collapse of the oscillation, then in the
limit one obtains a double root at the upper turning level:
\[
P(s)
=\frac{2\lambda}{R^{2}}(s-e_{1})(s-e_{2})^{2},
\qquad e_{1}<e_{2}.
\]
This is the separatrix degeneration of the vertical equation. Indeed, near the
double root \(e_{2}\) one has $P(s)=O((s-e_{2})^{2})$,
and therefore
\[
\int^{e_{2}}\frac{ds}{\sqrt{P(s)}}=+\infty.
\]
Thus the limiting vertical motion is of separatrix type rather than a regular
periodic oscillation. 

For branches with fixed vertical nodal number and bounded \(\lambda\), this pure
separatrix degeneration cannot occur. More precisely, it can occur only together
with collapse of the vertical oscillation.

\begin{lemma}
	\label{Le4.5}
	Let \(q\in\mathbb N\) and \(\Lambda>0\). Let $\{(\lambda_{n},\mathbf u_{n})\}_{n\in\N}\subset\mathcal Z_{\rm reg}$ be a sequence with vertical nodal number \(q\) and $0<\lambda_{n}\le\Lambda$. Let $e_{1,n}<e_{2,n}<e_{3,n}$
	be the corresponding roots. If $e_{3,n}-e_{2,n}\to0$,
	then
	\[
	e_{2,n}-e_{1,n}\to0.
	\]
\end{lemma}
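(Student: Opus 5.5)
The proof reuses the elliptic period identity from the proof of Proposition~\ref{Pr4.1}. For each $n$ we set $A_n:=e_{3,n}-e_{1,n}$, $d_n:=e_{2,n}-e_{1,n}$ and $\mu_n:=d_n/A_n\in(0,1)$. Since the vertical nodal number is $q$, the minimal vertical period equals $2\pi/q$, and \eqref{Eq4.10} gives
\[
\lambda_n A_n=\frac{2R^2q^2}{\pi^2}K(\mu_n)^2 .
\]
The argument in the proof of Proposition~\ref{Pr4.1} uses only $\lambda_n\le\Lambda$, $d_n\le 2R$ and this identity, so it yields the uniform bound $\mu_nK(\mu_n)^2\le \Lambda\pi^2/(Rq^2)$. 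Because $\mu\mapsto\mu K(\mu)^2$ is continuous on $(0,1)$ and blows up as $\mu\to1^-$, there is $\bar\mu=\bar\mu(R,\Lambda,q)<1$ with $\mu_n\le\bar\mu$ for all $n$.

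Next we write $e_{3,n}-e_{2,n}=A_n-d_n=A_n(1-\mu_n)\ge A_n(1-\bar\mu)$. The hypothesis $e_{3,n}-e_{2,n}\to0$ then gives $A_n\to0$. Since $0<d_n=\mu_nA_n<A_n$, it follows that $d_n=e_{2,n}-e_{1,n}\to0$, which is the claim.

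The only step needing care is the uniform bound $\mu_n\le\bar\mu<1$, i.e.\ that $\mu K(\mu)^2\to\infty$ as $\mu\to1^-$. This follows from $K(\mu)\sim\tfrac12\log\frac{16}{1-\mu}$ in the parameter convention of Appendix~\ref{ApA}. Note that $\lambda_n$ may tend to $0$; this does no harm, because the bound on $\mu_n$ uses only the upper bound $\lambda_n\le\Lambda$, and the identity $\lambda_nA_n=\frac{2R^2q^2}{\pi^2}K(\mu_n)^2$ is not needed beyond that. As a remark, since $K\ge\pi/2$, the identity also gives $\lambda_nA_n\ge R^2q^2/2$. Hence $A_n\to0$ forces $\lambda_n\to0$, so along sequences with $\lambda_n$ bounded away from zero the degeneration $e_{3,n}-e_{2,n}\to0$ cannot occur at all.
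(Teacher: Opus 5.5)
Your argument is correct and is essentially the paper's proof. The period identity $\lambda_n(e_{3,n}-e_{1,n})=\frac{2R^2q^2}{\pi^2}K(\mu_n)^2$ together with $\lambda_n\le\Lambda$ keeps $\mu_n\le\mu_*<1$, and then $e_{3,n}-e_{2,n}=(1-\mu_n)(e_{3,n}-e_{1,n})$ forces first $e_{3,n}-e_{1,n}\to0$ and then $e_{2,n}-e_{1,n}\to0$.

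Your closing remark is wrong. The bound $\lambda_nA_n\ge R^2q^2/2$ means $A_n\to0$ forces $\lambda_n\to+\infty$, not $\lambda_n\to0$. So under the hypothesis $\lambda_n\le\Lambda$ one has $A_n\ge R^2q^2/(2\Lambda)$, and $e_{3,n}-e_{2,n}\to0$ can never occur: the lemma holds vacuously, not merely along sequences with $\lambda_n$ bounded away from zero.
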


\begin{proof}
	Set
	\[
	\mu_{n}:=\frac{e_{2,n}-e_{1,n}}{e_{3,n}-e_{1,n}}.
	\]
	For a regular oscillatory solution with vertical nodal number \(q\), the period
	condition gives
	\[
	\lambda_{n}(e_{3,n}-e_{1,n})=\frac{2R^{2}q^{2}}{\pi^{2}}K(\mu_{n})^{2}.
	\]
	From the proof of Proposition~\ref{Pr4.1}
	there exists \(M=M(R,\Lambda,q)>0\) such that
	\[
	\lambda_{n}(e_{3,n}-e_{1,n})\le M.
	\]
	Therefore $K(\mu_n)\le C(R,\Lambda,q)$.
	Since $K(\mu)\to+\infty$ as $\mu\to1^{-}$, there exists \(\mu_{*}<1\), depending only on \(R,\Lambda,q\), such that $\mu_n\le\mu_*$ for all \(n\) large enough. Now
	\[
	e_{3,n}-e_{2,n}=(1-\mu_n)(e_{3,n}-e_{1,n}).
	\]
	Since $1-\mu_n\ge1-\mu_*>0$ and $e_{3,n}-e_{2,n}\to0$, we obtain $e_{3,n}-e_{1,n}\to0$.
	Consequently, $0\le e_{2,n}-e_{1,n}\le e_{3,n}-e_{1,n}\to0$.
	This proves the claim.
\end{proof}

\begin{proposition}[Boundary of the regular class]
	\label{Pr4.3}
	Let $\{(\lambda_{n},\mathbf u_{n})\}_{n\in\mathbb N}
	\subset\mathcal Z_{\rm reg}$
	be a sequence such that
	\[
	\lim_{n\to+\infty}(\lambda_n,\mathbf u_n)=(\lambda,\mathbf u)
	\quad \text{in }(0,+\infty)\times H^{2}(\mathbb T,\mathbb R^{3}),
	\]
	with $(\lambda,\mathbf u)\in \mathcal Z\setminus\mathcal Z_{\rm reg}$.
	Let $e_{1,n}<e_{2,n}<e_{3,n}$
	be the roots associated with \((\lambda_{n},\mathbf u_{n})\). Then, after passing
	to a subsequence, at least one of the following degeneracies occurs:
	\begin{enumerate}
		\item[\rm(i)] \(e_{1,n}\to -R\). The sequence approaches loss of non-polarity
		through contact with the south pole, and in the limit \(C=-\lambda R\).
		
		\item[\rm(ii)] \(e_{2,n}\to R\). The sequence approaches loss of non-polarity
		through contact with the north pole, and in the limit \(C=\lambda R\).
		
		\item[\rm(iii)] \(e_{2,n}-e_{1,n}\to0\). The vertical oscillation collapses,
		and the limiting solution satisfies \(x_{3}\equiv u_{0}\)
		for some constant \(u_{0}\in[-R,R]\). If \(|u_0|<R\), then the limiting
		non-polar solution is a constant-latitude solution of the form
		\begin{equation}
			\label{Eq4.12}
		\mathbf x(\eta)=\big(\rho\cos(p\eta+\varphi),\rho\sin(p\eta+\varphi),u_{0}\big),
		\qquad \rho=\sqrt{R^2-u_0^2},
		\end{equation}
		with \(p\in\mathbb Z\setminus\{0\}\), and
		\[
		u_{0}=\frac{\lambda(p-1)}{p^{2}}.
		\]
		The limiting cases \(|u_0|=R\) are the polar constant solutions
		\[
		\mathbf x\equiv Re_3, \qquad \mathbf x\equiv -Re_3.
		\]
		
		\item[\rm(iv)] \(e_{3,n}-e_{2,n}\to0\). The vertical polynomial approaches a
		double-root degeneration at the upper turning level. If this degeneration
		is not accompanied by collapse of the vertical oscillation, then the limiting
		vertical dynamics is of separatrix type.
	\end{enumerate}
	Moreover, along a sequence with fixed vertical nodal number and bounded
	\(\lambda\), alternative \({\rm(iv)}\) can occur only together with
	alternative \({\rm(iii)}\).
	
	In particular, for a regular branch \(\mathscr C_k^{\rm reg}\) bifurcating
	from the equator, one has \(p=1\). Hence, if such a branch reaches the collapse
	alternative \({\rm(iii)}\) without simultaneous contact with the poles, then
	necessarily \(u_{0}=0\). Thus, for \(\mathscr C_k^{\rm reg}\) with bounded
	\(\lambda\), the effective boundary alternatives are contact with one of the
	poles, or collapse back to the equatorial branch.
\end{proposition}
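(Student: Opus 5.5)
The plan is to assemble Proposition~\ref{Pr4.3} from Lemma~\ref{Le4.4}, Lemma~\ref{Le4.5} and the discussion between them, adding only the limit identifications. Lemma~\ref{Le4.4} already yields, after passing to a subsequence, at least one of the four root degeneracies. I will therefore take each alternative in turn and identify its geometric meaning in the limit, using the convergence $\mathbf u_n\to\mathbf u$ in $H^2\hookrightarrow\mathcal C^1$. As in the proof of Lemma~\ref{Le4.4}, the conserved quantities $C_n,E_n$ converge to $C,E$, so the coefficients of $P_n$ converge to those of $P$; since $\lambda>0$, the roots stay bounded and $e_{i,n}\to e_i$ along a subsequence.

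For (i) and (ii), I pass to the limit in $P_n(e_{1,n})=0$ and $P_n(e_{2,n})=0$, which gives $P(-R)=0$, respectively $P(R)=0$. The endpoint formulas $P(\pm R)=-(C\pm\lambda R)^2/R^2$ from \eqref{Eq2.8}, with $a=\Omega=\lambda$, then give $C=-\lambda R$, respectively $C=\lambda R$. Moreover, $\max u_n=e_{2,n}\to R$ (or $\min u_n=e_{1,n}\to-R$) shows that the limit touches the corresponding pole.

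For (iii), I use that $u_n(\mathbb T)=[e_{1,n},e_{2,n}]$, so $\operatorname{osc}(u_n)\to0$, and uniform convergence forces $x_3\equiv u_0$.
\begin{itemize}
\item If $|u_0|<R$, the horizontal projection has constant radius $\rho>0$. I then invoke the constant-latitude classification (Theorem~\ref{TP}(a), transported to the rescaled equation \eqref{Eq4.1}) together with $2\pi$-periodicity to get $\phi'=\nu\in\mathbb Z$, with $\nu$ equal to the winding number $p$. The computation already carried out before Lemma~\ref{Le4.5} then gives $(-p^2u_0-\lambda+\lambda p)\,e_3\times\mathbf x=0$, hence $u_0=\lambda(p-1)/p^2$. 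The case $p=0$ is excluded because a constant non-polar curve does not solve \eqref{Eq4.1} when $\lambda>0$.
\item If $|u_0|=R$, the constraint $\|\mathbf x\|=R$ forces the horizontal part to vanish, so $\mathbf x\equiv\pm Re_3$.
\end{itemize}

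For (iv), I pass to the limit in the factorization. If $e_2-e_1>0$, then $P$ has a double root at $e_2$, and the divergence of $\int^{e_2}ds/\sqrt{P}$ gives the separatrix interpretation. The "moreover" clause follows directly from Lemma~\ref{Le4.5}.

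The final statement about $\mathscr C_k^{\rm reg}$ requires $p=1$ in the limit, and this is the delicate step, since $p$ was only shown to be locally constant on $\mathcal Z_{\rm reg}$ and the limit is not regular. I would handle it exactly as in Lemma~\ref{Le4.2}. When $|u_0|<R$, the horizontal projections $y_n$ converge uniformly to the limiting projection $y$, which is bounded away from zero. A straight-line homotopy then shows that the winding number passes to the limit, so the limit has $p=1$ by \eqref{Eq4.9}. Hence $u_0=\lambda\cdot0/1=0$, and the limit is a phase of the equatorial solution, $\mathbf u=\rho_\varphi$-translate of $0$. Combined with (i), (ii) and the exclusion of pure separatrix behaviour at bounded $\lambda$, this leaves only pole contact or collapse back to the equatorial branch.
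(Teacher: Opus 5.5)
Your proposal is correct and follows the paper's proof essentially step by step: Lemma~\ref{Le4.4} gives the four alternatives, $P(\pm R)=0$ gives the pole constants, the constant-latitude computation handles collapse, Lemma~\ref{Le4.5} gives the ``moreover'' clause, and $p=1$ gives the final assertion. Your one genuine addition is the straight-line homotopy argument showing that the winding number $p=1$ survives in the non-regular limit when $|u_0|<R$; the paper uses this tacitly when it applies \eqref{Eq4.9} at a point outside $\mathcal Z_{\rm reg}$.

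One slip to fix concerns the collapsed limit. It is $\mathbf x=R_\varphi\mathbf x_0$, i.e.\ $\mathbf u=R_\varphi\mathbf x_0-\mathbf x_0$, which is a rotated equator. It is not a $\rho_\varphi$-translate of $0$: that translate is just $0$, since $\rho_\varphi 0=0$. So what you have shown is collapse onto the equatorial family, not onto $\mathbf u=0$. That is exactly what the statement asserts.
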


\begin{proof}
	The fact that at least one of the four alternatives must occur follows from
	Lemma~\ref{Le4.4}. The interpretations of
	\({\rm(i)}\), \({\rm(ii)}\), and \({\rm(iii)}\) were established above. In
	particular, in the collapse case, if the limiting constant level satisfies
	\(|u_0|<R\), the constant-latitude classification gives \eqref{Eq4.12} with
	\[
	u_0=\frac{\lambda(p-1)}{p^2}.
	\]
	The polar cases \(|u_0|=R\) give the constant solutions
	\(\mathbf x\equiv \pm Re_3\).
	
	The interpretation of \({\rm(iv)}\) follows from the double-root form of the
	limiting polynomial \(P\). When this double-root degeneration is not accompanied
	by collapse, the limiting vertical equation has a separatrix-type trajectory.
	The fact that \({\rm(iv)}\) can occur only together with \({\rm(iii)}\), under
	bounded \(\lambda\) and fixed vertical nodal number, is precisely
	Lemma~\ref{Le4.5}.
	
	Finally, on the component \(\mathscr C_k^{\rm reg}\) one has \(p=1\) by
	\eqref{Eq4.9}. Therefore, in the non-polar collapse
	case \({\rm(iii)}\), the constant-latitude classification gives
	\[
	u_0=\frac{\lambda(p-1)}{p^2}=0.
	\]
	This proves the final assertion.
\end{proof}

After Theorem~\ref{Th4.1} and Proposition~\ref{Pr4.3}, the effective global
alternatives for a regular component can be summarized schematically as in
Figure~\ref{F3}. The figure is not a numerical bifurcation diagram, its purpose
is only to illustrate the possible behaviours in a finite-dimensional picture.
The horizontal axis represents the bifurcation parameter $\lambda=\frac{a^2}{\Omega}$,
whereas the vertical direction should be understood merely as a symbolic
amplitude coordinate in the infinite-dimensional space
\(H^2(\mathbb T,\mathbb R^3)\). 
The blue points on the horizontal axis indicate bifurcation points on the
trivial equatorial branch, namely $\lambda_k=R\sqrt{k^2-1}$, $k\ge2$.
Starting from such a point, the local Crandall--Rabinowitz branch enters the
regular non-polar class. Its connected component
\(\mathscr C_k^{\mathrm{reg}}\) may then behave in, at least, one of the following ways.
It may contain a sequence with \(\lambda\to+\infty\). It may approach the
extended boundary through the limiting regime \(\lambda\to0\). It may approach a
limit profile with pole contact. Finally, if the component remains compact and
does not approach the extended boundary, then the only possible loss of
regularity is the return to the original bifurcation point
\((\lambda_k,0)\), corresponding to collapse of the vertical oscillation onto
the equator. Thus, in the schematic picture, the four effective possibilities are: escape to
\(\lambda=+\infty\), return to the bifurcation point, approach to
\(\lambda=0\), or pole contact. The double-root degeneration appearing in
Proposition~\ref{Pr4.3} is not drawn as a separate global alternative here,
because, along a fixed regular component with fixed vertical number, it is
absorbed by the vertical-collapse scenario.

\begin{figure}[h!]
	\centering

	\tikzset{every picture/.style={line width=0.75pt}} 
	
	\begin{tikzpicture}[x=0.7pt,y=0.7pt,yscale=-1,xscale=1]
		
		\draw [color={rgb, 255:red, 74; green, 144; blue, 226 }  ,draw opacity=1 ][line width=1.5]    (86.51,218.83) -- (390.72,219.55) ;
		\draw    (86.86,30.42) -- (86.51,218.83) ;
		\draw [color={rgb, 255:red, 184; green, 233; blue, 134 }  ,draw opacity=1 ][line width=2.25]    (150,219) .. controls (152,163) and (188,131) .. (210,126) ;
		\draw [color={rgb, 255:red, 245; green, 166; blue, 35 }  ,draw opacity=1 ][line width=2.25]    (47,104) .. controls (108,116) and (337,41) .. (210,126) ;
		\draw [color={rgb, 255:red, 245; green, 166; blue, 35 }  ,draw opacity=1 ][line width=2.25]    (210,126) .. controls (330,80) and (228,33) .. (308,34) ;
		\draw [color={rgb, 255:red, 184; green, 233; blue, 134 }  ,draw opacity=1 ][line width=2.25]    (277,219) .. controls (279,163) and (360,149) .. (382,144) ;
		\draw  [color={rgb, 255:red, 74; green, 144; blue, 226 }  ,draw opacity=1 ][fill={rgb, 255:red, 74; green, 144; blue, 226 }  ,fill opacity=1 ][line width=2.25]  (274.5,219) .. controls (274.5,217.62) and (275.62,216.5) .. (277,216.5) .. controls (278.38,216.5) and (279.5,217.62) .. (279.5,219) .. controls (279.5,220.38) and (278.38,221.5) .. (277,221.5) .. controls (275.62,221.5) and (274.5,220.38) .. (274.5,219) -- cycle ;
		\draw [color={rgb, 255:red, 245; green, 166; blue, 35 }  ,draw opacity=1 ][line width=2.25]    (207.5,126) .. controls (324,102) and (277,177) .. (242,257) ;
		\draw    (86.51,218.83) -- (9,319) ;
		\draw [color={rgb, 255:red, 245; green, 166; blue, 35 }  ,draw opacity=1 ][line width=2.25]  [dash pattern={on 2.53pt off 3.02pt}]  (242,257) .. controls (200,353) and (143,300) .. (150,219) ;
		\draw  [color={rgb, 255:red, 74; green, 144; blue, 226 }  ,draw opacity=1 ][fill={rgb, 255:red, 74; green, 144; blue, 226 }  ,fill opacity=1 ][line width=2.25]  (147.5,219) .. controls (147.5,217.62) and (148.62,216.5) .. (150,216.5) .. controls (151.38,216.5) and (152.5,217.62) .. (152.5,219) .. controls (152.5,220.38) and (151.38,221.5) .. (150,221.5) .. controls (148.62,221.5) and (147.5,220.38) .. (147.5,219) -- cycle ;
		\draw  [color={rgb, 255:red, 245; green, 166; blue, 35 }  ,draw opacity=1 ][fill={rgb, 255:red, 74; green, 144; blue, 226 }  ,fill opacity=1 ][line width=2.25]  (42,104) .. controls (42,102.62) and (43.12,101.5) .. (44.5,101.5) .. controls (45.88,101.5) and (47,102.62) .. (47,104) .. controls (47,105.38) and (45.88,106.5) .. (44.5,106.5) .. controls (43.12,106.5) and (42,105.38) .. (42,104) -- cycle ;
		\draw  [color={rgb, 255:red, 245; green, 166; blue, 35 }  ,draw opacity=1 ][fill={rgb, 255:red, 74; green, 144; blue, 226 }  ,fill opacity=1 ][line width=2.25]  (308,34) .. controls (308,32.62) and (309.12,31.5) .. (310.5,31.5) .. controls (311.88,31.5) and (313,32.62) .. (313,34) .. controls (313,35.38) and (311.88,36.5) .. (310.5,36.5) .. controls (309.12,36.5) and (308,35.38) .. (308,34) -- cycle ;
		\draw [color={rgb, 255:red, 245; green, 166; blue, 35 }  ,draw opacity=1 ][line width=2.25]    (210,126) .. controls (300.09,102.24) and (309.81,75.54) .. (431.28,72.1) ;
		\draw [shift={(435,72)}, rotate = 178.63] [color={rgb, 255:red, 245; green, 166; blue, 35 }  ,draw opacity=1 ][line width=2.25]    (17.49,-5.26) .. controls (11.12,-2.23) and (5.29,-0.48) .. (0,0) .. controls (5.29,0.48) and (11.12,2.23) .. (17.49,5.26)   ;
		
		\draw (96,14.4) node [anchor=north west][inner sep=0.75pt]    {$H^{2}\left(\mathbb{T} ,\mathbb{R}^{3}\right)$};
		\draw (398,205.4) node [anchor=north west][inner sep=0.75pt]  [color={rgb, 255:red, 74; green, 144; blue, 226 }  ,opacity=1 ]  {$\lambda =\frac{a^{2}}{\Omega }$};
		\draw (152,222.4) node [anchor=north west][inner sep=0.75pt]    {$\lambda _{2} =R\sqrt{3}$};
		\draw (279,222.4) node [anchor=north west][inner sep=0.75pt]    {$\lambda _{3} =2R\sqrt{2}$};
		\draw (116,171.4) node [anchor=north west][inner sep=0.75pt]    {$\mathscr{C}_{2}^{\rm{reg}}$};
		\draw (321,25) node [anchor=north west][inner sep=0.75pt]   [align=left] {pole contact};
		\draw (20,80.4) node [anchor=north west][inner sep=0.75pt]    {$\lambda =0$};
		\draw (437,48.4) node [anchor=north west][inner sep=0.75pt]    {$\lambda \rightarrow +\infty $};
		\draw (133,316) node [anchor=north west][inner sep=0.75pt]   [align=left] {collapse vertical oscillation};

	\end{tikzpicture}
	\caption{Schematic global alternative for the regular component $\mathscr{C}^{\rm{reg}}_{2}$.}
	\label{F3}
\end{figure}

The possible boundary degenerations described in Proposition~\ref{Pr4.3} are
schematically represented in Figure~\ref{F2}. In contrast with
Figure~\ref{F1}, which depicts the global continuation alternative, this figure
only illustrates the mechanisms by which a sequence of regular profiles may
approach the boundary of the regular class at bounded parameter.

\begin{figure}[h!]
	\centering
	\includegraphics[scale=0.2]{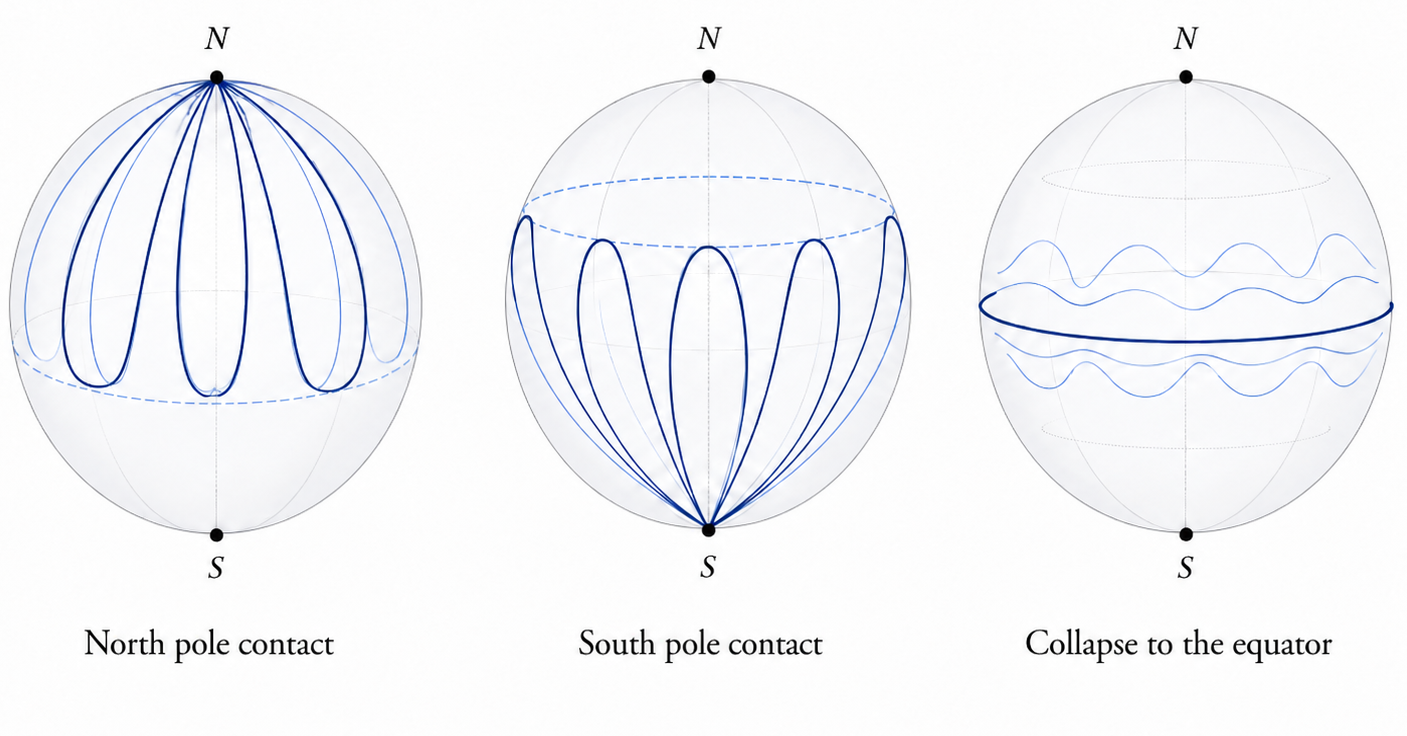}
	\caption{Schematic representation of the possible boundary degenerations of the regular class.}
	\label{F2}
\end{figure}

\subsection{Geometric interpretation of polar contact}

We record one final consequence of the boundary analysis above. In Proposition~\ref{Pr4.3}, loss of non-polarity may occur through one of the two polar regimes $e_{1,n}\to -R$ or $e_{2,n}\to R$. The following result explains what this means geometrically for the limiting spherical profile. The point is that, along a regular sequence, the vertical nodal number is fixed. Thus, if a sequence with vertical nodal number \(q\) converges to a non-degenerate polar-contact profile, then the contact with the pole occurs once during each vertical oscillation. In particular, the limiting profile does not merely touch the pole at a single parameter value: it reaches the pole exactly \(q\) times over one period.

\begin{proposition}[Multiple polar contact]
	\label{Pr}
	Let \(q\geq1\), and let $(\lambda_n,\mathbf u_n)\in\mathcal Z_{\rm reg}$, $\mathbf x_n:=\mathbf x_0+\mathbf u_n$,
	be a sequence of regular profiles with vertical nodal number \(q\). Assume that
	\[
	\lim_{n\to+\infty}(\lambda_n,\mathbf u_n)=(\lambda,\mathbf u)
	\quad\text{in }(0,+\infty)\times H^2(\mathbb T,\mathbb R^3),
	\]
	with \(\lambda>0\), and write $\mathbf x:=\mathbf x_0+\mathbf u$.
	For each \(n\), let $e_{1,n}<e_{2,n}<e_{3,n}$
	be the regular root data associated with \(\mathbf x_n\).
	
	Assume first that, after passing to a subsequence,
	\[
	e_{1,n}\to e_1,\qquad e_{2,n}\to R,\qquad e_{3,n}\to e_3,
	\]
	with $-R<e_1<R<e_3$. Then there exist exactly \(q\) distinct points $\eta_0^+,\ldots,\eta_{q-1}^+\in\mathbb T$
	such that
	\[
	\mathbf x(\eta_j^+)=R\mathbf e_3, \qquad j=0,\ldots,q-1.
	\]
	
	Similarly, assume that, after passing to a subsequence,
	\[
	e_{1,n}\to -R,\qquad
	e_{2,n}\to e_2,\qquad
	e_{3,n}\to e_3,
	\]
	with $-R<e_2<R<e_3$. Then there exist exactly \(q\) distinct points $\eta_0^-,\ldots,\eta_{q-1}^-\in\mathbb T$
	such that
	\[
	\mathbf x(\eta_j^-)=-R\mathbf e_3, \qquad j=0,\ldots,q-1.
	\]
	In particular, if \(q\geq2\), the limiting spherical profile has a multiple
	self-intersection at the corresponding pole.
\end{proposition}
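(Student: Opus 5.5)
The plan is to identify the limiting vertical component explicitly through Proposition~\ref{Pr2.4}(b), and then to read off the polar contacts from the zeros of \(1-\sn^2\) (respectively of \(\sn^2\)). I treat the north-pole case; the south-pole case is symmetric.

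\emph{Regularity and the limiting polynomial.} By the \(H^2\) convergence and \(H^2(\mathbb T)\hookrightarrow\mc C^1(\mathbb T)\), \(\mathbf x_n\to\mathbf x\) in \(\mc C^1\). The limit solves \eqref{Eq4.1}, so \eqref{Eq4.7} bootstraps it to a smooth solution; hence Section~\ref{S2} applies to \(u:=x_3\). As in the proof of Lemma~\ref{Le4.4}, the conserved quantities \(C_n,E_n\) converge, so the coefficients of \(P_n\) converge to those of the limiting polynomial \(P\). Therefore
\[
P(s)=\tfrac{2\lambda}{R^2}(s-e_1)(s-R)(s-e_3),\qquad -R<e_1<R<e_3,
\]
with three simple roots. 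Uniform convergence also gives \(\min u=e_1\) and \(\max u=R\), so \(u\) is nonconstant.

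\emph{Explicit form of \(u\).} Proposition~\ref{Pr2.4}(b), applied in the rescaled variable with \(a=\Omega=\lambda\), gives
\[
u(\eta)=e_1+(R-e_1)\,\sn^2\big(\alpha(\eta-\eta_0)\mid\mu\big),\qquad \mu=\frac{R-e_1}{e_3-e_1}\in(0,1),
\]
with \(\alpha^2=\frac{\lambda}{2R^2}(e_3-e_1)\). The minimal period of \(u\) is \(T=2K(\mu)/\alpha\).

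\emph{The period is still \(2\pi/q\).} For each \(n\), the nodal number condition reads \(2K(\mu_n)/\alpha_n=2\pi/q\), where \(\mu_n=\frac{e_{2,n}-e_{1,n}}{e_{3,n}-e_{1,n}}\). Since \(\mu_n\to\mu<1\), which uses \(e_3>R\) in an essential way, and \(\alpha_n\to\alpha>0\), the continuity of \(K\) on \([0,1)\) gives \(T=2\pi/q\). I expect this to be the main point of care: the hypothesis \(e_3>R\) is exactly what prevents \(\mu\to1\), that is, degeneration into the separatrix regime of Proposition~\ref{Pr2.4}(c) with infinite period. An alternative route would pass to the limit in the explicit \(\sn^2\) formulas for \(u_n\), extracting a convergent subsequence of phases \(\eta_{0,n}\in\mathbb T\); the approach above avoids that extraction.

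\emph{Counting the polar contacts.} On one period \([0,2K)\), \(\sn^2(\cdot\mid\mu)=1\) exactly at \(K\), since \(|\sn|=1\) only at odd multiples of \(K\). Hence \(u=R\) at exactly one point per vertical period, that is, at exactly \(q\) distinct points \(\eta_j^+=\eta_0+(2j+1)K/\alpha\) in \(\mathbb T\). At each such point, \(\|\mathbf x\|=R\) forces \(x_1=x_2=0\), so \(\mathbf x(\eta_j^+)=R e_3\).

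\emph{South-pole case and conclusion.} In the south-pole case the limiting polynomial is \(P(s)=\tfrac{2\lambda}{R^2}(s+R)(s-e_2)(s-e_3)\), and the same argument applies with \(\mu=\frac{e_2+R}{e_3+R}<1\). The contacts are now the zeros of \(\sn^2\), which occur once per period at multiples of \(2K\), and \(u=-R\) there gives \(\mathbf x=-Re_3\). Finally, for \(q\ge2\) the curve \(\mathbf x\) passes through the same pole at \(q\) distinct parameter values, which is the claimed multiple self-intersection.
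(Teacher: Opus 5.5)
Your proof is correct. Its second half coincides with the paper's. The period $2\pi/q$ survives in the limit because $e_3>R$ keeps $\mu=\frac{R-e_1}{e_3-e_1}<1$, hence $K(\mu_n)\to K(\mu)<\infty$. The maxima of $\sn^2(\cdot\mid\mu)$ occur once per period, at odd multiples of $K(\mu)$, and $\|\mathbf x\|=R$ forces $x_1=x_2=0$ there.

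The difference is in how you obtain the $\sn^2$ form of the limit. The paper writes $u_n=e_{1,n}+(e_{2,n}-e_{1,n})\sn^2(\alpha_n(\eta-\eta_n)\mid m_n)$, extracts $\eta_n\to\eta_*$ in $\mathbb T$, and passes to the limit in this explicit formula. That step tacitly uses continuity of $\sn$ jointly in its argument and in the parameter $m<1$. You instead identify the limiting cubic $P(s)=\frac{2\lambda}{R^2}(s-e_1)(s-R)(s-e_3)$ and apply Proposition~\ref{Pr2.4}(b) to the limit profile itself. You then recover the period relation from the continuity of $K$. This avoids the phase extraction and any continuity of $\sn$ in the modulus. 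The price is that you need $u$ to be a $\mc C^2$ solution of $(u')^2=P(u)$ with that factored cubic.

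For that step, be careful with the Section~\ref{S2} derivation of the vertical equation: it divides by $R^2-x_3^2$, which vanishes exactly at the contact points you are after. The identity does hold there, but the cleanest justification is to let $n\to\infty$ in $(u_n')^2=P_n(u_n)$. This converges uniformly, since $u_n\to u$ in $\mc C^1(\mathbb T)$ and the coefficients of $P_n$ converge. The $\mc C^2$ regularity comes from \eqref{Eq4.7}, as you say.
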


\begin{proof}
	We prove the north-pole case since the south-pole case is analogous. For each \(n\), since \(\mathbf x_n\in\mathcal Z_{\rm reg}\) and has vertical
	nodal number \(q\), its vertical component $u_n=(x_n)_3$
	has the elliptic representation
	\[
	u_n(\eta)
	=
	e_{1,n}
	+
	(e_{2,n}-e_{1,n})
	\sn^2\big(\alpha_n(\eta-\eta_n)\mid m_n\big),
	\]
	for some phase \(\eta_n\in\mathbb T\), where
	\[
	m_n=\frac{e_{2,n}-e_{1,n}}{e_{3,n}-e_{1,n}},
	\qquad
	\frac{2qK(m_n)}{\alpha_n}=2\pi.
	\]
	Passing to a subsequence, we may assume that \(\eta_n\to\eta_*\) in
	\(\mathbb T\). Under the north-pole assumptions,
	\[
	\lim_{n\to+\infty}m_n = m:=\frac{R-e_1}{e_3-e_1}\in(0,1),
	\]
	and therefore
	\[
	\alpha_n=\frac{qK(m_n)}{\pi}
	\to
	\alpha:=\frac{qK(m)}{\pi}.
	\]
	Since \(u_n\to u=x_3\) in \(\mc{C}^1(\mathbb T)\), passing to the limit in the
	elliptic representation gives
	\[
	u(\eta)=e_1+(R-e_1)\sn^2\big(\alpha(\eta-\eta_*)\mid m\big).
	\]
	The maxima of \(\sn^2(s\mid m)\) occur precisely at $s=(2j+1)K(m)$, $j\in\mathbb Z$.
	Hence the maxima of \(u\) in one period occur at
	\[
	\eta_j^+=\eta_*+\frac{(2j+1)K(m)}{\alpha}\quad\operatorname{mod}2\pi, \qquad j=0,\ldots,q-1.
	\]
	These points are distinct modulo \(2\pi\), because consecutive maxima are
	separated by
	\[
	\frac{2K(m)}{\alpha}=\frac{2\pi}{q}.
	\]
	At each of these points, $u(\eta_j^+)=R$.
	Since \(|\mathbf x(\eta)|=R\), the identity \(x_3(\eta_j^+)=R\) forces
	\[
	x_1(\eta_j^+)=x_2(\eta_j^+)=0.
	\]
	Therefore $\mathbf x(\eta_j^+)=R\mathbf e_3$, $j=0,\ldots,q-1$.
	
	Conversely, if \(\mathbf x(\eta)=R\mathbf e_3\), then \(u(\eta)=R\), so
	\(\eta\) is a maximum point. The listed points
	are exactly the \(q\) maxima in one period. Thus the limiting profile has
	exactly \(q\) north-pole contacts.
\end{proof}

\section{Numerical continuation of the closure system}
\label{SN}

The purpose of this section is to complement the global information obtained
in Section~\ref{S4} with a numerical continuation of the explicit closure system
\(\mathscr S_{p,q}\). The role of the computation is not to replace the global
alternative of Theorem~\ref{Th4.1}, but to indicate which of the possible
boundary mechanisms is selected by the regular components bifurcating from the
equatorial branch.

Recall that, on the regular component \(\mathscr C_k^{\mathrm{reg}}\) obtained
from the local bifurcation at \(\lambda_k=R\sqrt{k^2-1}\), the two discrete
invariants satisfy $p=1$, $q=k$.
Thus, modulo the two elementary phase parameters in \(\eta\) and in the
azimuthal angle, the profiles in \(\mathscr C_k^{\mathrm{reg}}\) are represented
inside the solution set of the finite-dimensional system \(\mathscr S_{1,k}\)
introduced in \eqref{Sis}. The numerical problem considered below is precisely
to continue the branch of \(\mathscr S_{1,k}\) which starts at the equatorial
root configuration.

The computations consistently indicate that this branch reaches the polar
boundary \(e_2=R\) at a finite value of \(\lambda\). Hence the numerical
evidence selects the north-pole-contact alternative in Proposition~\ref{Pr4.3}.
By Proposition~\ref{Pr}, the corresponding limiting
profile has exactly \(k\) contacts with the north pole.

For the numerical implementation we work in the normalized case $R=1$.
This is the convention used in the figures below. The general case is recovered
by the natural scaling of the root variables and of the parameter. With this
normalization, the equatorial bifurcation values are $\lambda_q=\sqrt{q^2-1}$, $q=2,3,\ldots$,
and the corresponding systems are $\mathscr S_{1,q}$.

\subsection{Algebraic reduction of system $\mathscr{S}_{1,q}$} We start from the system \(\mathscr S_{1,q}\) in \eqref{Sis}, specialized to
\(R=1\). The unknowns are $(e_1,e_2,e_3,C,\lambda)$,
subject to $\lambda>0$, $-1<e_1<e_2<1$ and $e_2<e_3$.
The vertical component \(x_3\) oscillates between the two turning levels
\(e_1\) and \(e_2\), while \(e_3\) is the third root of the vertical polynomial. For \(R=1\), the first two equations of \(\mathscr S_{1,q}\) are
\[
(C-\lambda)^2=2\lambda(1-e_1)(1-e_2)(e_3-1), \quad (C+\lambda)^2=2\lambda(1+e_1)(1+e_2)(1+e_3).
\]
These two identities determine the algebraic part of the system up to a choice
of signs. The branch issuing from the equatorial root configuration determines the sign
choice in the first two algebraic equations. Indeed, for the equatorial profile
one has $C=1$. Hence
\[
C-\lambda_q=1-\lambda_q<0,
\qquad
C+\lambda_q=1+\lambda_q>0.
\]
We therefore select, near the equatorial root configuration, the sign choice
\[
C-\lambda<0,
\qquad
C+\lambda>0.
\]
Set $A=(1-e_1)(1-e_2)(e_3-1)$ and $B=(1+e_1)(1+e_2)(1+e_3)$.
On the branch under consideration we therefore write
\[
C-\lambda=-\sqrt{2\lambda A},
\qquad
C+\lambda=\sqrt{2\lambda B}.
\]
Thus, along the branch selected by the equatorial sign choice, the quantities
\(C\) and \(\lambda\) are explicit functions of the three roots
\((e_1,e_2,e_3)\):
\begin{equation}
	\label{EqN1}
\lambda=\frac{(\sqrt A+\sqrt B)^2}{2}, \qquad C=\frac{B-A}{2}.
\end{equation}
Now, we use the upper turning level $e_2=\max_{\eta\in\mathbb T}x_3(\eta)$ as continuation parameter. Thus, for fixed \(q\) and fixed \(e_2\in(0,1)\), the remaining unknowns are $(e_1,e_3)$. The inequalities to be preserved are $-1<e_1<e_2<1$, $1<e_3$. For each candidate pair \((e_1,e_3)\), we compute $\l$ and $C$ in \eqref{EqN1}.
Setting $m=\frac{e_2-e_1}{e_3-e_1}$,
the vertical component has the Jacobi representation
\[
x_3(\eta)=e_1+(e_2-e_1)\sn^2(\alpha\eta\mid m),
\]
where $\alpha^2=\frac{\lambda}{2}(e_3-e_1)$.
Since \(\sn^2(\,\cdot\,\mid m)\) has period \(2K(m)\), the condition that the
vertical component performs exactly \(q\) oscillations over the interval
\([0,2\pi]\) is
\[
\frac{2qK(m)}{\alpha}=2\pi.
\]
Equivalently, $\alpha=\frac{qK(m)}{\pi}$.
Combining this identity with $\alpha^2=\frac{\lambda}{2}(e_3-e_1)$,
we obtain the first reduced equation
\[
F_1(e_1,e_3;e_2,q)=0, \quad F_1(e_1,e_3;e_2,q)
:=\lambda(e_3-e_1)-\frac{2q^2K(m)^2}{\pi^2}.
\]

The second reduced equation comes from the azimuthal closure condition. For
horizontal degree \(p=1\), the total azimuthal increment over one full period
is \(2\pi\). Since the vertical motion has \(q\) oscillations, the azimuthal
increment over one vertical oscillation must be $\frac{2\pi}{q}$.
Using the formula
\[
\phi'(\eta)=\frac{C-\lambda x_3(\eta)}{1-x_3(\eta)^2},
\]
the azimuthal increment over one vertical oscillation is
\[
\frac{1}{\alpha}\left[\frac{C-\lambda}{1-e_1}\Pi\left(\frac{e_2-e_1}{1-e_1}\,\middle|\,m\right)+\frac{C+\lambda}{1+e_1}\Pi\left(-\frac{e_2-e_1}{1+e_1}\,\middle|\,m\right)\right].
\]
Therefore the second reduced equation is
\[
F_2(e_1,e_3;e_2,q)=0,
\]
\[
F_2(e_1,e_3;e_2,q):=\frac{1}{\alpha}\left[\frac{C-\lambda}{1-e_1}\Pi\left(\frac{e_2-e_1}{1-e_1}\,\middle|\,m\right)+\frac{C+\lambda}{1+e_1}\Pi\left(-\frac{e_2-e_1}{1+e_1}\,\middle|\, m\right)\right]-\frac{2\pi}{q}.
\]
Here \(C\), \(\lambda\), \(m\), and \(\alpha\) are understood as functions of
\((e_1,e_2,e_3)\), with \(e_2\) fixed.

Thus the actual system solved at each continuation step is the two-dimensional
system
\begin{equation}
	\label{EqN2}
	\begin{cases}
		F_1(e_1,e_3;e_2,q)=0,\\
		F_2(e_1,e_3;e_2,q)=0.
	\end{cases}
\end{equation}

\subsection{Initialization near the equatorial root configuration}

The equatorial profile satisfies $x_3\equiv0$.
In terms of the root variables this corresponds to the limiting configuration $e_1=e_2=0$.
At this degenerate configuration the vertical oscillation has zero amplitude.
Nevertheless, the bifurcating branches are labelled by the integer \(q\ge2\),
which becomes the vertical nodal number once the amplitude is positive. The
corresponding bifurcation value is $\lambda_q=\sqrt{q^2-1}$. We now determine the limiting value of the third root. At the equatorial root
configuration one has $m=\frac{e_2-e_1}{e_3-e_1}=0$ and $K(0)=\frac{\pi}{2}$.
The period equation
\[
\lambda(e_3-e_1)=
\frac{2q^2K(m)^2}{\pi^2}
\]
therefore gives, at \(e_1=e_2=0\), $\lambda_q e_3^0=\frac{q^2}{2}$.
Hence
\[
e_3^0=\frac{q^2}{2\lambda_q}=\frac{q^2}{2\sqrt{q^2-1}}.
\]
Equivalently, since \(q^2=1+\lambda_q^2\), this can be written as
\[
e_3^0
=
\frac{1+\lambda_q^2}{2\lambda_q}.
\]

The numerical continuation cannot be initialized exactly at
\(e_1=e_2=0\), because the root parametrization is degenerate there. We
therefore start at a small positive value $e_2=\varepsilon>0$.
The first Newton seed is chosen as $e_1=-\varepsilon$, $e_3=e_3^0$.
This values are only used as an
initial guess, the actual values of \(e_1\) and \(e_3\) are then determined by
solving the reduced system \eqref{EqN2}.

After the first point has been computed, the solution obtained at
\(e_2=e_2^{(j)}\) is used as the initial seed for the next value
\(e_2=e_2^{(j+1)}\). In this way the branch is followed along an increasing
mesh
\[
0<e_2^{(0)}<e_2^{(1)}<\cdots<e_2^{(N)}<1.
\]
Once a solution \((e_1,e_3)\) has been computed for a fixed \(e_2\), we recover $\lambda$, $C$, $m$, $\alpha$ and the corresponding profile.

\subsection{Computation of the north-pole endpoint}

The reconstruction of the full spherical profile becomes numerically delicate
as \(e_2\to1\). Indeed, at north-pole contact one has $x_3=1$ at the contact points, and the azimuthal coordinate degenerates. Therefore the endpoint is not computed by extrapolating the
reconstructed profile. Instead, we pass to the limiting form of the reduced
closure system $\mathscr S_{1,q}$.

At the north-pole boundary \(e_2=1\), the quantities \(A\) and \(B\) become $A=0$ and $B=2(1+e_1)(1+e_3)$.
Hence
\[
\lambda=\frac{B}{2}=(1+e_1)(1+e_3),
\qquad
C=\frac{B}{2}=\lambda.
\]
Moreover, $m=\frac{1-e_1}{e_3-e_1}$ and he period equation becomes
\[
(1+e_1)(1+e_3)(e_3-e_1)=\frac{2q^2K(m)^2}{\pi^2}.
\]

The limiting azimuthal equation is obtained by taking the limit \(e_2\to1\) in
the fourth equation of \(\mathscr S_{1,q}\). The first \(\Pi\)-term is singular
in this limit, but its full contribution has a finite limit. The polar
contribution is $-\pi$.
Consequently the limiting azimuthal equation is
\[
-\pi
+
\frac{2(1+e_3)}{\alpha}
\Pi\left(
-\frac{1-e_1}{1+e_1}
\,\middle|\,
m
\right)
=
\frac{2\pi}{q},
\qquad
\alpha=\frac{qK(m)}{\pi}.
\]
Thus the endpoint is computed by solving the two equations
\begin{equation}
	\label{EqN4}
	\begin{cases}
		(1+e_1)(1+e_3)(e_3-e_1)
		=
		\dfrac{2q^2K(m)^2}{\pi^2},
		\\[1.2em]
		-\pi
		+
		\dfrac{2(1+e_3)}{\alpha}
		\Pi\left(
		-\dfrac{1-e_1}{1+e_1}
		\,\middle|\,
		m
		\right)
		=
		\dfrac{2\pi}{q},
	\end{cases}
\end{equation}
with
\[
m=\frac{1-e_1}{e_3-e_1},
\qquad
\alpha=\frac{qK(m)}{\pi}.
\]
The corresponding endpoint values are denoted by
\[
e_1^\ast,
\qquad
e_2^\ast=1,
\qquad
e_3^\ast,
\qquad
\lambda_q^\ast=(1+e_1^\ast)(1+e_3^\ast).
\]

\subsection{Numerical outcome}

We now describe the numerical outcome of the continuation procedure. For every
\(q=2,\ldots,10\) computed, the branch of \(\mathscr S_{1,q}\) issuing from the
equatorial root configuration approaches the north-pole boundary $e_2=1$
at a finite value of \(\lambda\). In the computed range, we found no numerical
evidence for escape to \(\lambda=+\infty\), approach to \(\lambda=0\), return
to the equator away from the initial bifurcation point, or contact with the
south pole. Thus the computation consistently selects the north-pole-contact
alternative among the boundary mechanisms described in Proposition~\ref{Pr4.3}.

The endpoint values obtained from the limiting polar system are
\[
\begin{array}{c|c|c|c|c|c}
	q & \lambda_q=\sqrt{q^2-1} & \lambda_q^\ast & e_1^\ast & e_2^\ast & e_3^\ast \\ \hline
	2 & 1.732051 & 3.506561 & 0.488013 & 1 & 1.356539\\
	3 & 2.828427 & 4.491194 & 0.595482 & 1 & 1.814945\\
	4 & 3.872983 & 5.488063 & 0.664474 & 1 & 2.297176\\
	5 & 4.898979 & 6.487742 & 0.713053 & 1 & 2.787241\\
	6 & 5.916080 & 7.488215 & 0.749237 & 1 & 3.280847\\
	7 & 6.928203 & 8.488904 & 0.777272 & 1 & 3.776368\\
	8 & 7.937254 & 9.489621 & 0.799647 & 1 & 4.273045\\
	9 & 8.944272 & 10.490301 & 0.817926 & 1 & 4.770477\\
	10 & 9.949874 & 11.490924 & 0.833142 & 1 & 5.268431
\end{array}
\]
The column \(e_2^\ast\) has been included in order to make explicit that the
computed endpoint lies on the north-pole boundary. In particular, $e_2^\ast=1$, $e_1^\ast>0$.
Hence, at the limiting profile, $e_1^\ast\le x_3(\eta)\le e_2^\ast=1$.
Thus the vertical oscillation is entirely contained in the northern hemisphere
and reaches the north pole.

Figure~\ref{F4} shows the numerical continuation in the
\((\lambda,\|\mathbf x\|_{H^2})\)-plane. The plotted curves are stopped slightly before
\(e_2=1\), whereas the endpoint values in the table above are obtained from the
limiting system \eqref{EqN4}.

The figure shows that the full \(H^2\)-norm increases along the computed
branches. This is consistent with the geometric picture: as the branch
approaches \(e_2=1\), the spherical tangent profile develops polar contacts,
and the resulting concentration is detected by the \(H^2\)-norm. The growth is
more pronounced for larger values of \(q\), as expected from the increasing
number of vertical oscillations.

We stress that Figure~\ref{F4} is a two-dimensional projection of
the continuation data. In particular, the apparent proximity of the first two
curves in the figure does not indicate an intersection of the corresponding
branches. The branches with \(q=2\) and \(q=3\) issue from different bifurcation
values, $\lambda_2=\sqrt3$, $\lambda_3=\sqrt8$,
and, in the regular regime, they carry different vertical nodal numbers. Since
the vertical nodal number is constant along regular components, the two
branches are distinct in the solution space. The apparent closeness is only a
visual effect of plotting the data in the projected plane
\((\lambda,\|\mathbf x\|_{H^2})\).

By Proposition~\ref{Pr}, the limiting profile
associated with the branch \(\mathscr S_{1,q}\) has exactly \(q\) contacts with
the north pole. Therefore, for the component \(\mathscr C_k^{\mathrm{reg}}\),
where \(q=k\), the numerical endpoint has \(k\) north-pole contacts.

\begin{figure}[h!]
	 \centering
	 \includegraphics[width=0.72\textwidth]{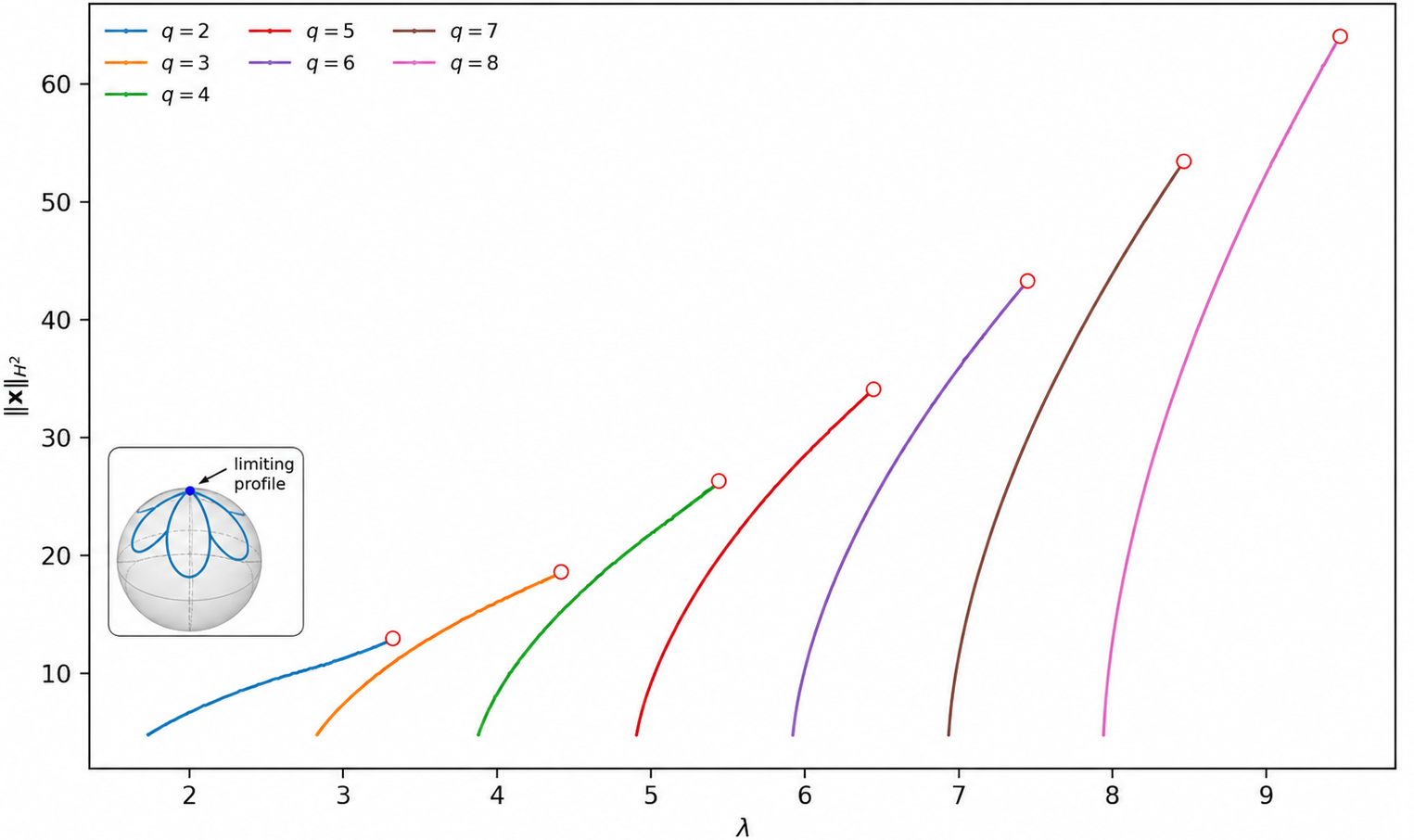} \caption{Numerical continuation of the branches $\mathscr S_{1,q}$, $q=2,\ldots,10$, issuing from the equatorial root configuration, plotted in the \((\lambda,\|\mathbf x\|_{H^2})\)-plane.}
	\label{F4} 
\end{figure}

\section{Relation with the Vortex Filament Equation}
\label{S5}

We close the paper by explaining how the travelling--rotating Schrödinger maps
studied above arise as tangent indicatrices of rigid travelling--rotating
solutions of the Vortex Filament Equation (VFE). Throughout this section we restrict
to the unit-sphere case \(R=1\), so that the profile \(\mathbf x\) can be
identified with the unit tangent field of an arclength-parametrized filament. Recall the Vortex Filament Equation, or binormal flow,
\begin{equation}
	\label{Eq5.1}
	\mathbf X_t=\mathbf X_s\times \mathbf X_{ss}, \qquad	(t,s)\in\mathbb R^2.
\end{equation}
If \(\mathbf X\) is parametrized by arclength, namely $\|\mathbf X_s\|\equiv1$,
then its unit tangent field
\[
\mathbf T(t,s):=\mathbf X_s(t,s)\in\mathbb S^2
\]
solves the Schrödinger map equation into \(\mathbb S^2\).

We compare the travelling--rotating Schrödinger map ansatz
\begin{equation}
	\label{Eq5.2}
	\mathbf T(t,s)=\mathscr R^{\Omega t}\mathbf x(\xi), \qquad \xi=s-at,
\end{equation}
with rigid travelling--rotating solutions of the Vortex Filament Equation.

\subsection{Rigid travelling--rotating filaments and tangent profiles}

Consider the VFE ansatz
\begin{equation}
	\label{Eq5.3}
	\mathbf X(t,s)=	\mathscr R^{\Omega t}\mathbf X_0(\xi)+\mathbf b(t), \qquad	\xi=s-at,
\end{equation}
where \(\mathbf X_0:\mathbb R\to\mathbb R^3\) is a profile curve and
\(\mathbf b:\mathbb R\to\mathbb R^3\) is a time-dependent translation. Then
\[
\mathbf X_s(t,s)=\mathscr R^{\Omega t}\mathbf X_0'(\xi).
\]
Therefore \eqref{Eq5.3} produces a tangent field of the form
\eqref{Eq5.2} if and only if $\mathbf X_0'(\xi)=\mathbf x(\xi)$.

The following proposition makes precise the equivalence between the tangent
profile equation and the rigid-motion profile equation for the filament.

\begin{proposition}
	Let \(\Omega,a\in\mathbb R\). Let $\mathbf x:\mathbb R\to\mathbb S^2$
	be a smooth solution of
	\begin{equation}
		\label{Eq5.4}
		\mathbf x\times\mathbf x''-\Omega e_3\times\mathbf x+a\mathbf x'=0.
	\end{equation}
	Let \(\mathbf X_0\) be any primitive of \(\mathbf x\), namely $\mathbf X_0'=\mathbf x$.
	Then the quantity
	\begin{equation*}
		\mathbf d:=\mathbf x\times\mathbf x'-\Omega e_3\times\mathbf X_0+a\mathbf x
	\end{equation*}
	is independent of \(\xi\). If \(\mathbf b\) solves $\mathbf b'(t)=\mathscr R^{\Omega t}\mathbf d$, then
	\begin{equation*}
		\mathbf X(t,s)=\mathscr R^{\Omega t}\mathbf X_0(s-at)+\mathbf b(t)
	\end{equation*}
	solves the Vortex Filament Equation \eqref{Eq5.1}, and its tangent field is
	\[
	\mathbf X_s(t,s)=\mathscr R^{\Omega t}\mathbf x(s-at).
	\]
	
	Conversely, if a filament of the form \eqref{Eq5.3} solves
	\eqref{Eq5.1} and satisfies \(\mathbf X_0'=\mathbf x\), then there exists a
	constant vector \(\mathbf d\in\mathbb R^3\) such that
	\begin{equation}
		\label{Eq5.5}
		\mathbf x\times\mathbf x'-\Omega e_3\times\mathbf X_0+a\mathbf x=\mathbf d,\qquad\mathbf X_0'=\mathbf x,
	\end{equation}
	and the tangent profile \(\mathbf x\) satisfies
	\eqref{Eq5.4}.
\end{proposition}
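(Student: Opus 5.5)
The plan is to prove both directions by direct differentiation. Everything rests on the fact that \eqref{Eq5.4} is the $\xi$-derivative of the quantity $\mathbf d$.

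\emph{Constancy of $\mathbf d$.} By \eqref{Eq2.2}, $(\mathbf x\times\mathbf x')'=\mathbf x\times\mathbf x''$. Together with $\mathbf X_0'=\mathbf x$, this gives
\[
\mathbf d'=\mathbf x\times\mathbf x''-\Omega e_3\times\mathbf x+a\mathbf x',
\]
which vanishes by \eqref{Eq5.4}. Hence $\mathbf d$ is constant. For the converse, the same identity shows that \eqref{Eq5.5} implies \eqref{Eq5.4}.

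\emph{Forward direction: substitute the ansatz.} Write $\mathbf X=\mathscr R^{\Omega t}\mathbf X_0(\xi)+\mathbf b(t)$ with $\xi=s-at$. Since $\frac{d}{dt}\mathscr R^{\Omega t}=\Omega\, e_3\times\mathscr R^{\Omega t}(\cdot)$, we get
\[
\mathbf X_t=\mathscr R^{\Omega t}\big(\Omega e_3\times\mathbf X_0-a\mathbf x\big)+\mathbf b'(t).
\]
Also $\mathbf X_s=\mathscr R^{\Omega t}\mathbf x$ and $\mathbf X_{ss}=\mathscr R^{\Omega t}\mathbf x'$. Because rotations preserve cross products,
\[
\mathbf X_s\times\mathbf X_{ss}=\mathscr R^{\Omega t}(\mathbf x\times\mathbf x').
\]
The VFE is therefore equivalent to
\[
\mathscr R^{\Omega t}\big(\mathbf x\times\mathbf x'-\Omega e_3\times\mathbf X_0+a\mathbf x\big)=\mathbf b'(t),
\]
that is, $\mathscr R^{\Omega t}\mathbf d=\mathbf b'(t)$. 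This holds by the choice of $\mathbf b$. The formula for the tangent field was already noted above.

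\emph{Converse direction.} Suppose a filament of the form \eqref{Eq5.3} solves \eqref{Eq5.1} and $\mathbf X_0'=\mathbf x$. Running the same computation backwards gives
\[
\mathbf x(\xi)\times\mathbf x'(\xi)-\Omega e_3\times\mathbf X_0(\xi)+a\mathbf x(\xi)=\mathscr R^{-\Omega t}\mathbf b'(t)
\]
for all $(t,s)$. The left side depends only on $\xi=s-at$, and the right side only on $t$. Fixing $t$ and varying $s$ lets $\xi$ range over all of $\mathbb R$, so the left side is constant; call it $\mathbf d$. This is \eqref{Eq5.5}, and differentiating it in $\xi$ gives \eqref{Eq5.4}.

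The only point needing care is this separation-of-variables step. It works for any $a$, including $a=0$, since $s$ alone already sweeps out all values of $\xi$. The rest is routine bookkeeping of the rotation derivative, and I expect no real obstacle beyond getting its sign convention right.
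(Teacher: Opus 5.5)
Your proof is correct and follows the paper's argument essentially step for step. You differentiate $\mathbf d$ to get its constancy, substitute the ansatz using rotation-invariance of the cross product and the commutation of $\mathscr R^{\Omega t}$ with $e_3\times\cdot$, and separate the $t$- and $\xi$-dependence for the converse. Your remark that fixing $t$ and varying $s$ already makes $\xi$ sweep all of $\mathbb R$ (so the separation step is valid even when $a=0$) is a small clarification that the paper leaves implicit.
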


\begin{proof}
	Assume first that \(\mathbf x\) solves
	\eqref{Eq5.4} and that \(\mathbf X_0'=\mathbf x\). Define
	\[
	\mathbf d(\xi)=\mathbf x(\xi)\times\mathbf x'(\xi)-\Omega e_3\times\mathbf X_0(\xi)+a\mathbf x(\xi).
	\]
	Then $\mathbf d'=(\mathbf x\times\mathbf x')'-\Omega e_3\times\mathbf X_0'+a\mathbf x'$.
	Since $(\mathbf x\times\mathbf x')'=\mathbf x\times\mathbf x''$ and \(\mathbf X_0'=\mathbf x\), we get
	\[
	\mathbf d'=\mathbf x\times\mathbf x''-\Omega e_3\times\mathbf x+a\mathbf x'.
	\]
	By \eqref{Eq5.4}, $\mathbf d'=0$.
	Hence \(\mathbf d\) is constant. Let \(\mathbf b\) solve $\mathbf b'(t)=\mathscr R^{\Omega t}\mathbf d$.
	Define
	\[
	\mathbf X(t,s)=\mathscr R^{\Omega t}\mathbf X_0(\xi)+\mathbf b(t), \qquad \xi=s-at.
	\]
	Then $\mathbf X_s=\mathscr R^{\Omega t}\mathbf X_0'(\xi)=\mathscr R^{\Omega t}\mathbf x(\xi)$
	and $\mathbf X_{ss}=\mathscr R^{\Omega t}\mathbf x'(\xi)$.
	Therefore, using invariance of the cross product under rotations,
	\[
	\mathbf X_s\times\mathbf X_{ss}=\mathscr R^{\Omega t}(\mathbf x\times\mathbf x').
	\]
	On the other hand,
	\[
	\mathbf X_t=\Omega e_3\times\mathscr R^{\Omega t}\mathbf X_0(\xi)-a\mathscr R^{\Omega t}\mathbf X_0'(\xi)+\mathbf b'(t).
	\]
	Since rotations around \(e_3\) commute with \(v\mapsto e_3\times v\), and since
	\(\mathbf X_0'=\mathbf x\), this becomes
	\[
	\mathbf X_t=\mathscr R^{\Omega t}\left(\Omega e_3\times\mathbf X_0-a\mathbf x+\mathbf d\right).
	\]
	Using $\mathbf d=\mathbf x\times\mathbf x'-\Omega e_3\times\mathbf X_0+a\mathbf x$,
	we obtain
	\[
	\mathbf X_t=\mathscr R^{\Omega t}(\mathbf x\times\mathbf x')=\mathbf X_s\times\mathbf X_{ss}.
	\]
	Thus \(\mathbf X\) solves VFE.
	
	Conversely, suppose that \(\mathbf X\) has the form
	\[
	\mathbf X(t,s)=\mathscr R^{\Omega t}\mathbf X_0(s-at)+\mathbf b(t),\qquad\mathbf X_0'=\mathbf x,
	\]
	and solves VFE. The equality $\mathbf X_t=\mathbf X_s\times\mathbf X_{ss}$
	is equivalent, by the computation above, to
	\[
	\Omega e_3\times\mathbf X_0-a\mathbf x+\mathscr R^{-\Omega t}\mathbf b'(t)=\mathbf x\times\mathbf x'.
	\]
	Therefore
	\[
	\mathscr R^{-\Omega t}\mathbf b'(t)=\mathbf x\times\mathbf x'-\Omega e_3\times\mathbf X_0+a\mathbf x.
	\]
	The left-hand side depends only on \(t\), while the right-hand side depends only
	on \(\xi\). Hence both sides must be equal to a constant vector
	\(\mathbf d\). This gives $\mathbf b'(t)=\mathscr R^{\Omega t}\mathbf d$
	and
	\[
	\mathbf x\times\mathbf x'-\Omega e_3\times\mathbf X_0+a\mathbf x=\mathbf d.
	\]
	Differentiating this identity with respect to \(\xi\), and using
	\(\mathbf X_0'=\mathbf x\), gives
	\[
	\mathbf x\times\mathbf x''-\Omega e_3\times\mathbf x+a\mathbf x'=0.
	\]
	This is \eqref{Eq5.4}.
\end{proof}

Thus, at the level of profiles, the equation studied in this paper is the
differentiated form of the rigid travelling--rotating filament equation
\eqref{Eq5.5}. Equivalently, Kida's profile equation is an integral
of our tangent-profile equation.

\subsection{Normalization of the translational part}

The constant vector \(\mathbf d\) in \eqref{Eq5.5} determines the
translation through $\mathbf b'(t)=\mathscr R^{\Omega t}\mathbf d$.
Writing
\[
\mathbf d=\mathbf d_\perp+d_3e_3,\qquad\mathbf d_\perp\cdot e_3=0,
\]
we see that the translation velocity consists of a rotating horizontal component
and a constant vertical drift. When \(\Omega\neq0\), the horizontal component can
be removed by shifting the filament profile by a constant vector.

\begin{lemma}[Gauge normalization]
	Assume \(\Omega\neq0\). Up to replacing $\mathbf X_0$ by 
	$\widetilde{\mathbf X}_0=\mathbf X_0+\mathbf c$, $\mathbf c\in\mathbb R^3$,
	one may suppose that the constant vector \(\mathbf d\) in
	\eqref{Eq5.5} is vertical:
	\[
	\mathbf d=d_3e_3.
	\]
	With this normalization, $\mathbf b(t)=d_3t\,e_3+\mathbf b_0$.
\end{lemma}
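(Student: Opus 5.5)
The plan is to compute how $\mathbf d$ changes under the shift $\widetilde{\mathbf X}_0=\mathbf X_0+\mathbf c$. Since $\widetilde{\mathbf X}_0'=\mathbf x$, the tangent profile is unchanged. Hence the terms $\mathbf x\times\mathbf x'$ and $a\mathbf x$ in \eqref{Eq5.5} are unaffected. The only change comes from $-\Omega e_3\times\mathbf X_0$. We obtain
\[
\widetilde{\mathbf d}=\mathbf x\times\mathbf x'-\Omega e_3\times(\mathbf X_0+\mathbf c)+a\mathbf x=\mathbf d-\Omega\, e_3\times\mathbf c ,
\]
which is again a constant vector. By the preceding proposition, it is the constant associated with the new primitive.

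Next I would decompose $\mathbf d=\mathbf d_\perp+d_3e_3$. The map $\mathbf c\mapsto e_3\times\mathbf c$ sends $\mathbb R^3$ onto the horizontal plane $e_3^\perp$. Restricted to that plane it is the rotation by $\pi/2$, and its inverse is $\mathbf w\mapsto -e_3\times\mathbf w$. Because $\Omega\neq0$, I would choose
\[
\mathbf c:=-\frac{1}{\Omega}\,e_3\times\mathbf d_\perp .
\]
Then $e_3\times\mathbf c=-\frac1\Omega e_3\times(e_3\times\mathbf d_\perp)=\frac1\Omega\mathbf d_\perp$, using $e_3\times(e_3\times\mathbf w)=-\mathbf w$ for horizontal $\mathbf w$. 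This gives $\widetilde{\mathbf d}=\mathbf d-\mathbf d_\perp=d_3e_3$. The vertical component of $\mathbf c$ is irrelevant and can be set to zero; $d_3$ itself is never changed, since $e_3\times\mathbf c$ is horizontal.

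Finally, with $\mathbf d=d_3e_3$ the translation equation becomes $\mathbf b'(t)=\mathscr R^{\Omega t}(d_3e_3)=d_3e_3$, because $\mathscr R^{\Omega t}$ fixes $e_3$. Integrating gives $\mathbf b(t)=d_3t\,e_3+\mathbf b_0$. The filament itself is still a solution by the preceding proposition, since the new pair $(\widetilde{\mathbf X}_0,\mathbf b)$ satisfies its hypotheses.

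All steps are elementary. The only point needing care is the sign in the double cross product identity when choosing $\mathbf c$.
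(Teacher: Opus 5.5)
Your proposal is correct and matches the paper's proof essentially step for step. Both compute $\widetilde{\mathbf d}=\mathbf d-\Omega e_3\times\mathbf c$, choose $\mathbf c=-\frac1\Omega e_3\times\mathbf d_\perp$ via $e_3\times(e_3\times\mathbf d_\perp)=-\mathbf d_\perp$, and integrate $\mathbf b'(t)=\mathscr R^{\Omega t}(d_3e_3)=d_3e_3$.
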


\begin{proof}
	The shift $\widetilde{\mathbf X}_0=\mathbf X_0+\mathbf c$ does not change the tangent profile, since $\widetilde{\mathbf X}_0'=\mathbf X_0'=\mathbf x$.
	However,
	\[
	-\Omega e_3\times \widetilde{\mathbf X}_0=-\Omega e_3\times \mathbf X_0-\Omega e_3\times \mathbf c.
	\]
	Thus the corresponding constant vector becomes $\widetilde{\mathbf d}=\mathbf d-\Omega e_3\times\mathbf c$.
	Write $\mathbf d=\mathbf d_\perp+d_3e_3$, $\mathbf d_\perp\cdot e_3=0$, and choose
	\[
	\mathbf c=-\frac1\Omega e_3\times\mathbf d_\perp.
	\]
	Then, using $e_3\times(e_3\times\mathbf d_\perp)=-\mathbf d_\perp$,
	we get $\Omega e_3\times\mathbf c=\mathbf d_\perp$.
	Hence $\widetilde{\mathbf d}=\mathbf d-\mathbf d_\perp=d_3e_3$.
	With this normalization,
	\[
	\mathbf b'(t)=\mathscr R^{\Omega t}(d_3e_3)=d_3e_3,
	\]
	and therefore $\mathbf b(t)=d_3t\,e_3+\mathbf b_0$.
\end{proof}

	The case \(\Omega=0\) is different. In that case the shift \(\mathbf X_0\mapsto \mathbf X_0+\mathbf c\) does not modify the constant vector \(\mathbf d\) through a horizontal term, and the horizontal component of the
	translational velocity cannot in general be gauged away by this argument.

\appendix

\section{Jacobi elliptic functions and elliptic integrals}
\label{ApA}

For the convenience of the reader, we recall in this appendix the basic facts
about Jacobi elliptic functions and elliptic integrals used throughout the
paper. We refer to \cite{AS} for a standard reference. We follow the convention in which the elliptic parameter is denoted by \(k\in(0,1)\).

\subsection{Elliptic integrals of the first kind}

For \(k\in(0,1)\), the incomplete elliptic integral of the first kind is defined by
\[
F(\varphi\mid k):=\int_0^\varphi\frac{d\theta}{\sqrt{1-k\sin^2\theta}}.
\]
Equivalently, after the change of variable \(s=\sin\theta\),
\[
F(\varphi\mid k)
=\int_0^{\sin\varphi}\frac{ds}{\sqrt{(1-s^2)(1-ks^2)}}.
\]
The complete elliptic integral of the first kind is
\[
K(k):=F\left(\frac{\pi}{2}\mid k\right)=\int_0^{\pi/2}
\frac{d\theta}{\sqrt{1-k\sin^2\theta}}=\int_0^1\frac{ds}{\sqrt{(1-s^2)(1-ks^2)}}.
\]
We shall repeatedly use that $K(k)<+\infty$
for $0<k<1$,
whereas $K(k)\to+\infty$ as $k\uparrow1$.

\subsection{Jacobi elliptic functions}

The Jacobi sine function \(\operatorname{sn}(u\mid k)\) is defined as the inverse
of the elliptic integral of the first kind. More precisely, if $u=F(\varphi\mid k)$,
then
\[
\operatorname{sn}(u\mid k)=\sin\varphi.
\]
One also defines
\[
\operatorname{cn}(u\mid k):=\cos\varphi,\quad
\operatorname{dn}(u\mid k):=\sqrt{1-k\sin^2\varphi}.
\]
Equivalently,
\[
\operatorname{cn}^{2}(u\mid k)
=1-\operatorname{sn}^{2}(u\mid k), \quad
\operatorname{dn}^{2}(u\mid k)=
1-k\,\operatorname{sn}^{2}(u\mid k).
\]
The function \(\operatorname{sn}(u\mid k)\) is \(4K(k)\)-periodic in the real
variable \(u\), while \(\operatorname{sn}^{2}(u\mid k)\) is \(2K(k)\)-periodic.
More precisely,
\[
\operatorname{sn}(u+4K(k)\mid k)=\operatorname{sn}(u\mid k), \qquad 
\operatorname{sn}^{2}(u+2K(k)\mid k)=\operatorname{sn}^{2}(u\mid k).
\]
For \(0<k<1\), the minimal positive real period of
\(\operatorname{sn}^{2}(\cdot\mid k)\) is \(2K(k)\).

\subsection{Elliptic integrals of the third kind}

For \(n\in\mathbb R\) and \(k\in(0,1)\), the incomplete elliptic integral of the
third kind is defined by
\[
\Pi(\varphi\mid n,k):=\int_0^\varphi\frac{d\theta}{(1-n\sin^2\theta)\sqrt{1-k\sin^2\theta}}.
\]
Equivalently, in Jacobi form,
\[
\Pi(u\mid n,k):=\int_0^u\frac{d\tau}{1-n\operatorname{sn}^{2}(\tau\mid k)}.
\]
These two definitions are related by \(u=F(\varphi\mid k)\). Indeed, if $s=\operatorname{sn}(\tau\mid k)$,
then
\[
d\tau=\frac{ds}{\sqrt{(1-s^2)(1-ks^2)}}.
\]
The complete elliptic integral of the third kind is
\[
\Pi(n\mid k):=\Pi\left(\frac{\pi}{2}\mid n,k\right)=\int_0^{\pi/2}\frac{d\theta}{(1-n\sin^2\theta)\sqrt{1-k\sin^2\theta}}.
\]
In Jacobi form,
\[
\Pi(n\mid k)=\int_0^{K(k)}\frac{d\tau}{1-n\operatorname{sn}^{2}(\tau\mid k)}.
\]
Since \(\operatorname{sn}^2(\cdot\mid k)\) is symmetric with respect to
\(K(k)\), namely
\[
\operatorname{sn}^2(2K(k)-\tau\mid k)=\operatorname{sn}^2(\tau\mid k),
\]
we have
\[
\int_0^{2K(k)}\frac{d\tau}{1-n\operatorname{sn}^{2}(\tau\mid k)}=2\int_0^{K(k)}\frac{d\tau}{1-n\operatorname{sn}^{2}(\tau\mid k)}=2\Pi(n\mid k),
\]
provided that the denominator does not vanish on \([0,2K(k)]\).

\section{Degenerate parameter regimes}
\label{ApB}

In the main body of the paper we have restricted ourselves to the non-degenerate
travelling--rotating regime
\[
a>0,\qquad \Omega>0.
\]
As explained in Section~\ref{S2}, this entails no loss of generality
as long as \(a\Omega\neq0\), after reversing the orientation of the profile
variable and, if necessary, applying the antipodal symmetry on the sphere.
The cases \(a=0\) and \(\Omega=0\), however, are genuinely degenerate for the
bifurcation analysis. In this appendix we record their elementary classification.
Throughout, we assume that the profile lies on a fixed sphere,
\[
\|\mathbf x(\xi)\|\equiv R>0.
\]

\subsection{The case \(a=0\)}

If \(\Omega\neq0\), the antipodal symmetry \(\mathbf x\mapsto-\mathbf x\)
changes \(\Omega\) into \(-\Omega\). Hence, up to this symmetry, it is enough to
describe the case \(\Omega>0\). When \(a=0\), the profile equation becomes
\begin{equation}
	\label{B.1}
	\mathbf x\times \ddot{\mathbf x}-\Omega e_3\times\mathbf x=0.
\end{equation}
Equivalently, $\mathbf x\times(\ddot{\mathbf x}+\Omega e_3)=0$,
so \(\ddot{\mathbf x}+\Omega e_3\) is pointwise parallel to \(\mathbf x\).
This case no longer corresponds to travelling profiles; it corresponds to
purely rotating profiles.

The equation still has two first integrals. The vertical angular momentum is $C:=(\mathbf x\times\dot{\mathbf x})_3$,
because taking the third component of \eqref{B.1} gives
\[
\frac{d}{d\xi}(\mathbf x\times\dot{\mathbf x})_3=0.
\]
Moreover, $H:=\frac12\|\dot{\mathbf x}\|^2+\Omega x_3$
is constant. Indeed, since \(\ddot{\mathbf x}+\Omega e_3\) is parallel to \(\mathbf x\), and
\(\dot{\mathbf x}\perp\mathbf x\), we have
\[
\frac{d}{d\xi}\left(\frac12\|\dot{\mathbf x}\|^2+\Omega x_3\right)=\dot{\mathbf x}\cdot(\ddot{\mathbf x}+\Omega e_3)=0.
\]
Let \(u:=x_3\). For non-polar solutions, write
\[
\mathbf x(\xi)=\big(\sqrt{R^2-u(\xi)^2}\cos\phi(\xi),\sqrt{R^2-u(\xi)^2}\sin\phi(\xi),u(\xi)\big).
\]
Then
\[
C=(\mathbf x\times\dot{\mathbf x})_3=(R^2-u^2)\dot\phi,
\]
and
\[
\|\dot{\mathbf x}\|^2=\frac{R^2\dot u^2}{R^2-u^2}+\frac{C^2}{R^2-u^2}.
\]
Using the conservation of \(H\), we obtain the scalar equation
\begin{equation*}
	\dot u^2=\frac{2(H-\Omega u)(R^2-u^2)-C^2}{R^2}.
\end{equation*}
Thus the vertical component is again governed by a cubic polynomial, $\dot u^2=P_0(u)$, where
\[
P_0(u):=\frac{2(H-\Omega u)(R^2-u^2)-C^2}{R^2}.
\]
The azimuthal equation is
\begin{equation*}
	\dot\phi=\frac{C}{R^2-u^2}.
\end{equation*}
Consequently, the classification is analogous to the non-degenerate case:
regular non-polar oscillations occur when \(P_0\) has three real roots
\(e_1<e_2<e_3\), with
\[
-R<e_1<e_2<R,
\]
and \(u\) oscillates between \(e_1\) and \(e_2\). These solutions are expressed
through Jacobi elliptic functions, and the full profile is periodic precisely
when the azimuthal increment over one vertical period is rationally related to
\(2\pi\), otherwise the profile is quasi-periodic on the corresponding spherical
band. Degenerate double-root configurations give separatrix-type solutions.

The non-polar constant-latitude solutions are obtained by taking
\[
\mathbf x(\xi)=\big(\rho\cos(\nu\xi+\phi_0),\rho\sin(\nu\xi+\phi_0),e\big), \qquad \rho=\sqrt{R^2-e^2}, \qquad |e|<R.
\]
Substitution into \eqref{B.1} gives $e\nu^2+\Omega=0$.
Thus such solutions exist precisely when \(e\Omega<0\). In particular, if
\(\Omega>0\), the non-polar constant latitudes lie in the lower hemisphere
\(e<0\). The polar constant solutions $\mathbf x\equiv Re_3$, $\mathbf x\equiv -Re_3$,
also solve \eqref{B.1}.

\subsection{The case \(\Omega=0\)}

When \(\Omega=0\), the profile equation becomes
\begin{equation*}
	\mathbf x\times\ddot{\mathbf x}+a\dot{\mathbf x}=0.
\end{equation*}
Assume first that \(a\neq0\). Then, as in Proposition~2.1, every nonzero solution
has constant norm. In this case the equation is completely integrable in a
particularly simple way. Indeed,
\[
\frac{d}{d\xi}(\mathbf x\times\dot{\mathbf x})=\mathbf x\times\ddot{\mathbf x}=-a\dot{\mathbf x}.
\]
Therefore
\begin{equation}
	\label{B.2}
	\mathbf K:=\mathbf x\times\dot{\mathbf x}+a\mathbf x
\end{equation}
is a constant vector. Taking the scalar product of \eqref{B.2} with \(\mathbf x\), we obtain $\mathbf{K}\cdot\mathbf x=aR^2$.
Thus every solution lies in the intersection of the sphere
\(\|\mathbf x\|=R\) with the fixed plane $\mathbf K\cdot\mathbf x=aR^2$. Hence every nonconstant solution is a circle on the sphere.

Conversely, taking the cross product of \eqref{B.2} with \(\mathbf x\),
and using \(\mathbf x\cdot\dot{\mathbf x}=0\), gives
\[
\mathbf K\times\mathbf x=(\mathbf x\times\dot{\mathbf x})\times\mathbf x=R^2\dot{\mathbf x}.
\]
Therefore
\begin{equation*}
	\dot{\mathbf x}=\frac1{R^2}\mathbf K\times\mathbf x.
\end{equation*}
Thus the motion is a uniform rotation around the fixed axis \(\mathbf K\). Setting
\[
\nu:=\frac{|\mathbf K|}{R^2},\qquad\mathbf n:=\frac{\mathbf K}{|\mathbf K|},
\]
we have
$\dot{\mathbf x}=\nu\,\mathbf n\times\mathbf x$ and 
$\mathbf n\cdot\mathbf x=\frac{a}{\nu}$.
In particular, after a rigid rotation sending \(\mathbf n\) to \(e_3\), every
nonconstant solution can be written as
\[
\mathbf x(\xi)=\big(\rho\cos(\nu\xi+\phi_0),\rho\sin(\nu\xi+\phi_0),e\big),
\]
where $e=\frac{a}{\nu}$, $\rho=\sqrt{R^2-e^2}$.
Equivalently, the constant-latitude relation is $e\nu=a$.
This is exactly the specialization to \(\Omega=0\) of the general relation
\[
e\nu^2-a\nu+\Omega=0,
\]
after excluding the trivial angular velocity \(\nu=0\).

Thus, when \(\Omega=0\) and \(a\neq0\), all nonconstant spherical profiles are
circles, possibly after a rigid rotation of the ambient space. There are no
genuine vertical oscillatory profiles of the type studied in the main
bifurcation analysis.

Finally, if \(a=0\) and \(\Omega=0\), the equation reduces to $\mathbf x\times\ddot{\mathbf x}=0$.
On the sphere this means that \(\ddot{\mathbf x}\) is parallel to \(\mathbf x\).
Since $\mathbf x\cdot\ddot{\mathbf x}=-\|\dot{\mathbf x}\|^2$,
we get
\[
\ddot{\mathbf x}=-\frac{\|\dot{\mathbf x}\|^2}{R^2}\mathbf x.
\]
The speed is constant, and the nonconstant solutions are precisely great
circles parametrized with constant speed. Constant solutions are also allowed.


\begin{thebibliography}{99}

\bibitem{AS}
M. Abramowitz and I. A. Stegun,
\emph{Handbook of Mathematical Functions with Formulas, Graphs, and Mathematical Tables},
National Bureau of Standards Applied Mathematics Series, Vol. 55,
U.S. Government Printing Office, Washington, D.C., 1964.

\bibitem{ArmsHama1965}
R. J. Arms and F. R. Hama,
\emph{Localized-induction concept on a curved vortex and motion of an elliptic vortex ring},
Phys. Fluids \textbf{8} (1965), no. 4, 553--559.

\bibitem{BanicaVega2009}
V. Banica and L. Vega,
\emph{On the stability of a singular vortex dynamics},
Comm. Math. Phys. \textbf{286} (2009), 593--627.

\bibitem{BanicaVega2012}
V. Banica and L. Vega,
\emph{Scattering for 1D cubic NLS and singular vortex dynamics},
J. Eur. Math. Soc. (JEMS) \textbf{14} (2012), no. 1, 209--253.

\bibitem{BanicaVega2015}
V. Banica and L. Vega,
\emph{The initial value problem for the binormal flow with rough data},
Ann. Sci. \'Ec. Norm. Sup\'er. (4) \textbf{48} (2015), no. 6, 1423--1455.

\bibitem{Betchov1965}
R. Betchov,
\emph{On the curvature and torsion of an isolated vortex filament},
J. Fluid Mech. \textbf{22} (1965), no. 3, 471--479.

\bibitem{CR1971}
M. G. Crandall and P. H. Rabinowitz,
\emph{Bifurcation from simple eigenvalues},
J. Funct. Anal. \textbf{8} (1971), 321--340.

\bibitem{DaRios1906}
L. S. Da Rios,
\emph{Sul moto d'un liquido indefinito con un filetto vorticoso di forma qualunque},
Rend. Circ. Mat. Palermo \textbf{22} (1906), 117--135.

\bibitem{DeLaHozVega2014}
F. de la Hoz and L. Vega,
\emph{Vortex filament equation for a regular polygon},
Nonlinearity \textbf{27} (2014), no. 12, 3031--3057.

\bibitem{GarciaVega2024}
C. Garc\'ia and L. Vega,
\emph{Steady solutions for the Schr\"odinger map equation},
Comm. Partial Differential Equations \textbf{49} (2024), no. 5--6, 505--542.

\bibitem{GutierrezRivasVega2003}
S. Guti\'errez, J. Rivas and L. Vega,
\emph{Formation of singularities and self-similar vortex motion under the localized induction approximation},
Comm. Partial Differential Equations \textbf{28} (2003), no. 5--6, 927--968.

\bibitem{Hasimoto1972}
H. Hasimoto,
\emph{A soliton on a vortex filament},
J. Fluid Mech. \textbf{51} (1972), no. 3, 477--485.

\bibitem{JerrardSmets2012}
R. L. Jerrard and D. Smets,
\emph{On Schr\"odinger maps from \(\mathbb T^1\) to \(S^2\)},
Ann. Sci. \'Ec. Norm. Sup\'er. (4) \textbf{45} (2012), no. 4, 637--680.

\bibitem{Kida1981}
S. Kida,
\emph{A vortex filament moving without change of form},
J. Fluid Mech. \textbf{112} (1981), 397--409.

\bibitem{Kielhofer2012}
H. Kielh\"ofer,
\emph{Bifurcation Theory: An Introduction with Applications to Partial Differential Equations},
second ed.,
Applied Mathematical Sciences, vol. 156,
Springer, New York, 2012.

\bibitem{LangerPerline1991}
J. Langer and R. Perline,
\emph{Poisson geometry of the filament equation},
J. Nonlinear Sci. \textbf{1} (1991), 71--93.

\bibitem{LangerSinger1984}
J. Langer and D. A. Singer,
\emph{Knotted elastic curves in \(\mathbb R^3\)},
J. London Math. Soc. (2) \textbf{30} (1984), no. 3, 512--520.

\bibitem{LG} 
J. L\'{o}pez-G\'{o}mez,
\emph{Spectral Theory and Nonlinear Functional Analysis},
Chapman and Hall/CRC Research Notes in Mathematics, vol.~426, CRC Press, Boca Raton (FL), 2001.

\bibitem{Rabinowitz1971}
P. H. Rabinowitz,
\emph{Some global results for nonlinear eigenvalue problems},
J. Funct. Anal. \textbf{7} (1971), 487--513.

\bibitem{Saffman1992}
P. G. Saffman,
\emph{Vortex Dynamics},
Cambridge Monographs on Mechanics and Applied Mathematics,
Cambridge University Press, Cambridge, 1992.
	
\end{thebibliography}
\end{document}